\documentclass[11pt,centertags]{amsart}
\usepackage{amssymb}            
\usepackage[shortlabels]{enumitem}           
\usepackage{url}                
\usepackage{gloss}              
\usepackage[english]{babel}     
\usepackage[pdftex]{graphicx}
    \DeclareGraphicsExtensions{.eps,.pdf,.png,.jpg}
    \graphicspath{{}}
\usepackage{mathtools}          
\usepackage[marginratio=1:1,height=8.5in,width=6.5in,tmargin=1.25in]{geometry}
\usepackage{comment}            
\usepackage[normalem]{ulem}     
\usepackage[usenames,dvipsnames]{color}

\usepackage{todonotes}          
\usepackage{hyperref}
\hypersetup{
    colorlinks,
    citecolor=red,
    filecolor=orange,
    linkcolor=blue,
    urlcolor=purple
  }

\usepackage[T1]{fontenc}         
\usepackage[utf8]{inputenc}      
\usepackage{textcomp}            
\usepackage{bbm}                
\usepackage{mathpazo}           
\usepackage{mathrsfs}           

\allowdisplaybreaks

\newtheorem*{main*}{Main Theorem}

\newtheorem*{theorem*}{Theorem}

\newtheorem{question}{Question}
\newtheorem*{question*}{Question}

\newtheorem*{conjecture*}{Conjecture}

\theoremstyle{definition}

\newtheorem*{definition*}{Definition}

\theoremstyle{remark}

\numberwithin{equation}{section}

\newcommand{\isom}{\cong} 

\newcommand{\Isom}{\mathrm{Isom}}

\newcommand{\CC}{\mathbb{C}}

\newcommand{\RR}{\mathbb{R}}

\newcommand{\TT}{\mathbb{T}}

\newcommand{\ZZ}{\mathbb{Z}}

\newcommand{\bV}{\mathbf{V}}

\newcommand{\cA}{\mathcal{A}}

\newcommand{\cC}{\mathcal{C}}
\newcommand{\cD}{\mathcal{D}}
\newcommand{\cE}{\mathcal{E}}
\newcommand{\cF}{\mathcal{F}}

\newcommand{\cI}{\mathcal{I}}

\newcommand{\cL}{\mathcal{L}}
\newcommand{\cM}{\mathcal{M}}
\newcommand{\cN}{\mathcal{N}}

\newcommand{\cQ}{\mathcal{Q}}
\newcommand{\cR}{\mathcal{R}}
\newcommand{\cS}{\mathcal{S}}
\newcommand{\cT}{\mathcal{T}}
\newcommand{\cU}{\mathcal{U}}
\newcommand{\cV}{\mathcal{V}}
\newcommand{\cW}{\mathcal{W}}

\newcommand{\del}{\partial}

\newcommand{\id}{\mathrm{Id}}

\newcommand{\matii}[2]{\begin{pmatrix} #1\\#2\end{pmatrix}}

\newcommand{\mativ}[4]{\begin{pmatrix} #1\\#2\\#3\\#4\end{pmatrix}}
\newcommand{\matv}[5]{\begin{pmatrix} #1\\#2\\#3\\#4\\#5\end{pmatrix}}

\newcommand{\pr}{\mathrm{pr}}

\newcommand{\Sing}{\mathrm{Sing}}

\newcommand{\sn}{\mathbf{s}}
\newcommand{\cn}{\mathbf{c}}

\newcommand{\tY}{\widetilde{Y}}
\newcommand{\tc}{\widetilde{c}}
\newcommand{\tg}{\widetilde{g}}

\theoremstyle{definition}
\newtheorem{thm}{Theorem}[section]

\newtheorem{prop}[thm]{Proposition}
\newtheorem{dfn}[thm]{Definition}
\newtheorem{subdfn}[thm]{Definition}
\theoremstyle{definition}
\newtheorem{lem}[thm]{Lemma}
\newtheorem{sublem}[thm]{Lemma}

\newtheorem*{rmk}{Remark}
\newtheorem{cor}[thm]{Corollary}
\newtheorem{fact}[thm]{Fact}

\begin{document}

\author[Keith Burns]{Keith Burns}

\author[Tomas Gedeon]{Tomas Gedeon$^\dagger$}
\thanks{$\dagger$ This project was sponsored in part by the Air Force Research Laboratory under Agreement Number r FA8750-25-C-B027 and by grant NSF-DMS 2601860.}

\author[Yuping Ruan]{Yuping Ruan}


\title{A smooth Riemannian metric on an arbitrary product of spheres with Bernoulli geodesic flow}

\address{Northwestern University} 
\email{burns@math.northwestern.edu} 

\address{Montana State University}
\email{tgedeon@montana.edu}

\address{Yale University}
\email{ruan.yuping@yale.edu}
\subjclass[2020]{Primary 37D40; Secondary 53C22}

\begin{abstract}
We construct a smooth Riemannian metric on $S^{k_1}\times\dots\times S^{k_m}$ for any $m,k_1,\dots, k_m\in\ZZ_+$ whose geodesic flow is Bernoulli with respect to the Liouville measure.
\end{abstract}
\maketitle

\tableofcontents


%
%

\section{Introduction}
Let $(M,g)$ be a closed Riemannian manifold. The \emph{geodesic flow} of $(M,g)$ is a smooth flow $\{\varphi_t\}_{t\in\RR}$ defined on $TM$ such that for any vector $v\in TM$ and for any $t\in\RR$, we have $\varphi_t(v)=c'_v(t)$, where $c_v:\RR\to M$ is the unique geodesic in $(M,g)$ satisfying $c'_v(0)=v$. Since the geodesic flow preserves the length of vectors, unless otherwise mentioned, we focus on the geodesic flow restricted to the unit tangent bundle $T^1(M,g)$. 

E. Hopf \cite{Hop39} proved in the late 1930s the \emph{ergodicity} of the geodesic flow for compact surfaces with variable negative curvature. (See also \cite{Hed34, Hed35,Hop36} for hyperbolic surfaces. Throughout the rest of this paper, ergodicity of the geodesic flow always means ergodicity with respect to the Liouville measure.) The only orientable compact surfaces which do not carry negatively curved Riemannian metrics are the $2$-sphere and the $2$-torus. Later, examples of Riemannian metrics with ergodic geodesic flow on the $2$-sphere were constructed by Burago \cite{Bur88}, Donnay \cite{Don88a} and an unpublished result of Osserman \cite[A.3, Page 552]{Don88a}. (Examples on the $2$-torus can be constructed in a similar way.) Hence, any orientable compact surface admits a Riemannian metric with ergodic geodesic flow. It is then natural to ask the following question:
\begin{question}[{\cite[Question 0.1]{BG94}}]\label{Q:ergodicity}
Does every closed Riemannian manifold admit a Riemannian metric whose geodesic flow is ergodic, or satisfies stronger stochastic properties?
\end{question}
Similar to the surface case, Anosov \cite{Ano67} proved that the geodesic flow of any closed Riemannian manifold of negative curvature satisfies the $K$-property, which is stronger than ergodicity. If we remove the curvature assumptions, some partial results on Question \ref{Q:ergodicity} have been obtained. Katok-Burns \cite{KB94} constructed a smooth Riemannian metric with ergodic geodesic flow on any closed $3$-manifold. A notion of \emph{tubes} is introduced in \cite{KB94} as a higher dimensional generalization of the smooth spherical caps in \cite{Don88a}. Following this line, Burns and Gerber \cite{BG94} introduced \emph{nearly nonpositively curved Riemannian manifolds}, which are obtained by gluing nonpositively curved pieces and a generalized version of tubes in \cite{KB94}. The same paper \cite{BG94} constructed a smooth Riemannian metric with ergodic geodesic flow on any product manifold whose factors have dimension at most $3$. However, Question \ref{Q:ergodicity} remains largely open despite some reports of its solution \cite[page 87]{Ber00} and \cite[\S 10.9]{Ber03}. For example, Question \ref{Q:ergodicity} is still open for $\CC\mathbf{P}^2$.

When it comes to stronger stochastic properties of geodesic flows, Ornstein and Weiss \cite{OW73} showed that the geodesic flows on closed surfaces with variable negative curvature are \emph{Bernoulli}. For closed negatively curved manifolds of dimension greater than two, the Bernoulli property of geodesic flows was verified by Bunimovich \cite{Bun73, Bun74} and Ratner \cite{Rat74}, strengthening the above result by Anosov. Shortly after Donnay's work, analytic metrics with Bernoulli geodesic flow on $2$-spheres were constructed in \cite{BG89a}. The Bernoulli property of the examples in \cite{KB94,BG94} is established in \cite{KB94}.

Here and below, for any $n\in\ZZ_+$, we let $S^n$ be the $n$-dimensional sphere. Also, in this paper, spheres are always equipped with the standard differential structure.

The present paper constructs a smooth Riemannian metric with Bernoulli geodesic flow on any product of spheres. Namely, we have the following theorem.
\begin{thm}\label{thm:main}
Let $k_1,\dots,k_q\in\ZZ_+$ with $k_1+\dots+k_q\geq 2$. Then the product manifold $S^{k_1}\times \dots \times S^{k_q}$ admits a smooth Riemannian metric whose geodesic flow is Bernoulli. 
\end{thm}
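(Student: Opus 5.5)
The plan is to realize $M=S^{k_1}\times\dots\times S^{k_q}$ as a \emph{nearly nonpositively curved} manifold in the sense of Burns--Gerber \cite{BG94}. Concretely, I would write $M$ as the union of a compact manifold with boundary $N$ that carries a nonpositively curved metric with totally geodesic (or suitably convex) boundary behaviour, together with finitely many ``tubes'' $D^{a}\times T^{b}$-type pieces glued along the boundary. The Bernoulli property would then follow from the hyperbolicity and ergodicity machinery of \cite{KB94,BG94} (Pesin theory, a Hopf argument, and the Ornstein--Weiss/Pesin route from $K$ to Bernoulli). The first step is topological. I would use the handle/surgery description $S^{k}=(S^1\times D^{k-1})\cup(D^2\times S^{k-2})$ for $k\ge 2$, and decompositions of products, to express $M$ as a flat or nonpositively curved piece with a controlled boundary, with the remaining pieces of the form $D^2\times(\text{flat torus})\times(\text{product of lower spheres})$. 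Induction on $\sum k_i$ handles factors of dimension $\le 3$ exactly as in \cite{BG94}. For higher-dimensional spheres I would decompose $S^k$ as $\partial(D^2\times D^{k-1})=(S^1\times D^{k-1})\cup(D^2\times S^{k-2})$. This reduces to the product $D^2\times S^{k-2}\times(\text{rest})$, where $S^{k-2}\times(\text{rest})$ again falls under the inductive hypothesis. The metric on that piece is then a warped product: a Donnay-type cap on $D^2$ times the inductively constructed metric.

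The second step is geometric. On each piece I would write a metric in the form $dr^2+f(r)^2d\theta^2+g_{\text{rest}}$. Here $f$ is chosen as in the smooth spherical caps of \cite{Don88a} and the tubes of \cite{KB94}: positive curvature is confined to a thin annulus, and the focusing caused by that annulus is exactly undone, so that geodesics crossing the tube exit with the same ``defocusing'' cone condition they entered with. Away from the tubes the metric is a product of flat and nonpositively curved pieces. The third step is dynamical. I would verify the invariant cone-family (Wojtkowski-type) criterion for nonvanishing Lyapunov exponents almost everywhere. That gives local ergodicity via a Hopf argument, whose accessibility comes from the fact that almost every geodesic spends positive time in the hyperbolic region. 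From this I would deduce the $K$-property and then Bernoulli via Pesin's theory, as in \cite{KB94}.

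The main obstacle is the inductive step. A flat torus factor, or a compact factor from the inductive metric, contributes zero exponents in its directions. So it is not obvious that the geodesic flow on $D^2$-cap$\times(\text{inductive manifold})$ is hyperbolic: product metrics are never ergodic. I would therefore not take literal products. Instead I would twist the gluing, using tubes whose core directions vary among different pieces, or a nontrivial flat bundle over the nonpositively curved part. The aim is that every nonzero direction in the $T^b$ factors is eventually transverse to some curvature-carrying region. Checking the cone condition uniformly across these twisted gluings is where I expect the bulk of the work.
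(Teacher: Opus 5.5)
There is a genuine gap at both the geometric and the dynamical stage.

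\textbf{Geometric gap.} Your decomposition does not produce a metric once some $k\ge 4$, which is the case not already covered by \cite{BG94}. You glue $D^2\times S^{k-2}$, carrying a cap times the inductive metric $g_{k-2}$, to $N=S^1\times D^{k-1}$ along $S^1\times S^{k-2}$. The slices $\{pt\}\times S^{k-2}$ carry $g_{k-2}$, which has positively curved cap regions. By continuity of curvature, $N$ is then positively curved near the interface, so it cannot be the ``flat or nonpositively curved piece'' your plan requires.

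Changing the boundary metric does not help. A nonpositively curved metric on $S^1\times D^{k-1}$ that is a product near the boundary would double to one on $S^1\times S^{k-1}$. Its universal cover $\RR\times S^{k-1}$ is not contractible, contradicting Cartan--Hadamard. What you would actually need is a nearly nonpositively curved metric on $D^{k-1}$ with a prescribed product collar, and your induction does not supply this.

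The paper avoids handle decompositions altogether. It writes $S^{k_1}\times\dots\times S^{k_q}=\Gamma\backslash\TT^n$, where $\Gamma\subset(\ZZ/2\ZZ)^{n-1}$ acts by isometries of the flat torus, using ellipsoidal coordinates. The local model is $z\mapsto z^2$ transverse to the codimension-two ramification loci. The paper then modifies the flat metric only inside tubes around the loci $\Omega_A(i)$:
\begin{itemize}
\item special $2$-caps (extended caps pulled back by a map equal to $z\mapsto z^2$ near the center) where $\gamma_i\in\Gamma$;
\item ordinary extended caps where $\gamma_i\notin\Gamma$.
\end{itemize}
The resulting metric descends smoothly. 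The topology is carried by the branched cover, not by a nonpositively curved piece.

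\textbf{Dynamical gap.} The obstacle you flag is the heart of the proof, and your plan leaves it open. Crossing a tube of the form (cap)$\times$(height) destroys parallel Jacobi fields only in the two cap directions; the height directions stay parallel. So ``positive time in the curved region'' does not give rank one. The paper needs two things.
\begin{enumerate}
\item[(a)] Tubes with cap plane $\{i,i+1\}$ for every consecutive pair, together with the fact that a bi-infinite geodesic has rank one iff it passes through tubes of every type (Corollary~\ref{cor:nomadic=rank-1}). This is proved by chaining the proportionality constants of a Jacobi field across adjacent tube visits.
\item[(b)] The fact that vectors whose geodesic misses some tube type form a null, nonseparating set.
\end{enumerate}

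Part (b) is proved by induction on dimension. On an intermediate cover, forgetting the $\{i,i+1\}$-tubes splits the metric as a product of two lower-dimensional instances of the same construction. By induction their flows are Bernoulli with the SBAP, so the product flow is ergodic and has the BAP. Hence the geodesics avoiding the open set $\Pi_\Gamma(\cT_{i;A}(\pi/10))$ form a null, nonseparating set. Only then does \cite[Theorem 5.17]{BG94} give Bernoulli.

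Your cone-family/Hopf step presupposes exactly (a) and (b). Strict cone invariance along almost every orbit is the full-measure statement. Passing from local to global ergodicity needs the connectedness that nonseparation provides.
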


\subsection*{Outline of the proof and plan of the paper} We first consider the special case of the $2$-dimensional torus $\TT^2=S^1\times S^1$. Any Riemannian metric $g$ satisfying the following properties has Bernoulli geodesic flow with respect to the Liouville measure:
\begin{itemize}
\item[\hypertarget{G1}{(G1)}]  $g$ is the standard flat torus metric except on a collection of finitely many disjoint discs $D_1,\dots, D_k$. 
\item[\hypertarget{G2}{(G2)}] For any $j\in\{1,\dots,k\}$, $(D_j,g|_{D_j})$ is an \emph{extended cap}. In particular, positive curvature only appears in extended caps. (See Definition \ref{dfn:caps} or \cite[Section 5]{BG94}).
\end{itemize}
Such Riemannian metrics are \emph{nearly nonpositively curved} in the sense of \cite{BG94}. (See Definition \ref{dfn:near nonpos curv} or \cite[Definition 5.1-Corollary 5.4]{BG94}.) In particular, by certain properties of extended caps, the geodesic flows of such manifolds behave in a similar way to the geodesic flows of nonpositively curved Riemannian manifolds. (See \cite[Fundamental Lemma 5.12]{BG94}, or the discussion between Definition \ref{dfn:near nonpos curv} and Definition \ref{dfn:geo rank}.) One can also generalize the definition of rank-one vectors from the nonpositively curved setting to the nearly nonpositively curved setting. (See Definition \ref{dfn:geo rank}.)

We say that an extended cap is \emph{inserted} to a surface if we remove a flat disc from the surface and replace it with an extended cap. Then for any Riemannian metric $g$ satisfying \hyperlink{G1}{(G1)} and \hyperlink{G2}{(G2)}, $(\TT^2,g)$ is obtained by inserting finitely many extended caps into the standard flat torus $\TT^2$. The insertion of extended caps in \hyperlink{G2}{(G2)} destroys the original product structure of the standard flat torus. A geodesic in $(\TT^2,g)$ does not have rank one if and only if one of the following holds:
\begin{itemize}
\item This geodesic does not intersect any extended cap. (Hence it is a geodesic for the flat torus which avoids a fixed open subset.)
\item This geodesic is the unique periodic geodesic inside an extended cap.
\end{itemize} 
As a corollary of the above, the subset of rank one vectors in $T^1(\TT^2,g)$ is connected and has full Lebesgue measure. This proves the Bernoulli property of the geodesic flow using \cite[Theorem 5.17]{BG94}. (See also Theorem \ref{thm:main criterion}.) 

The above construction can be generalized to any higher-dimensional torus $\TT^n$ with $n\geq 3$. Instead of inserting extended caps, we insert \emph{special tubes} to $\TT^n$. Here, special tubes are defined as the product of an extended caps and a codimension two torus with an arbitrary Riemannian metric. (See Definition \ref{dfn:special tubes} when $k=1$.) However, since we need to avoid product metrics when seeking Bernoulli geodesic flows, different types of tubes are needed in the construction of the Riemannian metric to make the corresponding geodesic flow Bernoulli. For example, if all inserted tubes in $\TT^3$ have cap parts in the first two components, the resulting Riemannian manifold is a product manifold and hence can never have Bernoulli geodesic flow. (See Figure \ref{fig:intro1}.) To obtain a Riemannian metric with Bernoulli geodesic flow on $\TT^3$, one can first insert a tube whose cap part is in the first two components, and then insert a tube whose cap part is in the last two components. (See Figure \ref{fig:intro2}.)

\begin{figure}[h]
	\centering
	\includegraphics[width=3in]{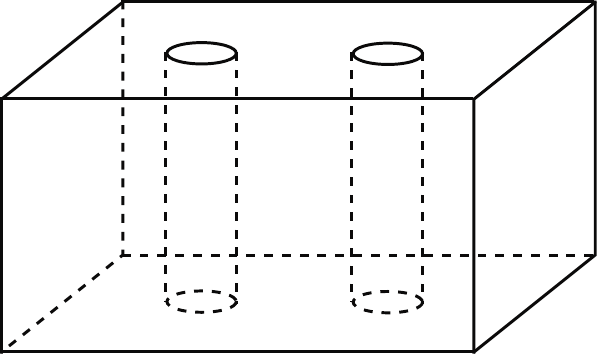}
	\caption{}\label{fig:intro1}
\end{figure}

\begin{figure}[h]
	\centering
	\includegraphics[width=3in]{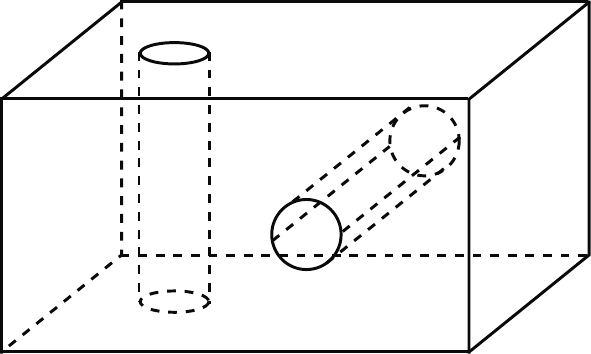}
	\caption{}\label{fig:intro2}
\end{figure}

The proof of Theorem \ref{thm:main} heavily relies on a special branched covering of $S^n$ by the $n$-dimensional torus. To illustrate, we consider the special case when $n=2$. Identifying $\TT^2$ with $\RR^2/\ZZ^2$, the quotient of $\TT^2$ under the relation $(x,y)\sim(-x,-y)$ is homeomorphic to $S^2$. (See Figure \ref{fig:intro3}.) In other words, one can find a discrete, abelian subgroup $\Gamma$ of the isometry group of $\TT^2$ (with respect to the standard flat torus metric) such that the following holds:
\begin{itemize}
\item $\Gamma\isom \ZZ/2\ZZ$.
\item $\Gamma\backslash\TT^2$ is homeomorphic to $S^2$.
\end{itemize}
Moreover, the natural projection map $\Pi:\TT^2\to\Gamma\backslash\TT^2$ has four fixed points. Under a suitable coordinate chart near any of these fixed points, the projection map restricted to this coordinate chart is given by $z\to z^2$ defined on a neighborhood of $0\in\CC$. To prove Theorem \ref{thm:main} for $S^2$ identified with $\Gamma\backslash\TT^2$, we construct our desired Riemannian metric $g$ on $S^2$ such that the following holds:
\begin{itemize}
\item[(G1')] $\Pi^*g$ is the standard flat torus metric except on four disjoint discs $D_1,\dots, D_4$, each of which is centered at a fixed point of $\Pi$. (Compare this with  \hyperlink{G1}{(G1)}.)
\item[(G2')] For any $j\in\{1,\dots,4\}$, $(D_j,g|_{D_j})$ is an ``extended cap with a rotational singularity at its center''. (See Definition \ref{dfn:special tubes} for details. Compare this with  \hyperlink{G2}{(G2)}.)
\end{itemize}
\begin{figure}[h]
	\centering
	\includegraphics[width=3in]{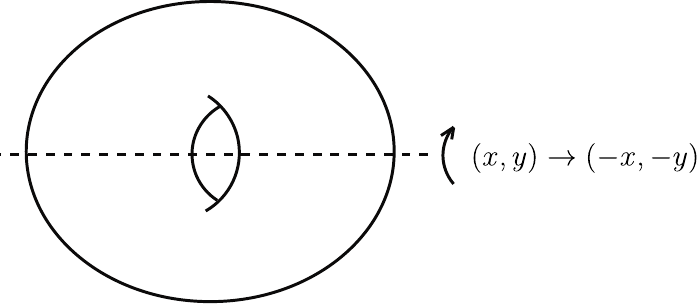}
	\caption{}\label{fig:intro3}
\end{figure}
The general construction in the proof of Theorem \ref{thm:main} is analogous to the above special case of $S^2$. Roughly speaking, we first observe that any product of spheres with total dimension $n$ is homeomorphic to $\Gamma\backslash \TT^n$ for some discrete, abelian subgroup $\Gamma$ of the isometry group of the standard flat torus $\TT^n$. Then, based on the ramification locus of the natural projection map $\Pi:\TT^n\to \Gamma\backslash \TT^n$, we construct our desired Riemannian metric $g$ on $\Gamma\backslash \TT^n$ so that $(\TT^n,\Pi^*g)$ is obtained by inserting tubes and ``tubes with rotational singularities at their centers'' into the standard $n$-dimensional flat torus.

The rest of this paper is organized as follows: Let $M=S^{k_1}\times \dots\times S^{k_q}$ with $k_1,\dots,k_q\geq 1$. We let $k_1+\dots+k_q=n\geq 2$.
\begin{enumerate}
\item In Section \ref{sect:branched.covering}, we first introduce the discrete, abelian subgroup $\Gamma$ of $\TT^n$ so that $\Gamma\backslash\TT^n$ is homeomorphic to $M$. (See Subsection \ref{subsect:good.subgp}.) Then we verify that the standard projection map $\TT^n\to\Gamma\backslash\TT^n$ is a smooth branched covering map, where the differential structure of $\Gamma\backslash\TT^n$ is induced by the standard differential structure on $M$. Moreover, for any $p\in\TT^n$, under a suitable coordinate chart and identifying $p$ with $0\in \CC^k\times \RR^{n-2k}$, the projection map restricted to this coordinate chart is given by the map $(z_1,\dots,z_k,t_{2k+1},\dots, t_n)\to(z_1^2,\dots, z_k^2,t_{2k+1},\dots, t_n)$ defined on a neighborhood of $0$ in $\CC^k\times \RR^{n-2k}$. (See Subsection \ref{subsect:proj=branched.cover}.) Some technical details regarding the ramification locus and branching locus can be found in Subsection \ref{subsect:locus}. 

The main result in Section \ref{sect:branched.covering} is Proposition \ref{prop:quotient.is.sphere}, whose proof is related to the classical ellipsoidal coordinates. We leave the main part of its proof to Appendix \ref{appA} due to its technical nature.  
\item Section \ref{sect:review} is a brief review of \cite[Sections 2-5]{BG94}. This includes the notion of nearly nonpositively curved Riemannian metrics, and the main criterion to prove the Bernoulli property of geodesic flows for nearly nonpositively curved Riemannian manifolds. 
\item In Section \ref{sect:construction}, we first introduce special caps and special tubes, possibly with rotational singularities at their centers. (See Subsection \ref{subsect:special tubes}. The existence of special caps is proved in Appendix \ref{sect:special.caps.exist}.) Then we introduce the language of inserting special tubes and describe the construction of our desired Riemannian metric for Theorem \ref{thm:main}. (See Definition \ref{dfn:insert/forget tubes}, Subsection \ref{subsect:constructions-1} and Subsection \ref{subsect:constructions-2}.) We also prove some necessary properties of this metric and its pullback to the covering torus in Lemma \ref{lem:properties.of.tg} and Lemma \ref{lem:properties.of.g.Gamma}. (For example, this metric is nearly nonpositively curved.)
\item Section \ref{sect:proof.of.thm} verifies the Bernoulli property of the geodesic flow for the Riemannian metric constructed in Subsection \ref{subsect:constructions-1}. The goal is to show that the set of all rank one vectors under this metric contains a connected subset of full Lebesgue measure, which requires the following two preparatory steps: (1) To study rank one unit vectors, we introduce the notion of \emph{nomadic} geodesic. Roughly speaking, a geodesic is nomadic if its lift in the covering torus visits all types of special tubes. Subsection \ref{subsect:nomadic} verifies that a unit vector has rank one if and only if the bi-infinite geodesic tangent to this vector is nomadic. (2) The desired subset of rank one vectors is defined in Subsection \ref{subsect:normal vectors}, which excludes some vectors with ``special angles''. (More precisely, this subset is defined in the statement of Theorem \ref{thm:main.true}, whose formulation heavily relies on Subsection \ref{subsect:normal vectors}.) Theorem \ref{thm:main} is proved in Subsection \ref{subsect:proof}. 
\item Appendix \ref{sect:garbage} contains supplementary details concerning some claims stated without proof in the main text.
\end{enumerate}

\section{Branched covering of $S^n$ by products of lower dimensional spheres}\label{sect:branched.covering}
Throughout this paper, unless otherwise mentioned, we denote by $\TT^1=\RR/2\pi\ZZ$. In other words, $\TT^1$ is the circle with arclength $2\pi$. For any integer $i\in\ZZ_+$, we associate with $i$ a copy of $\TT^1$, which is denoted by $\TT^1_i$. Unless otherwise mentioned, an arbitrary element in $\TT^1_i$ is always denoted by $\theta_i$. For any nonempty finite subset $A=\{i_1,\dots, i_k\}\subset \ZZ_+$ with $i_1,\dots,i_k$ distinct, we define 
\begin{align}\label{eqn:tori}
\TT_A=\TT_{i_1,\dots,i_k}:=\TT^1_{i_1}\times\cdots \times \TT^1_{i_k}.
\end{align} 
It is clear from the definition that $\dim \TT_A=|A|$. For simplicity, we let $\bV_n:=\{1,\dots,n\}$ and $\TT^n:=\TT_{\bV_n}$.

Let $\TT_{\emptyset}$ be a point. For any finite subset $A\subset\ZZ_+$, we identify $\TT_A$ with $\TT_A\times \TT_{\emptyset}$ naturally. Then for any finite subsets $A\subset A'\subset \ZZ_+$, we have 
\begin{align}\label{eqn:tori.natural.prod}
\TT_A\times \TT_{A'\setminus A}=\TT_{A'}.
\end{align}
We denote by 
\begin{align}\label{eqn:tori.proj}
\pr^{A'}_A:\TT_{A'}\to\TT_A
\end{align}
the natural projection. If $A=\{i_1,\dots, i_k\}$ and $A'=\{i_1,\dots,i_m\}$ for some $m\geq k>0$, we may omit the curly brackets for simplicity and use the notation $\pr^{i_1,\dots,i_m}_{i_1,\dots,i_k}$ instead of $\pr^{A'}_A$.

For any finite subset $A=\{i_1,\dots,i_k\}\subset \ZZ_+$ with $i_1,\dots,i_k$ distinct, for any $p\in\TT_A$ and for any $r\in\RR_+$, we let 
\begin{align}\label{eqn:ball.convention}
B_A(p,r)=B_{i_1,\dots,i_k}(p,r):=\{q\in\TT_A:d_{\TT_A}(p,q)<r\},
\end{align}
where $d_{\TT_A}(\cdot,\cdot)$ is the distance with respect to the standard product metric on $\TT_A$.

\subsection{A $(\ZZ/2\ZZ)^{n-1}$ action on $\TT^n$}\label{subsect:good.subgp}
We define $\rho:\TT^1\to \TT^1$ and $\kappa_y:\TT^1\to \TT^1$ such that $\rho(x)=x+\pi$ and $\kappa_y(x)=y-x$ for any $y\in \TT^1$. Then we have 
\begin{align}\label{eqn:commute}
\begin{cases}
\displaystyle \rho\circ\rho=\kappa_y\circ\kappa_y=\mathrm{Id},\forall y\in \TT^1,\\
\displaystyle \rho(\kappa_\pi(x))=\kappa_\pi(\rho(x))=\kappa_0(x), \\
\displaystyle \rho(\kappa_0(x))=\kappa_0(\rho(x))=\kappa_\pi(x),\\
\displaystyle \kappa_0(\kappa_\pi(x))=\kappa_\pi(\kappa_0(x))=\rho(x).
\end{cases}
\end{align}
For any $i\in\ZZ_+$, we define 
\begin{align}\label{eqn:generator}
\gamma_i:\TT_{i,i+1}\to\TT_{i,i+1},~\gamma_i(\theta_i,\theta_{i+1})=(\kappa_\pi(\theta_i),\kappa_0(\theta_{i+1})).
\end{align}
One can easily see that $\gamma_i$ is an isometry. By \eqref{eqn:tori.natural.prod}, for any finite subset $A\subset \ZZ_+$ such that $\{i,i+1\}\in A$, $\gamma_i$ naturally extends to an isometry $\gamma_i\times \id_{\TT_{A\setminus\{i,i+1\}}}$ on $\TT_A$. To avoid cumbersome notations, we use $\gamma_i$ instead of $\gamma_i\times  \id_{\TT_{A\setminus\{i,i+1\}}}$ for any finite subset $A\subset \ZZ_+$ such that $\{i,i+1\}\subset A$.

For any finite subset $A\subset \ZZ_+$, we let $\Gamma_A$ be the subgroup of $\Isom(\TT_A)$ defined by the following:
\begin{align}\label{eqn:Gamma.A}
\Gamma_A=\langle\cS_A\rangle\text{, where }\cS_A:=\{\gamma_i: \{i,i+1\}\subset A\}\subset \Isom(\TT_A).
\end{align} 
If $\cS_A=\emptyset$, we simply let $\Gamma_A=\{\id_{\TT_A}\}$, the trivial subgroup of $\Isom(\TT_A)$.) For simplicity, we write $\cS_n:=\cS_{\bV_n}$ and $\Gamma_n:=\Gamma_{\bV_n}$ for any $n\in\ZZ_+$. Following \eqref{eqn:Gamma.A} and the convention on $\gamma_i$ after \eqref{eqn:generator}, for any finite subsets $A\subset A'$ of $\ZZ_+$, when acting on $\TT_{A'}$, we identify $\Gamma_A$ as a subgroup of $\Gamma_{A'}$.

\begin{dfn}[Consecutive subsets]\label{dfn:consecutive}
A finite subset $A$ of $\ZZ_+$ is \emph{consecutive} if $A=\{m+1,\dots, m+k\}$ for some $m\in\ZZ_{\geq 0}$ and $k\in\ZZ_+$. 

A partition $\{A_1,\dots, A_l\}$ of $A$ is a \emph{consecutive partition} if $A_1,\dots, A_l$ are all consecutive.  $A_1,\dots,A_l$ are called \emph{atoms} of this partition.

If $A=\{m+1,\dots, m+k\}$ and $\{A_1,\dots, A_l\}$ is a consecutive partition of $A$, we let
\begin{align}\label{eqn:cutpoint}
\cE(A):=A\setminus\{m+k\}\text{ and } E_A(A_1,\dots, A_l):=\{i\in\cE(A):\{i,i+1\}\not\subset A_j,\forall~j\in\{1,\dots,l\}\}.
\end{align}
\end{dfn}
\begin{figure}[h]
	\centering
	\includegraphics[width=4in]{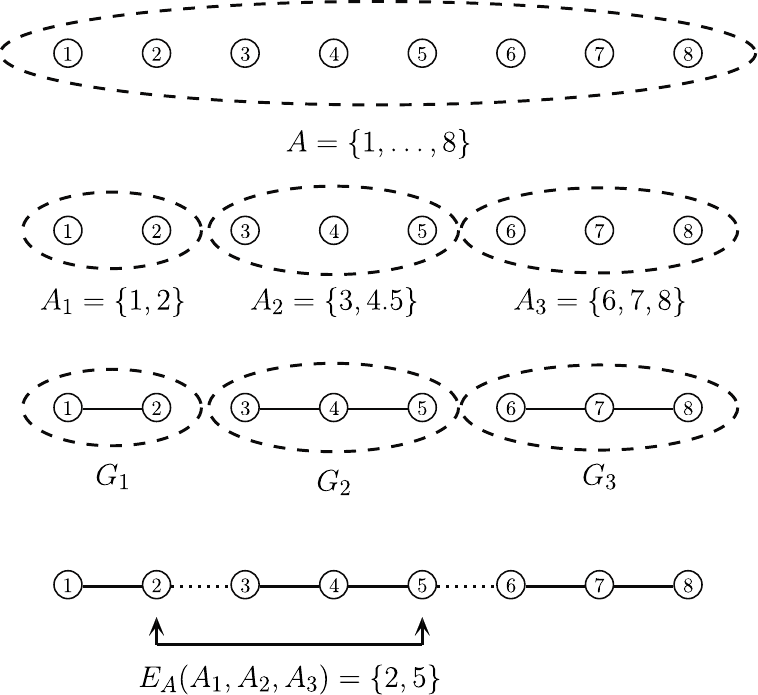}
	\caption{}\label{fig:consecutive partition}
\end{figure}
\begin{rmk}
It is straightforward to see that $\cE(A)=\{i\in \ZZ_+:\{i,i+1\}\in A\}$. Hence, by \eqref{eqn:Gamma.A}, we have $\cS_A=\{\gamma_i:i\in\cE(A)\}$.

One can easily check that the map $\{A_1,\dots, A_l\}\to E_A(A_1,\dots, A_l)$ gives a bijection between the collection of consecutive partitions of $A$ and the collection of subsets of $\cE(A)$. For this reason, for any $E\subset\cE(A)$, a consecutive partition $\{A_1,\dots, A_l\}$ is \emph{induced} by $E$ if $E=E_A(A_1,\dots, A_l)$.

If we consider a graph $G(A)$ with vertices $A$ and edges $\{\{i,i+1\}:i\in\cE(A)\}$, then any consecutive partition $\{A_1,\dots, A_l\}$ of $A$ corresponds to a set of connected subgraphs $\{G_1,\dots, G_l\}$ of $G(A)$ such that any vertex in $G(A)$ is contained in exactly one of $G_1,\dots, G_l$. (Under this correspondence, the vertex set of $G_j$ is $A_j$ for any $j\in\{1,\dots,l\}$.) On the other hand, any such $\{G_1,\dots, G_l\}$ is obtained by removing all edges in $\{\{i,i+1\}:i\in E_A(A_1\dots, A_l)\}$ from $G(A)$. See Figure \ref{fig:consecutive partition}.
\end{rmk}

Throughout this paper, we are mostly interested in $\Gamma_A$ when $A$ is a consecutive subset. For any $E\subset\cE(A)$, we let 
\begin{align}\label{eqn:good.subgroup.struct}
\cS_A(E):=\{\gamma_i\in\cS_A:i\not\in E\}=\{\gamma_i:i\in\cE(A)\setminus E\}\text{ and }\Gamma_A(E):=\langle\cS_A(E)\rangle.
\end{align}
\begin{dfn}[Good subgroups]\label{dfn:good subgp}
Let $A$ be a consecutive subset of $\ZZ_+$. A subgroup $\Gamma$ of $\Gamma_A$ is \emph{good} if $\Gamma=\Gamma_A(E)$ for some $E\subset\cE(A)$. (Equivalently, $\Gamma$ is a good subgroup if it is generated by $\Gamma\cap\cS_A$.)
\end{dfn}
It is not hard to check that many (products of) good subgroups are canonically isomorphic, and that the actions of many (products of) good subgroups on the corresponding tori are canonically equivalent. (See Lemma \ref{lem:properties.good.subgp} for more details.) Here, the equivalence of group actions is defined as follows: (See \cite[page 35]{Bre72} for the equivalence of $G$-spaces, which is of the same spirit.) 
\begin{dfn}\label{dfn:iso.of.G.space}
Let $M_1,M_2$ be closed manifolds and $G_i$ be a subgroup of the diffeomorphism group $\mathrm{Diff}(M_i)$ of $M_i$ for any $i\in\{1,2\}$. We say that the $G_1$ action on $M_1$ is \emph{equivalent} to the $G_2$ action on $M_2$ if there exist a diffeomorphism $F:M_1\to M_2$ and a group isomorphism $\rho:G_1\to G_2$ such that for any $g\in G_1$, we have $F\circ g=\rho(g)\circ F$.
\end{dfn}
We summarize some elementary properties of $\Gamma_A$ and its good subgroups as follows:
\begin{lem}\label{lem:properties.good.subgp}
Let $n\in\ZZ_+$ and $A$ be a consecutive subset of $\ZZ_+$. Then the following holds:
\begin{enumerate}[ref=\ref{lem:properties.good.subgp}(\arabic*)]
\item $\Gamma_n$ is commutative. Moreover, the map $(i_1,\dots, i_{n-1})\to \gamma^{i_1}_1\circ\dots\circ\gamma^{i_{n-1}}_{n-1}$ gives an isomorphism from $(\ZZ/2\ZZ)^{n-1}$ to $\Gamma_n$. 
\label{lem:properties.good.subgp-(1)}
\item If $A=\{m+1,\dots, m+k\}$, the bijective map $\gamma_{m+j}\to\gamma_j$ from $\cS_A\to\cS_k$ naturally extends to an isomorphism from $\Gamma_A$ to $\Gamma_k$. Moreover, under this isomorphism and the isometry $(\theta_{m+1},\dots,\theta_{m+k})\to (\eta_{1},\dots,\eta_{k})$ from $\TT_A\to \TT^k$ with $\eta_i:=\theta_{m+i}$ for any $i\in\{1,\dots,k\}$, the $\Gamma_A$-action on $\TT_A$ is equivalent to the $\Gamma_k$ action on $\TT^k$. (See Definition \ref{dfn:iso.of.G.space}.) 
\label{lem:properties.good.subgp-(2)}
\item For any subset $E\subset\cE(A)$ and for any consecutive subset $A'\subset A$, we have $\Gamma_A(E)\cap \Gamma_{A'}=\Gamma_{A'}(E\cap\cE(A'))=\Gamma_A(E\cup(\cE(A)\setminus\cE(A')))$. (In particular, if $\Gamma$ is a good subgroup of $\Gamma_A$, $\Gamma\cap \Gamma_{A'}$ is a good subgroup of $\Gamma_{A'}$.) 
\label{lem:properties.good.subgp-(3)}
\item For any subsets $E_1,E_2\subset \cE(A)$, we have 
$$\Gamma_A(E_1)\cap\Gamma_A(E_2)=\Gamma_A(E_1\cup E_2)\text{ and }\langle\Gamma_A(E_1),\Gamma_A(E_2)\rangle=\Gamma_A(E_1\cap E_2).$$ 
(In particular, this shows that the intersection of good subgroups of $\Gamma_A$ is a good subgroup of $\Gamma_A$). Moreover, when $E_1\cup E_2=\cE(A)$, the map $(h_1,h_2)\to h_1h_2$ gives a natural isomorphism from $\Gamma_A(E_1)\times \Gamma_A(E_2)\to \Gamma_A(E_1\cap E_2)$. 
\label{lem:properties.good.subgp-(4)}
\item Let $\Gamma=\Gamma_A(E)$ be a good subgroup of $\Gamma_A$, where $E\subset\cE(A)$. Let $\{A_1,\dots, A_l\}$ be the consecutive partition of $A$ induced by $E$. (See the remark after Definition \ref{dfn:consecutive}.) Then for any good subgroup $\Gamma'$ of $\Gamma_A$, the map $(h_1,\dots,h_l)\to h_1h_2\dots h_l$ gives a natural isomorphism from $(\Gamma_{A_1}\cap\Gamma')\times \dots\times (\Gamma_{A_l}\cap\Gamma')\to \Gamma\cap\Gamma'$. Under this isomorphism and the identity map on $\TT_A$, the $\Gamma\cap\Gamma'$-action on $\TT_A$ is equivalent to the product of the $\Gamma_{A_j}\cap\Gamma'$-action on $\TT_{A_j}$, where $1\leq j\leq l$. (See Definition \ref{dfn:iso.of.G.space}.) 
\label{lem:properties.good.subgp-(5)}
\end{enumerate}
\end{lem}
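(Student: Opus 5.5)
The whole lemma rests on one structural fact, so I would prove that first and derive everything else from it.

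\textbf{Step 1: the generators and commutativity.} Write each generator on $\TT_{i,i+1}$ as $\gamma_i=\kappa_\pi\times\kappa_0$, acting trivially on the other coordinates. Every $\gamma_i$ acts on each coordinate $\theta_j$ by an element of the group $K=\{\id,\rho,\kappa_0,\kappa_\pi\}$. By \eqref{eqn:commute}, $K$ is abelian and isomorphic to $(\ZZ/2\ZZ)^2$. Hence $\Gamma_n$ embeds in $K^n$, which is an elementary abelian $2$-group. This shows at once that $\Gamma_n$ is commutative and that $\gamma_i^2=\id$.

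\textbf{Step 2: independence of the generators.} To finish (1) I must show that $\gamma_1,\dots,\gamma_{n-1}$ are linearly independent over $\ZZ/2\ZZ$. Identify $K$ with $\mathbb{F}_2^2$ by $\kappa_\pi\mapsto(1,0)$ and $\kappa_0\mapsto(0,1)$, so that $\rho\mapsto(1,1)$. Then $\gamma_i$ has the vector $(1,0)$ in slot $i$, the vector $(0,1)$ in slot $i+1$, and zero elsewhere. Suppose a product $\gamma_1^{i_1}\cdots\gamma_{n-1}^{i_{n-1}}$ is trivial, and let $j$ be the smallest index with $i_j=1$. In slot $j$ the only contributions are $(1,0)$ from $\gamma_j$ and possibly $(0,1)$ from $\gamma_{j-1}$. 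But $i_{j-1}=0$ by minimality, so slot $j$ equals $(1,0)\neq 0$, a contradiction. Thus $\Gamma_n\cong(\ZZ/2\ZZ)^{n-1}$ via the stated map. This independence is the main point of the lemma.

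\textbf{Step 3: relabeling and good subgroups.} Item (2) is immediate, because the generators are defined by the same formula after the shift $i\mapsto i-m$. The relabeling isometry therefore intertwines $\gamma_{m+j}$ with $\gamma_j$, and by Step 2 the map on generators extends to a group isomorphism. Step 2, transported to $\Gamma_A$ by (2), identifies $\Gamma_A$ with the $\mathbb{F}_2$-vector space having basis $\{\gamma_i:i\in\cE(A)\}$. Under this identification $\Gamma_A(E)$ is the coordinate subspace spanned by the basis vectors indexed by $\cE(A)\setminus E$. Item (4) is then linear algebra on coordinate subspaces:
\begin{itemize}
\item intersections of coordinate subspaces correspond to unions of the excluded sets $E$, and spans correspond to intersections of the excluded sets;
\item when $E_1\cup E_2=\cE(A)$ the two index sets $\cE(A)\setminus E_1$ and $\cE(A)\setminus E_2$ are disjoint, so the product map is a direct-sum isomorphism.
\end{itemize}
Item (3) is the same computation. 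Since $\Gamma_{A'}$ is the coordinate subspace indexed by $\cE(A')\subset\cE(A)$, intersecting gives the span of $\gamma_i$ for $i\in\cE(A')\setminus E$.

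\textbf{Step 4: the product decomposition (5).} Let $\{A_1,\dots,A_l\}$ be the partition induced by $E$. Then $\cE(A)\setminus E$ is the disjoint union of the sets $\cE(A_j)$. Write $\Gamma'=\Gamma_A(E')$. By (3) and (4), $\Gamma\cap\Gamma'$ is the coordinate subspace indexed by the disjoint union over $j$ of $\cE(A_j)\setminus E'$, and the $j$-th piece is exactly $\Gamma_{A_j}\cap\Gamma'$. Hence the product map is an isomorphism.

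For equivalence of the actions, observe that a generator $\gamma_i$ with $i\in\cE(A_j)$ only moves the coordinates $\theta_i$ and $\theta_{i+1}$, both of which lie in $A_j$. Under $\TT_A=\TT_{A_1}\times\dots\times\TT_{A_l}$ it therefore acts as $\gamma_i$ on the factor $\TT_{A_j}$ and as the identity on the other factors. Consequently $h_1\cdots h_l$ acts as $h_1\times\dots\times h_l$, and the identity map $F=\id_{\TT_A}$ realizes the equivalence of Definition~\ref{dfn:iso.of.G.space}.

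Everything except the independence in Step 2 is bookkeeping once $\Gamma_A$ is viewed as an $\mathbb{F}_2$-vector space with basis $\cS_A$.
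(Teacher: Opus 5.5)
Your proof is correct and follows essentially the same route as the paper. Your injectivity argument in (1), taking the smallest index $j$ with $i_j=1$ so that the $j$-th coordinate is acted on by $\kappa_\pi$, is exactly the paper's. The paper likewise derives (3)--(5) by identifying $\Gamma_A$ with $(\ZZ/2\ZZ)^{|\cE(A)|}$, so that $\Gamma_A(E)$ becomes the coordinate subgroup indexed by $\cE(A)\setminus E$, and by using the decomposition $\cE(A)=E\sqcup\cE(A_1)\sqcup\dots\sqcup\cE(A_l)$.

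One cosmetic point concerns (2). The intertwining $F\circ\gamma_{m+j}=\gamma_j\circ F$ already shows that $\gamma\mapsto F\gamma F^{-1}$ is the required isomorphism. So the appeal to Step 2 there, and the subsequent transport of Step 2 back to $\Gamma_A$ via (2), is unnecessary.
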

\begin{proof}
These assertions follow directly from \eqref{eqn:commute}-\eqref{eqn:good.subgroup.struct}.
\end{proof}
\begin{rmk}
To avoid repetitively referring to Lemma \ref{lem:properties.good.subgp-(2)} and changing subscripts, (especially when proving results using induction on the dimension of the torus), 
we will state most lemmas and propositions for $\TT_A$ when $A$ is consecutive. 
\end{rmk}
We end this subsection by studying the fixed points of elements in $\Gamma_A$ acting on $\TT_A$.
\begin{lem}\label{lem:fixed points of gamma}
Let $A$ be a consecutive subset of $\ZZ_+$. If $\gamma\in\Gamma_A$ has fixed points on $\TT_A$, there exist $i_1,\dots,i_q\in\cE(A)$ with $ i_1<i_2<\cdots <i_q$ such that the following holds
\begin{itemize}
\item $i_j<i_{j+1}-1$ for any $j\in\{1,\cdots ,q-1\}$.
\item $\gamma=\gamma_{i_1}\circ\gamma_{i_2}\circ\cdots \circ\gamma_{i_q}$.
\end{itemize}
Moreover, for any $i\in\cE(A)$, if we define 
\begin{align}\label{jth singularity}
\Omega(i)=\{(\theta_i,\theta_{i+1})\in\TT_{i,i+1}|\cos(\theta_{i})=\sin(\theta_{i+1})=0\} \text{ and }\Omega_A(i):=\Omega(i)\times\TT_{A\setminus\{i,i+1\}},
\end{align}
then the set of fixed points of $\gamma$ acting on $\TT_A$ is exactly $\bigcap_{j=1}^q\Omega_{A}(i_j)$.
\end{lem}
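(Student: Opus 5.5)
By Lemma \ref{lem:properties.good.subgp-(2)} we may assume $A=\bV_n$. By Lemma \ref{lem:properties.good.subgp-(1)}, every $\gamma\in\Gamma_n$ can be written uniquely as $\gamma=\gamma_{j_1}\circ\cdots\circ\gamma_{j_r}$ with $j_1<\cdots<j_r$ in $\cE(A)$. The plan is to compute the action of such a product one coordinate at a time, using the relations \eqref{eqn:commute}.

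Coordinate $\theta_i$ is affected only by $\gamma_{i-1}$, which acts on it by $\kappa_0$, and by $\gamma_i$, which acts on it by $\kappa_\pi$. Let $a_i=1$ if $i-1\in\{j_1,\dots,j_r\}$ and $a_i=0$ otherwise, and define $b_i$ in the same way using $i$. Then by \eqref{eqn:commute} the $i$-th coordinate of $\gamma(\theta)$ is:
\begin{itemize}
\item $\theta_i$ if $(a_i,b_i)=(0,0)$;
\item $\kappa_0(\theta_i)=-\theta_i$ if $(a_i,b_i)=(1,0)$;
\item $\kappa_\pi(\theta_i)=\pi-\theta_i$ if $(a_i,b_i)=(0,1)$;
\item $\rho(\theta_i)=\theta_i+\pi$ if $(a_i,b_i)=(1,1)$.
\end{itemize}
The fourth map $\rho$ has no fixed points on $\TT^1$. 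Hence if $\gamma$ has a fixed point, no index $i$ has $a_i=b_i=1$. That means no two consecutive indices $i-1,i$ both occur among the $j$'s, which is exactly the gap condition $i_j<i_{j+1}-1$.

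For the fixed-point set under this gap condition, the coordinates decouple. Coordinates with $(a_i,b_i)=(0,0)$ are free. A coordinate $\theta_{j+1}$ for $j$ in the list must satisfy $-\theta_{j+1}=\theta_{j+1}$, i.e.\ $\sin\theta_{j+1}=0$. A coordinate $\theta_j$ must satisfy $\pi-\theta_j=\theta_j$, i.e.\ $2\theta_j\equiv\pi \pmod{2\pi}$, i.e.\ $\cos\theta_j=0$. Because of the gap condition, these constraints fall on distinct coordinates. So the fixed-point set is the product of the conditions $\cos\theta_{i_j}=\sin\theta_{i_j+1}=0$ over all $j$, which is $\bigcap_j\Omega_A(i_j)$ by \eqref{jth singularity}.

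The only step needing care is the bookkeeping of which generators act on each coordinate, in particular that $\kappa_0$ and $\kappa_\pi$ compose to $\rho$ in either order. This is exactly the content of \eqref{eqn:commute}.
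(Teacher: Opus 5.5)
Your proposal is correct and follows the paper's own argument. You reduce to $A=\bV_n$, write $\gamma$ as a product of distinct generators, note that consecutive indices give the fixed-point-free coordinate map $\rho=\kappa_0\circ\kappa_\pi$, and then read off the fixed-point set coordinatewise under the gap condition, as the paper does in the main text and its appendix verification.
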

\begin{proof}
By Lemma \ref{lem:properties.good.subgp-(2)}, we assume WLOG that $A=\bV_n$. By \eqref{eqn:Gamma.A} and Lemma \ref{lem:properties.good.subgp-(1)}, we can write $\gamma=\gamma_{i_1}\circ\gamma_{i_2}\circ\cdots \circ\gamma_{i_q}$ for some choice of $1\leq i_1<i_2<\cdots <i_q\leq n-1$. If there exists some $j\in\{1,\cdots ,q-1\}$ such that $i_j+1=i_{j+1}$, then for any $(\theta_1,\cdots ,\theta_n)\in\TT^n$, the $i_{j+1}$-th component of $\gamma(\theta_1,\cdots ,\theta_n)$ equals $\kappa_0(\kappa_\pi(\theta_{i_{j+1}}))=\rho(\theta_{i_{j+1}})=\theta_{i_{j+1}}+\pi$ due to  \eqref{eqn:commute} and \eqref{eqn:generator}. This contradicts the assumption that $\gamma$ has fixed points. The rest of the proof follows directly from \eqref{eqn:commute} and \eqref{eqn:generator}.
\end{proof}

\subsection{Branched coverings of products of spheres by products of lower dimensional spheres}\label{subsect:proj=branched.cover}
Let $A$ be a consecutive subset of $\ZZ_+$ and $\Gamma$ be a good subgroup of $\Gamma_A$. Consider the quotient space $\Gamma\backslash\TT_A$ equipped with the quotient topology. We want to show that $\Gamma\backslash\TT_A$ is a closed smooth manifold which is diffeomorphic to products of spheres. By Lemma \ref{lem:properties.good.subgp-(5)}, it suffices to show that $M:=\Gamma_A\backslash\TT_A$ is a closed smooth manifold which is diffeomorphic $S^{|A|}$. We also characterize Riemannian metrics on $\Gamma\backslash\TT_A$ using certain positive semidefinite symmetric $2$-tensor field on $\TT_A$.

For any consecutive subset $A\subset\ZZ_+$ and for any good subgroups $\Gamma, \Gamma'$ of $\Gamma_A$ such that $\Gamma\subset\Gamma'$, We denote by 
\begin{align}\label{eqn:quotient.maps}
\Pi^{\Gamma}_{\Gamma'}:\Gamma\backslash\TT_A\to\Gamma'\backslash\TT_A\text{ and }\Pi_{\Gamma}:\TT_A\to\Gamma\backslash\TT_A.
\end{align}
the natural projection maps. 

Using the notations in \eqref{jth singularity}, for any consecutive subset $A\subset\ZZ_+$ and for any good subgroup $\Gamma=\Gamma_A(E)$ of $\Gamma_A$ with $E\subset\cE(A)$, we let
\begin{align}\label{total singularity subgp}
\Omega_{\Gamma;A}=\bigcup_{\gamma_i\in\cS_A\cap\Gamma}\Omega_{A}(i)=\bigcup_{i\in\cE(A)\setminus E}\Omega_A(i).
\end{align}
For simplicity, when $A=\bV_n$, we write
\begin{align}\label{total singularity}
\Omega_n(\cdot):=\Omega_{\bV_n}(\cdot),~\Omega_{\Gamma;n}(\cdot):=\Omega_{\Gamma;\bV_n}(\cdot)\text{ and }\Omega_n:=\Omega_{\Gamma_n;n}:=\bigcup_{i=1}^{n-1}\Omega_{n}(i).
\end{align}

By \eqref{jth singularity}, for any consecutive subset $A\subset\ZZ_+$ and for any  choice of $l_1,\dots,l_t\in\cE(A)$ with $l_1<\dots<l_t$, $\bigcap_{s=1}^t\Omega_A(l_s)=\emptyset$ if and only if there exist some $s\in\{1,\dots ,t-1\}$ such that $l_{s+1}-l_s=1$. 

We introduce the following singular index of a point. 
\begin{dfn}[$\Gamma$-singular index]\label{sing index}
Let $A$ be a consecutive subset of $\ZZ_+$ and $\Gamma$ be a good subgroup of $\Gamma_A$. For any point $p\in\TT_A$, we denote by $\text{Sing}_{\Gamma;A}(p):=\{i\in\cE(A)|p\in\Omega_A(i)\mathrm{~and~}\gamma_{i}\in\Gamma\}$ the \emph{$\Gamma$-singular index} of $p$ and $\text{Sing}^+_{\Gamma;A}(p)=\text{Sing}_{\Gamma;A}(p)\sqcup(\text{Sing}_{\Gamma;A}(p)+1)$ the \emph{extended $\Gamma$-singular index} of $p$. 


For simplicity, we write $\text{Sing}_A:=\text{Sing}_{\Gamma_A;A}$ and $\text{Sing}_A^+:=\text{Sing}_{\Gamma_A;A}^+$.
\end{dfn} 
\begin{rmk}
The singular index of $p$ describes the ``singularity type'' of the natural projection maps in \eqref{eqn:quotient.maps} at $p$. In particular, when $p\in\TT_A\setminus\Omega_{\Gamma;A}$, we have $\text{Sing}_{\Gamma;A}(p)=\text{Sing}^+_{\Gamma;A}(p)=\emptyset$ due to Lemma \ref{lem:fixed points of gamma}.

By Lemma \ref{lem:fixed points of gamma}, Lemma \ref{lem:properties.good.subgp-(1)} and Definition \ref{dfn:good subgp}, for any consecutive subset $A\subset\ZZ_+$ and for any $p\in\TT_A$, the stabilizer $\mathrm{Stab}_{\Gamma}(p)$ satisfies
$$\mathrm{Stab}_{\Gamma}(p)=\begin{cases}
\displaystyle \{1\},~&\mathrm{if}~p\not\in\Omega_{\Gamma;A},\\
\displaystyle \langle\gamma_{i_1},\cdots ,\gamma_{i_q}\rangle,~&\mathrm{if}~p\in\Omega_{\Gamma;A}\mathrm{~and~}\text{Sing}_{\Gamma;A}(p)=\{i_1,\cdots ,i_q\}.
\end{cases}$$
Also, if $\Gamma=\Gamma_A(E)$ for some $E\subset\cE(A)$, then we have $\text{Sing}_{\Gamma;A}(p)=\text{Sing}_{A}(p)\setminus E$ due to Definition \ref{dfn:good subgp}.
\end{rmk}

To study the local structures of the natural projection maps in \eqref{eqn:quotient.maps}, we require the following definition of a good neighborhood of an arbitrary point in a torus.


\begin{dfn}[Good neighborhood]\label{dfn:good.nbhd}
Let $A$ be a consecutive subset of $\ZZ_+$. For any $p\in\TT_A$, an open neighborhood $\cU_p$ of $p$ is \emph{good} if the following holds:
\begin{enumerate}
\item $\text{Sing}_{A}(p)\supset\Sing_{A}(p')$ for any $p'\in\overline{\cU_p}$.
\item For any consecutive subset $A'\subset A$ and for any $\gamma\in\Gamma_{A'}$, $\gamma\cU_p=\cU_p$ if and only if $\pr^A_{A'}(\gamma p)= \pr^A_{A'}(p)$; $\pr^A_{A'}(\overline{\gamma\cU_p})\cap \pr^A_{A'}(\overline{\cU_p})=\emptyset$ if and only if $\pr^A_{A'}(\gamma p)\neq \pr^A_{A'}(p)$.
\item For any $q\in\cU_p$ and for any $i\in A$, $|\pr^A_i(q)-\pr^A_i(p)|<\pi/8$.
\end{enumerate}
\end{dfn}
\begin{rmk}
Let $A=\{m+1,\dots,m+k\}$. Any point $p=(\theta_{m+1},\dots,\theta_{m+k})\in\TT_A$ admits a good neighborhood. Indeed, by \eqref{jth singularity} and Defintion \ref{sing index}, we have 
\begin{align}\label{eqn:good.nbhd.1}
\cos^2(\theta_i)+\sin^2(\theta_{i+1})=0\text{ if and only if } i\in \Sing_A(p).
\end{align}
Let 
\begin{align}\label{eqn:good.nbhd}
\cU_p=\left(\prod_{i\in A\setminus\text{Sing}_{A}^+(p)}B_i(\theta_i,\epsilon'))\right)\times\left(\prod_{i\in\text{Sing}_{A}(p)}B_{i,i+1}((\theta_i,\theta_{i+1}),\epsilon)\right).
\end{align}
Then $\cU_p$ is a good neighborhood for any $\epsilon\in(0,\pi/8)$ (independent of $p$) and for any $\epsilon'$ sufficiently small (depending on $p$).

Also, following Definition \ref{dfn:good subgp} and the remark after Definition \ref{sing index}, we have $\text{Sing}_{\Gamma;A}(p)\supset\Sing_{\Gamma;A}(p')$ for any $p'$ in a good neighborhood of $p$.
\end{rmk}

For any consecutive subset $A=\{m+1,\dots,m+k\}\subset\ZZ_+$ and for any $p=(\theta_{m+1},\dots,\theta_{m+k})\in\TT_A$, let $\cU_p$ be a good neighborhood of $p$. For any good subgroup $\Gamma$ of $\Gamma_A$, we define $\Xi_{\Gamma;A;p}:\cU_p\to \RR^{|A|}$ such that for any $j\in A$ and for any $(\theta_{m+1}+\delta_{m+1},\dots,\theta_{m+k}+\delta_{m+k})\in\cU_p$ with $\delta_{m+1},\dots,\delta_{m+k}\in(-\pi/8,\pi/8)$, we have
\begin{align}\label{eqn:z.to.z2}
(\Xi_{\Gamma;A;p}(\theta_{m+1}+\delta_{m+1},\dots,\theta_{m+k}+\delta_{m+k}))_{i-m}=
\begin{cases}
\delta_{i},~&\text{if }i\in A\setminus\text{Sing}^+_{\Gamma;A}(p), \\
\delta^2_{i}-\delta^2_{i+1},~&\text{if }i\in\text{Sing}_{\Gamma;A}(p),\\
2\delta_{i-1}\delta_{i},~&\text{if }i-1\in\text{Sing}_{\Gamma;A}(p).
\end{cases}
\end{align}
In particular, for any $i\in\text{Sing}_{\Gamma;A}(p)$, if we let $z=\delta_i+\mathbf{i}\delta_{i+1}\in\CC$, the $(i-m,i-m+1)$ component of $\Xi_{\Gamma;A;p}(\theta_{m+1}+\delta_{m+1},\dots,\theta_{m+k}+\delta_{m+k})$ is the same as $(\text{Re}(z^2),\text{Im}(z^2))$.

\begin{lem}\label{lem:local.decomp.proj}
Let $A$ be a consecutive subset of $\ZZ_+$ and $\Gamma$ be a good subgroup of $\Gamma_A$. For any $p\in\TT_A$ and for any good neighborhood $\cU_p$ of $p$, there is a map $\Theta_{\Gamma;A;p}:\Xi_{\Gamma;A;p}(\cU_p)\to \Gamma\backslash\TT_A$ such that $\Pi_\Gamma|_{\cU_p}=\Theta_{\Gamma;A;p}\circ\Xi_{\Gamma;A;p}$. Moreover, $\Theta_{\Gamma;A;p}$ is a homeomorphism onto its image.
\end{lem}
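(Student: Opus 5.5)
The plan is the standard ``invariant map separates orbits'' argument. $\Xi:=\Xi_{\Gamma;A;p}$ is continuous and constant on the orbits of the local stabilizer $H:=\mathrm{Stab}_\Gamma(p)=\langle\gamma_i:i\in\Sing_{\Gamma;A}(p)\rangle$. On $\cU_p$ it separates $H$-orbits exactly, and the $H$-orbits in $\cU_p$ are exactly the intersections of $\Gamma$-orbits with $\cU_p$. Granting these facts, $\Theta_{\Gamma;A;p}$ is defined by $\Theta(\Xi(q)):=\Pi_\Gamma(q)$. It is well defined and injective, and the remaining work is to show it is a homeomorphism onto its image.

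\textbf{Step 1 (local form of $H$).} By the remark after Definition \ref{sing index}, $H$ fixes $p$, and by Definition \ref{dfn:good.nbhd}(2) (with $A'=A$) $H$ preserves $\cU_p$. For $i\in\Sing_{\Gamma;A}(p)$ we have $\theta_i\in\{\pm\pi/2\}$ and $\theta_{i+1}\in\{0,\pi\}$. By \eqref{eqn:generator}, $\gamma_i$ sends $(\theta_i+\delta_i,\theta_{i+1}+\delta_{i+1})$ to $(\theta_i-\delta_i,\theta_{i+1}-\delta_{i+1})$, since $\pi-\theta_i\equiv\theta_i$ and $-\theta_{i+1}\equiv\theta_{i+1}$. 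All other coordinates are unchanged. By Lemma \ref{lem:fixed points of gamma}, the indices in $\Sing_{\Gamma;A}(p)$ are pairwise non-adjacent, so the pairs $\{i,i+1\}$ are disjoint. In the $\delta$-coordinates, $H\cong(\ZZ/2)^{|\Sing_{\Gamma;A}(p)|}$ therefore acts by $z_i\mapsto -z_i$ independently on each complex coordinate $z_i=\delta_i+\mathbf{i}\delta_{i+1}$.

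\textbf{Step 2 ($\Xi$ separates $H$-orbits).} By \eqref{eqn:z.to.z2}, $\Xi$ is the identity in the non-singular coordinates and $z_i\mapsto z_i^2$ in each singular pair. Since $w^2=z^2$ iff $w=\pm z$, we get $\Xi(q)=\Xi(q')$ iff $q'\in Hq$. Also $\Xi$ is continuous, and it is an open map onto its image: $z\mapsto z^2$ is open on $\CC$, and $\cU_p$ is $H$-invariant, so the image of an open subset $U$ equals the image of the $H$-saturated open set $HU$.

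\textbf{Step 3 ($\Gamma$-orbits meet $\cU_p$ in $H$-orbits).} This is the main obstacle. Suppose $q,\gamma q\in\cU_p$ with $\gamma\in\Gamma$. Then $\gamma\cU_p\cap\cU_p\neq\emptyset$, so Definition \ref{dfn:good.nbhd}(2) with $A'=A$ gives $\gamma p=p$, i.e.\ $\gamma\in H$. The one subtlety is that (2) is phrased with closures and projections; applied with $A'=A$, where $\pr^A_A$ is the identity, it yields exactly the dichotomy we need: either $\gamma\cU_p=\cU_p$ or the closures are disjoint. So condition (2) has to be read this way. If it were insufficient, I would instead argue directly from (3) and \eqref{eqn:commute}: any $\gamma\notin\mathrm{Stab}(p)$ moves some coordinate of $p$ by a non-small amount, which is incompatible with $|\delta|<\pi/8$.

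\textbf{Step 4 (conclusion).} Define $\Theta$ on $\Xi(\cU_p)$ by $\Theta(\Xi(q))=\Pi_\Gamma(q)$. It is well defined because $H\subset\Gamma$. It is injective by Steps 2 and 3: $\Pi_\Gamma(q)=\Pi_\Gamma(q')$ gives $q'\in Hq$, hence $\Xi(q)=\Xi(q')$. It is continuous because $\Xi$ is a quotient map onto its image, being continuous, open and surjective. It is open onto its image because, for $V$ open, $\Theta(V)=\Pi_\Gamma(\Xi^{-1}(V))$, and $\Pi_\Gamma$ is open as the quotient by a finite group. Hence $\Theta_{\Gamma;A;p}$ is a homeomorphism onto its image, and $\Pi_\Gamma|_{\cU_p}=\Theta\circ\Xi$ holds by construction.
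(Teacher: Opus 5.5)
Your proposal is correct and follows essentially the same route as the paper: you obtain the fiber identity \eqref{eqn:local.decomp.proj-1} from the $z\mapsto z^2$ form of $\Xi_{\Gamma;A;p}$ together with property (2) of Definition \ref{dfn:good.nbhd} for $A'=A$, and then deduce continuity and openness of $\Theta_{\Gamma;A;p}$ from those of $\Pi_\Gamma$ and $\Xi_{\Gamma;A;p}$. Your reading of property (2) is exactly the one the paper uses, so you should drop the hedge in Step 3: property (3) alone would not suffice, because if $\gamma_i\in\Gamma$ and $p$ lies close to, but not on, $\Omega_A(i)$, then $\gamma_i\notin\mathrm{Stab}_\Gamma(p)$ moves $p$ only slightly.
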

\begin{proof}
For any $q\in\cU_p$, by \eqref{eqn:z.to.z2}, Definition \ref{sing index} and Definition \ref{dfn:good.nbhd}, we have 
\begin{align}\label{eqn:local.decomp.proj-1}
\{x\in\cU_p:\Xi_{\Gamma;A;p}(x)=\Xi_{\Gamma;A;p}(q)\}=\{\gamma q:\gamma\in\Gamma\text{ and }\gamma p = p\}.
\end{align}
Hence, the map $\Theta_{\Gamma;A;p}$ satisfying $\Pi_\Gamma|_{\cU_p}=\Theta_{\Gamma;A;p}\circ\Xi_{\Gamma;A;p}$ is well-defined. It remains to show that $\Theta_{\Gamma;A;p}$ is a homeomorphism onto its image.

Notice that both $\Pi_\Gamma$ and $\Xi_{\Gamma;A;p}$ are open and continuous, the equation $\Pi_\Gamma|_{\cU_p}=\Theta_{\Gamma;A;p}\circ\Xi_{\Gamma;A;p}$ implies that $\Theta_{\Gamma;A;p}$ is open and continuous.
Injectivity of $\Theta_{\Gamma;A;p}$ follows from Definition \ref{dfn:good.nbhd} and \eqref{eqn:local.decomp.proj-1}. Therefore, $\Theta_{\Gamma;A;p}$ is a homeomorphism onto its image.
\end{proof}
\begin{prop}\label{prop:quotient.is.sphere}
For any consecutive subset $A\subset\ZZ_+$ and for any $k_1,\dots, k_q\in\ZZ_+$ such that $k_1+\dots+k_q=|A|$, there is a good subgroup $\Gamma\subset \Gamma_A$ and a smooth map $\Phi_{\Gamma;A}:\TT_A\to S^{k_1}\times \dots\times S^{k_q}$ such that the following holds:
\begin{itemize}
\item $\Phi_{\Gamma;A}$ is $\Gamma$-invariant. Moreover, $\Phi_{\Gamma;A}$ induces a homeomorphism $\overline{\Phi}_{\Gamma;A}:\Gamma\backslash\TT^n\to S^{k_1}\times \dots\times S^{k_q}$ such that $\Phi_{\Gamma;A}=\overline{\Phi}_{\Gamma;A}\circ\Pi_{\Gamma}$. (See \eqref{eqn:quotient.maps}.)
\item For any $p\in\TT_A$ and for any good neighborhood $\cU_p$ of $p$, there is a smooth map $\Psi_{\Gamma;A;p}:\Xi_{\Gamma;A;p}(\cU_p)\to S^{k_1}\times \dots\times S^{k_q}$ such that $\Phi_{\Gamma;A}|_{U_p}=\Psi_{\Gamma;A;p}\circ \Xi_{\Gamma;A;p}$. Moreover, $\Psi_{\Gamma;A;p}$ is a diffeomorphism onto its image.
\item For any $\{A_1,\dots, A_q\}$ of $A$ such that $|A_j|=k_j$ for any $j\in\{1,\dots,q\}$, one can choose $\Gamma=\Gamma_A(E_A(A_1,\dots, A_q))$. (See Definition \ref{dfn:good subgp} and the remark after Definition \ref{dfn:consecutive}.) Moreover, the map $\Phi_{\Gamma;A}$ is the product of the maps $\Phi_{\Gamma_{A_j};A_j}$ with $j\in\{1,\dots,q\}$.
\end{itemize}
\end{prop}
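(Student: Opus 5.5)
The plan is to reduce at once to the single case $\Gamma=\Gamma_A$ with $A=\bV_n$, and then handle that case by an explicit invariant map modelled on ellipsoidal coordinates.

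\textbf{Reduction.} Given $k_1,\dots,k_q$, take the consecutive partition $\{A_1,\dots,A_q\}$ of $A$ into blocks of sizes $k_1,\dots,k_q$, and set $\Gamma=\Gamma_A(E_A(A_1,\dots,A_q))$. By Lemma \ref{lem:properties.good.subgp-(5)}, with $\Gamma'=\Gamma_A$, the $\Gamma$-action on $\TT_A$ is the product of the $\Gamma_{A_j}$-actions on $\TT_{A_j}$. By Lemma \ref{lem:properties.good.subgp-(2)}, each of these is equivalent to the $\Gamma_{k_j}$-action on $\TT^{k_j}$. The singular index also splits blockwise, since $\Sing_{\Gamma;A}(p)=\Sing_A(p)\setminus E$, so $\Xi_{\Gamma;A;p}$ is the product of the maps $\Xi_{\Gamma_{A_j};A_j;\pr^A_{A_j}(p)}$. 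Hence all three bullets, including the product description of $\Phi_{\Gamma;A}$, follow once we construct, for each $n$, a smooth $\Gamma_n$-invariant map $\Phi_n:\TT^n\to S^n$ with the first two properties. The case $n=1$ is trivial: $\Gamma_1$ is trivial and $\TT^1=S^1$.

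\textbf{Construction for $\Gamma_n$.} The generator $\gamma_i$ flips the signs of $\cos\theta_i$ and $\sin\theta_{i+1}$ and fixes $\sin\theta_i$ and $\cos\theta_{i+1}$. So every monomial in the $\cos\theta_j,\sin\theta_j$ in which the total degree of $\cos\theta_i$ and $\sin\theta_{i+1}$ is even, for each $i$, is $\Gamma_n$-invariant. Examples are $\sin\theta_1$, $\cos\theta_1\sin\theta_2$, $\cos\theta_{i}^2$, $\sin^2\theta_{i+1}$ and $\cos\theta_i\sin\theta_{i+1}$.

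I would build $\Phi_n=(x_0,\dots,x_n)$ from such invariants, in the spirit of ellipsoidal coordinates. Fix constants $0<a_1<\dots<a_n$. Each angle $\theta_i$ parametrizes a confocal parameter $\lambda_i=a_i\cos^2\theta_i+a_{i+1}\sin^2\theta_i$ lying in the interval $[a_i,a_{i+1}]$. The $x_j^2$ are then given by the usual product formulas $\prod_i(\lambda_i-b_j)/\prod_{l\neq j}(b_l-b_j)$. Finally, the signs of the $x_j$ are fixed by taking the $x_j$ to be invariant products of the $\cos\theta_i$ and $\sin\theta_{i+1}$ whose squares are those formulas.

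\textbf{Verification.} The key checks, in order:
\begin{enumerate}
\item $\Phi_n$ is smooth, $\Gamma_n$-invariant, and $\sum_j x_j^2=1$. This is a direct trigonometric computation.
\item Surjectivity, and that the fibres of $\Phi_n$ are exactly the $\Gamma_n$-orbits. Ellipsoidal coordinates determine each $\lambda_i$, hence each $\theta_i$ up to the $2^n$ symmetries $\theta_i\mapsto\pm\theta_i,\ \pi\pm\theta_i$. The sign data of the $x_j$ should then cut these down to exactly $\Gamma_n\isom(\ZZ/2\ZZ)^{n-1}$, by Lemma \ref{lem:properties.good.subgp-(1)}.

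Given this, $\overline{\Phi}_n$ is a continuous bijection from the compact space $\Gamma_n\backslash\TT^n$ to the Hausdorff space $S^n$, hence a homeomorphism.
\item Existence of $\Psi_p$. Since $\Phi_n$ is invariant under $\mathrm{Stab}(p)=\langle\gamma_i:i\in\Sing(p)\rangle$, and each $\gamma_i$ acts in the local coordinate $z=\delta_i+\mathbf{i}\delta_{i+1}$ as $z\mapsto -z$, a smooth function invariant under $z\mapsto -z$ is a smooth function of $z^2$ and the remaining variables. This is Whitney's even-function lemma applied in each complex factor, or Schwarz's theorem. So $\Phi_n|_{\cU_p}=\Psi_p\circ\Xi_p$ with $\Psi_p$ smooth.
\item $\Psi_p$ is a diffeomorphism onto its image. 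It is injective by item 2 together with \eqref{eqn:local.decomp.proj-1}, and open by Lemma \ref{lem:local.decomp.proj}. So it remains to show that $d\Psi_p$ is invertible at every point of $\Xi_p(\cU_p)$.
\end{enumerate}

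\textbf{Main obstacle.} I expect item 4 to be the hardest step: computing $\det d\Psi_p$ at points of $\Omega_n$. There the naive Jacobian of $\Phi_n$ degenerates, and one has to expand $\Phi_n$ to second order in $(\delta_i,\delta_{i+1})$ for each $i\in\Sing(p)$. The aim is to show that the pair of coordinates $x_j$ degenerating there behaves, up to a nonzero smooth factor, like $(\mathrm{Re}\,z^2,\mathrm{Im}\,z^2)$ with $z=\delta_i+\mathbf{i}\delta_{i+1}$. I would first treat $n=2$ by hand, where $\Phi_2$ is essentially $z\mapsto z^2$ near the four fixed points. For general $n$ I would argue by induction: in a good neighbourhood, the factors of $\Phi_n$ decouple blockwise into $\Sing^+$-pairs and free coordinates, and the confocal parameters $\lambda_i$ are smooth local coordinates in the $\Xi$-variables away from the endpoints $a_i$.
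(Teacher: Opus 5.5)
Your reduction to $\Gamma_n$ acting on $\TT^n$ is the paper's. However, item~3 rests on a false lemma, and with genuine $\sin$ and $\cos$ the second bullet of the proposition actually fails. Invariance under $z\mapsto -z$ does not make a smooth function a smooth function of $z^2$: $\delta_i^2+\delta_{i+1}^2=|z|^2=|z^2|$ is invariant but not smooth in $z^2$. Whitney's lemma needs evenness in each variable separately. Schwarz's theorem only gives smoothness in the Hilbert basis $\delta_i^2,\delta_{i+1}^2,\delta_i\delta_{i+1}$, which is three functions rather than the two components of $\Xi$. Concretely, take $n=2$ and $p=(\pi/2,0)$. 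For any $a_1<a_2<a_3$ your formulas give $x_2^2=\cos^2\theta_1\sin^2\theta_2$, so $x_2=\pm\sin\delta_1\sin\delta_2$. Its quartic Taylor part is a nonzero multiple of $\delta_1\delta_2(\delta_1^2+\delta_2^2)$, with equal coefficients on $\delta_1^3\delta_2$ and $\delta_1\delta_2^3$. Every quadratic form in $(\delta_1^2-\delta_2^2,\,2\delta_1\delta_2)$ has opposite coefficients on these two monomials. Hence $x_2$ is not even a $C^2$ function of $\Xi_{\Gamma_2;\bV_2;p}$, no smooth $\Psi_p$ exists, and item~4 is moot.

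The missing idea is the reparametrization in Definition~\ref{dfn:fake sin/cos}. Replace $\sin,\cos$ by $\sn,\cn$ with $\sn^2+\cn^2=1$ and the same symmetries under $t\mapsto\pi-t$ and $t\mapsto t+\pi$, so that invariance and the confocal identities survive. In addition, require $\sn(t)=t$ on $[-\pi/8,\pi/8]$ and $\cn(t)=\pi/2-t$ on $[3\pi/8,5\pi/8]$. Near a singular pair this yields the exact identities \eqref{eqn:sncn.near.sing}. Each product $\Phi_{j,l}\Phi_{j,l+1}$ is then an explicit smooth function of $t=\delta_l^2-\delta_{l+1}^2$ and $s=2\delta_l\delta_{l+1}$; for instance $(m+\delta_l^2)(m-\delta_{l+1}^2)=m^2+mt-s^2/4$. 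So $\Psi_p$ can be written down directly, as in \eqref{eqn:large.terms.covering.formula}. This exact form in the flat $\delta$-coordinates is also what the later cap insertion needs.

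For item~4 the paper does not expand to second order. The vectors $D\Phi(\partial/\partial\theta_k)$ are mutually orthogonal by Jacobi's confocal identity. Lemma~\ref{lem:deriv.covering.map} then gives $\mathrm{Jac}\,\Phi\ge c\prod_{l\in\Sing(p)}(\delta_l^2+\delta_{l+1}^2)$, which matches $\mathrm{Jac}\,\Xi$, so $\mathrm{Jac}\,\Psi_p>0$. Openness of $\Phi$ then extends the generic fibre count of Lemma~\ref{lem:preimage.count} to all points. That spares you the sign bookkeeping at degenerate points in item~2. Note also that there each $\theta_i$ is fixed only up to four values, giving $4^n$ candidates rather than $2^n$.
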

\begin{proof}
Assume that for any $A$, Proposition \ref{prop:quotient.is.sphere} holds for the special case when $q=1$ (and hence $\Gamma=\Gamma_A(A)=\Gamma_A$). Let $q\geq 2$, $\{A_1,\dots,A_q\}$ be an arbitrary consecutive partition of $A$ with $|A_j|=k_j$ for any $j\in\{1,\dots, q\}$, and $\Gamma=\Gamma_A(E_A(A_1,\dots, A_q))$. For simplicity, we write $E=E_A(A_1,\dots, A_q)$. By Lemma \ref{lem:properties.good.subgp-(3)} and the remark after Definition \ref{dfn:consecutive}, we have 
\begin{align}\label{eqn:quotient.is.sphere-1}
\Gamma\cap\Gamma_{A_j}=\Gamma_{A_j}(E\cap\cE(A_j))=\Gamma_{A_j}(\emptyset)=\Gamma_{A_j}, \text{for any }j\in\{1,\dots,q\},
\end{align}
where we also used \eqref{eqn:Gamma.A} and \eqref{eqn:cutpoint} in the last equality sign. Also, for any $(p_1,\dots,p_q)\in\TT_{A_1}\times\cdots\times\TT_{A_q}=\TT_A$ and for any $j\in\{1,\dots,q\}$, by \eqref{jth singularity}, the remark after Definition \ref{dfn:consecutive}, Definition \ref{sing index} and its remark, we have,
\begin{align}\label{eqn:quotient.is.sphere-2}
\begin{split}
\Sing_{A_j}(p_j)=&\{i\in\cE(A_j)|p_j\in\Omega_{A_j}(i)\}\\
=&\{i\in\cE(A)|p\in\Omega_{A}(i)\}\cap\cE(A_j)\\
=&\Sing_A(p)\cap\cE(A_j)=(\Sing_A(p)\setminus E)\cap\cE(A_j)=\Sing_{\Gamma;A}(p)\cap\cE(A_j).
\end{split}
\end{align}
Notice that for any good neighborhood $\cU_p$ and for any $j\in\{1,\dots ,q\}$, $\cU_{p_j}:=\pr^A_{A_j}(\cU_p)$ is a good neighborhood of $p_j$ in the sense of Definition \ref{dfn:good.nbhd}. Hence $\Xi_{\Gamma_{A_j};A_j;p_j}$ can be defined on $\cU_{p_j}$. Also, \eqref{eqn:z.to.z2}, \eqref{eqn:quotient.is.sphere-2} and the remark after Definition \ref{sing index} imply that $\Xi_{\Gamma;A;p}=\Xi_{\Gamma_{A_1};A_1;p_1}\times\cdots\times \Xi_{\Gamma_{A_q};A_q;p_q}$. Therefore, we choose $\Phi_{\Gamma;A}=\Phi_{\Gamma_{A_1};A_1}\times\cdots\times \Phi_{\Gamma_{A_q};A_q}$ and $\Psi_{\Gamma;A;p}=\Psi_{\Gamma_{A_1};A_1;p_1}\times\cdots\times \Psi_{\Gamma_{A_q};A_q;p_q}$. By Lemma \ref{lem:properties.good.subgp-(5)}, \eqref{eqn:quotient.is.sphere-1} and the above discussion, Proposition \ref{prop:quotient.is.sphere} for $A$ and $\Gamma$ follows directly from the above choices of $\Phi_{\Gamma;A},\Psi_{\Gamma;A;p}$ and the special case when $q=1$. Therefore, we only need to prove Proposition \ref{prop:quotient.is.sphere} when $q=1$.

By Lemma \ref{lem:properties.good.subgp-(2)}, it suffices to prove the special case when $A=\bV_n$ and $q=1$. In particular, $\Gamma=\Gamma_n$. Due to the technicality of the proof, we leave it to the appendix. (See Subsection \ref{subsect:quotient.is.sphere}.)
\end{proof}
\begin{rmk}
From now on, for any consecutive subset $A\subset\ZZ_+$ and for any good subgoup $\Gamma\subset\Gamma_A$, we equip $\Gamma\backslash\TT_A$ with the differential structure induced by $\overline{\Phi}_{\Gamma;A}$. Then one can easily check that the natural projection maps in \eqref{eqn:quotient.maps} are smooth.
\end{rmk}
We end this section by introducing a direct corollary of Proposition \ref{prop:quotient.is.sphere} without proofs.
\begin{cor}\label{cor:quotient is mfld}
Let $A$ be a consecutive subset of $\ZZ_+$ and $\Gamma$ be a good subgroup of $\Gamma_A$. We denote by $\cM_{\Gamma}(\TT_A)$ the set of positive semidefinite symmetric $2$-tensor fields on $\TT_A$ satisfying the following conditions:
\begin{enumerate}
\item[(1).] ($\Gamma$-symmetry) For any point $p\in\TT^n$, for any $g\in \cM_{\Gamma}(\TT_A)$, for any $X,Y\in T_p\TT^n$ and for any $\gamma\in\Gamma$, we have
$$g|_p(X,Y)=g|_{\gamma(p)}(D\gamma(X),D\gamma(Y)).$$
\item[(2).] (Compatibility with $\Omega_{\Gamma;A}$) For any $p\in\TT_A$ and for any $g\in\cM_\Gamma(\TT_A)$, there exist a good neighborhood $\cU_p$ of $p$ in $\TT_A$ and a Riemannian metric $g'$ on $\Xi_{\Gamma;A;p}(\cU_p)$ such that $g|_{\cU_p}=\Xi_{\Gamma;A;p}^*(g')$.
\end{enumerate}
Then, the map $g\to\Pi_{\Gamma}^*g$ defines a bijection from the set of Riemannian metrics on $\Gamma\backslash\TT_A$ to $\cM_{\Gamma}(\TT_A)$.
\end{cor}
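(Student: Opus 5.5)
We must show that $g\mapsto\Pi_\Gamma^*g$ is well defined into $\cM_\Gamma(\TT_A)$, injective, and surjective. Throughout, the smooth structure on $\Gamma\backslash\TT_A$ is the one induced by $\overline{\Phi}_{\Gamma;A}$. By Proposition \ref{prop:quotient.is.sphere}, for every $p$ and every good neighborhood $\cU_p$, the maps $\Theta_{\Gamma;A;p}$ of Lemma \ref{lem:local.decomp.proj} are the local charts $\overline{\Phi}^{-1}_{\Gamma;A}\circ\Psi_{\Gamma;A;p}$. Concretely, $\Theta_{\Gamma;A;p}$ is a diffeomorphism from the open set $\Xi_{\Gamma;A;p}(\cU_p)\subset\RR^{|A|}$ onto an open subset of $\Gamma\backslash\TT_A$, and
\[
\Pi_\Gamma|_{\cU_p}=\Theta_{\Gamma;A;p}\circ\Xi_{\Gamma;A;p}.
\]
The whole proof reduces to this local factorization.

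\emph{Well-definedness.} Let $g$ be a Riemannian metric on $\Gamma\backslash\TT_A$. Since $\Pi_\Gamma$ is smooth, $\Pi_\Gamma^*g$ is a smooth positive semidefinite symmetric $2$-tensor. It is $\Gamma$-symmetric because $\Pi_\Gamma\circ\gamma=\Pi_\Gamma$ for all $\gamma\in\Gamma$. On a good neighborhood $\cU_p$ we have
\[
\Pi_\Gamma^*g=\Xi_{\Gamma;A;p}^*\bigl(\Theta_{\Gamma;A;p}^*g\bigr),
\]
and $g':=\Theta_{\Gamma;A;p}^*g$ is a Riemannian metric on $\Xi_{\Gamma;A;p}(\cU_p)$. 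So condition (2) holds.

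\emph{Injectivity.} The map $\Xi_{\Gamma;A;p}$ is a local diffeomorphism off $\Omega_{\Gamma;A}$; its Jacobian is nonzero exactly where no $z_i=\delta_i+\mathbf{i}\delta_{i+1}$ vanishes. So $\Pi_\Gamma$ is a local diffeomorphism on the open dense set $\TT_A\setminus\Omega_{\Gamma;A}$. It follows that $\Pi_\Gamma^*g$ determines $g$ on the dense open set $\Pi_\Gamma(\TT_A\setminus\Omega_{\Gamma;A})$, hence everywhere by continuity.

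\emph{Surjectivity.} This is the main step. Given $\tilde g\in\cM_\Gamma(\TT_A)$, for each $p$ take the good neighborhood $\cU_p$ and the metric $g'_p$ from condition (2), and define on the open set $\Pi_\Gamma(\cU_p)$
\[
g_p:=\bigl(\Theta_{\Gamma;A;p}^{-1}\bigr)^*g'_p .
\]
Then $\Pi_\Gamma^*g_p=\tilde g$ on $\cU_p$. It remains to check that the $g_p$ agree on overlaps $\Pi_\Gamma(\cU_p)\cap\Pi_\Gamma(\cU_{p'})$.

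\begin{itemize}
\item At a point $x$ of the overlap that is the image of a non-branch point $q\in\cU_p$, with $\gamma q\in\cU_{p'}$ for some $\gamma\in\Gamma$, both $g_p$ and $g_{p'}$ are pushforwards of $\tilde g$ under local diffeomorphisms. They coincide by $\Gamma$-symmetry, since $\Pi_\Gamma=\Pi_\Gamma\circ\gamma$.
\item Non-branch images are dense in the overlap, so $g_p=g_{p'}$ on the whole overlap by continuity of both smooth tensors.
\end{itemize}

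The $g_p$ therefore glue to a global smooth Riemannian metric $g$ with $\Pi_\Gamma^*g=\tilde g$.

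The obstacle I expect is only bookkeeping: confirming that $\Theta_{\Gamma;A;p}$ is genuinely a smooth chart, i.e.\ that $\Psi_{\Gamma;A;p}$ is a diffeomorphism onto an open set. Proposition \ref{prop:quotient.is.sphere} supplies this. One also uses that $g'_p$ is uniquely determined by $\tilde g$ on the dense set where $\Xi_{\Gamma;A;p}$ is a local diffeomorphism, which makes each $g_p$ independent of choices.
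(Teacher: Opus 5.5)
Your proposal is correct and follows essentially the same route as the paper. The paper likewise defines the metric locally by transporting $g'$ through the local diffeomorphisms $\Psi_{\Gamma;A;p}$; these are your charts, since $\Theta_{\Gamma;A;p}=\overline{\Phi}^{-1}_{\Gamma;A}\circ\Psi_{\Gamma;A;p}$. It then checks consistency via $\Gamma$-symmetry where $D\Phi_{\Gamma;A}$ is nondegenerate and extends by continuity across the branch image, and it proves injectivity by the same density argument, phrased through Lebesgue-measure-zero (Sard) rather than topological density of non-branch images.
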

\subsection{Ramification and branching locus}\label{subsect:locus}
The notion of singular index  (see Definition \ref{sing index}) and \eqref{jth singularity}-\eqref{total singularity} are closely related to where the natural projection maps in \eqref{eqn:quotient.maps} fail to be local diffeomorphisms. 
\begin{dfn}[Ramification and branching loci]\label{dfn:locus}
Let $f:X\to Y$ be a surjective smooth map between smooth manifolds of the same dimension. For any $x\in X$, we say that $x$ is a $\emph{ramification point}$ of $f$ if $f$ is not a local diffeomorphism near $x$. The \emph{ramification locus} of $f$ is the collection of all ramification points. The \emph{branching locus} of $f$ is the image of the ramification locus under $f$.
\end{dfn}
\begin{lem}\label{lem:branching.locus}
Let $A$ be a consecutive subset of $\ZZ_+$. For any good subgroups $\Gamma, \Gamma'$ of $\Gamma_A$ such that $\Gamma\subset\Gamma'$, the following holds:
\begin{enumerate}[ref=\ref{lem:branching.locus}(\arabic*)]
\item For any $p\in\TT_A$, $p$ is a ramification point of $\Pi_\Gamma$ if and only if $\Sing_{\Gamma;A}(p)\neq \emptyset$. In particular, the ramification locus of $\Pi_\Gamma:\TT_A\to \Gamma\backslash\TT_A$ is $\Omega_{\Gamma;A}$, which is $\Gamma_A$-invariant. \label{lem:branching.locus-1}
\item For any $p\in\TT_A$, $\Pi_\Gamma(p)$ is a ramification point of $\Pi^\Gamma_{\Gamma'}$ if and only if $\Sing_{\Gamma;A}(p)\subsetneq\Sing_{\Gamma';A}(p)$. In particular, the ramification locus of $\Pi^{\Gamma}_{\Gamma'}:\Gamma\backslash\TT_A\to \Gamma'\backslash\TT_A$ is $\Pi_\Gamma(\overline{\Omega_{\Gamma';A}\setminus\Omega_{\Gamma;A}})$.\label{lem:branching.locus-2}
\end{enumerate}
\end{lem}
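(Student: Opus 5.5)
The plan is to read everything off the local normal form. Lemma \ref{lem:local.decomp.proj} gives $\Pi_\Gamma|_{\cU_p}=\Theta_{\Gamma;A;p}\circ\Xi_{\Gamma;A;p}$ on a good neighborhood. Proposition \ref{prop:quotient.is.sphere} (via $\overline{\Phi}_{\Gamma;A}$ and the differential structure fixed in the subsequent remark) shows that $\Theta_{\Gamma;A;p}$ agrees with $\overline{\Phi}_{\Gamma;A}^{-1}\circ\Psi_{\Gamma;A;p}$. Indeed, $\overline{\Phi}_{\Gamma;A}\circ\Theta_{\Gamma;A;p}\circ\Xi=\Phi_{\Gamma;A}|_{\cU_p}=\Psi_{\Gamma;A;p}\circ\Xi$, and $\Xi$ is surjective onto its image. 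So $\Theta_{\Gamma;A;p}$ is a diffeomorphism onto an open set. Whether $\Pi_\Gamma$ is a local diffeomorphism near $p$ is therefore equivalent to whether $\Xi_{\Gamma;A;p}$ is a local diffeomorphism near $p$.

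\textbf{Part (1).} By \eqref{eqn:z.to.z2}, $\Xi_{\Gamma;A;p}$ is a product of identity coordinates and the maps $z\mapsto z^2$ on the planes indexed by $i\in\Sing_{\Gamma;A}(p)$. If $\Sing_{\Gamma;A}(p)=\emptyset$, it is a translate of the identity, so $p$ is not a ramification point. If $i\in\Sing_{\Gamma;A}(p)$, the differential in the $(\delta_i,\delta_{i+1})$ block vanishes at $\delta=0$. Alternatively, $\Xi$ is not injective on any neighborhood of $p$, because $\gamma_i q$ and $q$ have the same image by \eqref{eqn:local.decomp.proj-1} while $\gamma_i$ fixes $p$. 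Either way $\Pi_\Gamma$ fails to be a local diffeomorphism at $p$. Hence the ramification locus is $\{p:\Sing_{\Gamma;A}(p)\neq\emptyset\}=\Omega_{\Gamma;A}$ by \eqref{total singularity subgp} and Definition \ref{sing index}.

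For $\Gamma_A$-invariance, I will check directly from \eqref{eqn:commute}, \eqref{eqn:generator} and \eqref{jth singularity} that each $\gamma_j$ maps each $\Omega_A(i)$ to itself. The maps $\kappa_\pi,\kappa_0$ send the sets $\{\cos\theta=0\}$ and $\{\sin\theta=0\}$ to themselves, since $\pi-\theta$ and $-\theta$ preserve $\{\pm\pi/2\}$ and $\{0,\pi\}$.

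\textbf{Part (2).} Apply the same local picture to both quotients at the same $p$, using a common good neighborhood $\cU_p$. Then $\Pi^\Gamma_{\Gamma'}\circ\Theta_{\Gamma;A;p}\circ\Xi_{\Gamma;A;p}=\Theta_{\Gamma';A;p}\circ\Xi_{\Gamma';A;p}$. By the remark after Definition \ref{sing index}, $\Sing_{\Gamma;A}(p)=\Sing_A(p)\setminus E\subset\Sing_{\Gamma';A}(p)$. So $\Xi_{\Gamma';A;p}=\sigma\circ\Xi_{\Gamma;A;p}$, where $\sigma$ is the coordinate map squaring (as $w\mapsto w^2$ in $\CC$) exactly the blocks indexed by $\Sing_{\Gamma';A}(p)\setminus\Sing_{\Gamma;A}(p)$ and fixing the others. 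Here one uses that $\Xi_{\Gamma;A;p}$ is just the identity in those $\delta$-coordinates.

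In these charts $\Pi^\Gamma_{\Gamma'}$ near $\Pi_\Gamma(p)$ is conjugate to $\sigma$ near $0$. Thus $\Pi^\Gamma_{\Gamma'}$ is a local diffeomorphism there iff $\sigma$ squares no block, i.e.\ iff the inclusion $\Sing_{\Gamma;A}(p)\subset\Sing_{\Gamma';A}(p)$ is an equality. The set of such $\Pi_\Gamma(p)$ is then $\Pi_\Gamma$ of $\{p:\Sing_{\Gamma';A}(p)\setminus\Sing_{\Gamma;A}(p)\neq\emptyset\}=\bigcup_{i\in E\setminus E'}\Omega_A(i)$, where $\Gamma=\Gamma_A(E)$ and $\Gamma'=\Gamma_A(E')$. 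This union is closed and equals $\overline{\Omega_{\Gamma';A}\setminus\Omega_{\Gamma;A}}$. The step I expect to need care is this closure identity. The inclusion "$\supset$" holds because $\Omega_A(i)\setminus\Omega_{\Gamma;A}$ is dense in $\Omega_A(i)$: each $\Omega_A(j)$ with $j\neq i$ meets $\Omega_A(i)$ in a set of positive codimension inside $\Omega_A(i)$, or not at all. The inclusion "$\subset$" holds because $\Omega_{\Gamma';A}\setminus\Omega_{\Gamma;A}\subset\bigcup_{i\in E\setminus E'}\Omega_A(i)$, which is closed. Well-definedness on the quotient is fine, since $\Sing$ is $\Gamma$-invariant along orbits by the invariance from part (1).
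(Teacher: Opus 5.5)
Your proof is correct. Part (1) essentially follows the paper: both arguments use Proposition \ref{prop:quotient.is.sphere} and the differential structure induced by $\overline{\Phi}_{\Gamma;A}$ to reduce everything to reading off \eqref{eqn:z.to.z2}. For the $\Gamma_A$-invariance, the paper instead uses commutativity of $\Gamma_A$ to get $\mathrm{Stab}_\Gamma(\gamma p)=\mathrm{Stab}_\Gamma(p)$. Your direct check that each generator $\gamma_j$ preserves each $\Omega_A(i)$ is more elementary and slightly stronger.

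Part (2) is where you genuinely diverge. The paper works with Jacobians:
\begin{itemize}
\item it writes $\mathrm{Jac}(\Pi^\Gamma_{\Gamma'})(\Pi_\Gamma(q))$ as $\mathrm{Jac}\,\Pi_{\Gamma'}(q)/\mathrm{Jac}\,\Pi_\Gamma(q)$ at generic $q$;
\item it expands this through $\overline{\Phi}$, $\Psi$ and $\Xi$ to isolate the factor $\prod_{i\in\Sing_{\Gamma';A}(p)\setminus\Sing_{\Gamma;A}(p)}4(\delta_i^2+\delta_{i+1}^2)$, then extends by smoothness to all of $\cU_p$;
\item it proves the closure identity locally. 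One inclusion asserts, without detail, that $\Omega_A(i)\setminus\Omega_{\Gamma;A}$ is dense in $\Omega_A(i)$. The other uses positivity of that Jacobian on $\cU_p$, together with the pointwise criterion applied at every $q\in\cU_p$.
\end{itemize}

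Your factorization $\Xi_{\Gamma';A;p}=\sigma\circ\Xi_{\Gamma;A;p}$, with the charts $\Theta_{\Gamma;A;p}$ and $\Theta_{\Gamma';A;p}$, instead conjugates $\Pi^\Gamma_{\Gamma'}$ near $\Pi_\Gamma(p)$ to the explicit model $\sigma$. This buys three things:
\begin{itemize}
\item it avoids Jacobians of maps between abstract manifolds and the extension-by-smoothness step;
\item it shows that the intermediate projection has the same $z\mapsto z^2$ local normal form;
\item by identifying the ramification set upstairs as $\bigcup_{i\in E\setminus E'}\Omega_A(i)$, it turns the closure statement into a set-theoretic identity. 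Its only nontrivial half is exactly the codimension count you supply, which also fills in the density claim the paper leaves unproved.
\end{itemize}
The paper's route has the modest advantage of reusing the Jacobian bookkeeping already set up in Appendix \ref{appA}, but yours is shorter and more transparent.
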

\begin{proof}
The first half of both assertions follow directly from \eqref{eqn:z.to.z2}, Definition \ref{sing index} and Proposition \ref{prop:quotient.is.sphere}. The second half of both assertions then follows from \eqref{jth singularity}-\eqref{total singularity}.
\end{proof}


\section{Geometry and dynamics of nearly non-positive curvature}\label{sect:review}
This section is a brief review of some concepts and results in \cite{BG94}.
\subsection{Caps and nearly non-positive curvature}
\begin{dfn}[Caps and extended caps, {\cite[Section 5, page 14]{BG94}}]\label{dfn:caps}
A \emph{(monotone curvature) cap} is a two-dimensional open disc equipped with a radially symmetric Riemannian metric satisfying the following properties:
\begin{itemize}
\item The boundary of the disc is a closed geodesic.
\item The curvature is positive in the interior and vanishes on the boundary.
\item The curvature is a nonincreasing function of the distance from the center.
\end{itemize}
An \emph{extended cap} $C$ is a two-dimensional open disc equipped with a radially symmetric metric $g$ such that there exist a smaller open disc $C'\subset C$ with the same center satisfying the following properties:
\begin{itemize}
\item $(C',g|_{C'})$ is a cap;
\item $g$ is negatively curved on $C\setminus \overline{C'}$.
\end{itemize}
\end{dfn}
\begin{rmk}
The definition of an extended cap in {\cite[Section 5, page 14]{BG94}} only requires that $g$ is nonpositively curved on $C\setminus \overline{C'}$. The more restrictive definition here is used to simplify the discussion in Subsection \ref{subsect:nomadic}. (In fact, the constructions in \cite[Section 7, page 26]{BG94} already require that $g$ is negatively curved on $C\setminus \overline{C'}$, which is the same as in the above definition.) 

The existence of extended caps is proved in \cite[Proposition 2.1]{BG89a}. Moreover, by \cite[Lemma 2.3 and Proposition 2.4]{BG89a}, the following holds:
\begin{itemize}
\item Any nonconstant geodesic in $(C,g)$ defined on an infinite time interval must be everywhere tangent to $\del C'$. In particular, the curvature at any point on this geodesic is $0$. Hence, any parallel vector field along this geodesic is a Jacobi field.
\item There exists some $\delta>0$ such that for any $p,q\in C$ with $d_g(p,q)<\delta$, $p,q$ are joined by a geodesic with respect to $g$ in $C$.
\end{itemize}

%
\end{rmk}
\begin{dfn}[Nearly nonpositive curvature, regular neighborhoods and canonical decomposition {\cite[pages 14-15]{BG94}}]\label{dfn:near nonpos curv}

~A Riemannian manifold has \emph{nearly nonpositive curvature} if every point has an open neighborhood that is a product in which each factor has nonpositive sectional curvature or is an extended cap.

A \emph{regular neighborhood} of a point $p$ on a nearly nonpositively curved manifold is an open neighborhood $U$ of $p$ satisfying the following properties:
\begin{itemize}
\item $U$ is isometric to $U_0\times\cdots \times U_k$, where $U_0,\dots,U_k$ are connected, totally geodesic submanifolds containing $p$.
\item $U_0$ has nonpositive curvature.
\item Let $p=(p_0,\cdots,p_k)$ after identifying $U$ with $U_0\times\cdots \times U_k$. For any $i\geq 1$, $U_i$ is a surface with nearly nonpositive curvature which contains a cap $C$ containing $p_i$. $U_1,\cdots ,U_k$ are called \emph{cap factors}.
\end{itemize}
$U_0\times\cdots \times U_k$ is called the \emph{canonical decomposition} of $U$ at $p$.
\end{dfn}
\begin{rmk}
Let $U$ and $V$ be regular neighborhoods of the same point $p$ in a nearly nonpositively curved manifold $M$. Suppose their canonical decomposition is given by $U=U_0\times\cdots \times U_k$ and $V=V_0\times\cdots \times V_l$, then $T_pU_0=T_pV_0$ and $k=l$. Moreover, the decompositions of $T_pM$ given by
$$T_pM=T_pU_0\oplus\cdots \oplus T_pU_k\mathrm{~and~}T_pM=T_pV_0\oplus\cdots \oplus T_pV_l$$
are identical. See \cite[Lemma 5.2-Corollary 5.3]{BG94}.
\end{rmk}

When $M$ is nonpositively curved, for any geodesic $c$ and any Jacobi field $Y(t)$ along $c$ at $c(t)$, the function 
\begin{align}\label{eqn:noninc baby Lyap fcn2}
t\to\langle Y(t),Y'(t)\rangle
\end{align} 
is nondecreasing. Nearly nonpositively curved manifolds satisfy a similar property. To be specific, we first consider the case when $M$ is either nonpositively curved or is a nearly nonpositively curved surface. For any $p\in M$, any $v\in T_pM$ and any $(w, w')\in T_pM\times T_pM$, let $c$ be the (not necessarily unit speed) geodesic with initial vector $v$ and $Y$ be the Jacobi field aloing $c_v$ such that 
$$Y(0)=w~\mathrm{and}~ Y'(0)=w'.$$
Define $\tau_-\leq 0\leq \tau_+$ such that 
$$\tau_+=\min\{\tau\geq 0|R( c'(\tau),\cdot,c'(\tau),\cdot)\leq 0\}~\mathrm{and}~\tau_-=\max\{\tau\leq 0|R(c'(\tau),\cdot,c'(\tau),\cdot)\leq 0\}.$$
In particular, $\tau_\pm$ only depends on $M$ and $v$. We define the following bilinear map $Q^M_v:T_pM\oplus T_pM\to\RR$ as 
$$Q^M_v(w,w'):=\begin{cases}
\displaystyle \frac{\tau_+}{\tau_+-\tau_-}\langle Y(\tau_-),Y'(\tau_-)\rangle- \frac{\tau_-}{\tau_+-\tau_-}\langle Y(\tau_+),Y'(\tau_+)\rangle,~&\mathrm{if}~\tau_+\neq\tau_-,\\
\displaystyle \langle w,w'\rangle=\langle Y(0),Y'(0)\rangle,~&\mathrm{if}~\tau_+=\tau_-.
\end{cases}$$
In the general case, for any nearly nonpositively curved manifold $M$ and any $p\in M$, we choose a regular neighborhood $U$ of $p$ with canonical decomposition $U=U_0\times\cdots \times U_k$. Then for any $v, w,w'\in T_pM$ such that 
$$v=\sum_{i=0}^kv_i,~w=\sum_{i=0}^kw_i~\mathrm{and}~w'=\sum_{i=0}^kw_i'\mathrm{~for~some~}v_i,w_i,w_i'\in T_pU_i,~i\in\{0,\cdots ,k\},$$
we define
\begin{align}\label{eqn:infi Lyap2}
Q_v^M(w,w'):=\sum_{i=0}^kQ^{U_i}_{v_i}(w_i,w_i').
\end{align}
By the remark under Definition \ref{dfn:near nonpos curv}, the right-hand-side in \eqref{eqn:infi Lyap2} does not depend on the choice of $U$. Hence $Q_v^M$ is well-defined. Moreover, similar to \eqref{eqn:noninc baby Lyap fcn2}, for any Jacobi field $Y(t)$ along a geodesic $c(t)$ defined on a nearly nonpositively curved manifold $M$, the function
\begin{align}\label{eqn:noninc Lyap fcn2}
f_M(Y,t):=Q^M_{\dot\gamma(t)}(Y(t), Y'(t))
\end{align}
is nondecreasing. (See \cite[Fundamental Lemma 5.12]{BG94}.) In particular, if $M$ is nonpositively curved, \eqref{eqn:noninc Lyap fcn2} coincide with \eqref{eqn:noninc baby Lyap fcn2}.

When $M$ is nonpositively curved, for any geodesic $c(t)$ and any Jacobi field $Y(t)$, $f_M(Y,t)\equiv 0$ if and only if $Y(t)$ is a parallel Jacobi field. Hence the \emph{rank} of a geodesic $c(t)$ is the dimension of the space of Jacobi fields $Y(t)$ such that $f_M(Y,t)\equiv 0$. We borrow the same definition for nearly nonpositively curved manifolds.
\begin{dfn}[Rank of a geodesic, {\cite[Definition 5.15]{BG94}}]\label{dfn:geo rank}
The \emph{rank} of a geodesic (or geodesic segment) $c(t)$ in a nearly nonpositively curved manifold $M$ is the dimension of the space of Jacobi fields $Y(t)$ such that $f_M(Y,t)\equiv 0$. The rank of a non-zero vector $v$ is defined to be the rank of the bi-infinite geodesic with initial vector $v$.
\end{dfn}
\begin{rmk}
The original definition of the rank of a geodesic (or geodesic segment) in \cite[Definition 5.15]{BG94} differs slightly from the above definition. More precisely, we replace the condition ``$f_M(Y,t)\equiv\text{constant}$'' by ``$f_M(Y,t)\equiv 0$''. These two definitions are in fact equivalent. When $\dim(M)=1$ or $M$ is nonpositively curved, this equivalence follows directly from \cite[Lemma 5.8]{BG94}. The case when $\dim(M)\geq 3$ follows from \cite[Lemma 5.19]{BG94} and the $\dim(M)=2$ case. When $\dim(M)=2$, we sketch the proof of this equivalence: We observe that the Wronskian associated to any two solutions to \cite[Equation (4), page 33]{BG89a} is constant, which is also mentioned in the proof of \cite[Lemma 5.9]{BG94}. Then, following this observation and using the $C(\cdot), S(\cdot)$ functions in \cite[page 33]{BG89a}, one can prove that $b<0$ in the proof of \cite[Lemma 5.9]{BG94} (originally $b\leq 0$). Therefore, if the equality case of \cite[Lemma 5.9]{BG94} is achieved, then $\langle Y,Y'\rangle(t_1)=\langle Y,Y'\rangle(t_2)=0$. The rest of the proof then follows from \cite[Lemma 5.8]{BG94} and the above.

We emphasize that the equivalence above is not needed for any of our arguments. Alternatively, one may use the definition of \cite[Definition 5.15]{BG94} throughout the rest of this paper. All the results below admit corresponding formulations and proofs with essentially no change.

Since $c'(t)$ is a parallel Jacobi field along any geodesic $c(t)$, the rank of a geodesic (or a non-zero vector) is at least one. We say a Jacobi field is an \emph{orthogonal parallel Jacobi field} along $c(t)$ if it is parallel along $c(t)$ and is orthogonal to $c'(\cdot)$ everywhere. If $c(t)$ admits an orthogonal parallel Jacobi field, then $c(t)$ has rank at least two.
\end{rmk}

In the Subsection \ref{subsect:B.and SBAP}, we will introduce a criterion for the Bernoulli property of the geodesic flow on a nearly nonpositively curved manifold. Roughly speaking, if there are ``sufficiently many'' rank one unit vectors, then the geodesic flow is Bernoulli. 

We conclude this subsection by introducing three useful lemmas from \cite{BG94} without proofs.
\begin{lem}\label{lem:rank-1 lem 1}
A vector $v\in TM$ on a nearly nonpositively curved manifold $M$ is \emph{non-normal} if one of the projections of $v$ to a cap factor either vanishes or is tangent to the boundary of a cap. The function $v\to Q_v^M$ is continuous unless $v$ is non-normal. Moreover, the set of non-normal vectors is a subset of the vectors with rank greater than or equal to $2$ and is invariant under the geodesic flow. (See Lemma 5.7, the remark before Lemma 5.10 and the first paragraph in the proof of Lemma 5.17 in \cite{BG94}.)
\end{lem}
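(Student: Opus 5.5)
This statement collects facts from \cite{BG94}, so the plan is to reduce each clause to the local product structure of a regular neighborhood together with the two-dimensional theory of extended caps. Fix $v\in T_pM$ and a regular neighborhood $U=U_0\times\cdots\times U_k$ of $p$, and write $v=\sum_i v_i$ with $v_i\in T_pU_i$. By \eqref{eqn:infi Lyap2}, $Q^M_v$ is a sum of the factor forms $Q^{U_i}_{v_i}$, and the splitting $T_pM=\oplus T_pU_i$ is canonical by the remark after Definition \ref{dfn:near nonpos curv}. Hence the continuity question is local and factorwise.

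\emph{Continuity.} On the nonpositively curved factor $U_0$ we have $\tau_\pm=0$, so $Q^{U_0}_{v_0}(w,w')=\langle w,w'\rangle$. This is clearly continuous in $v_0$. On a cap factor $U_i$, the numbers $\tau_\pm(v_i)$ are the entry and exit times of the geodesic $c_{v_i}$ into the open positively curved cap $C$. If $v_i\neq 0$ and $v_i$ is not tangent to $\partial C$, then $c_{v_i}$ either meets $\partial C$ transversally or misses $\overline C$ at a positive distance. By transversality and the implicit function theorem, $\tau_\pm$ then depend continuously on $v_i$. The Jacobi data $Y(\tau_\pm),Y'(\tau_\pm)$ depend continuously on $(v_i,w,w')$ and on $\tau_\pm$, so $Q^{U_i}_{v_i}$ is continuous. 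Note that the definition also switches to $\langle w,w'\rangle$ when $\tau_+=\tau_-$; near a vector whose geodesic misses the cap we still have $\tau_+=\tau_-$ nearby, so no jump occurs there.

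\emph{Non-normal vectors have rank at least $2$.} First suppose some cap component vanishes, say $v_i=0$ with $i\ge1$. Then $c_v$ stays in $U_0\times\cdots\times\{p_i\}\times\cdots$, at least while it remains in the regular neighborhood. Any parallel vector field tangent to $U_i$ along $c_v$ is then a Jacobi field, because the product curvature mixes no factors. Such a field is orthogonal to $c'$ and has $Q$ identically zero. Next suppose $v_i$ is tangent to $\partial C$. Then the $U_i$-component of $c_v$ runs along the closed geodesic $\partial C$, where the curvature is zero (Definition \ref{dfn:caps} and its remark). So the parallel field in $U_i$ normal to $\partial C$ is a Jacobi field with $f_M\equiv0$. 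In both cases this gives a second Jacobi field with $f_M\equiv 0$, so the rank is at least $2$, using the remark after Definition \ref{dfn:geo rank}.

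\emph{Flow invariance.} Within one product chart, the factor components of $c'(t)$ evolve by the geodesic flows of the factors. A zero component stays zero, and a component tangent to $\partial C$ stays tangent, since $\partial C$ is a closed geodesic. The main obstacle is passing between overlapping regular neighborhoods along $c_v$. The remark after Definition \ref{dfn:near nonpos curv} shows that the splitting of $T_{c(t)}M$ is independent of the chart, and I would use it to argue that non-normality at time $t$ is intrinsic. A connectedness argument on the set of times where $c'(t)$ is non-normal (open and closed in $\RR$) then finishes the proof. This is exactly the content of \cite[Lemma 5.7, Lemma 5.17]{BG94}, which I would cite for these details.
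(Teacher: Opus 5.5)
Your continuity step misreads $\tau_\pm$. By definition $\tau_+$ is the first time $\tau\ge 0$ at which the curvature along $c$ is $\le 0$. Hence $\tau_+=\tau_-=0$ as soon as the curvature at the footpoint of $v_i$ is $\le 0$, in particular for footpoints on $\partial C$ or just outside $\overline C$, whatever $c_{v_i}$ does later. Only for footpoints in the open cap are $\tau_\pm$ the exit and entry times.

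So the assertion that $\tau_\pm$ depend continuously on every nonzero $v_i$ not tangent to $\partial C$ is false. Take $v_i$ based at a point of $\partial C$ and pointing transversally into $C$. It is normal and $\tau_\pm(v_i)=0$. But for nearby $u$ based inside $C$ one has $\tau_-(u)\to 0$ while $\tau_+(u)\to T>0$, the time $c_{v_i}$ needs to cross the cap. Your argument therefore says nothing at normal vectors with footpoint on the cap boundary, which is the genuinely delicate case of the lemma.

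The missing idea is that $Q_u(w,w')$ is the value at $t=0$ of the linear interpolation of $t\mapsto\langle Y(t),Y'(t)\rangle$ between $\tau_-$ and $\tau_+$. As $u\to v_i$, the weight $-\tau_-/(\tau_+-\tau_-)$ on the far endpoint tends to $0$, while $\langle Y(\tau_+),Y'(\tau_+)\rangle$ stays bounded. The weight $\tau_+/(\tau_+-\tau_-)$ tends to $1$ and multiplies $\langle Y(\tau_-),Y'(\tau_-)\rangle\to\langle w,w'\rangle=Q_{v_i}(w,w')$. The case of outward-pointing $v_i$ is symmetric, and nearby $u$ based outside $C$ give $\langle w,w'\rangle$ directly. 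With this case added, your factorwise continuity argument goes through.

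A smaller point concerns the rank step. There you obtain a parallel Jacobi field with $f_M\equiv 0$ only along the portion of $c_v$ inside one regular neighborhood, whereas rank is defined via Jacobi fields on the whole bi-infinite geodesic. Extending the field needs exactly the chart-independence you postpone to your last paragraph. The field must stay in a cap-factor direction of each successive chart, where the corresponding component of $c'$ is still zero or tangent to the cap boundary, so that it remains Jacobi with $f_M\equiv 0$. So the invariance argument has to come before the rank argument, not after it.

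Delegating that chart-switching to \cite{BG94} is consistent with the paper, which gives no proof of this lemma and only cites \cite{BG94}.
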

\begin{lem}[{\cite[Lemma 5.18]{BG94}}]\label{lem:rank-1 lem 2}
Suppose $c$ is a nonconstant geodesic in a manifold with nearly nonpositive curvature and there exists some $t_0$ such that the sectional curvature of every plane containing $c'(t_0)$ is negative. Then $\mathrm{rank}(c)=1$.
\end{lem}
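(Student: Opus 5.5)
The plan is to reduce to an explicit computation of Jacobi fields along $c$ at the time $t_0$, combined with the monotonicity of $f_M(Y,t)$ from \cite[Fundamental Lemma 5.12]{BG94}. Fix a Jacobi field $Y$ along $c$ with $f_M(Y,t)\equiv 0$. We must show that $Y$ lies in the span of $c'$ and $tc'$. Since the rank counts such fields, and $tc'$ does not satisfy $f\equiv 0$ (for it $f=\langle tc',c'\rangle$ is nonconstant), this gives rank exactly one. By subtracting the component of $Y$ along $c'$, which is itself of the form $(a+bt)c'$, I reduce to the case where $Y$ is orthogonal to $c'$. This needs the observation that $Q$ splits orthogonally in this way, and I will check it factor by factor in a regular neighborhood. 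The reduced goal is then $Y\equiv 0$.

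First I take a regular neighborhood $U=U_0\times\cdots\times U_k$ of $c(t_0)$. The hypothesis is that every plane containing $c'(t_0)$ has negative curvature. This forces the component of $c'(t_0)$ in each factor to be "nondegenerate." Indeed, if $c'(t_0)$ had a zero component in some factor $U_i$ with $i\ge 0$ and $\dim\ge 1$, then any plane spanned by $c'(t_0)$ and a vector of $T U_i$ would be a mixed product plane, which has zero curvature. So for every factor either the factor is trivial or the plane is absent. In fact, if $k\ge 1$ or $\dim U_0\ge 1$ alongside another factor, mixed planes spanned by $c'(t_0)$ and $(v_i,-\lambda v_j)$-type vectors have curvature $\ge$ a convex combination that can vanish. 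I therefore expect to conclude that $M$ is locally a single factor at $c(t_0)$: either nonpositively curved with $K<0$ on planes through $c'(t_0)$, or a surface (a cap factor) in which $c(t_0)$ lies in the negatively curved annulus. This factor analysis is the delicate step. I would try first to show that a product with two nontrivial factors always has a zero-curvature plane containing a given vector, using $\sigma=\mathrm{span}(c', w)$ with $w$ chosen in the orthogonal complement of $c'$ so that it pairs the two factor components. A cleaner alternative is simply to take $w$ in a factor where $c'$ has zero component, if one exists. Otherwise, the curvature of $\mathrm{span}(v_0+v_1, a v_0 - b v_1)$ is a positive combination of the factor curvatures of $\mathrm{span}(v_i,v_i)$, which is zero.

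Next, in a small time interval $I$ around $t_0$, the curvature along $c$ is negative on planes containing $c'$, so $Q^{U}_{c'(t)}(Y,Y')=\langle Y,Y'\rangle$ there; in particular $\tau_\pm=0$ in the definition. Hence $f_M(Y,t)=\langle Y,Y'\rangle$ on $I$. Its derivative is $|Y'|^2-\langle R(Y,c')c',Y\rangle$. For $Y\perp c'$ this is strictly positive unless $Y=Y'=0$, because the second term is bounded below by a positive multiple of $|Y|^2$ by negative curvature. Since $f\equiv0$ on $I$, we get $Y(t_0)=Y'(t_0)=0$, so $Y\equiv 0$ by uniqueness of solutions of the Jacobi equation. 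This shows that the space of fields with $f\equiv 0$ is spanned by $c'$, and therefore $\mathrm{rank}(c)=1$.
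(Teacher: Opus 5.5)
The paper quotes this lemma from \cite{BG94} without proof. Your overall route is the standard one, and it is the same computation the paper uses in Case~1 of its appendix argument that ``$f_M\equiv$ constant'' implies ``$f_M\equiv 0$''. The hypothesis forces $c(t_0)$ to have a nonpositively curved neighborhood, so $\tau_\pm=0$ and $f_M(Y,t)=\langle Y,Y'\rangle$ for $t$ near $t_0$, and one then differentiates. Your two product observations do exclude two factors of positive dimension. If $c'(t_0)$ has a zero component in one factor, a mixed plane is flat. Otherwise, writing $c'(t_0)=v+v'$, the plane $\mathrm{span}(v+v',\,|v'|^2v-|v|^2v')$ is flat. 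Your alternative of a single cap factor with $c(t_0)$ in the negatively curved annulus cannot actually occur, because a cap factor contains a cap containing the base point, so the curvature there is nonnegative. Including that case does no harm, though, since $\tau_\pm=0$ wherever the current curvature is $\le 0$.

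The one step that does not go through as written is the reduction to $Y\perp c'$. Split $Y=Y_\parallel+Y_\perp$ with $Y_\parallel=(a+bt)c'$. On the interval $I$ where $f=\langle Y,Y'\rangle$, the splitting gives $f(Y_\perp,t)=-f(Y_\parallel,t)=-b(a+bt)|c'|^2$. This is not identically zero unless $b=0$, so $f(Y,\cdot)\equiv 0$ does not by itself give $f(Y_\perp,\cdot)\equiv0$. Your remark that $tc'$ fails $f\equiv 0$ comes after the reduction and does not supply $b=0$.

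The patch is one line: differentiating gives $|Y_\perp'|^2-\langle R(Y_\perp,c')c',Y_\perp\rangle=-b^2|c'|^2$ on $I$, and the left side is $\ge 0$, so $b=0$. It is simpler to drop the reduction altogether, together with the global orthogonal splitting of $Q$ that you promise but never need. Apply your local argument to $Y$ itself: $0=\frac{d}{dt}\langle Y,Y'\rangle=|Y'|^2-\langle R(Y,c')c',Y\rangle$, and both terms are nonnegative. Hence $Y'(t_0)=0$. Since every plane through $c'(t_0)$ is negatively curved, also $Y(t_0)=\alpha c'(t_0)$. Therefore $Y=\alpha c'$, and the set of Jacobi fields with $f\equiv 0$ is exactly $\RR c'$.
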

\begin{lem}[{An adapted version of \cite[Lemma 5.19]{BG94}, using the remark after Definition \ref{dfn:geo rank}}]\label{lem:rank-1 lem 3}
Suppose $c$ is a geodesic in the product of two nearly nonpositively curved manifolds $A$ and $B$, and $Y$ is a Jacobi field along $c$ such that $f_{A\times B}(Y,t)\equiv 0$. Then $f_{A}(Y_A,t)\equiv 0$ and $f_{B}(Y_B,t)\equiv 0$, where $Y_A$ and $Y_B$ are images of $Y$ under projections onto $A$ and $B$ respectively, and are Jacobi fields along $c_A$ and $c_B$ respectively.
\end{lem}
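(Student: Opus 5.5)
The plan is to reduce the statement to three facts already available: the additivity of $Q^M_v$ over the canonical decomposition, the monotonicity in \cite[Fundamental Lemma 5.12]{BG94}, and the equivalence of the conditions ``$f\equiv$ constant'' and ``$f\equiv 0$'' sketched in the remark after Definition \ref{dfn:geo rank}.

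\textbf{Splitting the geodesic and the Jacobi field.} Write $c=(c_A,c_B)$. Since the metric on $A\times B$ is a product, $c_A$ and $c_B$ are (possibly constant) geodesics of $A$ and $B$. The curvature tensor splits as well, so the Jacobi equation along $c$ decouples. Hence $Y=Y_A+Y_B$ with $Y_A$ and $Y_B$ Jacobi fields along $c_A$ and $c_B$, and $Y'=Y_A'+Y_B'$.

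\textbf{Additivity of $f$.} I claim that
$$f_{A\times B}(Y,t)=f_A(Y_A,t)+f_B(Y_B,t).$$
Fix $t$ and choose regular neighborhoods $U=U_0\times\cdots\times U_k$ of $c_A(t)$ in $A$ and $V=V_0\times\cdots\times V_l$ of $c_B(t)$ in $B$. Then $U\times V$ is a regular neighborhood of $c(t)$. Its canonical decomposition has nonpositively curved factor $U_0\times V_0$ and cap factors $U_1,\dots,U_k,V_1,\dots,V_l$. The one point to check is that $Q^{U_0\times V_0}$ splits as $Q^{U_0}+Q^{V_0}$. This holds because on a nonpositively curved manifold $Q_v(w,w')=\langle w,w'\rangle$, and the inner product splits orthogonally. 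By the remark after Definition \ref{dfn:near nonpos curv}, the right side of \eqref{eqn:infi Lyap2} is independent of the chosen regular neighborhood. Reading \eqref{eqn:infi Lyap2} for $U\times V$ therefore gives the claimed identity.

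\textbf{Each summand is constant.} By \cite[Fundamental Lemma 5.12]{BG94}, both $f_A(Y_A,\cdot)$ and $f_B(Y_B,\cdot)$ are nondecreasing. If their sum vanishes identically, then $f_A=-f_B$ is both nondecreasing and nonincreasing. So $f_A(Y_A,\cdot)\equiv C$ and $f_B(Y_B,\cdot)\equiv -C$ for some constant $C$.

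\textbf{Constant forces zero.} It remains to show $C=0$, and I expect this to be the only genuinely nontrivial step. My first move is to invoke the equivalence in the remark after Definition \ref{dfn:geo rank}: for a nearly nonpositively curved manifold, a Jacobi field with $f$ constant actually has $f\equiv 0$. Applied to $A$ this gives $f_A(Y_A,\cdot)\equiv0$, hence $f_B(Y_B,\cdot)\equiv0$.

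The degenerate case where $c_A$ is a constant curve needs separate handling. Then the curvature term vanishes and $\tau_\pm=0$. The field is linear, $Y_A(t)=w+tw'$, so
$$f_A=\langle w,w'\rangle+t|w'|^2.$$
Constancy forces $w'=0$, and hence $f_A\equiv 0$ directly. The same argument applies if $c_B$ is constant.

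If one prefers to avoid the equivalence remark entirely, the alternative is to use the original formulation of \cite[Lemma 5.19]{BG94}, which concludes that each of $f_A$ and $f_B$ is constant. One then upgrades constancy to vanishing by the same remark, applied factor by factor via the canonical decomposition. This reduces the claim to the surface case and the nonpositively curved case, which are \cite[Lemma 5.8, Lemma 5.9]{BG94} with the sharpened inequality $b<0$.
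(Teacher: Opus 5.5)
Your proposal is correct and is essentially the paper's argument. The paper cites \cite[Lemma 5.19]{BG94}, which amounts to your additivity-plus-monotonicity step giving $f_A(Y_A,\cdot)\equiv C$ and $f_B(Y_B,\cdot)\equiv -C$, and then upgrades constancy to vanishing via the equivalence in the remark after Definition \ref{dfn:geo rank} (proved in the appendix); your separate check for a constant factor geodesic is a harmless extra, and your closing ``alternative'' is not genuinely different because it still rests on that same remark.
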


\subsection{Strong ball avoidance property (SBAP) and the Bernoulli property of geodesic flow}\label{subsect:B.and SBAP}
\begin{dfn}[Nonseparating sets, {\cite[Definition 3.1]{BG94}}]\label{dfn:nonsep sets}
Let $X$ be a manifold, possibly without boundary, and $U\subset X$ be open and nonempty. A subset $N$ of $X$ is called \emph{nonseparating in $U$} if the following holds.
\begin{itemize}
\item $N\cap U$ is closed in $U$ and has empty interior.
\item $O\setminus N$ is connected for any connected open subset $O\subset U$.
\end{itemize}
If we say $N$ is nonseparating without mentioning $U$, we mean that $N$ is nonseparating in $X$.
\end{dfn}
\begin{dfn}[Ball avoidance property (BAP), {\cite[Definition 3.3]{BG94}}]\label{dfn:BAP}
Let $\varphi^t$ be a flow on a manifold $X$ and $U\subset X$ be a nonempty open subset. We say that $\varphi^t$ has the \emph{ball avoidance property (BAP) on $U$} if for any nonempty open subset $B\subset X$, the set $\{x\in X|\varphi^tx\not\in B,~\forall t\in\RR\}$ is nonseparating in $U$.
\end{dfn}
\begin{dfn}[Strong ball avoidance property (SBAP), {\cite[Definition 3.4]{BG94}}]\label{dfn:SBAP}
Let $\varphi^t$ be a flow on a manifold $X$ and $U\subset X$ be a nonempty open subset. We say that $\varphi^t$ has the \emph{strong ball avoidance property (SBAP) on $U$} if the product flow $\varphi^t\times\psi^t$ has the BAP on $U\times V$ whenever $\psi^t$ is a flow on a manifold $Y$ and $V$ is an open subset of $Y$ satisfying the following properties:
\begin{itemize}
\item $\psi^t$ has the BAP on $V$.
\item $V$ is not contained in a nonperiodic orbit of $\psi^t$.
\end{itemize}
\end{dfn}
We provide some basic facts regarding nonseparating sets, BAP and SBAP without proofs.
\begin{fact}\label{fact:SBAP}
Let $X$ be a manifold, possibly with boundary, and $U\subset X$ be open and nonempty. Let $N$ be a subset of $X$. Let $\varphi^t$ be a flow on $X$. Then the following holds:
\begin{enumerate}[ref=\ref{fact:SBAP}(\arabic*)]
\item If $N$ is nonseparating in $U$, it is nonseparating in any nonempty open subset of $U$. (This follows directly from Definition \ref{dfn:nonsep sets}.)\label{fact:SBAP-1}
\item A finite union of sets that are nonseparating in $U$ is also nonseparating in $U$. (See \cite[Lemma 3.2(iii)]{BG94} or \cite[Property 2, p.321]{KSS89}.) \label{fact:SBAP-2}
\item Assume that $X=\cF\times X_0$, where $\cF$, $X_0$ are manifolds. For any $x\in X_0$, we define $U_x:=U\cap (\cF\times \{x\})$ and $N_x=N\cap (\cF\times \{x\})$. If $N\cap U$ is closed in $U$ and $N_x$ is nonseparating in $U_x$ for any $x\in X_0$, then $N$ is non-separating in $U$. (This is a slight generalization of \cite[Property 4, p.321]{KSS89}. In \cite[Property 4, p.321]{KSS89}, $U$ is assumed to be $X$. One can check that their proof also works for any open subset $U$.)\label{fact:SBAP-3}
\item $N$ is nonseparating in $U$ if and only if for any $x\in U$, there exists an open neighborhood $U_x\subset U$ of $x$ such that $N$ is nonseparating in $U_x$. (See \cite[Lemma 3.2(iv)]{BG94}.) \label{fact:SBAP-4}
\item If the flow $\varphi^t$ has the BAP (resp. SBAP) on $U$, then it has the BAP (resp. SBAP) on any nonempty open subsets of $U$. (This is a direct corollary of the first item.) \label{fact:SBAP-5}
\item Let $U'$ be another open subset of $X$ such that $U\subset U'$ and that $U'\setminus U$ is nonseparating in $U'$. Suppose that $U$ is invariant under the flow $\varphi^t$ and $\varphi^t$ has the BAP (resp. SBAP) on $U$, then the flow $\varphi^t$ has the BAP (resp. SBAP) on $U'$. (See \cite[Lemma 3.6]{BG94}.) \label{fact:SBAP-6}
\end{enumerate}
\end{fact}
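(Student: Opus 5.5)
The plan is to prove the six items in the order (1), (5), (4), (2), (3), (6). Each later item uses only Definition \ref{dfn:nonsep sets} and the earlier items. Throughout I use one elementary observation: if $N\cap U$ is closed in $U$ with empty interior, then $O\setminus N$ is open and dense in every open $O\subset U$.

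\emph{Items (1) and (5).} For (1), let $U'\subset U$ be open. Then $N\cap U'=(N\cap U)\cap U'$ is closed in $U'$, and it has empty interior because an open subset of $U'$ is open in $U$. Every connected open $O\subset U'$ is also a connected open subset of $U$, so $O\setminus N$ is connected. Item (5) follows at once: BAP on $U$ says that certain sets are nonseparating in $U$, so by (1) they are nonseparating in any open $U'\subset U$. For SBAP, apply the same argument to the product flows on $U'\times V\subset U\times V$.

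\emph{Item (4).} One direction is (1). For the converse, closedness and empty interior are local properties. For connectivity, take a connected open $O\subset U$ and cover it by connected open sets $W\subset O\cap U_x$, for instance small coordinate balls. Each $W\setminus N$ is connected and nonempty, since $N$ has empty interior. If $W\cap W'\neq\emptyset$, then $W\cap W'$ is a nonempty open set, so it meets the dense set $O\setminus N$, and hence $W\setminus N$ and $W'\setminus N$ intersect. Because $O$ is connected, any two of these balls are joined by a finite chain of pairwise intersecting balls. Therefore $O\setminus N$, which is the union of the sets $W\setminus N$, is connected.

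\emph{Item (2).} By induction it suffices to treat $N_1\cup N_2$. A finite union of closed sets is closed. A finite union of closed nowhere dense sets is nowhere dense by the Baire category theorem, or directly because $O\setminus N_1$ is open dense and $N_2$ cannot contain an open subset of it. For connectivity, fix a connected open $O$ and set $O_1:=O\setminus N_1$, which is open and connected. Since $N_2$ is nonseparating in $O_1\subset U$ by (1), the set $O_1\setminus N_2=O\setminus(N_1\cup N_2)$ is connected.

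\emph{Item (3).} By (4) it suffices to work locally, on product neighborhoods $W=F\times B$ with $F\subset\cF$ and $B\subset X_0$ connected coordinate balls. Closedness is assumed. Empty interior holds because an open set inside $N$ would contain an open piece of some fiber inside $N_x$. For connectivity of $W\setminus N$, each slice $(F\times\{x\})\setminus N_x$ is connected by hypothesis, and each slice is dense in $F\times\{x\}$. Let $P,Q$ be two points of $W\setminus N$. The set $\{x\in B: \text{the slice over }x\text{ lies in the component of }P\}$ is open, because $N\cap W$ is closed: a point of the slice has a small product neighborhood avoiding $N$, and that neighborhood meets every nearby slice. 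The complement of this set in $B$ is open by the same argument. Since $B$ is connected, $P$ and $Q$ lie in the same component. Finally, pass from product balls to general connected open $O$ by the chaining argument of (4).

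\emph{Item (6).} I expect this to be the main obstacle, since it is the only item involving the flow. Take $B\subset X$ open and put $N_B:=\{x:\varphi^tx\notin B\ \forall t\}$. By BAP on $U$, the set $N_B$ is nonseparating in $U$. I claim $N_B\cup(U'\setminus U)$ is nonseparating in $U'$. Its intersection with $U'$ is closed: $U'\setminus U$ is closed in $U'$ by hypothesis, and $N_B$ is closed in $X$ because it is an intersection of the closed sets $\varphi^{-t}(X\setminus B)$. Given a connected open $O\subset U'$, the set $O\setminus(U'\setminus U)=O\cap U$ is connected, and $N_B$ is nonseparating in it, so $(O\cap U)\setminus N_B$ is connected. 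This set is dense in $O$, and hence $O\setminus(N_B\cup(U'\setminus U))$ is connected. Empty interior comes from (2)-type reasoning. The invariance of $U$ is what makes $N_B\cap U$ well behaved along orbits; this is the delicate point.

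The remaining gap is that the union set only contains $N_B\cap U'$, and a subset of a nonseparating set need not be nonseparating. To close it I would use the observation that $N_B\cap U'=(N_B\cap U)\cup(N_B\cap(U'\setminus U))$. The first piece separates nothing, and the second lies in a closed nowhere dense set whose removal does not disconnect. Connectivity of $O\setminus N_B$ then follows because $O\setminus N_B$ contains the connected dense set $(O\cap U)\setminus N_B$, and every point of $O\setminus N_B$ lies in the closure of that dense set. For the SBAP version, apply the same argument to the product flow on $U'\times V$, using that $(U'\setminus U)\times V$ is nonseparating in $U'\times V$ by (3).
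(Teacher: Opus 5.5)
Your proposal is correct. The paper writes out a proof only for item (3), in the appendix, and your argument there is the same one. You reduce, by chaining product balls inside a connected open $O$, to showing that $W\setminus N$ is connected for a product ball $W=F\times B\subset U$. Each slice $(F\times\{x\})\setminus N$ is connected and nonempty, so a separation of $W\setminus N$ would induce a separation of the connected base $B$. Your check that the set of base points whose slice lies in a given component is open, via a product neighborhood avoiding the closed set $N\cap W$, supplies a step the paper asserts without comment; equivalently, the paper's $H_j$ is the projection of the open set $G_j$. For items (1), (2), (4), (5) and (6) the paper only cites \cite{BG94} and \cite{KSS89}, so your self-contained arguments go beyond the paper, and they are sound.

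Two remarks on (6).

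First, your opening claim that $N_B\cup(U'\setminus U)$ is nonseparating in $U'$ is unnecessary. The actual proof is the three facts you end with:
\begin{itemize}
\item $N_B$ is closed in $X$.
\item Any open $V\subset N_B\cap U'$ would give a nonempty open set $V\cap U\subset N_B\cap U$, which is impossible.
\item $(O\cap U)\setminus N_B$ is connected, dense in $O$, and contained in $O\setminus N_B$, so $O\setminus N_B$ is connected.
\end{itemize}

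Second, your sentence calling the invariance of $U$ ``the delicate point'' is inaccurate. Your argument never uses invariance, and it is not needed. In this paper's definition of the BAP, the avoided set $B$ ranges over all nonempty open subsets of $X$, so $N_B$ is a fixed closed subset of $X$ that does not depend on $U$. Invariance would matter only if the avoided ball had to lie inside $U$. In that case it gives $N_B\cap U=N_{B\cap U}\cap U$ for open $B\subset U'$, which lets you replace $B$ by $B\cap U$.
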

%
%
%
To conclude this section, we introduce a slightly modified version of \cite[Theorem 5.17]{BG94}.
\begin{thm}[Corollary of {\cite[Theorem 5.17]{BG94}}]\label{thm:main criterion}
Let $M$ be a closed Riemannian manifold with nearly nonpositive curvature and dimension at least $2$. Suppose that there exists a connected subset $\cW$ of the set of rank one unit vectors such that the complement of $\cW$ in $T^1M$ has zero Lebesgue measure, then the geodesic flow $\varphi^t$ on $M$ is Bernoulli on $T^1M$ and has the SBAP on the set of rank one unit vectors.
\end{thm}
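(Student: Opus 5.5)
The plan is to reduce the statement to \cite[Theorem 5.17]{BG94}, which I take to assert the following: if the set $\cR\subset T^1M$ of rank one unit vectors of a closed nearly nonpositively curved manifold is connected and has full Lebesgue measure, then $\varphi^t$ is Bernoulli and has the SBAP on $\cR$. If its hypotheses are phrased differently (for instance via nonseparating sets), the same steps apply with that formulation. So the work is to pass from the given connected set $\cW$ to $\cR$ itself, in two steps.

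\emph{Step 1: $\cR$ has full measure.} This is immediate from $\cW\subset\cR$ and $\mathrm{Leb}(T^1M\setminus\cW)=0$.

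\emph{Step 2: $\cR$ is connected.} First I would show that $\cR$ is open in $T^1M$. By Lemma \ref{lem:rank-1 lem 1}, every non-normal vector has rank at least $2$, so every $v\in\cR$ is normal, and $v\mapsto Q^M_v$ is continuous near $v$. Rank is then upper semicontinuous at $v$, following the argument in \cite[Section 5]{BG94}. Suppose $v_j\to v$ with $\mathrm{rank}(v_j)\ge 2$. For each $j$, choose a Jacobi field $Y_j$ along $c_{v_j}$ with $f_M(Y_j,\cdot)\equiv 0$, linearly independent from $c_{v_j}'$ and normalized in initial data. Pass to a limit $Y$ along $c_v$. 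On compact time intervals along which $c_v$ stays normal, continuity of $Q$ gives $f_M(Y,\cdot)\equiv 0$. Normality is flow-invariant by Lemma \ref{lem:rank-1 lem 1}, so $c_v$ stays normal for all time and this holds everywhere. Hence $\mathrm{rank}(v)\ge 2$, a contradiction, and $\cR$ is open.

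Since $T^1M\setminus\cW$ is null, $\cW$ is dense in $T^1M$. Therefore $\cW\subset\cR\subset T^1M=\overline{\cW}$. A set sandwiched between a connected set and its closure is connected, so $\cR$ is connected.

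\emph{Step 3: conclusion.} With Steps 1--2, the hypotheses of \cite[Theorem 5.17]{BG94} hold, since $M$ is closed, nearly nonpositively curved, and of dimension at least $2$. That theorem then gives that $\varphi^t$ is Bernoulli on $T^1M$ with respect to Liouville measure. For the SBAP on $\cR$, I would read it off the proof of \cite[Theorem 5.17]{BG94}, which establishes BAP/SBAP on the rank one set via Fact \ref{fact:SBAP-6} and local product structure.

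The main obstacle is Step 2's semicontinuity: the limiting argument needs care because $Q^M_v$ can be discontinuous at non-normal vectors. It is exactly Lemma \ref{lem:rank-1 lem 1} (non-normal implies rank at least $2$, and non-normality is flow-invariant) that confines the whole limiting geodesic to the region where $Q$ is continuous. A lesser issue is matching the precise form of the hypotheses in \cite[Theorem 5.17]{BG94}.
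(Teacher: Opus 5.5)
Your proposal is correct and matches the paper's proof. The paper likewise notes that $\cR$ has full measure, obtains connectedness of $\cR$ from Lemma~\ref{lem:conn of full mrs set} (a measure-theoretic form of your observation $\cW\subset\cR\subset\overline{\cW}$), and then invokes \cite[Theorem 5.17]{BG94}, which already includes the SBAP conclusion. Your openness and semicontinuity argument for $\cR$ is not needed, since the sandwich argument works for any set lying between $\cW$ and $\overline{\cW}=T^1M$.
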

The only difference between Theorem \ref{thm:main criterion} and \cite[Theorem 5.17]{BG94} is that we do not assume that $\cW$ is the set of rank one unit vectors in Theorem \ref{thm:main criterion}. The proof of Theorem \ref{thm:main criterion} relies on the following elementary lemma.
\begin{lem}\label{lem:conn of full mrs set}
Let $M$ be an open Riemannian manifold and $\cW_1\subset M$. Suppose there exists a connected subset $\cW$ of $\cW_1$ whose complement in $M$ has Lebesgue measure zero. Then $\cW_1$ is also connected and $M\setminus \cW_1$ has Lebesgue measure zero.
\end{lem}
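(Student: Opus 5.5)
The statement to prove is Lemma \ref{lem:conn of full mrs set}. The measure claim is immediate: $M\setminus\cW_1\subset M\setminus\cW$, and the latter is Lebesgue-null, so $M\setminus\cW_1$ is null as well. All the work is in connectedness. The plan is the standard closure argument: show that $\cW\subset\cW_1\subset\overline{\cW}$. Since $\cW$ is connected, any set squeezed between a connected set and its closure is connected, so this gives the claim.

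The key step is density: $\overline{\cW}=M$. Let $O\subset M$ be any nonempty open set. In a Riemannian manifold, $O$ contains a coordinate ball and so has positive Lebesgue measure. Hence $O$ cannot lie inside the null set $M\setminus\cW$, which means $O\cap\cW\neq\emptyset$. This holds for every nonempty open $O$, so $\cW$ is dense in $M$. In particular $\cW_1\subset M=\overline{\cW}$.

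Now suppose $\cW_1$ were disconnected. Then there are open sets $U,V\subset M$, each meeting $\cW_1$, with $\cW_1\subset U\cup V$ and $U\cap V\cap\cW_1=\emptyset$. The set $\cW$ is connected and contained in $\cW_1$, so it lies entirely in one of them, say $\cW\subset U$. Pick a point $x\in V\cap\cW_1$. Then $x\in\overline{\cW}$ and $V$ is an open neighborhood of $x$, so $V$ meets $\cW$. That point lies in $U\cap V\cap\cW_1$, which is a contradiction. Hence $\cW_1$ is connected.

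There is no real obstacle here. The only point that needs care is to use "nonempty open sets have positive measure" to get density of $\cW$, rather than trying to argue about $\cW_1$ directly.
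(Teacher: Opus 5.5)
Your proof is correct and is essentially the paper's argument. Both rest on the fact that nonempty open sets have positive Lebesgue measure, which forces each piece of a putative disconnection of $\cW_1$ to meet $\cW$, giving a disconnection of the connected set $\cW$. You package this as density of $\cW$ plus the fact that any set lying between a connected set and its closure is connected, which spares you the paper's step of checking that the relatively open pieces of $\cW_1$ have positive measure.
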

\begin{proof}[Proof of Lemma \ref{lem:conn of full mrs set}]
It is clear that $M\setminus\cW_1$ has Lebesgue measure zero. If $\cW_1$ is not connected, then there exist nonempty disjoint open subsets $A,B\subset \cW_1$ such that $\cW_1=A\sqcup B$. Since $\cW$ has full Lebesgue measure, $A\setminus \cW$ and $B\setminus \cW$ both have Lebesgue measure zero. Notice that $A,B$ are open subsets of $\cW_1$ and $M\setminus\cW_1$ has Lebesgue measure zero. Therefore, both $A$ and $B$ have positive Lebesgue measure. In particular, $A\cap \cW$ and $B\cap \cW$ are disjoint nonempty open subsets of $\cW$. This contradicts the assumption that $\cW$ is connected.
\end{proof}

Now we present the proof of Theorem \ref{thm:main criterion}.

\begin{proof}[Proof of Theorem \ref{thm:main criterion}]
Let $\cW_1$ be the set of rank one unit vectors. Since $\cW\subset \cW_1$ and $T^1M\setminus \cW$ has Lebesgue measure zero,  $T^1M\setminus \cW_1$ also has Lebesgue measure zero. Connectedness of $\cW_1$ follows from Lemma \ref{lem:conn of full mrs set} applied to $\cW_1$, $T^1M$ and $\cW$. The rest of the proof follows directly from \cite[Theorem 5.17]{BG94}.
\end{proof}

\section{Construction of a smooth metric on products of spheres}\label{sect:construction}
\subsection{Special caps and special tubes}\label{subsect:special tubes}
\begin{dfn}[Generalized extended caps]\label{dfn:gen caps}
For any integer $k\geq 1$, a \emph{generalized extended $k$-cap with (Euclidean) radius $R$} is a two-dimensional open disc $C_{R}:=\{z\in\CC:|z|<R\}$ equipped with a smooth positive semidefinite symmetric $2$-tensor field $g$ such that the following holds: There exist a smooth, strictly increasing function $h:[0,R]\to\RR$ and an extended cap $C:=\{z\in\CC:|z|<h(R)\}$ with metric $g_C$ satisfying the following properties:
\begin{itemize}
\item There exists some $R_0\in (0,R)$ such that $h(r)=r^k$ for any $r\in [0,R_0]$.
\item Let $F:C_{R}\to C$ be the smooth map defined as
$$F(z)=h(|z|)\cdot|z|^{-k}\cdot z^k.$$
Then $g:=F^*g_C$.
\end{itemize}
\end{dfn}
\begin{rmk}
By Definition \ref{dfn:gen caps} and the radial symmetry of extended cap metrics (see Definition \ref{dfn:caps}), any generalized extended $k$-caps are invariant under reflections about the $x$-axis and the $y$-axis. Also, when $k=1$, the positive semidefinite symmetric $2$-tensor field of a generalized extended $k$-cap is in fact a Riemannian metric.
\end{rmk}

Let $A$ be a consecutive subset of $\ZZ_+$ with $|A|\geq 2$ and $\Gamma$  be a good subgroup of $\Gamma_A$. Although the standard product metric on $\TT_A$ is invariant under $\Gamma$ (see \eqref{eqn:generator}), it does not descend to a desired Riemannian metric on $\Gamma\backslash\TT_A$ via $\Pi_\Gamma$ for the following reasons:
\begin{itemize}
\item $\Pi_\Gamma$ may have ramification locus.
\item If $\Pi^*_\Gamma g$ is the standard torus metric on $\TT_A$ for some Riemannian metric $g$ on $\Gamma\backslash\TT_A$, the geodesic flow for $(\Gamma\backslash\TT_A,g)$ is not ergodic.
\end{itemize} 
In order to resolve these issues, we would like to modify the standard torus metric near $\Omega_{\Gamma_A;A}$. To be specific, we would like to replace the standard product metric by products of suitable generalized extended caps and lower dimensional tori near $\Omega_{\Gamma_A;A}$. This requires a generalized extended cap which can be smoothly glued to the standard Euclidean metric on its complement in $\CC\isom\RR^2$. For this reason, we introduce the following notions.

\begin{dfn}[Special $k$-caps]\label{dfn:special caps}
Let $C_{R}:=\{z\in\CC:|z|<R\}$. For any generalized extended $k$-cap $(C_{R},g)$ with Euclidean radius $R$, let $h$, $C:=\{z\in\CC:|z|<h(R)\}$ and $F:C_{R}\to C$ be the same as in Definition \ref{dfn:gen caps}. We say that $(C_{R},g)$ is \emph{special} if the positive semidefinite symmetric $2$-tensor field $g$ can be extended to a smooth positive semidefinite symmetric $2$-tensor field $\widetilde g$ on $\CC$ such that
$$\widetilde g|_z:=\begin{cases}
\displaystyle g|_z,~&\mathrm{if~}z\in C_{R},\\
\displaystyle dz\otimes d\overline{z}=dx\otimes dx+dy\otimes dy,~&\mathrm{if~}z\in\CC\setminus C_{R},
\end{cases}$$
where $z=x+\mathbf{i}y$.
\end{dfn}
\begin{rmk}
In the rest of this paper, for any $R>0$ and for any $k\in\ZZ_+$, we fix a choice of special $k$-cap with Euclidean radius $R$. Due to technicality of the argument, we prove the existence of special $k$-caps in Proposition \ref{prop:special.caps.exist}.
\end{rmk}
In Definition \ref{dfn:special tubes}, we would like to construct special $k$-tubes by first inserting special $k$-caps into the 2-dimensional torus $\TT_{i,j}$ with $i\neq j$. Therefore, it is convenient for us to introduce the following definition:
\begin{dfn}[Special $k$-caps on $2$-torus]\label{dfn:cap.on.torus} 
For any distinct integers $i,j\in\ZZ_+$, for any point $p_{i,j}:=(\theta_i,\theta_j)\in\TT_{i,j}$ and for any $R\in(0,\pi)$, let
\begin{align}\label{eqn:standard.local.disc}
\phi_{p_{i,j}}:B_{i,j}(p_{i,j},R)\to C_{R}:=\{z\in\CC:|z|<R\},~\phi_{p_{i,j}}(\theta_i+\delta_i,\theta_j+\delta_j):=\delta_i+\mathbf{i}\delta_j.
\end{align} 
For any $k\in\ZZ_+$, a \emph{$\{i,j\}$-special $k$-cap with Euclidean radius $R$ centered at $p_{i,j}$} is of the form $(B_{i,j}(p_{i,j},R),g_{\text{cap}})$, where $g_{\text{cap}}=\phi_{p_{i,j}}^*g$ for a special $k$-cap $(C_{R},g)$ with Euclidean radius $R$. 
\end{dfn}
\begin{rmk}
For any $y\in\TT^1$, let $\kappa_y:\TT^1\to\TT^1$ such that $\kappa_y(x)=y-x$. Then, for any $y_i\in \TT_i$ and for any special $k$-cap $(B_{i,j},g_{\text{cap}})$ with Euclidean radius $R$ centered at $(\theta_i,\theta_j)\in\TT_{i,j}$, $((\kappa_{y_i},\id)^{-1}(B_{i,j}),(\kappa_{y_i},\id)^*g_{\text{cap}})$ is a special $k$-cap centered at $(\kappa_{y_i}(\theta_i),\theta_j)$ with the same Euclidean radius. Similarly, for any $y_j\in\TT_j$, $((\id,\kappa_{y_j})^{-1}(B_{i,j}),(\id,\kappa_{y_j})^*g_{\text{cap}})$ is a special $k$-cap centered at $(\theta_i,\kappa_{y_j}(\theta_j))$ with the same Euclidean radius.
\end{rmk}
\begin{dfn}[Tubes]\label{dfn:tubes}
Let $A$ be a consecutive subset of $\ZZ_+$ with $|A|\geq 2$. For any distinct $i, j\in A$ and for any $p_{i,j}\in\TT_{i,j}$, A \emph{$\{i,j\}$-tube} is a set of the form $B_{i,j}(p_{i,j},R)\times \TT_{A\setminus\{i,j\}}$. Here, $B_{i,j}(p_{i,j},R)$ is called the \emph{base} of this tube.
\end{dfn}

\begin{dfn}[Standard tubes and special $k$-tubes]\label{dfn:special tubes}
Let $A$ be a consecutive subset of $\ZZ_+$ with $|A|\geq 2$. Let $\cT$ be a $\{i,j\}$-tube with base $B_{i,j}$.

We say that $(\cT,g)$ is a \emph{standard tube} with base $B_{i,j}$ for some positive semidefinite symmetric $2$-tensor field $g$ if $(\cT,g)=(B_{i,j},d\theta_i\otimes d\theta_i+d\theta_j\otimes d\theta_j)\times (\TT_{A\setminus\{i,j\}},g_h)$ for some positive semidefinite symmetric $2$-tensor field $g_h$;
$(\cT,g)$ is a \emph{special $k$-tube} if $(\cT,g)$ is the product of a $\{i,j\}$-special $k$-cap on $B_{i,j}$ and $ (\TT_{A\setminus\{i,j\}},g_h)$ for some positive semidefinite symmetric $2$-tensor field $g_h$.

In both definitions, $(\TT_{A\setminus\{i,j\}},g_h)$ is called the \emph{height} of the standard tube/special $k$-tube.
%
\end{dfn}
\begin{dfn}[Inserting and forgetting special tubes]\label{dfn:insert/forget tubes}
Let $A$ be a consecutive subset of $\ZZ_+$ and $g$ be a smooth positive semidefinite symmetric 2-tensor field on $\TT_A$ with $|A|\geq 2$. Let $i,j\in A$ be distinct and $\cT$ be a $\{i,j\}$-tube with base $B_{i,j}$. (See Definition \ref{dfn:tubes} and Figure \ref{fig:insert and forget}.)
\begin{enumerate}
\item (\textbf{Inserting special tube structures}) If $(\cT,g|_{\cT})$ is a $\{i,j\}$-standard tube, then we say that $(\TT_A,g')$ is obtained by \emph{inserting a $\{i,j\}$-special $k$-tube with base $B_{i,j}$ into $(\TT_A,g)$} if $g'=g$ on $\TT_A\setminus\cT$ and $(\cT,g'|_{\cT})$ is a $\{i,j\}$-special $k$-tube with the same height as $(\cT,g|_{\cT})$.
\item (\textbf{Forgetting special tube structures}) If $(\cT,g|_{\cT})$ is a $\{i,j\}$-special $k$-tube, then we say that $(\TT_A,g')$ is obtained by \emph{forgetting a $\{i,j\}$-special $k$-tube with base $B_{i,j}$ from $(\TT_A,g)$} if $g'=g$ on $\TT_A\setminus\cT$ and $(\cT,g'|_{\cT})$ is a $\{i,j\}$-standard tube with the same height as $(\cT,g|_{\cT})$.
\end{enumerate}
\end{dfn}
\begin{figure}[h]
	\centering
	\includegraphics[width=4in]{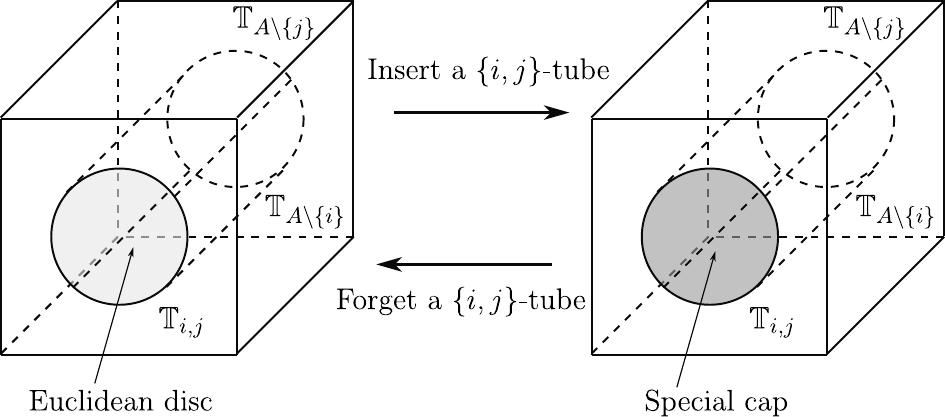}
	\caption{}\label{fig:insert and forget}
\end{figure}
\begin{rmk}
By Definition \ref{dfn:special tubes} and Definition \ref{dfn:insert/forget tubes}, for any smooth positive semidefinite symmetric $2$-tensor field $g$ on $\TT_A$, the tensor field obtained by inserting/forgetting a special tube is also a smooth positive semidefinite symmetric $2$-tensor field. One can also verify the following elementary properties:
\begin{itemize}
\item[\hypertarget{T1}{(T1)}] When inserting/forgetting multiple special tubes, the order does not matter.
\item[\hypertarget{T2}{(T2)}] Let $g$ be a smooth positive semidefinite symmetric $2$-tensor field on $\TT_A$. If $(\TT_A,g')$ is obtained by inserting a special tube into $(\TT_A,g)$, and $(\TT_A,g'')$ is obtained by forgetting the same special tube from $(\TT_A,g')$, then $g=g''$. Similarly, if $(\TT_A,g')$ is obtained by forgetting a special tube from $(\TT_A,g)$, and $(\TT_A,g'')$ is obtained by inserting the same special tube back into $(\TT_A,g')$, then $g=g''$. 
\item[\hypertarget{T3}{(T3)}] Let $A, A'$ be consecutive subsets of $\ZZ_+$ such that $A\subset A'$. Let $g$ be a smooth positive semidefinite symmetric $2$-tensor field on $\TT_A$ and $h$ be a smooth positive semidefinite symmetric $2$-tensor field on $\TT_{A'\setminus A}$. If $(\TT_A,g')$ is obtained by inserting a $\{i,j\}$-special $k$-tube into (or forgetting a $\{i,j\}$-special $k$-tube from) $(\TT_A,g)$ with distinct $i,j\in A$. Then $(\TT_A,g')\times (\TT_{A'\setminus A},h)$ can be alternatively obtained by inserting a $\{i,j\}$-special $k$-tube with the same base into (or forgetting a $\{i,j\}$-special $k$-tube with the same base from) $(\TT_A,g)\times (\TT_{ A'\setminus A},h)$.
\end{itemize}
\end{rmk}

\subsection{Tubular neighborhoods of $\Omega_A(i)$ and their properties}\label{subsect:tubes.that.we.consider}
Let $A$ be a consecutive subset of $\ZZ_+$ with $|A|\geq 2$. For any $i\in\cE(A)$ and for any $R\in(0,\pi/8)$, we define the following tubular neighborhood of $\Omega_A(i)$:
\begin{align}\label{eqn:j-tubes}
\cT_{i;A}(R):=\bigsqcup_{p_{i,i+1}\in\Omega(i)}B_{i,i+1}(p_{i,i+1},R)\times \TT_{ A\setminus\{i,i+1\}}\supset\Omega_{A}(i).
\end{align}
(See \eqref{eqn:tori}, \eqref{eqn:ball.convention} and \eqref{jth singularity} for the relevant notations. See also \eqref{jth singularity} for why the right-hand-side is a disjoint union.) One can easily check that the connected components of $\cT_{i;A}(R)$ are all $\{i,i+1\}$-tubes whose bases are centered at points in $\Omega(i)$. (See \eqref{jth singularity} and Figure \ref{fig:tubes in construction}.) In the rest of this paper, these tubes are the only $\{i,i+1\}$-tubes that we will consider.

\begin{figure}[h]
	\centering
	\includegraphics[width=4.5in]{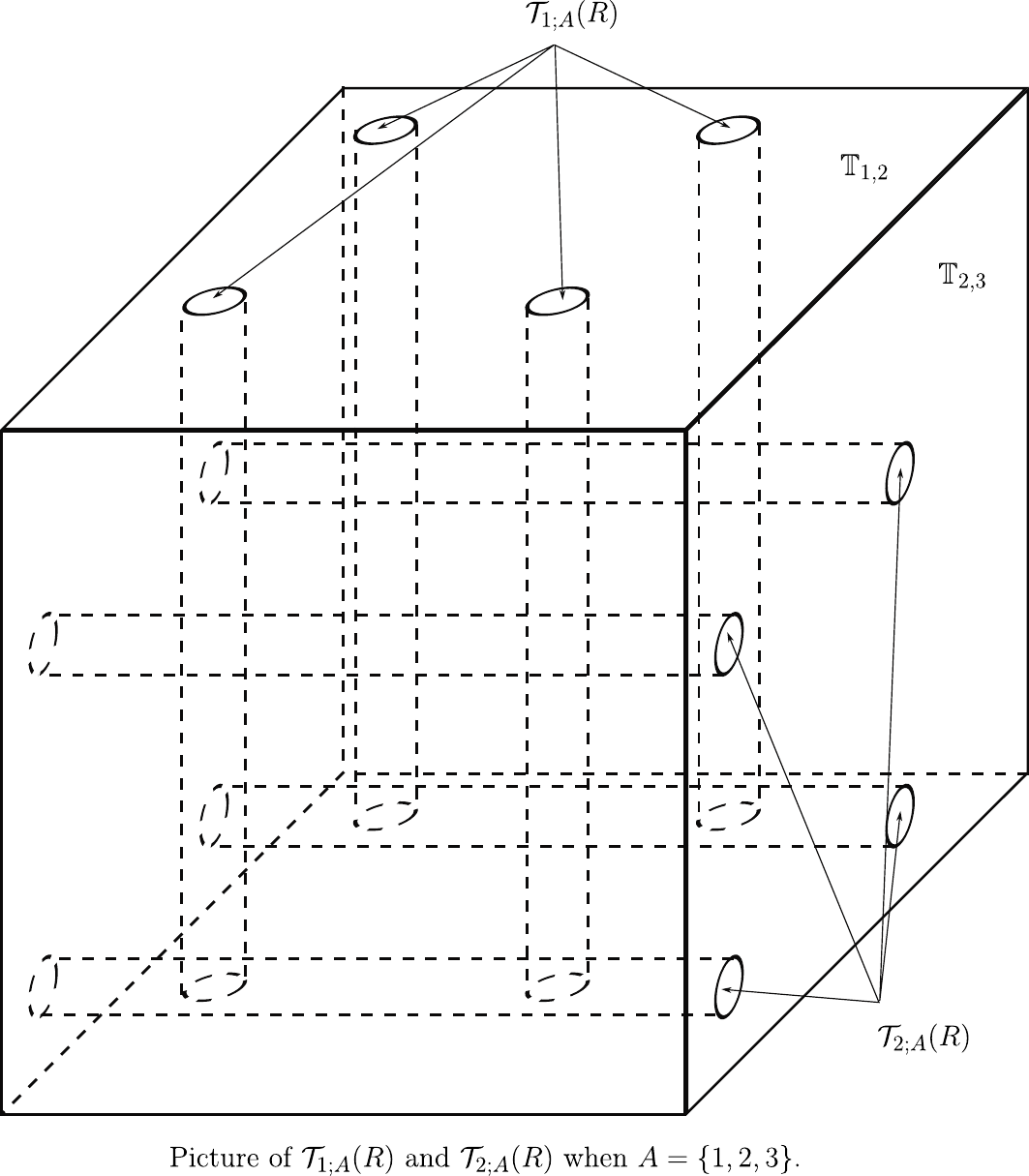}
	\caption{}\label{fig:tubes in construction}
\end{figure}

%

For any finite subset $A\subset\ZZ_+$, for any $i\in A$ and for any $y\in\TT^1$, we define an isometry $\kappa_{y;i}:\TT_A\to\TT_A$ such that for any $p\in\TT_A$, we have
\begin{align}\label{eqn:larger.gp}
\pr^A_j(\kappa_{y;i}(p))=\begin{cases}
\pr^A_j(p),~&\text{if }j\in A\setminus\{i\},\\
\kappa_y(\pr^A_j(p)),~&\text{if }j=i,
\end{cases}
\end{align}
where $\kappa_y(x)=y-x$. (See \eqref{eqn:tori.proj} and the discussion before \eqref{eqn:commute}.) In other words, $\kappa_{y;i}$ is an isometry on $\TT_A$ which only performs a reflection on the $i$-th component and keeps the other components unchanged.

We summarize some basic properties of $\cT_{i;A}(R)$.
\begin{lem}\label{lem:tube.index.properties-1}
Let $A$ be a consecutive subset of $\ZZ_+$ and $R\in(0,\pi/8)$. Then the following holds:
\begin{enumerate}[ref=\ref{lem:tube.index.properties-1}(\arabic*)]
\item For any $i\in\cE(A)$ and for any $j\in A$, $\cT_{i;A}(R)$ and $\TT_A\setminus\cT_{i;A}(R)$ are invariant under $\kappa_{0;j}$ and $\kappa_{\pi;j}$. In particular, $\cT_{i;A}(R)$ and $\TT_A\setminus\cT_{i;A}(R)$ are $\Gamma_A$- invariant subsets. \label{lem:tube.index.properties-1-1}
\item Let $i\in A$ and $p,q\in\TT_A$ such that $|\pr_i(p)-\pr_i(q)|\leq\pi/4$ and $|\pr_{i+1}(p)-\pr_{i+1}(q)|\leq\pi/4$. If in addition that $p\in\cT_{i;A}(R)$, then for any $j\in\{i-1,i+1\}\cap\cE(A)$ and for any $R'\in(0,\pi/8)$, we have $q\not\in\cT_{j;A}(R')$. \label{lem:tube.index.properties-1-2}
\item For any $p\in\TT_A$, there is a point $q\in\TT_A$ such that for any $\epsilon'>0$ sufficiently small and for any good neighborhood $\cU_q$ of the form \eqref{eqn:good.nbhd} with $\epsilon=R$ and the above choice of $\epsilon'$, we have $p\in\cU_q$ and that $\text{Sing}_{A}(q)=\{j\in\cE(A):p\in\cT_{j;A}(R)\}$. (See Definition \ref{sing index} for the definition of $\text{Sing}_A$.)\label{lem:tube.index.properties-1-3}
\item For any $E\subset\cE(A)$ and for any good subgroup $\Gamma\subset\Gamma_A$, we have $\Pi_\Gamma^{-1}(\Pi_\Gamma(\bigcup_{i\in E}\cT_{i,A}(R)))=\bigcup_{i\in E}\cT_{i;A}(R)$ and $\Pi_\Gamma^{-1}(\Pi_\Gamma(\TT_A\setminus \bigcup_{i\in E}\cT_{i,A}(R)))=\TT_A\setminus \bigcup_{i\in E}\cT_{i,A}(R)$. Similarly, we have $\Pi_\Gamma^{-1}(\Pi_\Gamma(\bigcup_{i\in E}\overline{\cT_{i,A}(R)}))=\bigcup_{i\in E}\overline{\cT_{i;A}(R)}$ and $\Pi_\Gamma^{-1}(\Pi_\Gamma(\TT_A\setminus \bigcup_{i\in E}\overline{\cT_{i,A}(R)}))=\TT_A\setminus \bigcup_{i\in E}\overline{\cT_{i,A}(R)}$.\label{lem:tube.index.properties-1-4}
\item For any $i,i+1\in\cE(A)$ and for any good subgroup $\Gamma\subset\Gamma_A$, we have $\overline{\Pi_\Gamma(\cT_{i;A}(R))}\cap\overline{\Pi_{\Gamma}(\cT_{i+1;A}(R))}=\emptyset$. \label{lem:tube.index.properties-1-5}
\end{enumerate}
\end{lem}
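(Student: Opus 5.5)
The plan is to reduce each item to elementary coordinate computations on $\TT_{i,i+1}$, using that $\cT_{i;A}(R)$ is the product of $N_i(R):=\bigsqcup_{p\in\Omega(i)}B_{i,i+1}(p,R)\subset\TT_{i,i+1}$ with $\TT_{A\setminus\{i,i+1\}}$. Here $\Omega(i)=\{\pm\pi/2\}\times\{0,\pi\}$ consists of four points with pairwise distance at least $\pi$, and $R<\pi/8$, so the balls are disjoint.

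For (1), the case $j\notin\{i,i+1\}$ is trivial, since $\kappa_{y;j}$ does not touch the relevant coordinates. For $j\in\{i,i+1\}$, I would check that $\kappa_0,\kappa_\pi$ preserve $\{\pm\pi/2\}$ and $\{0,\pi\}$; both are isometries of $\TT^1$, so they permute $\Omega(i)$ and carry $R$-balls to $R$-balls. The complement is then invariant as well. Since each generator $\gamma_l$ is a composition of such reflections by \eqref{eqn:generator}, $\Gamma_A$-invariance follows.

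For (2), $p\in\cT_{i;A}(R)$ gives $|\theta_{i+1}(p)-\{0,\pi\}|<\pi/8$, so $|\theta_{i+1}(q)-\{0,\pi\}|<3\pi/8$. Membership $q\in\cT_{i+1;A}(R')$ would force $\theta_{i+1}(q)$ to be within $\pi/8$ of $\pm\pi/2$, which contradicts this because $3\pi/8+\pi/8=\pi/2$ and the inequalities are strict. The case $j=i-1$ is symmetric: it forces $\theta_i(q)$ near $\{0,\pi\}$, while $p$ forces it near $\pm\pi/2$. I would note that the statement is meant for $i\in\cE(A)$.

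For (3), I would build $q$ coordinatewise. For each $j$ with $p\in\cT_{j;A}(R)$, I set $(q_j,q_{j+1})$ to be the center in $\Omega(j)$ of the ball containing $p$. By (2), these index sets contain no two consecutive $j$, so the assignments do not conflict. All remaining coordinates of $q$ I set equal to those of $p$, after perturbing slightly so that $q$ lies in no other $\Omega_A(l)$. This perturbation is possible because any such $l$ must have $p\notin\cT_{l;A}(R)$, so $\Omega_A(l)$ is avoided by a small adjustment, or already by the choice of $p$'s coordinates away from the chosen centers. Then $\Sing_A(q)$ is exactly the chosen set, and $p\in\cU_q$ for \eqref{eqn:good.nbhd} with $\epsilon=R$ and $\epsilon'$ small enough once the perturbation is smaller than $\epsilon'$. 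The delicate point here is to show that the chosen $q$ does not accidentally lie in $\Omega_A(l)$ for a further $l$ adjacent to the chosen indices. For such $l$, (2) shows that one coordinate of $q$ equals a center that is incompatible with $\Omega(l)$.

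For (4), I would use (1): each union is $\Gamma$-invariant, so it equals its own $\Gamma$-saturation $\Pi_\Gamma^{-1}\Pi_\Gamma$. The closures are also invariant because the action is by homeomorphisms, and complements of invariant sets are invariant.

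For (5), I would argue by contradiction. Suppose the closures meet. Since $\TT_A$ is compact and $\Pi_\Gamma$ is closed, the closure of $\Pi_\Gamma(\cT_{i;A}(R))$ is $\Pi_\Gamma(\overline{\cT_{i;A}(R)})$, so there are points $x\in\overline{\cT_{i;A}(R)}$ and $y\in\overline{\cT_{i+1;A}(R)}$ with $y=\gamma x$. By (1), $y\in\overline{\cT_{i;A}(R)}\cap\overline{\cT_{i+1;A}(R)}$. This forces $\theta_{i+1}(y)$ to be within $\pi/8$ of both $\{0,\pi\}$ and $\{\pm\pi/2\}$, which is impossible since $2\cdot\pi/8<\pi/2$.

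I expect (3) to be the main obstacle, because it requires careful bookkeeping of the perturbation; the other items are routine.
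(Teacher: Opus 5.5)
Your items (1), (2), (4) and (5) are correct and are essentially the paper's arguments. In (5) you work with the closed tubes via $\overline{\Pi_\Gamma(\cT_{i;A}(R))}=\Pi_\Gamma(\overline{\cT_{i;A}(R)})$, whereas the paper enlarges the radius to some $R'\in(R,\pi/8)$; this is the same idea.

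The gap is the perturbation in (3). In the statement, $q$ is fixed first, and then $p\in\cU_q$ is required for \emph{every} sufficiently small $\epsilon'>0$. Suppose you move a coordinate $q_l$ with $l\in A\setminus\Sing^+_A(q)$ away from $p_l$ by some $\eta>0$. Then for every $\epsilon'<\eta$ you have $p_l\notin B_l(q_l,\epsilon')$, and hence $p\notin\cU_q$. Your phrase ``once the perturbation is smaller than $\epsilon'$'' reverses the quantifiers. With a positive perturbation you would only get $p\in\cU_q$ for $\epsilon'$ in a window bounded away from $0$, which is not what is claimed.

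The fix is to drop the perturbation entirely; your own remarks already contain the reason. Put $J=\{j\in\cE(A):p\in\cT_{j;A}(R)\}$, assign centers on the pairs $\{j,j+1\}$ for $j\in J$, and set $q_l=p_l$ exactly for every other coordinate $l$. Clearly $J\subset\Sing_A(q)$. Now suppose $l\in\Sing_A(q)\setminus J$.

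\begin{enumerate}
\item First, $l\pm1\notin J$, because $\Sing_A(q)$ contains no two consecutive indices: $\Omega(l)$ forces $\sin\theta_{l+1}=0$, while $\Omega(l+1)$ forces $\cos\theta_{l+1}=0$. This is your ``adjacent'' case.
\item So $l-1,l,l+1\notin J$. Then neither coordinate $l$ nor $l+1$ of $q$ was assigned a center, so $\pr^A_{l,l+1}(q)=\pr^A_{l,l+1}(p)$.
\item Hence $p\in\Omega_A(l)\subset\cT_{l;A}(R)$, i.e.\ $l\in J$, a contradiction.
\end{enumerate}

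Thus $\Sing_A(q)=J$ for the unperturbed $q$, and $p\in\cU_q$ for every $\epsilon'>0$. Smallness of $\epsilon'$ is then needed only to make $\cU_q$ a good neighborhood. This is exactly the paper's proof.
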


\begin{proof}
\begin{enumerate}
\item Invariance under $\kappa_{0;j}$ and $\kappa_{\pi;j}$ follows directly from \eqref{jth singularity}, \eqref{eqn:j-tubes} and \eqref{eqn:larger.gp}. The rest follows directly from \eqref{eqn:generator}, \eqref{eqn:Gamma.A} and \eqref{eqn:larger.gp}.
%
\item This follows directly from \eqref{jth singularity} and \eqref{eqn:j-tubes}.
\item Choose $q\in\TT_A$ satisfying the following:
\begin{itemize}
\item $\pr^A_i(q)=\pr^A_i(p)$ for any $i\in A$ such that $\{i,i-1\}\cap\{j\in\cE(A):p\in\cT_{j;A}(R)\}=\emptyset$.
\item $\pr^A_{i,i+1}(q)=p_{i,i+1}$ for any $i\in\{j\in\cE(A):p\in\cT_{j;A}(R)\}$, where $p_{i,i+1}\in\Omega(j)$ is the unique point such that $\pr^A_{i,i+1}(p)\in B_{i,i+1}(p_{i,i+1},R)$.
\end{itemize}
The above two bullet points uniquely determines $q$. Also, the second bullet point implies that $\{j\in\cE(A):p\in\cT_{j;A}(R)\}\subset\text{Sing}_{A}(q)$. On the other hand, if there exist $i\in\text{Sing}_{A}(q)\setminus \{j\in\cE(A):p\in\cT_{j;A}(R)\}$, by the fact that $\Sing_A(q)\cap(1+\Sing_A(q))=\emptyset$, we have $i\pm1\not\in\Sing_A(q)$. (See Definition \ref{sing index}.) Recall that $\{j\in\cE(A):p\in\cT_{j;A}(R)\}\subset\text{Sing}_{A}(q)$, we conclude that $i,i\pm 1\not\in\{j\in\cE(A):p\in\cT_{j;A}(R)\}$. In particular, by the first bullet point, we have $\pr^A_i(q)=\pr^A_i(p)$ and $\pr^A_{i+1}(q)=\pr^A_{i+1}(p)$. The assumption that $i\in\text{Sing}_A(q)$ and the above imply that $i\in\text{Sing}_A(p)$. Notice that $\text{Sing}_A(p)\subset\{j\in\cE(A):p\in\cT_{j;A}(R)\}$, $i\in\text{Sing}_A(p)$ contradicts the assumption that $i\in\text{Sing}_{A}(q)\setminus \{j\in\cE(A):p\in\cT_{j;A}(R)\}$. Therefore, we have $\{j\in\cE(A):p\in\cT_{j;A}(R)\}=\text{Sing}_{A}(q)$. It is then clear that $p\in \cU_q$ for any neighborhood $\cU_q$ of $q$ defined in \eqref{eqn:good.nbhd} with $\epsilon=R$ and $\epsilon'>0$. Moreover, by the remark after Definition \ref{dfn:good.nbhd}, $\cU_q$ is a good neighborhood of $q$ as long as $\epsilon'$ is sufficiently small.
\item This is a straightforward corollary of the first assertion. 
\item This is a straightforward corollary of the second and the fourth assertion. \qedhere
\end{enumerate}
\end{proof}

\subsection{Construction of a smooth positive semidefinite symmetric $2$-tensor field $\tg_{\Gamma;A}$ on $\TT_A$}\label{subsect:constructions-1}
For any consecutive subset $A\subset\ZZ_+$ and for any good subgroup $\Gamma\subset\Gamma_A$, 
we first construct a smooth positive semidefinite symmetric $2$-tensor field $\tg_{\Gamma;A}$ on $\TT_A$. This will be the pull-back of our desired Riemannian metric on $\Gamma\backslash\TT_A$ by the natural projection map $\Pi_\Gamma:\TT_A\to\Gamma\backslash\TT_A$. (See Subsection \ref{subsect:constructions-2}.)
\begin{dfn}[Construction of $\tg_{\Gamma;A}$]\label{dfn:construction}
Let $A$ be a consecutive subset of $\ZZ_+$ and $\Gamma$ be a good subgroup of $\Gamma_A$. Write $\Gamma=\Gamma_A(E)$ for some $E\subset\cE(A)$. For any $E'\subset\cE(A)$ we let $\tg_{\Gamma;A;E'}$ be a smooth positive semidefinite symmetric $2$-tensor field on $\TT_A$ such that $(\TT_A,\tg_{\Gamma;A;E'})$ is obtained by inserting the following special tubes into $\TT_A$ with the standard product metric: 
\begin{itemize}
\item The $\{i,i+1\}$-special $1$-tube with base $B_{i,i+1}(p_{i,i+1},\pi/10)$, for any $p_{i,i+1}\in\Omega(i)$ and for any $i\in  E$ such that $i\not\in E'$.
\item The $\{i,i+1\}$-special $2$-tube with base $B_{i,i+1}(p_{i,i+1},\pi/10)$, for any $p_{i,i+1}\in\Omega(i)$ and for any $i\in\cE(A)\setminus E$ such that $i\not\in E'$.
\end{itemize}
For simplicity, we let $\tg_{\Gamma;A;\emptyset}=\tg_{\Gamma;A}$.
\end{dfn}

\begin{rmk}
The constructions in Definition \ref{dfn:construction} are well-defined due to Lemma \ref{lem:tube.index.properties-1-2}, \hyperlink{T1}{(T1)} and \hyperlink{T2}{(T2)}. 

Let $\Gamma'=\Gamma_A(E')$. For any $q\in\TT_A$, if there exists a good neighborhood $\cU_q$ of $q$ of the form \eqref{eqn:good.nbhd} with $\epsilon\in(\pi/10,\pi/8)$, such that 
\begin{align}\label{eqn:construction-1}
\cU_q\subset
\begin{cases}
\displaystyle \TT_A\setminus \left(\bigcup_{i\in\cE(A)}\cT_{i;A}(\pi/10)\right),~&\text{if }\Sing_A(q)=\emptyset,\\
\displaystyle \left(\bigcap_{i\in\Sing_A(q)}\cT_{i;A}(\epsilon)\right)\setminus\left(\bigcup_{i\in\cE(A)\setminus \Sing_A(q)}\cT_{i;A}(\pi/10)\right),~&\text{otherwise,}
\end{cases}
\end{align}
then $(\cU_q,\tg_{\Gamma;A;E'}|_{\cU_q})$ is the product of the following manifolds equipped with positive semidefinite symmetric 2-tensor fields:
\begin{itemize}
\item The union of $(B_{i,i+1}(\pr^A_{i,i+1}(q),\epsilon)\setminus B_{i,i+1}(\pr^A_{i,i+1}(q),\pi/10), d\theta_i\otimes d\theta_i+d\theta_{i+1}\otimes d\theta_{i+1})$ and a $\{i,i+1\}$-special $1$-caps centered at $\pr^A_{i,i+1}(q)$ with Euclidean radius $\pi/10$, where $i\in \{j\in E\setminus E':q\in\Omega_A(j)\}=\Sing_{\Gamma';A}(q)\setminus\Sing_{\Gamma'\cap\Gamma;A}(q)$. (See Definition \ref{sing index} and its remark.)
\item The union of $(B_{i,i+1}(\pr^A_{i,i+1}(q),\epsilon)\setminus B_{i,i+1}(\pr^A_{i,i+1}(q),\pi/10), d\theta_i\otimes d\theta_i+d\theta_{i+1}\otimes d\theta_{i+1})$ and a$\{i,i+1\}$-special $2$-caps centered at $\pr^A_{i,i+1}(q)$ with Euclidean radius $\pi/10$, where $i\in \{j\in\cE(A)\setminus\{E\cup E'\}:q\in\Omega_A(j)\}=\Sing_{\Gamma'\cap\Gamma;A}(q)$. (See Definition \ref{sing index} and its remark.)
\item $\pr^A_{A\setminus\Sing^+_{\Gamma';A}(q)}(\cU_q)$ equipped with the standard Euclidean metric $\displaystyle \sum_{i\in A\setminus\Sing^+_{\Gamma';A}(q)}d\theta_i\otimes d\theta_i$. (See Definition \ref{sing index}.)
\end{itemize}
Moreover, open subsets of the above type cover $\TT_A$. Indeed, for any $p\in\TT_A$ and for any $\epsilon\in(\pi/10,\pi/8)$, by Lemma \ref{lem:tube.index.properties-1-3}, there exist some $q\in\TT_A$ and a good neighborhood $\cU_q$ with the above choice of $\epsilon$ and a sufficiently small $\epsilon'\in(0,(\epsilon-(\pi/10))/2)$, such that $\Sing_A(q)=\{j\in\cE(A):p\in\cT_{j;A}(\epsilon)\}$. Using \eqref{eqn:good.nbhd}, \eqref{eqn:j-tubes} and Lemma \ref{lem:tube.index.properties-1-5}, it is not hard to verify that $\cU_q$ satisfies \eqref{eqn:construction-1}. 
\end{rmk}


We summarize some basic properties of $\tg_{\Gamma;A;E'}$ as follows:

\begin{lem}[Properties of $\tg_{\Gamma;A;E'}$]\label{lem:properties.of.tg}
Let $A$ be a consecutive subset of $\ZZ_+$ and $E'\subset\cE(A)$. For simplicity, we write $\Gamma'=\Gamma_A(E')$. Let $\Gamma=\Gamma_A(E)$ be an arbitrary good subgroup of $\Gamma_A$, where $E\subset\cE(A)$. The following holds:
\begin{enumerate}[ref=\ref{lem:properties.of.tg}(\arabic*)]
\item $\tg_{\Gamma;A;E'}$ is $\Gamma_A$-invariant. \label{lem:properties.of.tg-1}
\item $\tg_{\Gamma;A;E'}\in\cM_{\Gamma\cap\Gamma'}(\TT_A)$. (See Corollary \ref{cor:quotient is mfld} for the definition of $\cM_{\Gamma\cap\Gamma'}(\TT_A)$.)
\label{lem:properties.of.tg-2}
\item Let $A_1,\dots,A_q$ be the consecutive partition of $A$ induced by $E'$. Then the identity map on $\TT_A$ gives the following isometry between manifolds with positive semidefinite symmetric $2$-tensor fields:
\begin{align}\label{eqn:prod.struct.after.forget}
(\TT_A,\tg_{\Gamma;A;E'})\isom\prod_{j=1}^q(\TT_{A_j},\tg_{\Gamma\cap\Gamma_{A_j};A_j}).
\end{align}\label{lem:properties.of.tg-3}
\item When restricted to $\TT_A\setminus\bigcup_{j\in E'}\cT_{j;A}(\pi/10)$, $\tg_{\Gamma;A;E'}$ and $\tg_{\Gamma;A}$ are the same. \label{lem:properties.of.tg-4}
\end{enumerate}
\end{lem}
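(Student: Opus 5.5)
We prove the four assertions in the order (1), (4), (3), (2). Each one is checked either on the tubes that were inserted or locally on the neighborhoods $\cU_q$ described in the remark after Definition \ref{dfn:construction}.

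For (1), the metric $\tg_{\Gamma;A;E'}$ is the flat metric outside $\bigcup_i\cT_{i;A}(\pi/10)$, so it suffices to work tube by tube. By Lemma \ref{lem:tube.index.properties-1-1}, each generator $\gamma_l$ permutes the connected components of $\cT_{i;A}(\pi/10)$. On a base $B_{i,i+1}(p_{i,i+1},\pi/10)$, the map $\gamma_l$ acts through the $\kappa_0$ or $\kappa_\pi$ reflections, and possibly through $\rho=\kappa_0\kappa_\pi$ in one coordinate. In the chart $\phi_{p_{i,i+1}}$ of \eqref{eqn:standard.local.disc}, these become reflections of $C_{\pi/10}$ about the coordinate axes, or compositions of such reflections. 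By the remark after Definition \ref{dfn:cap.on.torus} and the remark after Definition \ref{dfn:gen caps}, such maps carry a special $k$-cap centered at $p_{i,i+1}$ to the special $k$-cap centered at the image point. The height factor is the flat metric on the complementary torus (modified only by other tubes), and that flat metric is preserved by these isometries. The choice of $k$ depends only on $i$, so the whole inserted configuration is $\Gamma_A$-invariant.

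For (4), the tubes indexed by $j\in E'$ are exactly the ones not inserted into $\tg_{\Gamma;A;E'}$. Every other tube is inserted in both $\tg_{\Gamma;A;E'}$ and $\tg_{\Gamma;A}$. By (T1), (T2) and Lemma \ref{lem:tube.index.properties-1-2}, the tubes inserted in $\tg_{\Gamma;A}$ for indices $j\in E'$ do not interact with the others. Hence the two tensors agree off $\bigcup_{j\in E'}\cT_{j;A}(\pi/10)$.

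For (3), every inserted tube has an index $i\in\cE(A)\setminus E'$, so $\{i,i+1\}$ lies in a single atom $A_j$. We start from the flat product $\prod_j\TT_{A_j}$ and insert the tubes atom by atom, using (T3). The cap type is $1$ if $i\in E$ and $2$ otherwise. By Lemma \ref{lem:properties.good.subgp-(3)}, $\Gamma\cap\Gamma_{A_j}=\Gamma_{A_j}(E\cap\cE(A_j))$, so the cap type matches the one prescribed in Definition \ref{dfn:construction} for $\tg_{\Gamma\cap\Gamma_{A_j};A_j}$. This gives \eqref{eqn:prod.struct.after.forget}.

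For (2), $\Gamma\cap\Gamma'$-symmetry follows from (1). Compatibility with $\Omega_{\Gamma\cap\Gamma';A}$ is the main step, and it is checked on the cover $\{\cU_q\}$ from the remark after Definition \ref{dfn:construction}. We use the product description there and the map $\Xi_{\Gamma\cap\Gamma';A;q}$, which squares exactly the pairs $i\in\Sing_{\Gamma\cap\Gamma';A}(q)$. These are precisely the factors carrying special $2$-caps. In the chart $\delta_i+\mathbf{i}\delta_{i+1}$, such a factor is $F^*g_C$ with $F(z)=h(|z|)|z|^{-2}z^2$. Since $h$ is smooth and strictly increasing with $h(r)=r^2$ near $0$, $F$ factors as $z\mapsto z^2$ followed by a radial map $w\mapsto \tilde h(|w|)|w|^{-1}w$, where $\tilde h(s)=h(\sqrt s)$ equals $s$ near $0$. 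This radial map is a diffeomorphism, so the factor equals $(z^2)^*g'$ for a smooth Riemannian metric $g'$. By Definition \ref{dfn:special caps}, $g'$ extends to the flat metric on the annulus. On every other factor, $\Xi_{\Gamma\cap\Gamma';A;q}$ is the identity in local coordinates, and those factors are genuine Riemannian metrics (flat metrics, or special $1$-caps, which are Riemannian by the remark after Definition \ref{dfn:gen caps}). The product of these metrics is the required $g'$ on $\Xi_{\Gamma\cap\Gamma';A;q}(\cU_q)$. The delicate point is this factorization of the special $2$-cap through $z^2$, including smoothness of $g'$ at the origin, which relies on $h(r)=r^2$ near $0$.
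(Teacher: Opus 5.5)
Your treatment of (1), (3) and (4) follows the paper, and so does the core computation in (2). In particular, your factorization of the special $2$-cap through $z\mapsto z^2$ is the same map $\widetilde F(z)=z\,h(|z|^{1/k})/|z|$ that the paper uses in the appendix proof of Lemma \ref{lem:properties.of.g.Gamma-1}.

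The gap is in (2). The compatibility condition of Corollary \ref{cor:quotient is mfld} is required at \emph{every} $p\in\TT_A$, with a good neighborhood of $p$ itself and the map $\Xi_{\Gamma\cap\Gamma';A;p}$ centered at $p$. You only verify $\tg_{\Gamma;A;E'}|_{\cU_q}=\Xi_{\Gamma\cap\Gamma';A;q}^*(g')$ at the centers $q$ of the cover from the remark after Definition \ref{dfn:construction}. For $p\in\cU_q$ with $p\neq q$ this is a different statement, for two reasons:
\begin{itemize}
\item $\cU_q$ is generally not a good neighborhood of $p$: condition (1) of Definition \ref{dfn:good.nbhd} fails as soon as $\Sing_A(p)\subsetneq\Sing_A(q)$, which happens for generic $p$ in a tube.
\item $\Xi_{\Gamma\cap\Gamma';A;q}$ squares the $(i,i+1)$-coordinates for $i\in\Sing_{\Gamma\cap\Gamma';A}(q)\setminus\Sing_{\Gamma\cap\Gamma';A}(p)$, while $\Xi_{\Gamma\cap\Gamma';A;p}$ does not.
\end{itemize}
What is missing is a re-centering step. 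Take a good neighborhood $\cU_p\subset\cU_q$ and a diffeomorphism $\Psi_{p,q}\colon\Xi_{\Gamma\cap\Gamma';A;p}(\cU_p)\to\Xi_{\Gamma\cap\Gamma';A;q}(\cU_p)$ with $\Xi_{\Gamma\cap\Gamma';A;q}|_{\cU_p}=\Psi_{p,q}\circ\Xi_{\Gamma\cap\Gamma';A;p}$. This yields $\tg_{\Gamma;A;E'}|_{\cU_p}=\Xi_{\Gamma\cap\Gamma';A;p}^*(\Psi_{p,q}^*g')$. The paper takes $\Psi_{p,q}=(\Psi_{\Gamma\cap\Gamma';A;q})^{-1}\circ\Psi_{\Gamma\cap\Gamma';A;p}$, using the second bullet of Proposition \ref{prop:quotient.is.sphere}. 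You can also build it by hand:
\begin{itemize}
\item $\Sing_{\Gamma\cap\Gamma';A}(p)\subset\Sing_{\Gamma\cap\Gamma';A}(q)$.
\item On the pairs in $\Sing_{\Gamma\cap\Gamma';A}(p)$ the two maps coincide, because $p$ and $q$ have the same $(i,i+1)$-coordinates there.
\item Off $\Sing^+_{\Gamma\cap\Gamma';A}(q)$ the two maps differ only by translations.
\item On each remaining squared pair, $p$ has $\delta_i+\mathbf{i}\delta_{i+1}\neq 0$ relative to $q$, so $z\mapsto z^2$ is a diffeomorphism on the small $\epsilon'$-box around it.
\end{itemize}

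A smaller point in (1): your sentence about the height factor does not justify what is needed. The heights are themselves modified by other tubes, and $\gamma_{i\pm1}$ restrict to the height of an $\{i,i+1\}$-tube as single-coordinate reflections. The clean route, which the paper takes by induction over the insertions, is additive. Each insertion changes the tensor only by $(\pr^A_{i,i+1})^*\bigl(g_{\text{cap}}-d\theta_i\otimes d\theta_i-d\theta_{i+1}\otimes d\theta_{i+1}\bigr)$ on the tube. This difference is invariant under every $\kappa_{0;j}$ and $\kappa_{\pi;j}$, so no separate statement about heights is needed. Your closing sentence about the inserted configuration essentially says this, so you should base the argument on it.
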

\begin{proof}
\begin{enumerate}
\item This follows directly from Definition \ref{dfn:special tubes}, Definition \ref{dfn:insert/forget tubes}, Definition \ref{dfn:construction}, Lemma \ref{lem:tube.index.properties-1}, the remark after Definition \ref{subsect:tubes.that.we.consider} and the remark after Definition \ref{dfn:cap.on.torus}.
\item This follows directly from \eqref{eqn:good.nbhd}, \eqref{eqn:z.to.z2}, Proposition \ref{prop:quotient.is.sphere}, Lemma \ref{lem:properties.of.tg-1}, Definition \ref{dfn:cap.on.torus}, and the remark after Definition \ref{dfn:construction}.
\item This follows directly from Lemma \ref{lem:properties.good.subgp-(3)},  \hyperlink{T3}{(T3)}, and Definition \ref{dfn:construction}.
\item This follows directly from \hyperlink{T1}{(T1)}, Definition \ref{dfn:insert/forget tubes} and Definition \ref{dfn:construction}.\qedhere
\end{enumerate}
\end{proof}

\subsection{Construction of a smooth Riemannian metric on $g_{\Gamma;A}$ on $\Gamma\backslash \TT_A$}\label{subsect:constructions-2}
Thanks to Corollary \ref{cor:quotient is mfld} and Lemma \ref{lem:properties.of.tg-2}, we can define the following Riemannian metrics:
\begin{dfn}\label{dfn:construction.of.g}
Let $A$ be a consecutive subset of $\ZZ_+$ and $\Gamma$ be a good subgroup of $\Gamma_A$. For any $E'\subset\cE(A)$, we let $g_{\Gamma;A;E'}$ be the unique smooth Riemannian metric on $(\Gamma\cap\Gamma_A(E'))\backslash\TT_A$ such that $\Pi_{\Gamma\cap\Gamma_A(E')}^*g_{\Gamma;A;E'}=\tg_{\Gamma;A;E'}$.

Similar to Definition \ref{dfn:construction}, we let $g_{\Gamma;A;\emptyset}=g_{\Gamma;A}$.
\end{dfn}

We summarize some basic properties of $g_{\Gamma;A;E'}$ as follows:
\begin{lem}[Properties of $g_{\Gamma;A;E'}$]\label{lem:properties.of.g.Gamma}
Let $A$ be a consecutive subset of $\ZZ_+$ and $E'\subset\cE(A)$. For simplicity, we write $\Gamma'=\Gamma_A(E')$. Let $\Gamma=\Gamma_A(E)$ be an arbitrary good subgroup of $\Gamma_A$, where $E\subset\cE(A)$. The following holds:
\begin{enumerate}[ref=\ref{lem:properties.of.g.Gamma}(\arabic*)]
\item $((\Gamma\cap\Gamma')\backslash\TT_A, g_{\Gamma;A;E'})$ is nearly nonpositively curved in the sense of Definition \ref{dfn:near nonpos curv}. Moreover, for any point $q\in\TT_A$ and for any good neighborhood $\cU_q$ of the form \eqref{eqn:good.nbhd} with $\epsilon\in(\pi/10,\pi/8)$ satisfying \eqref{eqn:construction-1}, $(\Pi_{\Gamma\cap\Gamma'}(\cU_q),g_{\Gamma;A;E'}|_{\Pi_{\Gamma\cap\Gamma'}(\cU_q)})$ is nearly nonpositively curved in the sense of Definition \ref{dfn:near nonpos curv}.\label{lem:properties.of.g.Gamma-1}
\item  Let $A_1,\dots,A_q$ be the consecutive partition of $A$ induced by $E'$. Then, as Riemannian manifolds, $((\Gamma\cap\Gamma')\backslash\TT_A, g_{\Gamma;A;E'})$ is naturally isometric to the product of $((\Gamma\cap\Gamma_{A_i})\backslash\TT_{A_i},g_{\Gamma\cap\Gamma_{A_i};A_i})$ for any $i\in\{1,\dots,q\}$. (See Lemma \ref{lem:properties.good.subgp-(5)}, Lemma \ref{lem:properties.of.tg-3} and the commutative diagram in Figure \ref{fig:comm.diag}.)\label{lem:properties.of.g.Gamma-2}
\item When restricted to $\Pi_{\Gamma\cap\Gamma'}(\TT_A\setminus\bigcup_{i\in E'}\cT_{i,A}(\pi/10))$, we have $g_{\Gamma;A;E'}=(\Pi^{\Gamma\cap\Gamma'}_\Gamma)^*g_{\Gamma;A}$.\label{lem:properties.of.g.Gamma-3}

\begin{figure}[h]
	\centering
	\includegraphics[width=6in]{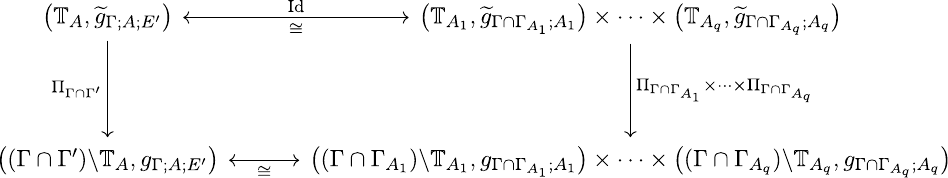}
	\caption{}\label{fig:comm.diag}
\end{figure}

\item Let $I$ be an interval and $c:I\to (\Gamma\backslash \TT_A,g_{\Gamma;A})$ be a geodesic. Assume that $c(I)\subset \Pi_{\Gamma}(\TT_A\setminus\bigcup_{i\in E'}\cT_{i;A}(\pi/10))$, then the following holds:
\begin{enumerate}
\item There exists a geodesic $\tc_{\Gamma\cap\Gamma'}:I\to (\Gamma\cap\Gamma'\backslash\TT_A, g_{\Gamma;A;E'})$ such that $\Pi_\Gamma^{\Gamma\cap\Gamma'}\circ\tc_{\Gamma\cap\Gamma'}=c$ and $\tc_{\Gamma\cap\Gamma'}(I)\in\Pi_{\Gamma\cap\Gamma'}(\TT_A\setminus\bigcup_{i\in E'}\cT_{i;A}(\pi/10))$. 
\item For any continuous map $\tc:I\to (\Gamma\cap\Gamma'\backslash\TT_A, g_{\Gamma;A;E'})$ such that $\Pi_\Gamma^{\Gamma\cap\Gamma'}\circ\tc_{\Gamma\cap\Gamma'}=c$, $\tc$ is a geodesic in $ (\Gamma\cap\Gamma'\backslash\TT_A, g_{\Gamma;A;E'})$.
Moreover, the map $Y\to (\Pi_{\Gamma}^{\Gamma\cap\Gamma'})_*Y$ gives a bijection from the collection of (orthogonal) Jacobi fields along $\tc_{\Gamma\cap\Gamma'}$, to the collection of (orthogonal) Jacobi fields along $c$.
\end{enumerate}\label{lem:properties.of.g.Gamma-4}
\item For any $E''\subset\cE(A)$ and for any $p\in \TT_A\setminus\left(\bigcup_{j\in E''}\overline{\cT_{j;A}(\pi/10)}\right)$, there exist $q\in\TT_A$ and a good neighborhood $\cU_q$ of the form \eqref{eqn:good.nbhd} containing $p$, such that $\cU_q$ satisfies \eqref{eqn:construction-1} and that $\cU_q\subset \TT_A\setminus\left(\bigcup_{j\in E''}\overline{\cT_{j;A}(\pi/10)}\right)$. In particular, the Riemannian manifold
$$\left(\Pi_{\Gamma\cap\Gamma'}\left(\TT_A\setminus\left(\bigcup_{j\in E''}\overline{\cT_{j;A}(\pi/10)}\right)\right), g_{\Gamma;A;E'}|_{\Pi_{\Gamma\cap\Gamma'}\left(\TT_A\setminus\left(\bigcup_{j\in E''}\overline{\cT_{j;A}(\pi/10)}\right)\right)}\right)$$ 
is nearly nonpositively curved in the sense of Definition \ref{dfn:near nonpos curv}. \label{lem:properties.of.g.Gamma-5}
\end{enumerate}
\end{lem}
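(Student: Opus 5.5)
The plan is to treat the five assertions in order, each time working locally through the good neighborhoods $\cU_q$ of the form \eqref{eqn:good.nbhd} that satisfy \eqref{eqn:construction-1}. By the remark after Definition \ref{dfn:construction}, these neighborhoods cover $\TT_A$, and on each of them $\tg_{\Gamma;A;E'}$ is an explicit product of three kinds of factors: annular flat regions glued to special $1$-caps, the same glued to special $2$-caps, and a flat Euclidean factor. All local statements will then be read off through Lemma \ref{lem:local.decomp.proj} and Proposition \ref{prop:quotient.is.sphere}.

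For (1), I would fix such a $\cU_q$ and compute $\Xi_{\Gamma\cap\Gamma';A;q}$ coordinatewise using \eqref{eqn:z.to.z2}. On each $\{i,i+1\}$ factor with $i\in\Sing_{\Gamma\cap\Gamma';A}(q)$ the map is $z\mapsto z^2$. By Definition \ref{dfn:gen caps}, the special $2$-cap is $F^*g_C$ with $F(z)=h(|z|)|z|^{-2}z^2$, so its push-forward under $z\mapsto z^2$ is a radially symmetric metric that is an extended cap near the centre and flat outside. On the other factors the map is the identity, and the metric there is a special $1$-cap, which is an honest extended cap, or flat. Thus $\Pi_{\Gamma\cap\Gamma'}(\cU_q)$ is isometric to a product of extended caps and a flat Euclidean piece, which is exactly Definition \ref{dfn:near nonpos curv}. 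Since these sets cover the quotient, the global statement follows.

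For (2), I would combine Lemma \ref{lem:properties.of.tg-3} with Lemma \ref{lem:properties.good.subgp-(5)} applied to $\Gamma'$ and the good subgroup $\Gamma$. This gives the isomorphism of $\Gamma\cap\Gamma'$ with $\prod_j(\Gamma\cap\Gamma_{A_j})$ and the equivalence of the corresponding actions. The product of the quotient maps then intertwines $\Pi_{\Gamma\cap\Gamma'}$ with $\prod_j\Pi_{\Gamma\cap\Gamma_{A_j}}$. Uniqueness in Corollary \ref{cor:quotient is mfld} identifies the pulled-back metrics, which is the commutative diagram in Figure \ref{fig:comm.diag}. For (3), Lemma \ref{lem:properties.of.tg-4} gives $\tg_{\Gamma;A;E'}=\tg_{\Gamma;A}=\Pi_\Gamma^*g_{\Gamma;A}$ off $\bigcup_{i\in E'}\cT_{i;A}(\pi/10)$. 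This set is saturated by Lemma \ref{lem:tube.index.properties-1-4}, and $\Pi_\Gamma=\Pi^{\Gamma\cap\Gamma'}_\Gamma\circ\Pi_{\Gamma\cap\Gamma'}$, so uniqueness of the descended metric on the open image yields the claim.

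For (4), the main point is that $\Pi^{\Gamma\cap\Gamma'}_\Gamma$ restricted to $\Pi_{\Gamma\cap\Gamma'}(\TT_A\setminus\bigcup_{i\in E'}\cT_{i;A}(\pi/10))$ is a local isometry onto its image and a covering map. By (3) it is a local isometry wherever it is a local diffeomorphism. By Lemma \ref{lem:branching.locus-2}, its ramification locus is $\Pi_{\Gamma\cap\Gamma'}(\overline{\Omega_{\Gamma;A}\setminus\Omega_{\Gamma\cap\Gamma';A}})$, which lies inside $\Pi_{\Gamma\cap\Gamma'}(\bigcup_{i\in E'}\Omega_A(i))$ and hence inside the excluded tubes. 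The map is proper and finite-to-one, because it is a quotient by a finite group. A proper local homeomorphism is a covering map, so the path $c$ lifts to a continuous $\tc_{\Gamma\cap\Gamma'}$ staying in the saturated region. Any continuous lift is locally the inverse of a local isometry composed with $c$, hence a geodesic. Jacobi fields correspond under the differential of a local isometry, and orthogonality is preserved. The step I expect to be delicate is the inclusion of the ramification locus in the removed tubes. I would check it via the formula $\Sing_{\Gamma\cap\Gamma';A}(p)=\Sing_{\Gamma;A}(p)\setminus E'$ from the remark after Definition \ref{sing index}.

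For (5), I would rerun the argument of the remark after Definition \ref{dfn:construction} via Lemma \ref{lem:tube.index.properties-1-3}. The point $p$ lies outside the closed tubes $\overline{\cT_{j;A}(\pi/10)}$ for $j\in E''$, so it has positive distance from them. I would choose $\epsilon$ close enough to $\pi/10$ and $\epsilon'$ small enough that $\cU_q$ avoids these closed tubes. Here $j\notin\Sing_A(q)$ holds for $j\in E''$ whenever $p\notin\cT_{j;A}(\epsilon)$. Nearly nonpositive curvature of the image then follows from the local statement in (1).
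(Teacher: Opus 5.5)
Your proposal is correct and follows essentially the same route as the paper. In order: the local product description through $\Xi_{\Gamma\cap\Gamma';A;q}$ together with the push-forward of generalized caps under $z\mapsto z^k$ for (1); Lemma~\ref{lem:properties.of.tg-3} and Lemma~\ref{lem:properties.of.tg-4} plus saturation and uniqueness of descended metrics for (2)--(3); the identity $\Sing_{\Gamma;A}(p)=\Sing_{\Gamma\cap\Gamma';A}(p)$ off the $E'$-tubes combined with Lemma~\ref{lem:branching.locus-2} for (4); and the choice of $\epsilon$ and $\cU_q$ via Lemma~\ref{lem:tube.index.properties-1-3} for (5). The one point to make explicit in (5) is the case $j\in E''$ with $j\pm1\in\Sing_A(q)$. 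There the corresponding factor of $\cU_q$ is an $\epsilon$-ball, not a small $\epsilon'$-box, so avoiding $\overline{\cT_{j;A}(\pi/10)}$ comes from the coordinate separation in Lemma~\ref{lem:tube.index.properties-1-2} (as in the paper's case analysis), not from choosing $\epsilon'$ small.
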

\begin{proof}
\begin{enumerate}
\item This follows directly from Definition \ref{dfn:near nonpos curv}, Definition \ref{dfn:gen caps}, Proposition \ref{prop:quotient.is.sphere}, Definition \ref{dfn:construction.of.g} and the remark after Definition \ref{dfn:construction}.
\item This follows directly from Lemma \ref{lem:properties.good.subgp-(5)}, Lemma \ref{lem:properties.of.tg-3} and Definition \ref{dfn:construction.of.g}. 
\item This follows directly from \eqref{eqn:quotient.maps}, Lemma \ref{lem:tube.index.properties-1-4}, Lemma \ref{lem:properties.of.tg-4}. and Definition \ref{dfn:construction.of.g}. 
\item  By Lemma \ref{lem:properties.good.subgp-(4)}, \eqref{eqn:quotient.maps} and Lemma \ref{lem:tube.index.properties-1-4}, we have 
$$(\Pi_{\Gamma}^{\Gamma\cap\Gamma'})^{-1}\left(\Pi_{\Gamma}\left(\TT_A\setminus\bigcup_{i\in E'}\cT_{i;A}(\pi/10)\right)\right)=\Pi_{\Gamma\cap\Gamma'}\left(\TT_A\setminus\bigcup_{i\in E'}\cT_{i;A}(\pi/10)\right).$$
Hence by Lemma \ref{lem:properties.of.g.Gamma-3} and Definition \ref{dfn:construction.of.g}, it suffices to show that $\Pi_\Gamma^{\Gamma\cap\Gamma'}$ is a local diffeomorphism when restricted to a neighborhood of $\Pi_{\Gamma\cap\Gamma'}(\TT_A\setminus\bigcup_{i\in E'}\cT_{i;A}(\pi/10))$. Notice that for any $p\in\TT_A\setminus\bigcup_{i\in E'}\cT_{i;A}(\pi/10)$, by \eqref{eqn:j-tubes} and Definition \ref{sing index}, we have $\Sing_A(p)\cap E'=\emptyset$. Following Lemma \ref{lem:properties.good.subgp-(4)} and the remark after Definition \ref{sing index}, we have $\Sing_{\Gamma;A}(p)=\Sing_A(p)\setminus E=\Sing_A(p)\setminus (E\cup E')=\Sing_{\Gamma\cap\Gamma';A}(p)$. Lemma \ref{lem:branching.locus-2} and the above imply that $\Pi_\Gamma^{\Gamma\cap\Gamma'}$ is a local diffeomorphism when restricted to a neighborhood of $\Pi_{\Gamma\cap\Gamma'}(\TT_A\setminus\bigcup_{i\in E'}\cT_{i;A}(\pi/10))$. This finishes the proof.
\item This follows directly from \eqref{jth singularity}, \eqref{eqn:good.nbhd}, \eqref{eqn:j-tubes}, Lemma \ref{lem:properties.of.g.Gamma-1} and the remark after Definition \ref{dfn:construction}.\qedhere
\end{enumerate}
\end{proof}
\section{The Bernoulli property of geodesic flow on $(\Gamma\backslash\TT_A,g_{\Gamma;A})$}\label{sect:proof.of.thm}

\subsection{Characterizing rank one geodesics: nomadic geodesics}\label{subsect:nomadic}
Throughout this subsection, unless otherwise mentioned, we let $A$ be a consecutive subset of $\ZZ_+$ with $|A|\geq 2$ and $\Gamma$ be an arbitrary good subgroup of $\Gamma_A$. 
\begin{dfn}[Lifts]\label{dfn:geo lift}
Let $I$ be an interval and $c:I\to (\Gamma\backslash\TT_A, g_{\Gamma;A})$ be a geodesic. A \emph{lift} $\tc$ of $c$ is a continuous map $\tc: I\to \TT_A$ such that $\Pi_\Gamma\circ\tc=c$.
\end{dfn}
\begin{rmk}
It is not hard to verify that if $c^{-1}(\Pi_\Gamma(\Omega_{\Gamma;A}))$ does not have accumulation points, then $c$ admits a lift. In other words, one can always lift a geodesic which does not accumulate on the branching locus of $\Pi_\Gamma$.

Later, we will frequently use the fact that for any $\Gamma$-invariant subset $U$ of $\TT_A$, we have $c^{-1}(\Pi_\Gamma(U))=\tc^{-1}(U)$. (For example, one can choose $U$ as $\cT_{i;A}(R)$ or $\TT_A\setminus \cT_{i;A}(R)$ for any $i\in\cE(A)$ and for any $R\in(0,\pi/8)$. See Lemma \ref{lem:tube.index.properties-1-1}.)
\end{rmk}
\begin{dfn}[Nomadic geodesic]\label{dfn:nomadic}
Let $I$ be an interval and $c:I\to (\Gamma\backslash\TT_A, g_{\Gamma;A})$ be a geodesic. For any $i\in\cE(A)$, we say that $c$ \emph{passes through} a $\{i,i+1\}$-special tube if there exists $t_-, t,t_+\in I$ with $t_-<t<t_+$ such that $c(t)\in\Pi_\Gamma(\cT_{i;A}(\pi/10))$ and $c(t_\pm)\not\in \overline{\Pi_\Gamma(\cT_{i;A}(\pi/10))}$.

We say that $c$ is \emph{nomadic} if for any $i\in\cE(A)$, it passes through a $\{i,i+1\}$-special tube.
\end{dfn}
It would be very inconvenient for us if there are many geodesics which pass through a special tube infinitely many times on a finite interval. The following lemma essentially verifies that ``most'' geodesics do not have such crazy behavior. This is used in the proof of Proposition \ref{prop:nomadic.is.rank-1}.
\begin{lem}\label{lem:pass.thru.not.crazy}
Let $I$ be a closed interval and $c:I\to (\Gamma\backslash\TT_A,g_{\Gamma;A})$ be a geodesic with constant speed no larger than one. If $c^{-1}(\Pi_\Gamma(\Omega_{\Gamma;A}))$ does not have accumulation points, then there exists some $\delta>0$ depending only on $(\Gamma\backslash\TT_A,g_{\Gamma;A})$ such that for any $i\in\cE(A)$, the distance between any two distinct connected components in $c^{-1}(\Pi_\Gamma(\cT_{i;A}(\pi/10)))$ is at least $\delta$.
\end{lem}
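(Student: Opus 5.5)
We lift $c$ to $\tc:I\to\TT_A$; this is possible by the remark after Definition~\ref{dfn:geo lift}, since $c^{-1}(\Pi_\Gamma(\Omega_{\Gamma;A}))$ has no accumulation points. By Lemma~\ref{lem:tube.index.properties-1-1}, $\cT_{i;A}(\pi/10)$ is $\Gamma$-invariant, so $c^{-1}(\Pi_\Gamma(\cT_{i;A}(\pi/10)))=\tc^{-1}(\cT_{i;A}(\pi/10))$. The problem therefore reduces to the lifted path. Away from the isolated times in $\tc^{-1}(\Omega_{\Gamma;A})$, $\tc$ is a geodesic of $(\TT_A,\tg_{\Gamma;A})$. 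Its speed is at most one. Only the pairs $\{i,i+1\}$ matter, so the key is to control the $(\theta_i,\theta_{i+1})$-projection of $\tc$.

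\textbf{Step 1: the gap is flat and product-like.} Take a gap, i.e.\ a maximal interval $(s_1,s_2)$ of the complement between two distinct components of $\tc^{-1}(\cT_{i;A}(\pi/10))$. At the endpoints, $\tc(s_1)$ and $\tc(s_2)$ lie on $\partial\cT_{i;A}(\pi/10)$. By Lemma~\ref{lem:tube.index.properties-1-2} and the disjointness in Lemma~\ref{lem:tube.index.properties-1-5}, no $\{i-1,i\}$- or $\{i+1,i+2\}$-tube meets a neighborhood of $\partial\cT_{i;A}(\pi/10)$ of fixed width $\pi/8-\pi/10$ in the $(\theta_i,\theta_{i+1})$-directions. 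Hence, while $\tc$ stays in this collar (about $\Omega(i)$, radii between $\pi/10$ and $\pi/8$), the remark after Definition~\ref{dfn:construction} applies. It says $\tg_{\Gamma;A}$ is a product of the flat factor $d\theta_i^2+d\theta_{i+1}^2$ (the annulus outside the special cap) with other factors. So in the collar the $(\theta_i,\theta_{i+1})$-component of $\tc$ is a Euclidean straight segment, traversed at constant speed $\le 1$.

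\textbf{Step 2: lower bound on the gap length.} There are two cases.
\begin{itemize}
\item[(a)] If $\tc$ leaves the collar during $(s_1,s_2)$, its $(\theta_i,\theta_{i+1})$-projection travels Euclidean distance at least $\pi/8-\pi/10$. Since the speed is $\le1$, we get $s_2-s_1\ge \pi/8-\pi/10$.
\item[(b)] If $\tc$ stays in the collar, the projection is a single straight chord that exits and re-enters the round disc $B_{i,i+1}(p,\pi/10)$. A straight segment meets a convex disc in a connected set. So the chord must pass from one disc $B(p,\pi/10)$ to a different disc $B(p',\pi/10)$, with $p\ne p'\in\Omega(i)$. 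By \eqref{jth singularity}, such points are at distance at least $\pi$, so the gap is at least $\pi-2\pi/10$, contradicting the collar assumption.
\end{itemize}

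\textbf{Step 3: speed zero and the main obstacle.} If the speed is zero, $c$ is constant and there is nothing to prove. Otherwise the lengths above do not depend on the speed bound except through the factor $\le1$, so we may take $\delta=\pi/8-\pi/10$, which depends only on the construction.

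The main obstacle is making Step 1 rigorous at the gap endpoints. We must check that the product description near $\partial\cT_{i;A}(\pi/10)$ holds uniformly. We must also check that passing through other tubes $\{j,j+1\}$ with $|j-i|\ge2$, and through isolated branch points, does not affect the $(\theta_i,\theta_{i+1})$-motion. For this we will use the good neighborhoods $\cU_q$ of Lemma~\ref{lem:properties.of.g.Gamma-5} and the factor structure, in which the $\{i,i+1\}$-factor splits off isometrically.
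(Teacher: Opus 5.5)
\textbf{There is a genuine gap: you never exclude two distinct components that share an endpoint.}

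Your route differs from the paper's in its second ingredient. The paper combines two facts. First, getting from $\cT_{i;A}(\pi/10)$ to the outside of $\cT_{i;A}(\pi/9)$ costs time at least $\pi/90$; this is essentially your case (a). Second, it uses local convexity of the extended cap itself: a short geodesic segment whose endpoints lie in one component of $\cT_{i;A}(\pi/10)$ stays in that component. This comes from the remark after Definition~\ref{dfn:caps} together with $\delta<\tfrac12\,\mathrm{injrad}$, reduced to $|A|=2$ via Lemma~\ref{lem:properties.of.g.Gamma-2}--\ref{lem:properties.of.g.Gamma-4}. You replace the second fact by convexity of the Euclidean disc, seen from the flat collar outside the tube. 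That works for every complementary interval of positive length.

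Your Step 1, however, assumes the complement between two distinct components contains a nondegenerate interval $(s_1,s_2)$. Nothing in your argument excludes components $(a,s_1)$ and $(s_1,b)$ with a common endpoint. This is the geodesic touching $\partial\cT_{i;A}(\pi/10)$ from inside at the single instant $s_1$ and immediately re-entering. Then the two components are at distance $0$, the curve spends no time in the collar, and neither (a) nor (b) applies. Accumulating components reduce to this case via your own bound. The flat metric outside the tube cannot rule this out. It is a statement about the cap metric on the inner side of the boundary circle, which is exactly what the paper's second ingredient supplies.

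You can close the gap within your framework. Near $\tc(s_1)$ the $\{i,i+1\}$-factor splits off, as in your Step 1, so its component $\sigma$ is a geodesic of the special cap metric near the circle of Euclidean radius $\pi/10$. That metric is nondegenerate there, since this is away from the center. The extension $\widetilde g$ in Definition~\ref{dfn:special caps} is smooth and Euclidean on the closed exterior. So it agrees with $d\theta_i^2+d\theta_{i+1}^2$ to first order along the circle, and the Christoffel symbols vanish there in the coordinates $(\theta_i,\theta_{i+1})$.

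Let $r(t)$ be the Euclidean distance from $\sigma(t)$ to the center. Since $r<\pi/10$ just before $s_1$, we have $\dot r(s_1)\ge 0$. Suppose $\dot r(s_1)=0$. Then $\ddot\sigma(s_1)=0$ in these coordinates, so $\ddot r(s_1)=\tfrac{10}{\pi}|\dot\sigma(s_1)|^2$. If $\dot\sigma(s_1)\neq0$, then $r>\pi/10$ for $t\neq s_1$ near $s_1$. If $\dot\sigma(s_1)=0$, then $\sigma$ is constant near $s_1$. Either way $\sigma$ cannot lie inside the disc just before $s_1$, a contradiction.

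Hence every exit is transversal, the complementary interval is nondegenerate, and your (a)/(b) dichotomy gives $\delta=\pi/40$. The same observation also settles the constant-chord subcase of (b), which you passed over. With this addition your argument is a valid and more explicit alternative to the paper's, and it avoids both the injectivity radius and the cap-convexity lemma.
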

\begin{proof}
By the remark after Definition \ref{dfn:geo lift}, there is a lift $\tc:I\to\TT_A$ of $c$. Moreover, by Definition \ref{dfn:construction.of.g}, after removing $I\cap\tc^{-1}(\Omega_{\Gamma;A})$ from $I$, $\tc$ is a piecewise geodesic in $(\TT_A\setminus \Omega_{\Gamma;A}, \tg_{\Gamma;A})$ with constant speed no larger than one.  Observe the following:
\begin{itemize}
\item If there exist $t_1,t_2\in I$ such that $\tc(t_1)\in\cT_{i;A}(\pi/10)$ and $\tc(t_2)\in\TT_A\setminus\cT_{i;A}(\pi/9)$, then $|t_1-t_2|\geq (\pi/9-\pi/10)=\pi/90$.  In particular, if there exist $t_1',t_2'\in I$ such that $\tc(t_1')$ and $\tc(t_2')$ are in different connected components of $\cT_{i;A}(\pi/10)$, then $|t_1'-t_2'|\geq \pi/90$. (This follows from Lemma \ref{lem:branching.locus-1}, Definition \ref{dfn:construction}, Definition \ref{dfn:construction.of.g} and the fact that $c$ has constant speed no larger than one.)
\item There exists some $\delta\in(0,\min\{\text{injrad}(\Gamma\backslash\TT_A,g_{\Gamma;A})/2,\pi/180\})$ such that for any $i\in\cE(A)$, if $\tc(t_1)$ and $\tc(t_2)$ are in the same connected component of $\cT_{i;A}(\pi/10)$ and $0<t_2-t_1<2\delta$, then $c([t_1,t_2])\subset \Pi_\Gamma(\cT_{i;A}(\pi/10))$. (When $|A|=2$, $\Pi_\Gamma(\cT)$ equipped with the restriction of $g_{\Gamma;A}$ is an extended cap. Since $(\Gamma\backslash\TT_A,g_{\Gamma;A})$ is a nearly nonpositively curved surface with finitely many extended caps (due to Definition \ref{dfn:construction} and Definition \ref{dfn:construction.of.g}), the $|A|=2$ case follows directly from Definition \ref{dfn:caps} and its remark. The case when $|A|\geq 3$ can be reduced to the case when $|A|=2$ using Lemma \ref{lem:tube.index.properties-1-4}-\ref{lem:tube.index.properties-1-5} and Lemma \ref{lem:properties.of.g.Gamma-2}-\ref{lem:properties.of.g.Gamma-4}, where $E'$ in Lemma \ref{lem:properties.of.g.Gamma} is chosen to be $\{i-1,i+1\}\cap\cE(A)$.)
\end{itemize}
By the above two bullet points, for any connected components $I_1,I_2$ of $c^{-1}(\Pi_\Gamma(\cT_{i;A}(\pi/10)))$, $d(I_1,I_2)\leq \delta$ is equivalent to $I_1=I_2$. This finishes the proof.
\end{proof}

We would like to show that for bi-infinite geodesics, being nomadic is equivalent to having rank one. (See Corollary \ref{cor:nomadic=rank-1}.) This contains the following two parts:
\begin{enumerate}
\item If a bi-infinite geodesic fails to be nomadic, there are two possibilities:
\begin{itemize}
\item This geodesic does not visit $\Pi_{\Gamma}(\cT_{i;A}(\pi/10))$ for some $i\in\cE(A)$.
\item This geodesic enters $\Pi_{\Gamma}(\cT_{i;A}(\pi/10))$ for some $i\in\cE(A)$ and never leaves there. (Recall that in Definition \ref{dfn:nomadic}, being nomadic requires that the geodesic leaves after it enters $\overline{\Pi_{\Gamma}(\cT_{i;A}(\pi/10))}$ for any $i\in\cE(A)$.)
\end{itemize}
Lemma \ref{lem:missing.tube=higher.rank} proves that geodesics in the first case must have rank at least two. The second assertion in Lemma \ref{lem:rank-1.not.in.sing} proves that geodesics in the second case must have rank at least two.
\item If a bi-infinite geodesic is nomadic, we would like to show that it has rank one. The main proof of this part is in Proposition \ref{prop:nomadic.is.rank-1}. By Definition \ref{dfn:geo rank}, we need to prove that any Jacobi field $Y$ with $f_{(\Gamma\backslash\TT_A,g_{\Gamma;A})}(Y;t)\equiv 0$ must be a scalar multiple of the tangent vector field of this geodesic. 

If a geodesic accumulates on the branching locus of $\Pi_\Gamma$, it would be quite inconvenient for us to study the Jacobi fields along it. Fortunately, such geodesics must have rank at least two and cannot be nomadic due to the first assertion in Lemma \ref{lem:rank-1.not.in.sing}. 
\end{enumerate}
\begin{lem}\label{lem:missing.tube=higher.rank}
Let $I$ be an interval and $c:I\to (\Gamma\backslash\TT_A, g_{\Gamma;A})$ be a non-constant geodesic. Let $i\in\cE(A)$. If $c(I)\cap\Pi_\Gamma(\cT_{i;A}(\pi/10))=\emptyset$, then $c$ has rank at least two.
\end{lem}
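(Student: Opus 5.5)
The idea is to exhibit an orthogonal parallel Jacobi field along $c$ coming from the flat product structure in the $\{i,i+1\}$ directions outside the tube $\cT_{i;A}(\pi/10)$. By Lemma \ref{lem:properties.of.g.Gamma-4} with $E'=\{i\}$, we may pass to the cover $(\Gamma\cap\Gamma_A(\{i\}))\backslash\TT_A$. There the metric $g_{\Gamma;A;\{i\}}$ splits, by Lemma \ref{lem:properties.of.g.Gamma-2}, as a product of two factors, $X_1$ on $\TT_{A_1}$ and $X_2$ on $\TT_{A_2}$. The consecutive partition induced by $\{i\}$ has $A_1=\{\min A,\dots,i\}$ and $A_2=\{i+1,\dots,\max A\}$. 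The hypothesis $c(I)\cap\Pi_\Gamma(\cT_{i;A}(\pi/10))=\emptyset$ places $c(I)$ in the region where Lemma \ref{lem:properties.of.g.Gamma-4} applies. So $c$ lifts to a geodesic $\tc$ of $X_1\times X_2$, and Jacobi fields along $\tc$ push forward bijectively to Jacobi fields along $c$. Since the pushforward is a local isometry there, the quantity $f$ is preserved. It therefore suffices to show that $\tc$ has rank at least two in $X_1\times X_2$.

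Write $\tc=(\tc_1,\tc_2)$ with $\tc_j$ a geodesic of $X_j$, each of constant speed. The natural candidates are the parallel fields $Y_1=(\tc_1',0)$ and $Y_2=(0,\tc_2')$. Both are Jacobi fields, since they are tangent fields of geodesics in the factors, and both are parallel. Along a geodesic in a product, $Q$ splits by \eqref{eqn:infi Lyap2} over the canonical decomposition. For each factor direction the contribution of a parallel field is $\langle Y,Y'\rangle=0$ at the relevant times $\tau_\pm$, so $f\equiv0$.

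First suppose both $\tc_1'$ and $\tc_2'$ are nonzero. Then $Y_1,Y_2$ are linearly independent, and the rank is at least two.

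Now suppose one component is constant, say $\tc_2$. If $\dim X_1\ge2$, I need another field. Here I would use instead a parallel field in the constant factor: any nonzero vector $w\in T X_2$ at the fixed point gives $(0,w)$. This is a Jacobi field, because the geodesic in the $X_2$ direction is constant and the Jacobi equation in that factor is $Y''=0$, with $R(0,\cdot)0=0$. It is parallel, orthogonal to $\tc'$, and has $f\equiv 0$ since $Y'=0$. This gives rank at least two whenever $\dim X_2\ge1$, which always holds. The case $\tc_1$ constant is symmetric.

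The main obstacle I expect is justifying that these parallel fields have $f\equiv0$ when $\tc$ passes through cap regions inside a factor. In a cap factor the definition of $Q$ uses $\tau_\pm$, so one must check that $\langle Y,Y'\rangle$ vanishes at $\tau_\pm$. This holds trivially because $Y'\equiv0$.

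A second check is that the product decomposition of $X_1\times X_2$ is compatible with the canonical decomposition, so that \eqref{eqn:infi Lyap2} splits accordingly. The remark after Definition \ref{dfn:near nonpos curv}, together with Lemma \ref{lem:rank-1 lem 3}-type arguments, should handle this.

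Finally, a subtlety is that $c$ might meet the branching locus. Lemma \ref{lem:properties.of.g.Gamma-4} gives the lift regardless, but pushing Jacobi fields forward requires a local diffeomorphism. That is exactly what its proof provides along the region avoiding $\cT_{i;A}(\pi/10)$.
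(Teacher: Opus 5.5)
Your proposal is correct and follows the paper's proof. You lift $c$ to $((\Gamma\cap\Gamma_A(\{i\}))\backslash\TT_A,g_{\Gamma;A;\{i\}})$ via Lemma~\ref{lem:properties.of.g.Gamma-4}, use the product splitting of Lemma~\ref{lem:properties.of.g.Gamma-2} to obtain an orthogonal parallel Jacobi field, and push it down along the local isometry of Lemma~\ref{lem:properties.of.g.Gamma-3}. Your case analysis only makes explicit the field that the paper reads off directly from the product structure; one such field suffices, since $c'$ supplies the other.
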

\begin{proof}
Let $E'=\{i\}\subset\cE(A)$ and $\Gamma':=\Gamma_A(E')$. In particular, $\Gamma'$ and $\Gamma\cap\Gamma'$ are good subgroups of $\Gamma_A$. (See Definition \ref{dfn:good subgp} and Lemma \ref{lem:properties.good.subgp-(4)}.) 
By Lemma \ref{lem:properties.good.subgp-(4)}, Lemma \ref{lem:tube.index.properties-1-4} and Lemma \ref{lem:properties.of.g.Gamma-4}, there is a smooth geodesic $\tc_{\Gamma\cap\Gamma'}:I\to\Pi_{\Gamma\cap\Gamma'}(\TT_A\setminus\cT_{i;A}(\pi/10))$ in $((\Gamma\cap\Gamma')\backslash\TT_A,g_{\Gamma;A;E'})$ such that $\Pi_{\Gamma}^{\Gamma\cap\Gamma'}\circ\tc_{\Gamma\cap\Gamma'}=c$. By Lemma \ref{lem:properties.of.g.Gamma-2}, $\tc_{\Gamma\cap\Gamma'}$ admits an orthogonal parallel Jacobi field $Y(\cdot)$  with respect to $g_{\Gamma;A;E'}$. Hence by Lemma \ref{lem:properties.of.g.Gamma-3} and Lemma \ref{lem:properties.of.g.Gamma-4}, $(\Pi_{\Gamma}^{\Gamma\cap\Gamma'})_*Y(\cdot)$ is an orthogonal parallel Jacobi field along $c$ with respect to $g_{\Gamma;A}$. This proves that $c$ has rank at least two.
\end{proof}
\begin{lem}\label{lem:rank-1.not.in.sing}
Let $I$ be an interval and $c:I\to (\Gamma\backslash\TT_A, g_{\Gamma;A})$ be a geodesic. Let $i\in\cE(A)$. 
\begin{enumerate}[ref=\ref{lem:rank-1.not.in.sing}(\arabic*)]
\item If there exists some $p\in\overline{\cT_{i;A}(\pi/10)}$ such that $c^{-1}(\Pi_\Gamma(\pr^A_{i,i+1}(\{p\})\times \TT_{A\setminus\{i,i+1\}}))$ has accumulation points, then $c(I)\subset \Pi_\Gamma(\pr^A_{i,i+1}(\{p\})\times \TT_{A\setminus\{i,i+1\}})$. In particular, $c$ is constant or has rank at least two. \label{lem:rank-1.not.in.sing-1}
\item If there is an infinite interval $I'\subset I$ such that $c(I')\subset \Pi_\Gamma(\overline{\cT_{i;A}(\pi/10)})$, then the bi-infinite geodesic containing $c$ is contained in $\Pi_\Gamma(\overline{\cT_{i;A}(\pi/10)})$. In particular, $c$ is either constant, or has rank at least two. \label{lem:rank-1.not.in.sing-2}
\end{enumerate}
\end{lem}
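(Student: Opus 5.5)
The plan is to reduce both assertions to a two-dimensional problem inside the $\{i,i+1\}$ factor. First I localize with Lemma \ref{lem:properties.of.g.Gamma}, taking $E'=\{i-1,i+1\}\cap\cE(A)$ as in the proof of Lemma \ref{lem:pass.thru.not.crazy}. By Lemma \ref{lem:tube.index.properties-1-2} and Lemma \ref{lem:tube.index.properties-1-5}, the set $\overline{\cT_{i;A}(\pi/10)}$ does not meet $\bigcup_{j\in E'}\cT_{j;A}(\pi/10)$. Lemma \ref{lem:properties.of.g.Gamma-4} then lets me lift any geodesic segment of $c$ near $\Pi_\Gamma(\overline{\cT_{i;A}(\pi/10)})$ to a geodesic $\tc_{\Gamma\cap\Gamma'}$ in $((\Gamma\cap\Gamma')\backslash\TT_A,g_{\Gamma;A;E'})$. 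Jacobi fields correspond bijectively under this lift.

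By Lemma \ref{lem:properties.of.g.Gamma-2}, this space is a Riemannian product whose atom containing $\{i,i+1\}$ is exactly $\{i,i+1\}$. That atom is the surface $S:=((\Gamma\cap\Gamma_{\{i,i+1\}})\backslash\TT_{i,i+1},g_{\Gamma\cap\Gamma_{\{i,i+1\}};\{i,i+1\}})$. It is a flat torus or sphere with finitely many (possibly $2$-cap-type) extended caps inserted around the images of the points of $\Omega(i)$. Near such an image, by Proposition \ref{prop:quotient.is.sphere} and Definition \ref{dfn:gen caps}, the metric is a genuine extended cap, because the special $k$-cap pushed forward by $z\mapsto z^k$ is an extended cap. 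The $S$-component of $\tc_{\Gamma\cap\Gamma'}$ is a (possibly constant) geodesic $\sigma$ of $S$, and the set $\Pi_\Gamma(\pr^A_{i,i+1}(\{p\})\times\TT_{A\setminus\{i,i+1\}})$ corresponds to $\sigma$ hitting a single point $x\in S$.

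For (1): an accumulation point $t_0$ of $\sigma^{-1}(x)$ forces $\sigma'(t_0)=0$. Indeed, $\sigma$ has constant speed, and a nonconstant geodesic is locally injective below the injectivity radius. Hence $\sigma$ is constant and $c$ lies in the stated set. If $c$ is nonconstant, the $S$-directions give an orthogonal parallel Jacobi field. Concretely, a parallel unit field tangent to $S$ along the constant curve $\sigma$, paired with the product structure, is a Jacobi field orthogonal to $c'$ with $f\equiv0$. Pushing it down via Lemma \ref{lem:properties.of.g.Gamma-4} shows that $c$ has rank $\ge2$.

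For (2): the $S$-component $\sigma$ stays, for all $t$ in an infinite interval, inside the closed extended cap region $\overline{C}$. By the remark after Definition \ref{dfn:caps}, a nonconstant geodesic in an extended cap defined on an infinite interval is everywhere tangent to $\partial C'$. So either $\sigma$ is constant, or $\sigma$ is the closed geodesic $\partial C'$. In both cases the full bi-infinite extension of $\sigma$ stays in $\overline C$, and so does the bi-infinite extension of $c$. If $\sigma$ is constant, the argument of (1) gives rank $\ge2$ unless $c$ is constant. If $\sigma=\partial C'$, the curvature of $S$ vanishes along $\sigma$. The unit normal to $\sigma$ in $S$ is then a parallel Jacobi field of $S$, and $f\equiv 0$ by the definition of $Q$ for surfaces with $\tau_\pm$. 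Adding the zero field in the other factors gives a Jacobi field with $f\equiv0$ independent of $c'$, so $c$ has rank $\ge2$. Alternatively, $c'$ is non-normal, and Lemma \ref{lem:rank-1 lem 1} applies directly.

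The main obstacle is the extension step in (2). The lift via Lemma \ref{lem:properties.of.g.Gamma-4} is only guaranteed on a region avoiding $\bigcup_{j\in E'}\cT_{j;A}(\pi/10)$. I must therefore argue that once $\sigma$ is trapped on $\partial C'$ or constant, the extended geodesic never leaves $\Pi_\Gamma(\overline{\cT_{i;A}(\pi/10)})$. I would handle this with a continuation argument. Let $T$ be the supremum of times for which the bi-infinite extension stays in the set. Near $T$ the product structure still holds, so $\sigma$ continues as the same closed geodesic or constant point. This contradicts the maximality of $T$ by uniqueness of geodesics.
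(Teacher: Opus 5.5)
Your proposal is correct and is essentially the paper's argument. Taking $E'=\{i-1,i+1\}\cap\cE(A)$, lifting to $((\Gamma\cap\Gamma')\backslash\TT_A,g_{\Gamma;A;E'})$ and using Lemma \ref{lem:properties.of.g.Gamma-2} to pass to the surface factor on the atom $\{i,i+1\}$ is exactly what the paper's ``induction on $|A|$'' does, since its inductive hypothesis is only ever applied to that two-dimensional atom. The remaining differences are cosmetic. The paper gets the rank conclusions from Lemma \ref{lem:missing.tube=higher.rank} (the geodesic misses $\Pi_\Gamma(\cT_{j;A}(\pi/10))$ for $j\in E'$) instead of writing down the parallel Jacobi field. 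Where you speak of a single point $x$, it pigeonholes over the images in the surface factor of the finitely many points $\pr^A_{i,i+1}(\gamma p)$, $\gamma\in\Gamma$. In (2), it replaces your supremum argument by extending $\tc_{\Gamma\cap\Gamma'}$ to a bi-infinite geodesic of the intermediate cover, applying the surface case there, and pushing it down via Lemma \ref{lem:properties.of.g.Gamma-3} and uniqueness of geodesics.
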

\begin{proof}
We prove both assertions via induction on $|A|$. When $|A|=2$, the first assertion obviously holds. (If $c$ is a geodesic such that $c^{-1}(p)$ has accumulation points for some $p$, then $c\equiv p$.) The second assertion is a direct corollary of the remark after Definition \ref{dfn:caps}. 

Assume that the lemma holds whenever $2\leq |A|\leq k-1$ for some $k\geq 3$. Consider the case when $|A|=k$. Let $E'=\{i-1,i+1\}\cap\cE(A)$, which is not empty. The proof of the two assertions are very similar. Due to this reason, we prove the first assertion in complete details. We will omit some repetitive details in the proof of the second assertion.
\begin{enumerate}
\item  If $c(I)\subset \Pi_\Gamma(\pr^A_{i,i+1}(\{p\})\times \TT_{A\setminus\{i,i+1\}})$, then by Lemma \ref{lem:tube.index.properties-1-5}, $c(I)\cap\Pi_\Gamma(\cT_{j;A}(\pi/10))=\emptyset$ for any $j\in E'$. Hence, $c$ is either constant or has rank at least two due to Lemma \ref{lem:missing.tube=higher.rank}. Therefore, it remains to prove that $c(I)\subset \Pi_\Gamma(\pr^A_{i,i+1}(\{p\})\times \TT_{A\setminus\{i,i+1\}})$.

It suffices to show that for any accumulation point $t_0\in I$ of $c^{-1}(\Pi_\Gamma(\pr^A_{i,i+1}(\{p\})\times \TT_{A\setminus\{i,i+1\}}))$, there exist some $\epsilon>0$ and an open interval $I_0:=(t_0-\epsilon,t_0+\epsilon)\cap I$, such that
\begin{align}\label{eqn:rank-1.not.in.sing-1}
c(I_0)\subset \Pi_\Gamma(\pr^A_{i,i+1}(\{p\})\times \TT_{A\setminus\{i,i+1\}}).
\end{align} 
By the fact that $\Pi_\Gamma$ is an open map and the fact that $\overline{\cT_{i;A}(\pi/10)}\subset\cT_{i;A}(\pi/9)$, there exists $\epsilon>0$ sufficiently small such that $c(I_0)\subset\Pi_\Gamma(\cT_{i;A}(\pi/9))$. By Lemma \ref{lem:tube.index.properties-1-4} and the fact that $c(I)\cap\Pi_\Gamma(\cT_{j;A}(\pi/10))=\emptyset$ for any $j\in E'$, we have $c(I_0)\subset\Pi_\Gamma(\TT_A\setminus\bigcup_{j\in E'}\cT_{j;A}(\pi/10))$. Let $\Gamma'=\Gamma_A(E')$. Then by Lemma \ref{lem:properties.of.g.Gamma-4} and the above, there is a smooth geodesic $\tc_{\Gamma\cap\Gamma'}:I_0\to\Pi_{\Gamma\cap\Gamma'}(\TT_A\setminus\bigcup_{j\in E'}\cT_{j;A}(\pi/10))$ in $((\Gamma\cap\Gamma')\backslash\TT_A,g_{\Gamma;A;E'})$ such that $\Pi_{\Gamma}^{\Gamma\cap\Gamma'}\circ\tc_{\Gamma\cap\Gamma'}=c|_{I_0}$. Notice that by \eqref{eqn:generator}, \eqref{eqn:Gamma.A} and Definition \ref{dfn:good subgp}, for any $\gamma\in\Gamma$, we have
\begin{align}\label{eqn:rank-1.not.in.sing-2}
\Pi_\Gamma(\pr^A_{i,i+1}(\{p\})\times \TT_{A\setminus\{i,i+1\}})=\Pi_\Gamma(\pr^A_{i,i+1}(\{\gamma p\})\times \TT_{A\setminus\{i,i+1\}}).
\end{align}
Hence by \eqref{eqn:quotient.maps} and \eqref{eqn:rank-1.not.in.sing-2}, we have
$$(\Pi_{\Gamma}^{\Gamma\cap\Gamma'})^{-1}(\Pi_\Gamma(\pr^A_{i,i+1}(\{p\})\times \TT_{A\setminus\{i,i+1\}}))=\bigcup_{\gamma\in\Gamma}\Pi_{\Gamma\cap\Gamma'}(\pr^A_{i,i+1}(\{\gamma p\})\times \TT_{A\setminus\{i,i+1\}}).$$
In particular, by the assumption on $t_0$ and the fact that $\Pi_{\Gamma}^{\Gamma\cap\Gamma'}\circ\tc_{\Gamma\cap\Gamma'}=c|_{I_0}$, there exists some $\gamma\in\Gamma$ such that $t_0$ is an accumulation point for $(\tc_{\Gamma\cap\Gamma'})^{-1}(\Pi_{\Gamma\cap\Gamma'}(\pr^A_{i,i+1}(\{\gamma p\})\times \TT_{A\setminus\{i,i+1\}}))$. Since $E'=\{i-1,i+1\}\cap\cE(A)\neq\emptyset$, the consecutive partition of $A$ induced by $E'$ has at least two atoms with one of the atoms containing $i,i+1$. (See Definition \ref{dfn:consecutive}.) Hence, by Lemma \ref{lem:properties.of.g.Gamma-2} and the inductive step, we have $\tc_{\Gamma\cap\Gamma'}(I_0)\subset\Pi_{\Gamma\cap\Gamma'}(\pr^A_{i,i+1}(\{\gamma p\})\times \TT_{A\setminus\{i,i+1\}})$. Recall that $\gamma\in \Gamma$, \eqref{eqn:rank-1.not.in.sing-1} then follows from \eqref{eqn:rank-1.not.in.sing-2}, the fact that $\Pi_{\Gamma}^{\Gamma\cap\Gamma'}\circ\tc_{\Gamma\cap\Gamma'}=c|_{I_0}$ and the above discussion.

\item Similar to the first paragraph of the proof of the first assertion, if $c(\RR)$ is contained in $\Pi_\Gamma(\overline{\cT_{i;A}(\pi/10)})$, $c$ is either constant or has rank at least two due to Lemma \ref{lem:missing.tube=higher.rank}. Therefore it remains to prove that $c(\RR)$ is contained in $\Pi_\Gamma(\overline{\cT_{i;A}(\pi/10)})$. 

Let $\Gamma'=\Gamma_A(E')$. Similar to the proof of the first assertion, there is a smooth geodesic $\tc_{\Gamma\cap\Gamma'}:I'\to\Pi_{\Gamma\cap\Gamma'}(\TT_A\setminus\bigcup_{j\in E'}\cT_{j;A}(\pi/10))$ in $((\Gamma\cap\Gamma')\backslash\TT_A,g_{\Gamma;A;E'})$ such that $\Pi_{\Gamma}^{\Gamma\cap\Gamma'}\circ\tc_{\Gamma\cap\Gamma'}=c|_{I'}$. Since $c(I')\subset\Pi_{\Gamma}(\overline{\cT_{i;A}(\pi/10)})$, we have $\tc_{\Gamma\cap\Gamma'}(I')\subset \Pi_{\Gamma\cap\Gamma'}(\overline{\cT_{i;A}(\pi/10)})$ (due Lemma \ref{lem:tube.index.properties-1-4}). 

We extend $\tc_{\Gamma\cap\Gamma'}$ and $c$ to bi-infinite geodesics in $((\Gamma\cap\Gamma')\backslash\TT_A,g_{\Gamma;A})$ and $(\Gamma\backslash\TT_A,g_{\Gamma;A;E'})$ respectively. For simplicity, we use the same notations for the extended geodesics. Since $E'=\{i-1,i+1\}\cap\cE(A)\neq\emptyset$, the consecutive partition of $A$ induced by $E'$ has at least two atoms with one of the atoms containing $i,i+1$. (See Definition \ref{dfn:consecutive}.) By Lemma \ref{lem:properties.of.g.Gamma-2} and the inductive step, we have $\tc_{\Gamma\cap\Gamma'}(\RR)\subset\Pi_{\Gamma\cap\Gamma'}(\overline{\cT_{i;A}(\pi/10)})$. Notice that by Lemma \ref{lem:tube.index.properties-1-4}, Lemma \ref{lem:tube.index.properties-1-5} and the definition of $E'$, we have
$$\Pi_{\Gamma\cap\Gamma'}(\overline{\cT_{i;A}(\pi/10)})\subset\Pi_{\Gamma\cap\Gamma'}({\cT_{i;A}(\pi/9)})\subset \Pi_{\Gamma\cap\Gamma'}\left(\TT_A\setminus\bigcup_{j\in E'}\cT_{j;A}(\pi/10)\right).$$
By Lemma \ref{lem:properties.of.g.Gamma-3} and the fact that $\Pi_{\Gamma}^{\Gamma\cap\Gamma'}\circ\tc_{\Gamma\cap\Gamma'}|_{I'}=c|_{I'}$, we have $\Pi_{\Gamma}^{\Gamma\cap\Gamma'}\circ\tc_{\Gamma\cap\Gamma'}=c$ as bi-infinite geodesics. In particular, $c(\RR)\subset\Pi_{\Gamma}(\overline{\cT_{i;A}(\pi/10)})$. This completes the proof. \qedhere
\end{enumerate}
\end{proof}

\begin{prop}\label{prop:nomadic.is.rank-1}
Let $c:\RR\to (\Gamma\backslash\TT_A, g_{\Gamma;A})$ be a geodesic. If $c$ is nomadic, then $c$ has rank one.
\end{prop}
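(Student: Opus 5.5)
The plan is to take a Jacobi field $Y$ along $c$ with $f_{(\Gamma\backslash\TT_A,g_{\Gamma;A})}(Y,t)\equiv 0$ and show that $Y$ is a constant multiple of $c'$. The strategy is to lift to the covering torus, where $\tg_{\Gamma;A}$ is flat outside the tubes and locally a product of a $2$-dimensional special cap with a flat factor inside each tube. The cap factor forces constraints on $Y$ during each passage, and the flat factor forces $Y$ to be rigid (parallel) between passages.

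\emph{Step 1: reduction to a liftable geodesic.} First I rule out that $c$ accumulates on the branching locus. If $c^{-1}(\Pi_\Gamma(\Omega_{\Gamma;A}))$ had an accumulation point, then some $p\in\Omega_A(i)\subset\overline{\cT_{i;A}(\pi/10)}$ would satisfy the hypothesis of Lemma \ref{lem:rank-1.not.in.sing-1}. Then $c(\RR)\subset\Pi_\Gamma(\pr^A_{i,i+1}(\{p\})\times\TT_{A\setminus\{i,i+1\}})$, so $c$ never leaves $\overline{\Pi_\Gamma(\cT_{i;A}(\pi/10))}$, contradicting that $c$ is nomadic. So by the remark after Definition \ref{dfn:geo lift}, $c$ has a lift $\tc$, and by Lemma \ref{lem:pass.thru.not.crazy} the passages through each family of tubes are separated by a uniform gap $\delta$. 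Away from the discrete set $\tc^{-1}(\Omega_{\Gamma;A})$, $\tc$ is a $\tg_{\Gamma;A}$-geodesic and $Y$ lifts to a Jacobi field $\widetilde Y$. Across the discrete bad times I extend by continuity, using the local charts $\Xi_{\Gamma;A;p}$ of Proposition \ref{prop:quotient.is.sphere}. Write $Y=a c'+Y^\perp$; since $f\equiv 0$ and $c'$ contributes a parallel field, $a$ is constant and $Y^\perp$ is an orthogonal Jacobi field with $f\equiv 0$. It suffices to show $Y^\perp\equiv 0$.

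\emph{Step 2: local rigidity.} On each regular neighborhood of the form given by the remark after Definition \ref{dfn:construction}, apply Lemma \ref{lem:rank-1 lem 3} to the product decomposition, so that each factor component of $Y^\perp$ also has $f\equiv 0$. On the flat factor, a Jacobi field is $w_0+tw_1$, and $\langle w_0+tw_1,w_1\rangle\equiv0$ forces $w_1=0$. Hence in flat regions $\widetilde Y^\perp$ has constant torus coordinates, and $\tc'$ does too. On an $\{i,i+1\}$ cap factor, a passage in the sense of Definition \ref{dfn:nomadic} means the cap component of $\tc$ enters and leaves the cap region. It therefore crosses the negatively curved annulus of the extended cap, and its cap component is nonzero and not tangent to the boundary geodesic. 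Applying Lemma \ref{lem:rank-1 lem 2} inside that $2$-dimensional factor, the cap component of $\widetilde Y^\perp$ must equal $\lambda_i$ times the cap component of $\tc'$ throughout the passage, for some constant $\lambda_i$. (For $2$-caps I work on the base disc away from the center, which is an honest extended cap after the map $F$.)

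\emph{Step 3: global bookkeeping (main obstacle).} Let $w\in\RR^{|A|}$ be the constant coordinate vector of $\widetilde Y^\perp$ in flat stretches, and $v$ that of $\tc'$; both transform identically under the reflections in $\Gamma$. Each passage through an $\{i,i+1\}$-tube changes only coordinates $i,i+1$ of $w$ and $v$, and it does so by the same cap isometry. It also forces $(w_i,w_{i+1})=\lambda_i(v_i,v_{i+1})$ just before and just after the passage. Since $c$ passes through tubes of every index $i\in\cE(A)$, I would propagate these constraints along the chain $1,2,\dots$ of consecutive indices. The idea is that adjacent indices share the coordinate $i+1$, giving $\lambda_i=\lambda_{i+1}$, and then $w=\lambda v$ together with $w\perp v$ gives $w=0$. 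The difficulty is that constraints arise at different times, while intervening passages may rotate the shared coordinate pair, and some coordinate $v_{i+1}$ may vanish. I would handle this by tracking, between passages, the invariant ``$w$ and $v$ agree up to scalar on the union of coordinate blocks already visited''. The key observations are that a cap passage preserves this relation on its own block, and that flat stretches and $\Gamma$-reflections preserve it trivially. When a coordinate of $v$ vanishes, I would use the orthogonality $\langle w,v\rangle=0$ and the fact that a cap passage with nonzero cap component changes the direction of $(v_i,v_{i+1})$ generically, to merge the blocks. Once $Y^\perp$ vanishes on an open time interval it vanishes identically, so $Y=ac'$ and $c$ has rank one.
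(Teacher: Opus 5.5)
The gap is in Step~3, which carries the real content of the proof and is left unresolved. Steps~1 and~2 follow the paper's outline: you rule out accumulation on the branching locus via Lemma~\ref{lem:rank-1.not.in.sing-1}, freeze a coordinate away from the tubes touching it, and get proportionality during a passage from Lemmas~\ref{lem:rank-1 lem 3} and~\ref{lem:rank-1 lem 2}. Two repairs are needed there.
\begin{itemize}
\item To apply Lemma~\ref{lem:rank-1 lem 2} over a whole passage you need a product structure along the entire passage, not just on small regular neighbourhoods. The paper gets this by forgetting the neighbouring tubes $E'=\{i-1,i+1\}\cap\cE(A)$ and lifting to $((\Gamma\cap\Gamma_A(E'))\backslash\TT_A,g_{\Gamma;A;E'})$. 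By Lemma~\ref{lem:properties.of.g.Gamma-2} this is globally a product with the surface $(\Gamma\cap\Gamma_{i,i+1})\backslash\TT_{i,i+1}$ as a factor, which also handles passages through the centre of a $2$-cap.
\item Your claim that $a$ is constant and $f(Y^\perp)\equiv 0$ needs an argument, since the tangential part is a priori $(\xi+\lambda t)c'$. It is also unnecessary: it suffices to show $\tY_i=\alpha\tc_i'$ for all $i$ and $t$.
\end{itemize}

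Step~3 fails for two reasons.
\begin{itemize}
\item Your constants $\lambda_i$ belong to individual passages, not to tube indices. A nomadic geodesic may pass through $\{i,i+1\}$-tubes many times, so linking one passage per index along $1,2,\dots$ is not enough. You need every passage touching a given coordinate to carry the same constant. Between a passage through an $\{i,i+1\}$-tube and a later one through an $\{i+1,i+2\}$-tube, further passages through such tubes can change coordinate $i+1$.
\item Your invariant on the ``visited blocks'' is not preserved when the shared coordinate of $v$ vanishes before a passage. The proposed fix, that a cap passage changes the direction of $(v_i,v_{i+1})$ ``generically'', cannot be used. Rank one must be proved for every nomadic geodesic, and a radial passage through a rotationally symmetric cap goes straight through with no change of direction.
\end{itemize}

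What is missing is a deterministic way to link the constants of consecutive passages. The paper works coordinate by coordinate. For $i\in A$ it compares only passages through $\{j,j+1\}$-tubes with $j\in\{i-1,i\}$ that are consecutive in the sense of Definition~\ref{subdfn:adjacent}. Between such passages $\tc_i'$ and $\tY_i$ are constant (Lemma~\ref{sublem:nomadic=rank-1-2}). The key facts are these:
\begin{itemize}
\item If the two passages are through tubes of different indices, the shared coordinate must travel from near $\{0,\pi\}$ to near $\{\pi/2,3\pi/2\}$. Hence its velocity in the gap is nonzero.
\item If both are through tubes of index $i$ with no $\{i-1,i,i+1\}$-tube in between, $\tc_i'$ and $\tc_{i+1}'$ cannot both vanish in the gap. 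Otherwise the $\{i,i+1\}$-projection would be constant, and Lemma~\ref{lem:rank-1.not.in.sing-1} would contradict nomadicity.
\item If two same-index passages are adjacent in one coordinate but not the other, you insert the intervening passages to obtain a chain of completely adjacent ones (Lemma~\ref{sublem:nomadic=rank-1-3}).
\end{itemize}
This yields a single $\alpha_i$ with $\tY_i=\alpha_i\tc_i'$ for all $t$. Then $\alpha_i=\alpha_{i+1}$, because each passage through an $\{i,i+1\}$-tube forces proportionality in both coordinates with one constant, and $(\tc_i',\tc_{i+1}')\neq(0,0)$ there. An argument of this kind has to replace your invariant.
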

\begin{proof}

We first show that $c^{-1}(\Pi_\Gamma(\Omega_{\Gamma; A}))$ does not have accumulation points. If not, then by \eqref{jth singularity} and \eqref{total singularity subgp}, there exist some $i\in\cE(A)$ and some $p_{i,i+1}\in\Omega(i)$ such that $c^{-1}(\Pi_\Gamma(\{p_{i,i+1}\}\times \TT_{A\setminus\{i,i+1\}}))$ has accumulation points. Then by Lemma \ref{lem:rank-1.not.in.sing-1}, $c$ is not nomadic in the sense of Definition \ref{dfn:nomadic}. This contradicts the assumption on $c$. Hence $c^{-1}(\Pi_\Gamma(\Omega_{\Gamma; A}))$ does not have accumulation points. We write $\Sigma:=c^{-1}(\Pi_\Gamma(\Omega_{\Gamma; A}))$. 

When $|A|=2$, by Lemma \ref{lem:properties.of.g.Gamma-1}, $(\Gamma\backslash\TT_A,g_{\Gamma;A})$ is a nearly nonpositively curved surface. If $c$ is nomadic, then $c$ passes through an extended cap. (See Definition \ref{dfn:special caps}, Definition \ref{dfn:construction} and Definition \ref{dfn:construction.of.g}.) In particular, it visits a region with negative curvature. (See Definition \ref{dfn:caps}) Therefore, by Lemma \ref{lem:rank-1 lem 2}, $c$ has rank one.

From now on, we assume that $|A|\geq 3$. Let $Y(t)$ be an arbitrary Jacobi field along $c(t)$ such that $f_{(\Gamma\backslash\TT_A,g_{\Gamma;A})}(Y,t)\equiv 0$ whenever $t\in \RR$. (See \eqref{eqn:infi Lyap2} and \eqref{eqn:noninc Lyap fcn2} for the relevant definitions.) Recall that $c^{-1}(\Pi_\Gamma(\Omega_{\Gamma; A}))=\Sigma$ does not have accumulation points and that $\Omega_{\Gamma;A}$ is the ramification locus of $\Pi_\Gamma$. (See Lemma \ref{lem:branching.locus}.) Following the remark after Definition \ref{dfn:geo lift}, there is a lift $\tc:\RR\to\TT_A$ of $c$. Moreover, $\tc$ restricted to any connected component of $\RR\setminus\Sigma$ is a geodesic segment in $(\TT_A\setminus\Omega_{\Gamma;A},\tg_{\Gamma;A})$ due to Definition \ref{dfn:construction.of.g}. In particular, for any $t\in \RR\setminus \Sigma$, one can define $\tY(t)\in T_{\tc(t)}\TT_A$ such that $(\Pi_\Gamma)_*\tY(t)=Y(t)$. In addition, $\tY(t)$ is a Jacobi field on every geodesic subsegment of $\tc$. 

For any $i\in A$ and for any $t\in \RR\setminus \Sigma$, we write $\tY_i(t):=(\pr^A_i)_*\tY(t)$ and $\tc_i'(t):=(\pr^A_i)_*\tc'(t)$.
Following the remark after Definition \ref{dfn:geo rank}, it suffices to show that there exists some $\alpha\in\RR$ such that for any $i\in A$ and for any $t\in \RR\setminus\Sigma$, we have
\begin{align}\label{eqn:nomadic.is.rank.1.goal}
\tY_i(t)=\alpha\cdot\tc_i'(t).
\end{align}
For any $i\in\cE(A)$, let $\cI_i$ be the set of all connected components of $ c^{-1}(\Pi_\Gamma(\cT_{i;A}(\pi/10)))$. Following Lemma \ref{lem:rank-1.not.in.sing-2} and the assumption that $c$ is nomadic, we have 
\begin{align}\label{eqn:finite.and.nonempty}
\cI_i\neq\emptyset\text{ and length}(I)<\infty\text{ for any }I\in\cI_i. 
\end{align}
To prove \eqref{eqn:nomadic.is.rank.1.goal}, we need the following definition and a technical lemma.
\begin{subdfn}[Adjacent]\label{subdfn:adjacent}
For any $i_1,i_2\in\cE(A)$, for any $I_1\in\cI_{i_1}$, for any $I_2\in\cI_{i_2}$ and for any $i\in A$, we say that $I_1$ and $I_2$ are \emph{$i$-adjacent} if the following holds:
\begin{itemize}
\item $i\in\{i_1,i_1+1\}\cap\{i_2,i_2+1\}$.
\item $I_1\neq I_2$ (and hence $\overline{I_1}\cap \overline{I_2}=\emptyset$ due to  the definition of $\cI_i$, Lemma \ref{lem:tube.index.properties-1-5} and Lemma \ref{lem:pass.thru.not.crazy}). Moreover, if $I_1=(a,b)$ and $I_2=(a',b')$ for some $a<b<a'<b'$, then $[b,a']\cap c^{-1}(\Pi_\Gamma(\cT_{j;A}(\pi/10)))=\emptyset$ for any $j\in\{i,i-1\}\cap\cE(A)$.
\end{itemize}
In addition, we say that $I_1,I_2$ are \emph{completely adjacent} if the following holds:
\begin{itemize}
\item $\{i_1,i_1+1\}\cap\{i_2,i_2+1\}\neq\emptyset$.
\item $I_1$ and $I_2$ are $i$-adjacent for any $i\in\{i_1,i_1+1\}\cap\{i_2,i_2+1\}$.
\end{itemize}
\end{subdfn}
\begin{sublem}\label{sublem:nomadic=rank-1}
The following properties hold:
\begin{enumerate}[ref=\ref{sublem:nomadic=rank-1}(\arabic*)]
\item For any $i\in\cE(A)$ and for any $I\in\cI_i$, we have $(\tc_i'(t),\tc_{i+1}'(t))\neq (0,0)$ for any $t\in I\setminus\Sigma$. Moreover, there exists an open neighborhood $I'$ of $\overline{I}$ and a unique constant $\alpha_{i,i+1}\in\RR$ such that $\tY_i(t)=\alpha_{i,i+1}\cdot\tc_i'(t)$ and $\tY_{i+1}(t)=\alpha_{i,i+1}\cdot\tc_{i+1}'(t)$ for any $t\in I'\setminus\Sigma$.
\label{sublem:nomadic=rank-1-1}
\item Let $i\in A$. For any interval $I$ such that $I\cap \bigcup_{j\in\{i,i-1\}\cap\cE( A)}c^{-1}(\Pi_\Gamma(\cT_{j;A}(\pi/10)))=\emptyset$, we have $\tc_i'(t)$ and $\tY_i(t)$ are constant functions on $I\setminus \Sigma$.
\label{sublem:nomadic=rank-1-2}
\item Let $i\in A$. For any $i_1,i_2\in\cE(A)$, for any $I_1\in\cI_{i_1}$ and $I_2\in\cI_{i_2}$ that are $i$-adjacent, there exist an open interval $I$ and a unique constant $\alpha\in\RR$  such that the following holds:
\begin{itemize}
\item $I\supset\overline{I_1\sqcup I_2}$.
\item $\tY_i(t)=\alpha\cdot\tc_i'(t)$ for any $t\in I\setminus\Sigma$.
\item There exists an open neighborhood $I_1'$ of $\overline{I_1}$ such that $\tY_{i_1}(t)=\alpha\cdot\tc_{i_1}'(t)$ and $\tY_{i_1+1}(t)=\alpha\cdot\tc_{i_1+1}'(t)$ for any $t\in I_1'\setminus\Sigma$.
\item There exists an open neighborhood $I_2'$ of $\overline{I_2}$ such that $\tY_{i_2}(t)=\alpha\cdot\tc_{i_2}'(t)$ and $\tY_{i_2+1}(t)=\alpha\cdot\tc_{i_2+1}'(t)$ for any $t\in I_2'\setminus\Sigma$.
\end{itemize}
\label{sublem:nomadic=rank-1-3}
\item Let $i\in A$. If there exists an open interval $I\subset \RR$ and a constant $\alpha\in\RR$ such that the following holds:
\begin{itemize}
\item $\tY_i(t)=\alpha \cdot\tc_i'(t)$ for any $t\in I\setminus\Sigma$.
\item $\overline{\bigcup_{j\in \cE(A)\cap\{i,i-1\}}\bigcup_{J\in\cI_{j}}J}\subset I$.
\end{itemize}
Then $\tY_i(t)=\alpha \cdot\tc_i'(t)$ for any $t\in\RR\setminus\Sigma$.
\label{sublem:nomadic=rank-1-4}
\end{enumerate}
\end{sublem}

\begin{proof}[Proof of Lemma \ref{sublem:nomadic=rank-1}]
\begin{enumerate}
\item Let $E'=\{i-1,i+1\}\cap\cE(A)$ and $I'$ be the unique connected component in $c^{-1}(\Pi_\Gamma(\cT_{i;A}(\pi/9)))$ containing $I$. For simplicity, we write $\Gamma'=\Gamma_A(E')$.
By Lemma \ref{lem:tube.index.properties-1-4} and Lemma \ref{lem:tube.index.properties-1-5}, 
\begin{align}\label{eqn:non.ints}
c(I')\subset \Pi_\Gamma\left(\TT_A\setminus\left(\bigcup_{j\in E'}\overline{\cT_{j;A}(\pi/10)}\right)\right).
\end{align} 
Hence, by Definition \ref{dfn:geo lift} and Lemma \ref{lem:properties.of.g.Gamma-4}, the following holds:
\begin{itemize}
\item $\Pi_{\Gamma\cap\Gamma'}\circ\tc|_{I'}$ is a geodesic in $(\Gamma\cap\Gamma'\backslash\TT_A,g_{\Gamma;A;E'})$. 
\item $(\Pi_{\Gamma\cap\Gamma'})_*\tY|_{I'\setminus\Sigma}$ extends to a smooth Jacobi field along $\Pi_{\Gamma\cap\Gamma'}\circ\tc|_{I'}$ in $(\Gamma\cap\Gamma'\backslash\TT_A,g_{\Gamma;A;E'})$.  (We will use the same notation for the extended Jacobi field.)
\end{itemize}
We first claim that 
\begin{align}\label{eqn:middle.cover.lyap.const}
f_{(\Gamma\backslash\TT_A,g_{\Gamma;A})}(Y,t)=f_{(\Gamma\cap\Gamma')\backslash\TT_A, g_{\Gamma;A;E'})}((\Pi_{\Gamma\cap\Gamma'})_*\tY,t)\equiv 0,~\forall t\in I'.
\end{align}
For any $t\in I'$, by Lemma \ref{lem:tube.index.properties-1-4}, Definition \ref{dfn:geo lift} and \eqref{eqn:non.ints}, we have $\tc(t)\in\TT_A\setminus\left(\bigcup_{j\in E'}\overline{\cT_{j;A}(\pi/10)}\right).$
By Lemma \ref{lem:properties.of.g.Gamma-1} and Lemma \ref{lem:properties.of.g.Gamma-5}, there exists a point $q\in\TT_A$ and a good neighborhood $\cU_q$ of $q$ such that $\tc(t)\in\cU_q\subset \TT_A\setminus\left(\bigcup_{j\in E'}\overline{\cT_{j;A}(\pi/10)}\right)$ and that $(\Pi_\Gamma(\cU_q),g_{\Gamma;A}|_{\Pi_\Gamma(\cU_q)})$ is nearly nonpositively curved.
Observe the following:
\begin{itemize}
\item $g_{\Gamma;A;E'}=(\Pi^{\Gamma\cap\Gamma'}_\Gamma)^*g_{\Gamma;A}$ when restricted to $\Pi_{\Gamma\cap\Gamma'}(\TT_A\setminus\bigcup_{j\in E'}\overline{\cT_{j,A}(\pi/10)})$. (This is due to Lemma \ref{lem:properties.of.g.Gamma-3}.)
\item For any nearly nonpositively curved manifolds $(M,g)$, for any open subset $U$ of $M$, for any geodesic $c_M:I_M\to (U,g|_U)$, for any Jacobi field $Y_M$ along $c_M$ and for any $t\in I_M\subset c_M^{-1}(U)$, if $(U,g|_U)$ is nearly nonpositively curved, then $f_{(M,g)}(Y_M,t)=f_{(U,g|_U)}(Y_M,t)$. (This is due to \eqref{eqn:infi Lyap2}, \eqref{eqn:noninc Lyap fcn2} and \cite[Corollary 5.3]{BG94}. See also \cite[page 18, lines 14-16]{BG94}.)
\item For any $p\in\TT_A$ and any good neighborhood $\cU_p$ of $p$. If $\cU_p\subset \TT_A\setminus \left(\bigcup_{j\in E'}\cT_{j;A}(\pi/10)\right)$, then $\Pi^{\Gamma\cap\Gamma'}_{\Gamma}|_{\Pi_{\Gamma\cap\Gamma'}(\cU_p)}$ is a diffeomorphism onto its image. (To see this, for any $q_1,q_2\in\cU_p$, if $\Pi_\Gamma(q_1)=\Pi_\Gamma(q_2)$, by Definition \ref{dfn:good.nbhd}, there exists $\gamma\in\text{Stab}_\Gamma(p)$ such that $\gamma(q_1)=q_2$. Observe that $E'\cap\Sing_A(p)=\emptyset$ due to Definition \ref{sing index} and \eqref{eqn:j-tubes}. Write $\Gamma=\Gamma_A(E)$. Recall that $\Gamma'=\Gamma_A(E')$. By Lemma \ref{lem:properties.good.subgp-(4)} and the remark after Definition \ref{sing index}, we have 
$$\Sing_{\Gamma\cap\Gamma';A}(p)=\Sing_A(p)\setminus(E\cup E')=\Sing_A(p)\setminus E=\Sing_{\Gamma;A}(p).$$ 
Hence, by the remark after Definition \ref{sing index} again, we have $\text{Stab}_\Gamma(p)=\text{Stab}_{\Gamma\cap\Gamma'}(p)$. This implies that $\Pi_{\Gamma\cap\Gamma'}(q_1)=\Pi_{\Gamma\cap\Gamma'}(q_2)$. Notice that by Lemma \ref{lem:properties.of.g.Gamma-3}, $\Pi^{\Gamma\cap\Gamma'}_{\Gamma}$ is everywhere nondegenerate when restricted to $\Pi_{\Gamma\cap\Gamma'}(\TT_A\setminus\bigcup_{j\in E'}\cT_{j,A}(\pi/10))$. Therefore, $\Pi^{\Gamma\cap\Gamma'}_{\Gamma}|_{\Pi_{\Gamma\cap\Gamma'}(\cU_p)}$ is a diffeomorphism onto its image.)
\end{itemize}
Hence, by Lemma \ref{lem:properties.of.g.Gamma-3} and the three bullet points in the above, we have 
\begin{align*}
f_{(\Gamma\backslash\TT_A,g_{\Gamma;A})}(Y,t)=&f_{(\Pi_\Gamma(\cU_q),g_{\Gamma;A}|_{\Pi_\Gamma(\cU_q)})}(Y,t)\\
=&f_{(\Pi_{\Gamma\cap\Gamma'}(\cU_q), g_{\Gamma;A;E'}|_{\Pi_{\Gamma\cap\Gamma'}(\cU_q)})}((\Pi_{\Gamma\cap\Gamma'})_*\tY,t)=f_{((\Gamma\cap\Gamma')\backslash\TT_A, g_{\Gamma;A;E'})}((\Pi_{\Gamma\cap\Gamma'})_*\tY,t)
\end{align*}
\eqref{eqn:middle.cover.lyap.const} then follows from the above and the assumption that $f_{(\Gamma\backslash\TT_A,g_{\Gamma;A})}(Y,t)\equiv 0$.

Notice that $\{i,i+1\}$ is an atom in the consecutive partition induced by $E'$. (See the remark after Definition \ref{dfn:consecutive}.) Therefore, by Lemma \ref{lem:properties.of.g.Gamma-2}, the following holds: 
\begin{itemize}
\item $\Pi_{\Gamma\cap\Gamma_{i,i+1}}\circ \pr^A_{i,i+1}\circ\tc|_{I'}$ is a nonconstant geodesic in $(\Gamma\cap\Gamma_{i,i+1}\backslash\TT_{i,i+1},g_{\Gamma\cap\Gamma_{i,i+1};\{i,i+1\}})$. (Recall that $I\subset I'$. The fact that this geodesic is nonconstant follows from the definition of $I$, Lemma \ref{lem:rank-1.not.in.sing-1} and the assumption that $c$ is nomadic in the sense of Definition \ref{dfn:nomadic}.)
\item $(\Pi_{\Gamma\cap\Gamma_{i,i+1}}\circ \pr^A_{i,i+1})_*\tY|_{I'\setminus\Sigma}$ extends to a smooth Jacobi field along $\Pi_{\Gamma\cap\Gamma_{i,i+1}}\circ\pr^A_{i,i+1}\circ\tc|_{I'}$ in $(\Gamma\cap\Gamma_{i,i+1}\backslash\TT_{i,i+1},g_{\Gamma\cap\Gamma_{i,i+1};\{i,i+1\}})$. (We use the same notation for the extended Jacobi field.)
\end{itemize}
\begin{figure}[h]
	\centering
	\includegraphics[width=4.5in]{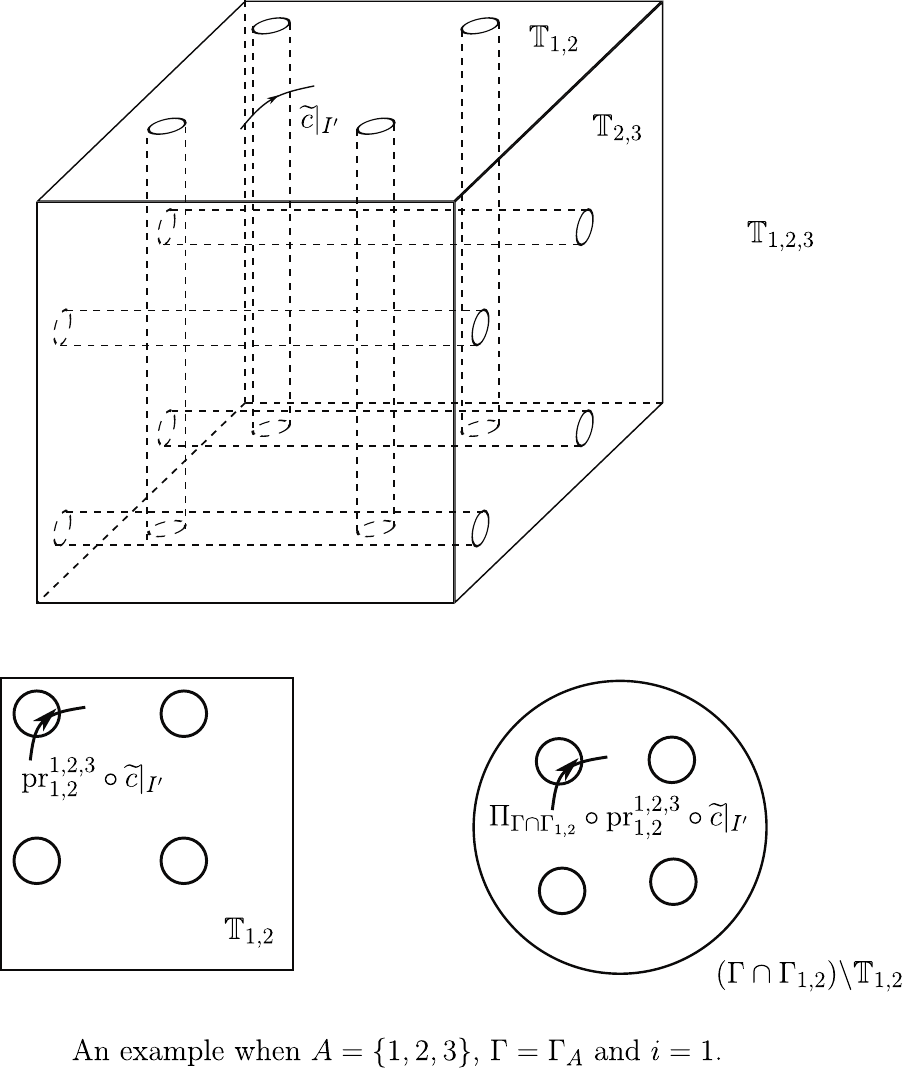}
	\caption{}\label{fig:tube geo}
\end{figure}
Moreover, by Lemma \ref{lem:rank-1 lem 3}, Lemma \ref{lem:properties.of.g.Gamma-2} and \eqref{eqn:middle.cover.lyap.const}, we have
\begin{align}\label{eqn:factor.middle.cover.lyap.const}
f_{(\Gamma\cap\Gamma_{\{i,i+1\}}\backslash\TT_{i,i+1},g_{\Gamma\cap\Gamma_{\{i,i+1\}};\{i,i+1\}})}((\Pi_{\Gamma\cap\Gamma_{\{i,i+1\}}}\circ\pr^A_{i,i+1})_*\tY,t)\equiv 0,~\forall t\in I'.
\end{align}
Recall the following facts:
\begin{itemize}
\item $(\Gamma\cap\Gamma_{i,i+1}\backslash\TT_{i,i+1},g_{\Gamma\cap\Gamma_{i,i+1};\{i,i+1\}})$ is a nearly nonpositively curved surface. (This is due to Lemma \ref{lem:properties.of.g.Gamma-1}.)
\item By Definition \ref{dfn:construction}, Definition \ref{dfn:construction.of.g}, and the definition of $I'$ and $I$, the geodesic segment $\Pi_{\Gamma\cap\Gamma_{i,i+1}}\circ \pr^A_{i,i+1}\circ\tc|_{I'}$ passes through a point with negative curvature. (See also Definition \ref{dfn:caps} and Definition \ref{dfn:near nonpos curv}. See Figure \ref{fig:tube geo} for a picture of $\Pi_{\Gamma\cap\Gamma_{i,i+1}}\circ \pr^A_{i,i+1}\circ\tc|_{I'}$.)
\end{itemize}
Therefore, by Definition \ref{dfn:geo rank}, Lemma \ref{lem:rank-1 lem 2}, \eqref{eqn:factor.middle.cover.lyap.const} and the fact that $\Pi_{\Gamma\cap\Gamma_{i,i+1}}\circ \pr^A_{i,i+1}\circ\tc|_{I'}$ is nonconstant, there exists a unique constant $\alpha_{i,i+1}\in\RR$ such that 
$$(\Pi_{\Gamma\cap\Gamma_{\{i,i+1\}}}\circ\pr^A_{i,i+1})_*\tY(t)=\alpha_{i,i+1}\cdot(\Pi_{\Gamma\cap\Gamma_{i,i+1}}\circ \pr^A_{i,i+1}\circ\tc)'(t)\text{ for any }t\in I'.$$
In particular, by Lemma \ref{lem:branching.locus-1} and the definition of $\Sigma$, we have $\tY_i(t)=\alpha_{i,i+1}\cdot\tc_i'(t)$ and $\tY_{i+1}(t)=\alpha_{i,i+1}\cdot\tc_{i+1}'(t)$ for any $t\in I'\setminus\Sigma$.
\item The proof of this assertion follows from the same argument as in the proof of Lemma \ref{sublem:nomadic=rank-1-1}. The only difference is that we use $I$ instead of $I'$ and use $E'=\{i-1,i\}\cap\cE(A)$ instead of $\{i-1,i+1\}\cap \cE(A)$. In particular, $\{i\}$ is an atom in the induced consecutive partition of $A$ by $E'$. (See the remark after Definition \ref{dfn:consecutive}.) This implies that $\tc_i|_I$ is a geodesic in the circle $\TT_i$, and that $\tY_i|_{I\setminus\Sigma}$ extends to a smooth Jacobi field along $\tc_i|_I$ with $f_{\TT_i}(\tY_i,t)\equiv 0$ for any $t\in I$. Hence both $\tc_i$ and $\tY_i$ are constant along $I$. This finishes the proof.
\item Without loss of generality, we let $I_1=(a_1,b_1)$ and $I_2=(a_2,b_2)$ with $a_1<b_1<a_2<b_2$. Let $I=(a_1-\epsilon,b_2+\epsilon)$ for some $\epsilon>0$ such that $(a_1-\epsilon,b_1+\epsilon)$ and $(a_2-\epsilon,b_2+\epsilon)$ are subintervals of the open neighborhoods of $\overline{I_1}$ and $\overline{I_2}$ described in Lemma \ref{sublem:nomadic=rank-1-1} respectively. By Lemma \ref{sublem:nomadic=rank-1-1} applied to $I_1,I_2$, Lemma \ref{sublem:nomadic=rank-1-2} applied to $(b_1,a_2)$ and the assumption that $I_1,I_2$ are $i$-adjacent, there exist unique constants $\alpha(I_1),\alpha(I_2)\in\RR$ such that 
\begin{align}\label{eqn:half.const.bridge-1}
\begin{cases}
\tY_i(t)=\alpha(I_1)\cdot\tc_i'(t)\text{ for any }t\in (a_1-\epsilon,a_2)\setminus\Sigma,\\
(\tY_{i_1}(t),\tY_{i_1+1}(t))=\alpha(I_1)\cdot(\tc_{i_1}'(t),\tc_{i_1+1}'(t))\text{ for any }t\in (a_1-\epsilon,b_1+\epsilon)\setminus \Sigma,\\
\end{cases}
\end{align}
and
\begin{align}\label{eqn:half.const.bridge-2}
\begin{cases}
\tY_i(t)=\alpha(I_2)\cdot\tc_i'(t)\text{ for any }t\in (b_1,b_2+\epsilon)\setminus\Sigma,\\
(\tY_{i_2}(t),\tY_{i_2+1}(t))=\alpha(I_2)\cdot(\tc_{i_2}'(t),\tc_{i_2+1}'(t))\text{ for any }t\in (a_2-\epsilon,b_2+\epsilon)\setminus \Sigma.\\
\end{cases}
\end{align}
It suffices to show that $\alpha(I_1)=\alpha(I_2)$. We split the proof into two separate cases:

\textbf{Case 1}: Assume that $I_1$ and $I_2$ are completely adjacent in the sense of Definition \ref{subdfn:adjacent}. Then \eqref{eqn:half.const.bridge-1} and \eqref{eqn:half.const.bridge-2} hold with $i$ replaced by any $j\in\{i_1,i_1+1\}\cap\{i_2,i_2+1\}$.   
Observe that there exist $t\in(b_1,a_2)\setminus \Sigma$ and $j\in\{i_1,i_1+1\}\cap\{i_2,i_2+1\}$ such that $\tc_j'(t)\not\equiv 0$. (When $i_1=i_2$, this is due to Lemma \ref{lem:rank-1.not.in.sing-1}, the assumption that $c$ is nomadic and the assumptions on $I_1,I_2$. When $i_1\neq i_2$, this is due to \eqref{jth singularity}, \eqref{eqn:j-tubes} and the assumptions on $I_1,I_2$.) Hence, $\alpha(I_1)=\alpha(I_2)$ follows directly from \eqref{eqn:half.const.bridge-1} (with $i$ replaced by $j$) and \eqref{eqn:half.const.bridge-2} (with $i$ replaced by $j$).

\textbf{Case 2}: If $I_1$ and $I_2$ are not completely adjacent in the sense of Definition \ref{subdfn:adjacent}, then $i_1=i_2$. Moreover, there exists a finite collection of pairwise disjoint open intervals $J_1:=I_1,J_2,\dots,J_{m-1},J_m:=I_2$ such that the following holds: 
\begin{itemize}
\item For any $j\in\{1,\dots,m\}$, $J_j\in\cI_{l_j}$ for some $l_j\in\cE(A)$. (See the definition of $\cI_l$ before Definition \ref{subdfn:adjacent}.)
\item For any $j\in\{1,\dots,m-1\}$, $(\inf J_{j+1})-(\sup J_j)$ is uniformly bounded from below by a positive number. (This follows from Lemma \ref{lem:pass.thru.not.crazy} and Lemma \ref{lem:tube.index.properties-1-5}.)
\item For any $j\in\{1,\dots,m-1\}$, $J_j$ and $J_{j+1}$ are completely adjacent in the sense of Definition \ref{subdfn:adjacent}.
\end{itemize}
Hence, $\alpha(I_1)=\alpha(I_2)$ follows directly from \textbf{Case 1}.

We note that the above collection of pairwise disjoint intervals $\{J_1,\dots,J_m\}$ can be constructed in the following recursive way: Start with $\{I_1,I_2\}$. For any two distinct intervals $I_1'=(a_1',b_1'),I_2'=(a_2',b_2')$ in this collection with $a_1'<b_1'<a_2'<b_2'$, if $I_1',I_2'$ are $j$-adjacent but not completely adjacent, there exists some $i\in\{j,j-1\}\cap\cE(A)$ such that $I_1',I_2'\in\cI_{i}$. Let $j'$ be the unique element in $\{i,i+1\}\setminus\{j\}$. Then $I_1',I_2'$ are not $j'$-adjacent. Extend the existing collection of intervals by adding all intervals in $\cI_{j'}$ which intersects $[b_1',a_2']$. It is easy to see that the first bullet point holds for this extended collection. Lemma \ref{lem:pass.thru.not.crazy} and Lemma \ref{lem:tube.index.properties-1-5} guarantee that the second bullet point holds for the extended collection. Repeat the above constructions, which will end in finitely many steps due to the second bullet point. The final collection of intervals must satisfy the third bullet point because otherwise there is room to further extend this collection using the above recursive constructions.
\item This follows directly from Lemma \ref{sublem:nomadic=rank-1-2}.\qedhere
\end{enumerate}
\end{proof}
Back to the proof of Proposition \ref{prop:nomadic.is.rank-1}. 
By Lemma \ref{lem:pass.thru.not.crazy}, there are at most finitely many intervals in $\bigcup_{j\in \cE(A)\cap\{i,i-1\}}\cI_j$ which intersects any given bounded interval. Moreover, by Definition \ref{subdfn:adjacent} and the definition of $\cI_i$ before \eqref{eqn:finite.and.nonempty}, for any distinct $I_1=(a,b),I_2=(a',b')\in \bigcup_{j\in \cE(A)\cap\{i,i-1\}}\cI_j$ with $a<b<a'<b'$, $I_1,I_2$ are $i$-adjacent if and only if $I\cap[b,a']=\emptyset$ for any $I\in \bigcup_{j\in \cE(A)\cap\{i,i-1\}}\cI_j$. Therefore, by \eqref{eqn:finite.and.nonempty} and Lemma \ref{sublem:nomadic=rank-1}, there exists a unique $\alpha_i\in\RR$ such that the following holds:
\begin{itemize}
\item $\tY_i(t)=\alpha_i\cdot\tc_i'(t)$ for any $t\in\RR\setminus\Sigma$.
\item For any $j\in \cE(A)\cap\{i,i-1\}$ and for any $I\in \cI_j$, there exists an open neighborhood $I'$ of $\overline{I}$ such that $\tY_j(t)=\alpha_{i}\cdot\tc_j'(t)$ and $\tY_{j+1}(t)=\alpha_{i}\cdot\tc_{j+1}'(t)$ for any $t\in I'\setminus\Sigma$.
\end{itemize}
Then, by \eqref{eqn:finite.and.nonempty} and Lemma \ref{sublem:nomadic=rank-1-1}, for any $i,j\in A$, we have $\alpha_i=\alpha_j$. This proves \eqref{eqn:nomadic.is.rank.1.goal} and hence finishes the proof of Proposition \ref{prop:nomadic.is.rank-1}.
\end{proof}

\begin{cor}\label{cor:nomadic=rank-1}
A bi-infinite geodesic in $(\Gamma\backslash\TT_A, g_{\Gamma;A})$ has rank one if and only if it is nomadic.
\end{cor}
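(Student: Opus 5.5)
One direction is Proposition \ref{prop:nomadic.is.rank-1}: a nomadic bi-infinite geodesic has rank one. So the work is the converse. I will argue by contrapositive and show that a non-constant bi-infinite geodesic $c:\RR\to(\Gamma\backslash\TT_A,g_{\Gamma;A})$ which is \emph{not} nomadic has rank at least two.

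By Definition \ref{dfn:nomadic}, if $c$ is not nomadic there is some $i\in\cE(A)$ such that $c$ does not pass through any $\{i,i+1\}$-special tube. That is, there are no $t_-<t<t_+$ with $c(t)\in\Pi_\Gamma(\cT_{i;A}(\pi/10))$ and $c(t_\pm)\notin\overline{\Pi_\Gamma(\cT_{i;A}(\pi/10))}$. I would split into two cases.

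\textbf{Case 1:} $c(\RR)\cap\Pi_\Gamma(\cT_{i;A}(\pi/10))=\emptyset$. Then Lemma \ref{lem:missing.tube=higher.rank}, applied with $I=\RR$, immediately gives rank at least two.

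\textbf{Case 2:} there is $t_0$ with $c(t_0)\in\Pi_\Gamma(\cT_{i;A}(\pi/10))$. The failure of the passing-through condition at $t=t_0$ means one of two things. Either $c(t)\in\overline{\Pi_\Gamma(\cT_{i;A}(\pi/10))}$ for all $t<t_0$, or $c(t)\in\overline{\Pi_\Gamma(\cT_{i;A}(\pi/10))}$ for all $t>t_0$; otherwise we could pick $t_-<t_0<t_+$ outside the closure. In either case there is an infinite interval $I'$ (a half-line) on which $c$ stays in $\overline{\Pi_\Gamma(\cT_{i;A}(\pi/10))}$.

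To apply Lemma \ref{lem:rank-1.not.in.sing-2} I need the closure of the image to agree with the image of the closure. Since $\Pi_\Gamma$ is a closed map (a quotient by a finite group of a compact space), $\overline{\Pi_\Gamma(\cT_{i;A}(\pi/10))}=\Pi_\Gamma(\overline{\cT_{i;A}(\pi/10)})$. Lemma \ref{lem:rank-1.not.in.sing-2} then says that $c$ is constant or has rank at least two. Since $c$ is non-constant, it has rank at least two.

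The main subtlety is the bookkeeping in Case 2: making sure that the negation of ``passes through'' really yields a half-line contained in the closed tube image, rather than merely some point outside it on one side. Beyond that, no new estimates are needed. Combining the two cases with Proposition \ref{prop:nomadic.is.rank-1} gives the equivalence.
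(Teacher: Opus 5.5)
Your proposal is correct and follows essentially the same route as the paper: Proposition \ref{prop:nomadic.is.rank-1} gives one direction, and for the converse you use the same two cases, handled by Lemma \ref{lem:missing.tube=higher.rank} and Lemma \ref{lem:rank-1.not.in.sing-2} respectively. You also note that negating ``passes through'' only places a half-line of $c$ in $\overline{\Pi_\Gamma(\cT_{i;A}(\pi/10))}=\Pi_\Gamma(\overline{\cT_{i;A}(\pi/10)})$, not in the open image as the paper's case list states; this is exactly what matches the hypothesis of Lemma \ref{lem:rank-1.not.in.sing-2}.
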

\begin{proof}
By Proposition \ref{prop:nomadic.is.rank-1}, it suffices to show that all rank one, bi-infinite geodesics are nomadic. If not, then there exist $i\in\cE(A)$ and a rank one, bi-infinite geodesic $c:\RR\to (\Gamma\backslash\TT_A, g_{\Gamma;A})$ such that $c$ does not pass through a $\{i,i+1\}$-special tube. By Definition \ref{dfn:nomadic}, either of the following cases holds:
\begin{itemize}
\item $c(\RR)\cap\Pi_\Gamma(\cT_{i;A}(\pi/10))=\emptyset$.
\item There exists an infinite interval $I$ such that $c(I)\subset \Pi_\Gamma(\cT_{i;A}(\pi/10))$.
\end{itemize}
By Lemma \ref{lem:missing.tube=higher.rank} and Lemma \ref{lem:rank-1.not.in.sing-2}, either of the above cases contradicts the assumption that $c$ has rank one. This finishes the proof.
\end{proof}

\subsection{Construction of $\cN_{\Gamma;A}$}\label{subsect:normal vectors}


\begin{dfn}[$i$-th angle]\label{dfn:j.angle}
Let $A$ be a consecutive subset of $\ZZ_+$ with $|A|\geq 2$. For any $v\in T\TT_A$ and for any $i\in\cE(A)$, if $(\pr^A_{i,i+1})_*(v)\neq 0$, we define $\beta_{i}(v)\in[0,\pi]/\{0\sim \pi\}$ such that $(\pr^A_{i,i+1})_*(v)=(r\cos(\beta_{i}(v)),r\sin(\beta_{i}(v)))$ for some $r\in\RR\setminus\{0\}$.
\end{dfn}
\begin{dfn}[Exceptional angle]\label{dfn:exceptional value}
Let $A$ be a consecutive subset of $\ZZ_+$. For any $i\in\cE(A)$, we say that $\beta\in[0,\pi]/\{0\sim\pi\}$ is a \emph{$i$-exceptional angle} if under the standard torus metric, there is a vector $v\in T\TT_A$ with $(\pr^A_{i,i+1})_*(v)\neq 0$ and $\beta_i(v)=\beta$, such that the bi-infinite geodesic with initial vector $v$ never visits $\cT_{i;A}(\pi/10)$. (See \eqref{eqn:j-tubes}.)

We say that a vector $v\in T\TT_A$ has \emph{no exceptional angles} if for any $i\in\cE(A)$, $\beta_i(v)$ is well-defined and is not $i$-exceptional.

For any $p\in\TT_A$, we denote by $T^{\text{NE}}_p\TT_A$ the collection of vectors with footpoint $p$ which have no exceptional angles. For any subset $U$ of $\TT_A$, we let $T^{\text{NE}}U:=\bigcup_{p\in U}T^{\text{NE}}_p\TT_A$.
\end{dfn}
\begin{rmk}
One can check that for any $i\in\cE(A)$, $0\sim\pi$ and $\pi/2$ are $i$-exceptional angles. Also, there are only finitely many $i$-exceptional angles.
\end{rmk}
\begin{dfn}\label{dfn:normal.vectors}
Let $A$ be a consecutive subset of $\ZZ_+$. If $|A|=1$, we define $\widetilde{\cN}_A$ to be the collection of all non-zero vectors in $T\TT_A$.

If $|A|\geq 2$, we define $\widetilde{\cN}_A$ to be a subset of $T\TT_A$ such that for any $v\in T\TT_A$, $v\in\widetilde{\cN}_A$ if and only if the following holds:
\begin{enumerate}
\item The footpoint of $v$ is in $\TT_A\setminus\Omega_{\Gamma_A;A}$.
\item For any $i\in\cE(A)$, $(\pr^A_{i,i+1})_*(v)\neq 0$. Moreover, $\beta_i(v)$ is not a $i$-exceptional angle, or there exists $j\in\{i-1,i,i+1\}\cap\cE(A)$ such that the following holds:
\begin{itemize}
\item The footpoint of $v$ is in $\cT_{j;A}(\pi/10)$.
\item If $j\neq i$, then for any $l\in\{i,i+1\}\setminus\{j,j+1\}$, we have $(\pr^A_l)_*(v)\neq 0$.
\end{itemize}
\end{enumerate}
\end{dfn}
\begin{rmk}
One can easily verify that $\widetilde{\cN}_A$ satisfies the following properties:
\begin{enumerate}
\item[\hypertarget{C1}{(C1)}] For any $v\in T^{\text{NE}}(\TT_A\setminus\Omega_{\Gamma_A;A})$ and for any $w\in\widetilde{\cN}_A$, if $(\pr^A_i)_*(v)=(\pr^A_i)_*(w)$ for any $i\in A$, then $v$ and $w$ are connected by a path in $\widetilde{\cN}_A$. (This uses the fact that $T^{\text{NE}}(\TT_A\setminus\Omega_{\Gamma_A;A})$ is invariant under parallel transport and the fact that $\TT_A\setminus\Omega_{\Gamma_A;A}$ is path-connected. See Definition \ref{dfn:j.angle}-Definition \ref{dfn:normal.vectors}, \eqref{jth singularity} and \eqref{total singularity subgp}.)
\item[\hypertarget{C2}{(C2)}] For any $i\in\cE(A)$, for any $q\in\cT_{i;A}(\pi/10)\setminus\Omega_{\Gamma_A;A}$ and for any vector $v\in T_q\TT_A\cap\widetilde{\cN}_A$, if $(\pr^A_{j})_*(v)\neq 0$ for any $j\in\{i-1,i+2\}\cap A$, then we have
$$\{w\in T_q\TT_A:(\pr^A_{i,i+1})_*(w)\neq 0\text{ and }(\pr^A_{j})_*(w)=(\pr^A_{j})_*(v)~\forall j\in A\setminus\{i,i+1\}\}\subset\widetilde{\cN}_A.$$

\end{enumerate}
\end{rmk}
\begin{prop}\label{prop:normal.vectors.open.connected}
Let $A$ be a consecutive subset of $\ZZ_+$ with $|A|\geq 2$. Then $\widetilde{\cN}_A$ is a $\Gamma_A$-invariant, open and connected subset of $T\TT_A$. Moreover, $T\TT_A\setminus\widetilde{\cN}_A$ has Lebesgue measure zero.
\end{prop}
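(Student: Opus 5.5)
We establish the four properties in turn: invariance, measure zero, openness, and connectedness. Connectedness is the real work.

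\textbf{Invariance.} Both $\Omega_{\Gamma_A;A}$ and each $\cT_{j;A}(\pi/10)$ are invariant under all reflections $\kappa_{0;l},\kappa_{\pi;l}$ (Lemma \ref{lem:tube.index.properties-1-1}). Each generator $\gamma_i$ acts on the differential by $-\id$ on the $i$-th and $(i+1)$-th coordinates. This preserves the vanishing or non-vanishing of each $(\pr^A_l)_*v$ and of each $(\pr^A_{i,i+1})_*v$. It also preserves each angle $\beta_i$ modulo $\pi$, since $(x,y)\mapsto(-x,-y)$ fixes the line through $(x,y)$. The other generators act on $(\pr^A_{i,i+1})_*v$ by a reflection $(x,y)\mapsto(\pm x,\mp y)$. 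This sends $\beta_i$ to $-\beta_i$, and the set of $i$-exceptional angles is symmetric under $\beta\mapsto-\beta$, because the reflected geodesic of a tube-avoiding geodesic is again tube-avoiding by the invariance of $\cT_{i;A}(\pi/10)$.

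\textbf{Measure zero.} The complement of $\widetilde\cN_A$ is contained in the union of the following sets:
\begin{itemize}
\item the preimage of $\Omega_{\Gamma_A;A}$, which has codimension $2$;
\item the sets $\{(\pr^A_{i,i+1})_*v=0\}$ and $\{(\pr^A_l)_*v=0\}$;
\item the sets $\{\beta_i(v)=\beta\}$ for the finitely many exceptional $\beta$ (see the remark after Definition \ref{dfn:exceptional value}).
\end{itemize}
Each of these is a finite union of closed submanifolds of positive codimension, so the complement has measure zero.

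\textbf{Openness.} Fix $v\in\widetilde\cN_A$. The footpoint condition and the non-vanishing conditions are open. For each $i\in\cE(A)$, either $\beta_i(v)$ avoids the finite exceptional set, which is an open condition, or the alternative clause holds. That clause requires the footpoint to lie in the open set $\cT_{j;A}(\pi/10)$ and some projections $(\pr^A_l)_*v$ to be nonzero, both of which are open. Taking a finite intersection over $i$ gives an open neighbourhood of $v$ inside $\widetilde\cN_A$.

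\textbf{Connectedness.} Every $w\in\widetilde\cN_A$ will be joined to a fixed base set $T^{\text{NE}}(\TT_A\setminus\Omega_{\Gamma_A;A})$. This base set is connected: it is a product of the connected set $\TT_A\setminus\Omega_{\Gamma_A;A}$ with a fibre consisting of vectors whose components avoid finitely many lines in each $(i,i+1)$-plane. That fibre may be disconnected, so the (C1) remark is the tool for connecting across it. Given $w$, the plan has two steps.
\begin{enumerate}
\item Build a path inside $\widetilde\cN_A$ from $w$ to some $v\in T^{\text{NE}}$ with the same footpoint and the same coordinate components. For each index $i$ at which $\beta_i(w)$ is exceptional, a $j$-tube containing the footpoint is available. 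Apply (C2) inside that tube to rotate the $(j,j+1)$-component freely until $\beta_j$, and hence $\beta_i$, is non-exceptional. Before invoking (C2) one must first perturb the neighbouring components $(\pr^A_{j-1})_*$ and $(\pr^A_{j+2})_*$ to be nonzero, using openness. The tubes involved are disjoint for adjacent indices (Lemma \ref{lem:tube.index.properties-1-2}), so these rotations can be carried out index by index.
\item Once every angle is non-exceptional, apply (C1) to join $v$ to the base set.
\end{enumerate}

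The main obstacle is the bookkeeping in Step 1. Rotating within the $(j,j+1)$-plane changes $\beta_{j-1}$ and $\beta_{j+1}$ through the shared coordinates, and each of these must stay non-exceptional or remain covered by the alternative clause along the entire path. The plan is to handle this by induction on $|A|$, processing the tubes containing the footpoint in increasing order of $j$. The condition ``$(\pr^A_l)_*v\neq0$ for $l\in\{i,i+1\}\setminus\{j,j+1\}$'' in Definition \ref{dfn:normal.vectors} is exactly what keeps the neighbouring indices in $\widetilde\cN_A$ during the rotation.
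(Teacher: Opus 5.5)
Your arguments for $\Gamma_A$-invariance, openness and the measure-zero statement are fine and match the paper. The connectedness part, however, has a genuine gap. You assert that the base set $T^{\text{NE}}(\TT_A\setminus\Omega_{\Gamma_A;A})$ is connected. But under the trivialization $T\TT_A=\TT_A\times\RR^{|A|}$ this set is exactly $(\TT_A\setminus\Omega_{\Gamma_A;A})\times F$, where the fibre $F$ does not depend on the footpoint. Since $0\sim\pi$ and $\pi/2$ are always exceptional, $F\subset(\RR\setminus\{0\})^{|A|}$, and $F$ is disconnected. Already for $|A|=2$, $F$ is $\RR^2$ minus finitely many lines through the origin, including both axes.

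Property (C1) cannot repair this. It only joins two vectors with \emph{identical} coordinate components (it is parallel transport of a constant vector), so it never leaves a component of $F$. Your Steps 1--2 therefore show at best that every point of $\widetilde{\cN}_A$ can be joined to \emph{some} component of the base set. They do not show that different components lie in one path component of $\widetilde{\cN}_A$, which is the entire content of the connectedness claim. Step 2 is also vacuous as written: the vector $v$ produced in Step 1 already lies in the base set.

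The missing ingredient is the paper's claim (C3): any two $v,w\in T^{\text{NE}}(\TT_A\setminus\Omega_{\Gamma_A;A})$ are joined by a path in $\widetilde{\cN}_A$. It is proved by induction on the number $D(v,w)$ of indices $l\in A$ with $(\pr^A_l)_*v\neq(\pr^A_l)_*w$.
\begin{enumerate}
\item Let $i$ be the smallest such index.
\item By (C1), move both vectors to a common footpoint in $\cT_{i;A}(\pi/10)\setminus\Omega_{\Gamma_A;A}$, or in $\cT_{i-1;A}(\pi/10)$ if $i=\max A$.
\item There, apply (C2) to join $w$ to an NE vector $w'$. The hypotheses of (C2) hold because every component of an NE vector is nonzero. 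Choose $w'$ to have the $i$-th component of $v$, to agree with $w$ outside $\{i,i+1\}$, and to have an $(i+1)$-th component making $w'$ again NE. Minimality of $i$ gives $\beta_{i-1}(w')=\beta_{i-1}(v)$.
\item Then $D(v,w')<D(v,w)$.
\end{enumerate}
This is the step where exceptional directions are actually crossed, and it can only happen inside a tube.

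With (C3) in hand, your Step 1 and its bookkeeping become unnecessary. Since $\widetilde{\cN}_A$ is open and the base set has full measure, any splitting $\widetilde{\cN}_A=U_1\sqcup U_2$ into nonempty disjoint open sets would place base-set vectors in both pieces, contradicting (C3).
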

\begin{proof}
Openness of $\widetilde{\cN}_A$ follows directly from Definition \ref{dfn:normal.vectors} and the remark after Definition \ref{dfn:exceptional value}. Observe the following:
\begin{itemize}
\item For any $i\in A$ and for any $v\in T\TT_A$, if $(\pr^A_{i})_*(v)\neq 0$, then $(\pr^A_{i})_*(\gamma_*(v))\neq 0$ for any $\gamma\in\Gamma_A$. (This follows from \eqref{eqn:generator} and \eqref{eqn:Gamma.A}.)
\item For any $i\in\cE(A)$ and for any $v\in T\TT_A\setminus \Omega_{\Gamma_A; A}$ with $(\pr^A_{i,i+1})_*v\neq 0$, if $\beta_i(v)$ is not $i$-exceptional, $\beta_i(\gamma_*(v))$ is also not $i$-exceptional for any $\gamma\in\Gamma_A$. (This follows from Definition \ref{dfn:exceptional value} and Lemma \ref{lem:tube.index.properties-1-1}.)
\item For any $i\in\cE(A)$ and for any $v\in T\TT_A$, if the footpoint of $v$ is in $\cT_{i;A}(\pi/10)$, then the footpoint of $\gamma_*(v)$ is also in $\cT_{i;A}(\pi/10)$ for any $\gamma\in\Gamma_A$. (This follows from Lemma \ref{lem:tube.index.properties-1-1}.)
\end{itemize} 
Hence, by Definition \ref{dfn:normal.vectors} and the above, $\widetilde{\cN}_A$ is a $\Gamma_A$-invariant. 

Now, we prove that $T\TT_A\setminus\widetilde{\cN}_A$ has Lebesgue measure zero in $T\TT_A$. Notice that by Definition \ref{dfn:normal.vectors}, we have 
\begin{align*}
\widetilde{N}_A\supset T^{\text{NE}}(\TT_A\setminus\Omega_{\Gamma_A;A})= T^{\text{NE}}\TT_A\setminus \left(\bigcup_{p\in\Omega_{\Gamma_A;A}}T_p\TT_A\right) .
\end{align*} 
Therefore, it suffices to show that the following subset 
\begin{align}\label{eqn:comp.zero.msr}
T\TT_A\setminus(T^{\text{NE}}(\TT_A\setminus\Omega_{\Gamma_A;A}))=(T\TT_A\setminus T^{\text{NE}}\TT_A)\cup\bigcup_{p\in\Omega_{\Gamma_A;A}}T_p\TT_A.
\end{align}
has Lebesgue measure zero in $T\TT_A$. This is an immediate corollary of \eqref{jth singularity}, \eqref{total singularity subgp} and the remark after Definition \ref{dfn:exceptional value}.

It remains to show that $\widetilde{\cN}_A$ is connected. We first make the following claim:
\begin{enumerate}
\item[\hypertarget{C3}{(C3)}] For any $v,w\in T^{\text{NE}}(\TT_A\setminus\Omega_{\Gamma;A})$, $v$ and $w$ are connected by a path in $\widetilde{\cN}_A$. 
\end{enumerate}
Assume that \hyperlink{C3}{(C3)} holds, if $\widetilde{\cN}_A$ is not connected, there exist two disjoint open subsets $U_1,U_2\subset T\TT_A$ such that $U_1\sqcup U_2=\widetilde{\cN}_A$. (Here, we also used the fact that $\widetilde{\cN}_A$ is open, which is proved in the beginning.) By the fact that \eqref{eqn:comp.zero.msr} has zero Lebesgue measure in $T\TT_A$, there exist $v_1,v_2\in T^{\text{NE}}(\TT_A\setminus\Omega_{\Gamma_A;A})$ such that $v_i\in U_i$ for any $i\in\{1,2\}$. However, this contradicts with \hyperlink{C3}{(C3)}. Therefore, $\widetilde{\cN}_A$ is connected. Hence, it remains to prove \hyperlink{C3}{(C3)}.

\textbf{Proof of \hyperlink{C3}{(C3)}}: The proof of \hyperlink{C3}{(C3)} uses induction on the following quantity:
\begin{align}\label{eqn:vec.comp.difference}
D(v,w):=\left|\left\{j\in A:(\pr^A_j)_*(v)\neq(\pr^A_j)_*(w)\right\}\right|\in\{0,1,\dots,|A|\},\text{ for any }v,w\in T\TT_A.
\end{align}
When $D(v,w)=0$, \hyperlink{C3}{(C3)} follows from \hyperlink{C1}{(C1)}. Assume that $v,w$ can be connected by a path in $\widetilde{\cN}_A$ whenever $D(v,w)\leq k-1$ for some $k\in\{1,\dots,|A|\}$, consider the case when $D(v,w)=k$. By Definition \ref{dfn:exceptional value} and the assumption on $v,w$ in \hyperlink{C3}{(C3)}, we have 
\begin{align}\label{eqn:NE.implies.no.zero}
(\pr^A_j)_*(v),(\pr^A_j)_*(w)\in\RR\setminus\{0\},\text{ for any }j\in A.
\end{align}
Let $i$ be the smallest element in $A$ such that $(\pr^A_{i})_*(v)\neq (\pr^A_{i})_*(w)$. When $i\not\in\cE(A)$, $i$ is the largest element in $A$ and $D(v,w)=1$. Choose $p\in\cT_{i-1;A}(\pi/10)\setminus\Omega_{\Gamma_A;A}$. By \hyperlink{C1}{(C1)}, we assume without loss of generality that $v,w\in T_p\TT_A$. It follows from  \hyperlink{C2}{(C2)} and \eqref{eqn:NE.implies.no.zero} that $v$ and $w$ are connected by a path in $\widetilde{\cN}_A$.

When $i\in\cE(A)$, we choose $p\in\cT_{i;A}(\pi/10)\setminus\Omega_{\Gamma_A;A}$. By \hyperlink{C1}{(C1)}, we assume without loss of generality that $v,w\in T_p\TT_A$.
Choose $w'\in T_p\TT_A$ such that the following holds:
\begin{itemize}
\item $(\pr^A_{i})_*(w')=(\pr^A_{i})_*(v)$.
\item If $(\pr^A_{i+1})_*(w)=(\pr^A_{i+1})_*(v)$, then $(\pr^A_{i+1})_*(w')=(\pr^A_{i+1})_*(w)=(\pr^A_{i+1})_*(v)$.
\item $(\pr^A_{A\setminus\{i,i+1\}})_*(w')=(\pr^A_{A\setminus\{i,i+1\}})_*(w).$
\item If $(\pr^A_{i+1})_*(w)\neq (\pr^A_{i+1})_*(v)$, then $(\pr^A_{i+1})_*(w')$ is chosen so that $\beta_j(w')$ is not $j$-exceptional for any $j\in\{i,i+1\}\cap \cE(A)$. (Such a choice is guaranteed by the remark after Definition \ref{dfn:exceptional value} and \eqref{eqn:NE.implies.no.zero}.)
\end{itemize}
Then, by the assumption that $v,w\in T_p^{\text{NE}}\TT_A$, the assumption on $i$ and the above, we have $w'\in T_p^{\text{NE}}\TT_A$. Apply \hyperlink{C2}{(C2)} and \eqref{eqn:NE.implies.no.zero}, $w$ and $w'$ are connected by a path in $\widetilde{\cN}_A$.
Notice that $D(v,w')\leq D(v,w)-1$. By the inductive step, $w'$ and $v$ are connected by a path in $\widetilde{\cN}_A$. This proves \hyperlink{C3}{(C3)}.
\end{proof}
\begin{dfn}\label{dfn:projected.normal.vector}
Let $A$ be a consecutive subset of $\ZZ_+$. For any good subgroup $\Gamma\subset\Gamma_A$, we define $\cN_{\Gamma;A}:=(\Pi_\Gamma)_*\widetilde{\cN}_A\subset T(\Gamma\backslash\TT_A)$.
\end{dfn}
\begin{lem}\label{lem:basic.properties.normal.vector}
Let $A$ be a consecutive subset of $\ZZ_+$. For any good subgroup $\Gamma\subset\Gamma_A$, the following holds:
\begin{enumerate}[ref=\ref{lem:basic.properties.normal.vector}(\arabic*)]
\item $\cN_{\Gamma;A}$ is an open subset of $T(\Gamma\backslash\TT_A)$ whose complement has zero Lebesgue measure. Moreover, when $|A|\geq 2$, $\cN_{\Gamma;A}$ is connected. \label{lem:basic.properties.normal.vector-1}
\item For any $v\in\cN_{\Gamma;A}$ and for any $\widetilde{v}\in T\TT_A$ such that $(\Pi_\Gamma)_*(\widetilde{v})=v$, we have $v\neq 0$ and $((\Pi_\Gamma)_*)^{-1}(v)=((\Pi_\Gamma)_*)^{-1}(v)\cap\widetilde{\cN}_{\Gamma;A}=\Gamma\cdot\widetilde{v}$. \label{lem:basic.properties.normal.vector-2}
\item When $|A|\geq 2$, for any $i\in\cE(A)$, we define
\begin{align}\label{eqn:j.higher.rk.normal.vector}
\widetilde{\cN}_{\Gamma;A}(i):=\left\{\widetilde{v}\in \widetilde{\cN}_A:(\Pi_\Gamma)_*(\widetilde{v})\in\cQ_{\Gamma;A}(i)\right\},
\end{align}
where
\begin{align}\label{eqn:vectors.avoiding.j.tube}
\cQ_{\Gamma;A}(i):=\left\{v\in T(\Gamma\backslash\TT_A)\left|\begin{aligned}
&v\text{ is tangent to a bi-infinite geodesic } c\text{ in}\\
&(\Gamma\backslash\TT_A,g_{\Gamma;A})\text{ such that }c^{-1}(\Pi_\Gamma(\cT_{i;A}(\pi/10))=\emptyset
\end{aligned}\right.\right\}.
\end{align}
Then $\widetilde{\cN}_{\Gamma;A}(i)$ is $\Gamma$-invariant. Moreover, we have $\widetilde{\cN}_{\Gamma;A}(i)\subset \widetilde{\cN}_{\{l\in A:j\leq i\}}\times\widetilde{\cN}_{\{l\in A:j>i\}}$.\label{lem:basic.properties.normal.vector-3}
\end{enumerate}
\end{lem}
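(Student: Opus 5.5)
I will treat the three assertions separately, working mostly on the covering torus and using Proposition \ref{prop:normal.vectors.open.connected} as the main input.

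\textbf{Assertion (1).} Openness of $\cN_{\Gamma;A}$ comes from two facts. First, $\widetilde{\cN}_A$ is open by Proposition \ref{prop:normal.vectors.open.connected}. Second, the footpoints of $\widetilde{\cN}_A$ lie in $\TT_A\setminus\Omega_{\Gamma_A;A}\subset\TT_A\setminus\Omega_{\Gamma;A}$. On this set $\Pi_\Gamma$ is a local diffeomorphism by Lemma \ref{lem:branching.locus-1}, so $(\Pi_\Gamma)_*$ restricted to $T(\TT_A\setminus\Omega_{\Gamma;A})$ is an open map.

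For the measure statement, note that the complement $T(\Gamma\backslash\TT_A)\setminus\cN_{\Gamma;A}$ is contained in the union of two sets. The first is $(\Pi_\Gamma)_*(T\TT_A\setminus\widetilde{\cN}_A)$. The second is $T(\Pi_\Gamma(\Omega_{\Gamma;A}))$, the tangent vectors over the branching locus. For the first set, use $\Gamma$-invariance of $\widetilde{\cN}_A$ (Proposition \ref{prop:normal.vectors.open.connected}): over the regular part, the image of a null set under a local diffeomorphism is null. The second set is null because $\Pi_\Gamma(\Omega_{\Gamma;A})$ is a finite union of submanifolds of codimension $2$.

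Connectedness when $|A|\ge 2$ follows because $\cN_{\Gamma;A}$ is the continuous image of the connected set $\widetilde{\cN}_A$.

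\textbf{Assertion (2).} Let $v=(\Pi_\Gamma)_*\widetilde v$ with $\widetilde v\in\widetilde{\cN}_A$. Then $v\ne0$: $\widetilde v\ne 0$ because $(\pr^A_{i,i+1})_*\widetilde v\ne0$, and the differential is injective at points off $\Omega_{\Gamma;A}$.

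Now take any preimage $\widetilde w$ of $v$. Its footpoint $q$ satisfies $\Pi_\Gamma(q)=\Pi_\Gamma(p)$, where $p$ is the footpoint of $\widetilde v$. Hence $q=\gamma p$ for some $\gamma\in\Gamma$. Since $p\notin\Omega_{\Gamma;A}$, and $\Omega_{\Gamma;A}$ is $\Gamma_A$-invariant (Lemma \ref{lem:branching.locus-1}), also $q\notin\Omega_{\Gamma;A}$. By the remark after Definition \ref{sing index}, $\mathrm{Stab}_\Gamma(p)$ is trivial, so $\gamma$ is unique. Because $\Pi_\Gamma\circ\gamma=\Pi_\Gamma$ and $(\Pi_\Gamma)_*$ is injective at $q$, we get $\widetilde w=\gamma_*\widetilde v$.

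Two details need checking:
\begin{itemize}
\item The statement as written allows an arbitrary $\widetilde v\in T\TT_A$ with $(\Pi_\Gamma)_*\widetilde v=v$. Here the footpoint of $\widetilde v$ could a priori lie on $\Omega_{\Gamma;A}$, but it is a $\Gamma$-translate of the footpoint of some element of $\widetilde{\cN}_A$, so it cannot.
\item The identity $((\Pi_\Gamma)_*)^{-1}(v)=\Gamma\cdot\widetilde v\subset\widetilde{\cN}_A$ then follows from $\Gamma$-invariance of $\widetilde{\cN}_A$. (I read $\widetilde{\cN}_{\Gamma;A}$ in the statement as $\widetilde{\cN}_A$.)
\end{itemize}

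\textbf{Assertion (3).} $\Gamma$-invariance of $\widetilde{\cN}_{\Gamma;A}(i)$ is immediate: $\widetilde{\cN}_A$ is $\Gamma$-invariant, and $(\Pi_\Gamma)_*\gamma_*=(\Pi_\Gamma)_*$ for $\gamma\in\Gamma$.

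For the product inclusion, set $A_-=\{l\in A:l\le i\}$ and $A_+=\{l\in A:l>i\}$. Take $\widetilde v\in\widetilde{\cN}_{\Gamma;A}(i)$, with geodesic $c$ in $\Gamma\backslash\TT_A$ missing $\Pi_\Gamma(\cT_{i;A}(\pi/10))$.
\begin{itemize}
\item Lift $c$ as in Lemma \ref{lem:missing.tube=higher.rank}: by Lemma \ref{lem:properties.of.g.Gamma-4} with $E'=\{i\}$ we get a geodesic in $(\Gamma\cap\Gamma_A(\{i\}))\backslash\TT_A$ with metric $g_{\Gamma;A;\{i\}}$.
\item By Lemma \ref{lem:properties.of.g.Gamma-2}, this space is the product of the $A_-$ and $A_+$ factors, so the lifted geodesic splits into factor geodesics $c_-$ and $c_+$. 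By Lemma \ref{lem:properties.of.tg-4} the metrics agree away from $\cT_{i;A}(\pi/10)$.
\item Write $\widetilde v=(\widetilde v_-,\widetilde v_+)$. Check each condition of Definition \ref{dfn:normal.vectors} for $\widetilde v_\pm$ on $\TT_{A_\pm}$ (footpoint off the singular set, nonvanishing projections, angle conditions), using the conditions on $\widetilde v$ for indices $j\ne i$.
\end{itemize}

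The hard point is exactly this last check. The alternative clause in Definition \ref{dfn:normal.vectors} allows $j\in\{i-1,i,i+1\}$ and refers to tubes $\cT_{j;A}$, so I must show two things. First, no condition for an index $l\in\cE(A_\pm)$ relied on the tube $\cT_{i;A}$. Second, the exceptional angles for $\TT_{A_\pm}$ coincide with those for $\TT_A$.

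For the first, the case $j=i$ cannot occur: the footpoint lies on $c$, which avoids $\cT_{i;A}(\pi/10)$. For $l=i-1$ or $l=i+1$, the alternative with $j=i$ is therefore also impossible, while $j\in\{l-1,l,l+1\}\setminus\{i\}$ lies in $\cE(A_\pm)$.

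For the second, Definition \ref{dfn:exceptional value} depends only on the flat geodesic on $\TT_{l,l+1}$ with the remaining coordinates free, and $\cT_{l;A}=\cT_{l;A_\pm}\times\TT_{A_\mp}$. So the $l$-exceptional angles agree and the conditions transfer verbatim. The case $|A_\pm|=1$ is handled by the convention that $\widetilde{\cN}_{A_\pm}$ is then all nonzero vectors, which holds since the projection is nonzero.
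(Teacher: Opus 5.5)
Your treatment of (1), (2) and of the $\Gamma$-invariance in (3) is the paper's argument: $(\Pi_\Gamma)_*$ is a local diffeomorphism off $\Omega_{\Gamma_A;A}\supset\Omega_{\Gamma;A}$, combined with the $\Gamma$-invariance, openness, connectedness and full measure of $\widetilde{\cN}_A$. For the product inclusion the paper also checks Definition \ref{dfn:normal.vectors} condition by condition. That check uses only two facts. The first is that the footpoint $p$ of $\widetilde v$ satisfies $p\notin\cT_{i;A}(\pi/10)$, which follows from $\Pi_\Gamma(p)=c(0)$ and Lemma \ref{lem:tube.index.properties-1-4}. The second is the identity $\cT_{j;A'}(\pi/10)\times\TT_{A\setminus A'}=\cT_{j;A}(\pi/10)$. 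Your lift to $((\Gamma\cap\Gamma_A(\{i\}))\backslash\TT_A,g_{\Gamma;A;\{i\}})$ and the splitting into $c_\pm$ play no role and can be dropped. Your handling of the indices $l\in\cE(A_\pm)$, including $l=i\pm1$, and of the exceptional angles is correct.

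There is one step you assert without proof: the case $|A_-|=1$, and symmetrically $|A_+|=1$. Take $|A_-|=1$, so $A_-=\{i\}$, $i=\min A$, and $\cE(A_-)=\emptyset$. You must show $(\pr^A_i)_*\widetilde v\neq 0$. This does not follow from $(\pr^A_{i,i+1})_*\widetilde v\neq0$. It also does not follow from the conditions at indices $j\neq i$, which are the only ones your plan uses, because none of them involves the $i$-th coordinate when $i=\min A$. You need the index-$i$ clause of Definition \ref{dfn:normal.vectors}, which gives two possibilities.
\begin{itemize}
\item $\beta_i(\widetilde v)$ is not $i$-exceptional. Since $0\sim\pi$ and $\pi/2$ are $i$-exceptional (remark after Definition \ref{dfn:exceptional value}), both $(\pr^A_i)_*\widetilde v$ and $(\pr^A_{i+1})_*\widetilde v$ are then nonzero.
\item Some $j\in\{i,i+1\}\cap\cE(A)$ works. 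Here $j=i$ is excluded by $p\notin\cT_{i;A}(\pi/10)$. The choice $j=i+1$ carries the requirement $(\pr^A_l)_*\widetilde v\neq0$ for $l\in\{i,i+1\}\setminus\{i+1,i+2\}=\{i\}$, which is exactly what you need.
\end{itemize}
The case $|A_+|=1$ is the same argument with $j=i-1$. The paper spells this out in its appendix. It is needed later: inclusion \eqref{eqn:key.inclusion.for.induction} and Case 2 in the proof of Theorem \ref{thm:main.true} use that the circle factor of $\widetilde{\cN}_{\Gamma;A}(i)$ consists of nonzero vectors.
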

\begin{proof}
\begin{enumerate}
\item When $|A|=1$, $\Gamma=\{\id\}$ and $\Pi_\Gamma=\id|_{\TT_A}$. Hence, this assertion follows directly from Definition \ref{dfn:normal.vectors}.

When $|A|\geq 2$, observe that the footpoint of any vector in $\widetilde{\cN}_A$ is not in $\Omega_{\Gamma_A;A}$. (See Definition \ref{dfn:normal.vectors}.) Notice that $\Omega_{\Gamma;A}\subset\Omega_{\Gamma_A;A}$. (See \eqref{eqn:Gamma.A} and \eqref{total singularity subgp}.) Therefore, by Lemma \ref{lem:branching.locus-1}, $(\Pi_\Gamma)_*$ is a local diffeomorphism when restricted to $\widetilde{\cN}_A$. The openness of $\cN_{\Gamma;A}$ is then a corollary of the openness of $\widetilde{\cN}_A$ (proved in Proposition \ref{prop:normal.vectors.open.connected}) and the above.

By Lemma \ref{lem:branching.locus-1} again, $(\Pi_\Gamma)_*$ is a local diffeomorphism when restricted to $T(\TT_A\setminus\Omega_{\Gamma_A;A})$.  Notice that the complement of $(\Pi_{\Gamma})_*T(\TT_A\setminus\Omega_{\Gamma_A;A})$ in $T(\Gamma\backslash\TT_A)$ has zero Lebesgue measure. By Proposition \ref{prop:normal.vectors.open.connected} and the fact that $\widetilde{\cN}_A\subset T(\TT_A\setminus\Omega_{\Gamma_A;A})$, $\cN_{\Gamma;A}$ is connected and  $T(\Gamma\backslash\TT_A)\setminus\cN_{\Gamma;A}$ has zero Lebesgue measure in $T(\Gamma\backslash\TT_A)$.
\item The fact that $v\neq 0$ follows from Definition \ref{dfn:normal.vectors} and  Lemma \ref{lem:branching.locus-1}. The rest follows from Proposition \ref{prop:normal.vectors.open.connected}. 
\item $\Gamma$-invariance of $\widetilde{\cN}_{\Gamma;A}(i)$ follows directly from \eqref{eqn:j.higher.rk.normal.vector} and Proposition \ref{prop:normal.vectors.open.connected}. Recall that for any $A'\subset A$ and for any $j\in A'$, by \eqref{eqn:tori.natural.prod} and \eqref{eqn:j-tubes}, we have $\cT_{j,A'}(\pi/10)\times \TT_{A\setminus A'}=\cT_{j;A}(\pi/10)$. The fact that $\widetilde{\cN}_{\Gamma;A}(i)\subset \widetilde{\cN}_{\{j\in A:j\leq i\}}\times\widetilde{\cN}_{\{j\in A:j>i\}}$ follows directly from Lemma \ref{lem:tube.index.properties-1}, the remark after Definition \ref{dfn:exceptional value}, Definition \ref{dfn:normal.vectors}, \eqref{eqn:j.higher.rk.normal.vector}, \eqref{eqn:vectors.avoiding.j.tube} and the above. \qedhere
\end{enumerate}
\end{proof}

\subsection{Proof of Theorem \ref{thm:main}}\label{subsect:proof}
Thoughout this subsection, for any Riemannian manifold $M$ and for any $a>0$, we let $T^aM:=\{v\in TM:\|v\|=a\}$, where $\|\cdot\|$ is induced by the Riemannian metric on $M$. 

By Proposition \ref{prop:quotient.is.sphere}, it suffices to prove the following theorem instead:
\begin{thm}\label{thm:main.true}
Let $A$ be a consecutive subset of $\ZZ_+$ with $|A|\geq 2$ and $\Gamma$ be a good subgroup of $\Gamma_A$. We denote by $\cR_{\Gamma;A}$ the collection of rank one vectors in $(\Gamma\backslash\TT_A,g_{\Gamma;A})$. (In particular, $R_{\Gamma;A}$ is open due to Lemma \ref{lem:rank-1 lem 1} and Lemma \ref{lem:properties.of.g.Gamma-1}. This can be alternatively verified via Definition \ref{dfn:nomadic} and Corollary \ref{cor:nomadic=rank-1}.) We also denote by $\varphi_{\Gamma;A}$ the geodesic flow on $T(\Gamma\backslash \TT_A,g_{\Gamma;A})$. Then the following holds:
\begin{enumerate}
\item $\cN_{\Gamma;A}\setminus\cR_{\Gamma;A}$ is non-separating in $\cN_{\Gamma;A}$ and has Lebesgue measure zero in $T(\Gamma\backslash\TT_A)$. In particular, $\cN_{\Gamma;A}\cap\cR_{\Gamma;A}$ is open and connected.
\item For any $a>0$, the geodesic flow $\varphi_{\Gamma;A}$ on $T^a(\Gamma\backslash\TT_A,g_{\Gamma;A})$ is Bernoulli with respect to the Liouville measure. Moreover, $\varphi_{\Gamma;A}$ has the SBAP on the set $\cN_{\Gamma;A}\cap T^a(\Gamma\backslash\TT_A,g_{\Gamma;A})$ in $T^a(\Gamma\backslash\TT_A,g_{\Gamma;A})$.
\end{enumerate}
\end{thm}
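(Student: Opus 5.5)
Part (2) follows from part (1) together with Theorem \ref{thm:main criterion}. Part (1) I would prove by induction on $|A|$, using the product structure of Lemma \ref{lem:properties.of.g.Gamma-2}.

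\textbf{Reduction of (2) to (1).} By (1), $\cW:=\cN_{\Gamma;A}\cap\cR_{\Gamma;A}\cap T^1(\Gamma\backslash\TT_A)$ is a connected set of rank one unit vectors of full measure. Connectedness passes to the unit sphere bundle because $\cN_{\Gamma;A}$ and $\cR_{\Gamma;A}$ are cones invariant under positive scaling. By Lemma \ref{lem:properties.of.g.Gamma-1} the manifold is nearly nonpositively curved, so Theorem \ref{thm:main criterion} gives the Bernoulli property on $T^1$, and scaling gives it on every $T^a$. It also gives the SBAP on $\cR_{\Gamma;A}$. Since $\cR_{\Gamma;A}$ is flow-invariant and $(\cN_{\Gamma;A}\cup\cR_{\Gamma;A})\setminus\cR_{\Gamma;A}$ is nonseparating by (1), Fact \ref{fact:SBAP-6} upgrades this to the SBAP on $\cN_{\Gamma;A}\cup\cR_{\Gamma;A}$. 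Fact \ref{fact:SBAP-5} then restricts it to $\cN_{\Gamma;A}$.

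\textbf{Structure of the bad set.} By Corollary \ref{cor:nomadic=rank-1}, a vector fails to have rank one iff its geodesic is not nomadic. By Lemmas \ref{lem:missing.tube=higher.rank} and \ref{lem:rank-1.not.in.sing-2}, this happens iff for some $i\in\cE(A)$ the geodesic either misses $\Pi_\Gamma(\cT_{i;A}(\pi/10))$ or stays in its closure forever. The second case consists of geodesics tangent to the boundary of a cap factor, which are non-normal vectors of positive codimension. I would check directly that these form a closed set that is nonseparating in $\cN_{\Gamma;A}$.

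The main work is the first case, the set $\cQ_{\Gamma;A}(i)\cap\cN_{\Gamma;A}$. By Fact \ref{fact:SBAP-2} it suffices to show it is nonseparating in $\cN_{\Gamma;A}$ for each $i$ separately.

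\textbf{Nonseparation of $\cQ_{\Gamma;A}(i)\cap\cN_{\Gamma;A}$.} Lift to $\widetilde{\cN}_{\Gamma;A}(i)$ via Lemma \ref{lem:basic.properties.normal.vector-2}; $(\Pi_\Gamma)_*$ is a local diffeomorphism on $\widetilde{\cN}_A$, and nonseparation is local by Fact \ref{fact:SBAP-4}. With $E'=\{i\}$, Lemma \ref{lem:properties.of.g.Gamma-4} lifts these geodesics to $(\Gamma\cap\Gamma_A(\{i\}))\backslash\TT_A$. By Lemma \ref{lem:properties.of.g.Gamma-2} that space is a product $M_1\times M_2$ of the corresponding quotients over $A_1=\{l\le i\}$ and $A_2=\{l>i\}$. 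A vector in $\cQ_{\Gamma;A}(i)$ is one whose $(M_1\times M_2)$-geodesic avoids the product set $B_1\times B_2$ of the two cap regions adjacent to the deleted edge.

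By induction, (2) holds for each factor (for $|A_j|=1$ the factor is a circle, handled separately). So each factor flow has the SBAP, in particular the BAP, on $\cN_{\cdot;A_j}$. Lemma \ref{lem:basic.properties.normal.vector-3} places $\widetilde{\cN}_{\Gamma;A}(i)$ inside $\widetilde{\cN}_{A_1}\times\widetilde{\cN}_{A_2}$. The SBAP of one factor combined with the BAP of the other gives the BAP for the product flow on the product of the normal sets. Avoiding the open set $B_1\times B_2$ is then a nonseparating condition there, and it transfers back locally.

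\textbf{Expected obstacle.} The delicate point is the mismatch of speeds: the tangent vector splits as $(v_1,v_2)$ with $\|v_1\|^2+\|v_2\|^2$ fixed, not each norm fixed. I would handle this with Fact \ref{fact:SBAP-3}, fibering over the speed ratio on the unit tangent bundle.

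A second issue is that the non-periodicity hypothesis of the SBAP is needed on the factor where $|A_j|=1$. There the factor is a circle whose flow is a single periodic orbit, so the SBAP definition's requirement that $V$ not lie in a \emph{nonperiodic} orbit is satisfied.

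\textbf{Measure zero.} This follows from ergodicity-free reasoning: $\cQ(i)$ is closed, flow-invariant, nonseparating and has empty interior. Its zero measure then follows on the product from the BAP/SBAP measure statements of \cite{BG94}, or more directly because factor ergodicity (the induction hypothesis) forces a.e. orbit to enter $B_1\times B_2$.
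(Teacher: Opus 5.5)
Your architecture is the paper's: you reduce (2) to (1) via Theorem \ref{thm:main criterion} and Facts \ref{fact:SBAP-5} and \ref{fact:SBAP-6}, decompose the bad set by the missed tube, and lift with $E'=\{i\}$ to the product $(\Gamma(i)\backslash\TT_A,g_{\Gamma;A;\{i\}})$. You then combine the SBAP of one factor with the BAP of the other, fiber over the speeds with Fact \ref{fact:SBAP-3}, and transfer back locally.

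\textbf{The measure-zero step is not justified as written.} A closed, flow-invariant, nonseparating set with empty interior need not be Lebesgue-null. For example, $\TT^1\times K\times K\subset\TT^3$ with $K$ a fat Cantor set is invariant under the flow along the first factor and has positive measure. The BAP and SBAP are purely topological and carry no measure information.

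Your fallback, ``factor ergodicity'', also fails. A product of ergodic flows need not be ergodic, and an orbit must lie in the tube region in \emph{both} coordinates at the same time. You need the full induction hypothesis: (2) for the factors gives Bernoulli, hence mixing.
\begin{itemize}
\item If $|A_{\le i}|,|A_{>i}|\ge 2$, the product of mixing flows is mixing.
\item If one factor is the circle, the mixing factor times the periodic flow on each component $T^a_\pm\TT_i$ is ergodic.
\end{itemize}
Only then does a.e.\ orbit in $T^{a,b}$ enter the open, positive-measure set of vectors based in $\Pi_{\Gamma(i)}(\cT_{i;A}(\pi/10))$. So $\widetilde\cQ_{\Gamma;A}(i)\cap T^{a,b}$ is null, and Fubini over $(a,b)$ finishes.

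\textbf{The base case $|A|=2$ is missing, and your mechanism cannot supply it.} Here both factors are circles and neither has the SBAP. The product flow on each component of $T^{a,b}$ is a linear flow on a $2$-torus. For the exceptional slopes it has open sets of orbits missing the tube, so the fiberwise nonseparation required by Fact \ref{fact:SBAP-3} fails.

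The paper instead uses the definition of $\widetilde\cN_A$. A geodesic missing the tube is a flat straight line, so its angle is $i$-exceptional while its footpoint lies outside $\cT_{i;A}(\pi/10)$. Hence $\cQ_{\Gamma;A}(i)\cap\cN_{\Gamma;A}=\emptyset$. The only remaining bad vectors are tangent to the finitely many cap-boundary closed geodesics, a $2$-dimensional closed set in a $4$-manifold. This is exactly where the exceptional-angle condition built into $\cN$ is used, and you should state it.

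Two minor points:
\begin{itemize}
\item The set to avoid in $M_1\times M_2$ is the image of the forgotten, now flat, $\{i,i+1\}$-tube. It is open but not a product of cap regions. This is harmless, since the BAP covers arbitrary open sets.
\item For $|A|\ge 3$, your separate treatment of geodesics trapped in $\overline{\cT_{i;A}(\pi/10)}$ is unnecessary. By Lemma \ref{lem:tube.index.properties-1-5} such geodesics miss an adjacent tube, so their tangent vectors already lie in some $\cQ_{\Gamma;A}(i\pm1)$.
\end{itemize}
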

\begin{proof}
The main part of the proof is an inductive proof of (1). Before that we first show that (1)$\implies$(2).

\textbf{Proof of }(1)$\implies$(2): Notice that for any $\tau>0$, the map $v\to av$ gives a smooth conjugation between the time-$\tau$ map of the geodesic flow on $T^1(\Gamma\backslash\TT_A,g_{\Gamma;A})$ and the time-$(\tau/a)$ map of the geodesic flow on $T^a(\Gamma\backslash\TT_A,g_{\Gamma;A})$. Also, we recall that $\cN_{\Gamma;A}$ is invariant under scalar multiplication by $\RR_+$. (See Definition \ref{dfn:normal.vectors} and Definition \ref{dfn:projected.normal.vector}.) Therefore, it suffices to prove (2) when $a=1$.

Observe that $\cR_{\Gamma;A}$ and $\cN_{\Gamma;A}$ are invariant under scalar multiplication by $\RR_+$. By (1), Definition \ref{dfn:nonsep sets} and Lemma \ref{lem:basic.properties.normal.vector-1}, $\cN_{\Gamma;A}\cap\cR_{\Gamma;A}\cap T^1(\Gamma\backslash\TT_A,g_{\Gamma;A})$ is a connected open subset and has full Lebesgue measure in $T^1(\Gamma\backslash\TT_A,g_{\Gamma;A})$. Hence, by Theorem \ref{thm:main criterion}, the geodesic flow $\varphi_{\Gamma;A}$ is Bernoulli on $T^1(\Gamma\backslash\TT_A,g_{\Gamma;A})$ and has the SBAP on $\cR_{\Gamma;A}\cap T^1(\Gamma\backslash\TT_A,g_{\Gamma;A})$.

By (1), Fact \ref{fact:SBAP-4} and the fact that $\cR_{\Gamma;A}$ and $\cN_{\Gamma;A}$ are open and invariant under scalar multiplication by $\RR_+$, $(\cN_{\Gamma;A}\setminus\cR_{\Gamma;A})\cap T^1(\Gamma\backslash\TT_A,g_{\Gamma;A})$ is nonseparating in $(\cN_{\Gamma;A}\cup\cR_{\Gamma;A})\cap T^1(\Gamma\backslash\TT_A,g_{\Gamma;A})$. Observe that $\cR_{\Gamma;A}\cap T^1(\Gamma\backslash\TT_A,g_{\Gamma;A})$ is invariant under the geodesic flow. By Fact \ref{fact:SBAP-6} and the above, the geodesic flow has the SBAP on $(\cN_{\Gamma;A}\cup\cR_{\Gamma;A})\cap T^1(\Gamma\backslash\TT_A,g_{\Gamma;A})$. By Fact \ref{fact:SBAP-5} and the fact that $\cN_{\Gamma;A}$ is open (due to Lemma \ref{lem:basic.properties.normal.vector-1}), the geodesic flow has the SBAP on $\cN_{\Gamma;A}\cap T^1(\Gamma\backslash\TT_A,g_{\Gamma;A})$  in $ T^1(\Gamma\backslash\TT_A,g_{\Gamma;A})$. This verifies (2).

\textbf{Proof of }(1): By Definition \ref{dfn:nonsep sets} and Lemma \ref{lem:basic.properties.normal.vector-1}, it remains to prove that $\cN_{\Gamma;A}\setminus\cR_{\Gamma;A}$ has zero Lebesgue measure in $T(\Gamma\backslash\TT_A)$ and is non-separating in $\cN_{\Gamma;A}$. We prove this via induction on $|A|$.

When $|A|=2$, let $i$ be the unique element in $\cE(A)$. By Lemma \ref{lem:properties.of.g.Gamma-1}, Corollary \ref{cor:nomadic=rank-1} and Lemma \ref{lem:rank-1.not.in.sing-2}, for any $v\in T^1(\Gamma\backslash\TT_A,g_{\Gamma;A})$, $v$ has rank at least two if and only if one of the following two cases holds:
\begin{itemize}
\item The bi-infinite geodesic tangent to $v$ never visits $\Pi_\Gamma(\cT_{i;A}(\pi/10))$.
\item There is an extended cap in $(\Gamma\backslash\TT_A,g_{\Gamma;A})$ such that $v$ is tangent the unique periodic geodesic in this extended cap. (See Definition \ref{dfn:construction}, Definition \ref{dfn:construction.of.g}, Definition \ref{dfn:caps} and its remark.)
\end{itemize}
Definition \ref{dfn:normal.vectors}, Proposition \ref{prop:normal.vectors.open.connected} and Definition \ref{dfn:projected.normal.vector} imply any vector satisfying the first case is not in $\cN_{\Gamma;A}$. Since there are only finitely many extended caps in $v\in T^1(\Gamma\backslash\TT_A,g_{\Gamma;A})$ (due to Definition \ref{dfn:construction} and Definition \ref{dfn:construction.of.g}), $\cN_{\Gamma;A}\setminus\cR_{\Gamma;A}$ is contained in the union of finitely many $2$-dimensional closed submanifolds of $T\TT_A$. Observe that $\dim(T\TT_A)=2|A|=4$ and that $\cN_{\Gamma;A}$ is open (due to Lemma \ref{lem:basic.properties.normal.vector-1}). Therefore, $\cN_{\Gamma;A}\setminus\cR_{\Gamma;A}$ has zero Lebesgue measure in $T(\Gamma\backslash\TT_A)$ and is non-separating in $\cN_{\Gamma;A}$. This verifies (1) when $|A|=2$.

Assume that (1) holds whenever $|A|\leq k-1$ (and hence (2) holds whenever $|A|\leq k-1$) for some $k\geq 3$. Consider the case when $|A|=k$. Notice that $|\cE(A)|=|A|-1\geq2$. Then by Lemma \ref{lem:tube.index.properties-1-5}, Corollary \ref{cor:nomadic=rank-1} and Lemma \ref{lem:rank-1.not.in.sing-2}, a bi-infinite geodesic has rank at least two if and only if it fails to visit $\Pi_\Gamma(\cT_{i;A}(\pi/10))$ for some $i\in\cE(A)$. Hence, by Lemma \ref{lem:missing.tube=higher.rank} and \eqref{eqn:j.higher.rk.normal.vector}, we have 
\begin{align}\label{eqn:higher.rk.normal.decomp}
\cN_{\Gamma;A}\setminus\cR_{\Gamma;A}=\bigcup_{i\in\cE(A)}\cN_{\Gamma;A}(i),
\end{align}
where $\cN_{\Gamma;A}(i):=(\Pi_\Gamma)_*\widetilde{\cN}_{\Gamma;A}(i)$ for any $i\in\cE(A)$. 

For any $i\in\cE(A)$, we let $A_{\leq i}:=\{j\in A:j\leq i\}$ and $A_{>i}:=\{j\in A:j>i\}$. For simplicity, we write
\begin{align}\label{eqn:j.subgroup.Gamma}
\Gamma(i):=\Gamma_A(\{i\})\cap\Gamma, ~\Gamma_-(i):=\Gamma\cap\Gamma_{A_{\leq i}}\text{ and }\Gamma_+(i):=\Gamma\cap\Gamma_{A_{> i}}. 
\end{align}
Notice that $\{A_{\leq i}, A_{>i}\}$ is the consecutive partition of $A$ induced by $\{i\}$. (See Definition \ref{dfn:consecutive} and its remark.) By Lemma \ref{lem:properties.good.subgp-(5)}, $\Gamma_-(i)\times \Gamma_+(i)$ is naturally isomorphic to $\Gamma(i)$ via the map $(h_1,h_2)\to h_1h_2$. Moreover, Lemma \ref{lem:properties.of.g.Gamma-2} implies that we have the following natural isometry induced by \eqref{eqn:prod.struct.after.forget}:
\begin{align}\label{eqn:j.natural.isom}
(\Gamma_-(i)\backslash\TT_{A_{\leq i}},g_{\Gamma_-(i);A_{\leq i}})\times (\Gamma_+(i)\backslash\TT_{A_{> i}},g_{\Gamma_+(i);A_{> i}})\isom(\Gamma(i)\backslash\TT_A,g_{\Gamma;A;\{i\}}).
\end{align}
To further simplify notations, for any $a,b>0$, we let 
\begin{align}\label{eqn:j.tangent.bundle.ab}
T^{a,b}(\Gamma(i)\backslash\TT_A,g_{\Gamma;A;\{i\}}):=T^a(\Gamma_-(i)\backslash\TT_{A_{\leq i}},g_{\Gamma_-(i);A_{\leq i}})\times T^b(\Gamma_+(i)\backslash\TT_{A_{> i}},g_{\Gamma_+(i);A_{> i}})
\end{align}
We define the \emph{Liouville measure} on $T^{a,b}(\Gamma(i)\backslash\TT_A,g_{\Gamma;A;\{i\}})$ as the product measure of the Liouville measure of each component on the right-hand-side of \eqref{eqn:j.tangent.bundle.ab}. We also define the \emph{geodesic flow} on $T^{a,b}(\Gamma(i)\backslash\TT_A,g_{\Gamma;A;\{i\}})$ as the product of the geodesic flow of each component on the right-hand-side. By \eqref{eqn:j.natural.isom}, this is the same as the geodesic flow on $T(\Gamma(i)\backslash\TT_A,g_{\Gamma;A;\{i\}})$ restricted to $T^{a,b}(\Gamma(i)\backslash\TT_A,g_{\Gamma;A;\{i\}})$.

Consider the following statements:
\begin{enumerate}
\item[(a)] For any $i\in\cE(A)$, $\cN_{\Gamma;A}(i)$ has Lebesgue measure zero in $T(\Gamma\backslash\TT_A)$ and is nonseparating in $\cN_{\Gamma;A}$.
\item[(b)] For any $i\in\cE(A)$, $(\Pi_{\Gamma(i)})_*\widetilde{\cN}_{\Gamma;A}(i)$ has Lebesgue measure zero in $T(\Gamma(i)\backslash\TT_A)$ and is nonseparating in $\cN_{\Gamma(i);A}$.
\item[(c)] For any $a,b>0$ and for any $i\in\cE(A)$, $((\Pi_{\Gamma(i)})_*\widetilde{\cN}_{\Gamma;A}(i))\cap T^{a,b}(\Gamma(i)\backslash\TT_A,g_{\Gamma;A;\{i\}})$ has Lebesgue measure zero in $T^{a,b}(\Gamma(i)\backslash\TT_A,g_{\Gamma;A;\{i\}})$ and is nonseparating in $\cN_{\Gamma(i);A}\cap T^{a,b}(\Gamma(i)\backslash\TT_A,g_{\Gamma;A;\{i\}})$.
\end{enumerate}
We would like to first show that (c)$\implies$(b)$\implies$(a)$\implies$(1). 

\textbf{Proof of }(a)$\implies$(1): This follows immediately from Fact \ref{fact:SBAP-2} and \eqref{eqn:higher.rk.normal.decomp}.

\textbf{Proof of } (b)$\implies$(a): 
It is clear from \eqref{eqn:j.higher.rk.normal.vector} that $\widetilde{\cN}_{\Gamma;A}(i)$ is a closed subset of $\widetilde{\cN}_{A}$. Moreover, by Proposition \ref{prop:normal.vectors.open.connected} and \eqref{eqn:j.higher.rk.normal.vector}, both $\widetilde{\cN}_{\Gamma;A}(i)$ and $\widetilde{\cN}_{A}$ are invariant under $\Gamma$. Hence, ${\cN}_{\Gamma;A}(i)$ is a closed subset of ${\cN}_{A}$. Recall that the footpoint of any vector in $\widetilde{\cN}_A$ is not in $\Omega_{\Gamma_A;A}$. (See Definition \ref{dfn:normal.vectors}.) Notice that $\Omega_{\Gamma;A}\subset\Omega_{\Gamma_A;A}$. (See \eqref{total singularity subgp}.) Therefore, by Lemma \ref{lem:branching.locus-1}, $(\Pi_\Gamma)_*$ is a local diffeomorphism when restricted to $\widetilde{\cN}_A$. In particular, $(\Pi^{\Gamma(i)}_\Gamma)_*$ is a local diffeomorphism when restricted to a neighborhood of $\cN_{\Gamma(i);A}$. (See \eqref{eqn:quotient.maps}, Definition \ref{dfn:projected.normal.vector} and \eqref{eqn:j.subgroup.Gamma}.) Hence, (a) is a direct corollary of (b), Fact \ref{fact:SBAP-1} and Fact \ref{fact:SBAP-4}.

\textbf{Proof of } (c)$\implies$(b): By Lemma \ref{lem:branching.locus-1}, Definition \ref{dfn:normal.vectors} and the discussion after \eqref{eqn:j.subgroup.Gamma}, $(\Pi_{\Gamma(i)})_*$ is a local diffeomorphism when restricted to $\widetilde{\cN}_{A_{\leq i}}\times \widetilde{\cN}_{A_{>i}}$. By Definition \ref{dfn:normal.vectors}, Lemma \ref{lem:basic.properties.normal.vector-3}, \eqref{eqn:j.tangent.bundle.ab} and the above, we have 
\begin{align}\label{eqn:key.inclusion.for.induction}
(\Pi_{\Gamma(i)})_*\widetilde{\cN}_{\Gamma;A}(i)\subset (\Pi_{\Gamma(i)})_*(\widetilde{\cN}_{A_{\leq i}}\times \widetilde{\cN}_{A_{>i}})\subset \bigcup_{a,b>0}T^{a,b}(\Gamma(i)\backslash\TT_A,g_{\Gamma;A;\{i\}}).
\end{align}
(b) is then a direct corollary of \eqref{eqn:key.inclusion.for.induction} and Fact \ref{fact:SBAP-3} applied to (c).

It remains for us to prove (c) when $|A|=k$.


\textbf{Proof of }(c): Let 
\begin{align}\label{eqn:middle.lift.vectors.avoid.j.tube}
\widetilde{\cQ}_{\Gamma;A}(i):=\{\widetilde{v}\in T(\Gamma(i)\backslash\TT_A): (\Pi^{\Gamma(i)}_\Gamma)_*(\widetilde{v})\in\cQ_{\Gamma;A}(i)\},
\end{align}
where $\cQ_{\Gamma;A}(i)$ is defined in \eqref{eqn:vectors.avoiding.j.tube}. Observe the following:
\begin{enumerate}
\item[\hypertarget{M1}{(M1)}] $v\in\widetilde{\cQ}_{\Gamma;A}(i)$ if and only if it is tangent to a bi-infinite geodesic in $(\Gamma(i)\backslash\TT_A, g_{\Gamma;A;\{i\}})$ which never intersects $\Pi_{\Gamma(i)}(\cT_{i;A}(\pi/10))$. In particular, $\widetilde{\cQ}_{\Gamma;A}(i)$ is closed and is invariant under the geodesic flow in $T(\Gamma(i)\backslash\TT_A, g_{\Gamma;A;\{i\}})$. (This follows from  \eqref{eqn:vectors.avoiding.j.tube}, \eqref{eqn:middle.lift.vectors.avoid.j.tube}, Lemma \ref{lem:branching.locus-2}, Lemma \ref{lem:tube.index.properties-1-3}, Lemma \ref{lem:tube.index.properties-1-4} and Lemma \ref{lem:properties.of.g.Gamma-4}.)
\item[\hypertarget{M2}{(M2)}] $(\Pi_{\Gamma(i)})_*\widetilde{\cN}_{\Gamma;A}(i)=\cN_{\Gamma(i);A}\cap \widetilde{\cQ}_{\Gamma;A}(i)$. (This follows from  \eqref{eqn:j.higher.rk.normal.vector}, \eqref{eqn:vectors.avoiding.j.tube}, \eqref{eqn:middle.lift.vectors.avoid.j.tube} and Definition \ref{dfn:projected.normal.vector}.)
\end{enumerate}
Since $k\geq 3$, one of $A_{\leq i}$ and $A_{>i}$ has cardinality at least two. We consider the following two cases:

\textbf{Case 1}: Assume that $|A_{\leq i}|\geq 2$ and $|A_{>i}|\geq 2$. Then the following holds:
\begin{itemize}
\item The geodesic flow on $T^a(\Gamma_-(i)\backslash\TT_{A_{\leq i}},g_{\Gamma_-(i);A_{\leq i}})$ is Bernoulli with respect to the Liouville measure. Moreover, it has the SBAP on the set  $\cN_{\Gamma_-(i);A_{\leq i}}\cap T^a(\Gamma_-(i)\backslash\TT_{A_{\leq i}},g_{\Gamma_-(i);A_{\leq i}})$. (This is due to (2) applied to the case when $A,\Gamma$ are replaced by $A_{\leq i}, \Gamma_-(i)$.)
\item The geodesic flow on $T^b(\Gamma_+(i)\backslash\TT_{A_{> i}},g_{\Gamma_+(i);A_{> i}})$ is Bernoulli with respect to the Liouville measure. Moreover, it has the SBAP on the set  $\cN_{\Gamma_+(i);A_{> i}}\cap T^b(\Gamma_+(i)\backslash\TT_{A_{> i}},g_{\Gamma_+(i);A_{> i}})$. (This is due to (2) applied to the case when $A,\Gamma$ are replaced by $A_{> i}, \Gamma_+(i)$.)
\end{itemize}
As a corollary, the following holds:
\begin{itemize}
\item The geodesic flow on $T^{a,b}(\Gamma(i)\backslash\TT_A,g_{\Gamma;A;\{i\}})$ is ergodic with respect to the Liouville measure. (This follows from \eqref{eqn:j.tangent.bundle.ab} and the fact that the product of mixing flows is mixing \cite[Proposition 3.4.17, pages 185]{FH19}.)
\item The geodesic flow on $T^{a,b}(\Gamma(i)\backslash\TT_A,g_{\Gamma;A;\{i\}})$ has the BAP on the set $(\cN_{\Gamma_-(i);A_{\leq i}}\times \cN_{\Gamma_+(i);A_{>i}})\cap T^{a,b}(\Gamma(i)\backslash\TT_A,g_{\Gamma;A;\{i\}})$. (This follows from Definition \ref{dfn:SBAP}.)
\end{itemize}
By \hyperlink{M1}{(M1)}, \hyperlink{M2}{(M2)} and the first bullet point in the above, $((\Pi_{\Gamma(i)})_*\widetilde{\cN}_{\Gamma;A}(i))\cap T^{a,b}(\Gamma(i)\backslash\TT_A,g_{\Gamma;A;\{i\}})$ has Lebesgue measure zero in $T^{a,b}(\Gamma(i)\backslash\TT_A,g_{\Gamma;A;\{i\}})$. By \hyperlink{M1}{(M1)}, the second bullet point in the above and Definition \ref{dfn:BAP}, $\widetilde{\cQ}_{\Gamma;A}(i)\cap T^{a,b}(\Gamma(i)\backslash\TT_A,g_{\Gamma;A;\{i\}})$ is nonseparating in $(\cN_{\Gamma_-(i);A_{\leq i}}\times \cN_{\Gamma_+(i);A_{>i}})\cap T^{a,b}(\Gamma(i)\backslash\TT_A,g_{\Gamma;A;\{i\}})$.  Observe the following:
\begin{itemize} 
\item For any $\widetilde{v}\in (\cN_{\Gamma(i);A}\setminus\widetilde{\cQ}_{\Gamma;A}(i))\cap T^{a,b}(\Gamma(i)\backslash\TT_A,g_{\Gamma;A;\{i\}})$, there exists an open neighborhood of $\widetilde{v}$ in $\cN_{\Gamma(i);A}\cap T^{a,b}(\Gamma(i)\backslash\TT_A,g_{\Gamma;A;\{i\}})$ which does not intersect $\widetilde{\cQ}_{\Gamma;A}(i)$. (This is due to \hyperlink{M1}{(M1)} and Lemma \ref{lem:basic.properties.normal.vector-1}.)
\item $\cN_{\Gamma(i);A}\cap \widetilde{\cQ}_{\Gamma;A}(i)\subset(\Pi_{\Gamma(i)})_*(\widetilde{\cN}_{A_{\leq i}}\times \widetilde{\cN}_{A_{>i}})=\cN_{\Gamma_-(i);A_{\leq i}}\times \cN_{\Gamma_+(i);A_{>i}}$. (Set inclusion is due to \eqref{eqn:key.inclusion.for.induction} and \hyperlink{M2}{(M2)}. Equality is due to Definition \ref{dfn:projected.normal.vector} and the discussion after \eqref{eqn:j.subgroup.Gamma}.)
\end{itemize}
Therefore, by Fact \ref{fact:SBAP-1}, Fact \ref{fact:SBAP-4} and the above, $\widetilde{\cQ}_{\Gamma;A}(i)\cap T^{a,b}(\Gamma(i)\backslash\TT_A,g_{\Gamma;A;\{i\}})$ is nonseparating in $\cN_{\Gamma(i);A}\cap T^{a,b}(\Gamma(i)\backslash\TT_A,g_{\Gamma;A;\{i\}})$. By Definition \ref{dfn:nonsep sets} and \hyperlink{M2}{(M2)}, this is equivalent to (c).
%
%
%

\textbf{Case 2}: Assume that $|A_{\leq i}|= 1$ or $|A_{>i}|=1$. WLOG we assume that $|A_{\leq i}|= 1$. (The proof for $|A_{>i}|=1$ is similar.) Then by Definition \ref{dfn:good subgp}, Definition \ref{dfn:normal.vectors} and Definition \ref{dfn:projected.normal.vector}, we have
\begin{align}\label{eqn:dim.1=circle}
(\Gamma_-(i)\backslash\TT_{A\leq i},g_{\Gamma_-(i);A_{\leq i}})=\TT_i\text{ and }\cN_{\Gamma_-(i);A_{\leq i}}\cap T^a(\Gamma_-(i)\backslash\TT_{A_{\leq i}}; g_{\Gamma_-(i);A_{\leq i}})=T^a\TT_i.
\end{align} 
Choose an orientation for the circle $\TT_i$. We let $T^a_{+}\TT_i$ ($T^a_-\TT_i$ resp.) be the collection of vectors in $T^a\TT_i$ whose geodesic is following the positive (negative resp.) direction. It is easy to see that the geodesic flow in $T^a\TT_i$ leaves $T^a_\pm\TT_i$ invariant. Observe the following:
\begin{itemize}
\item The geodesic flow restricted to $\TT^a_\pm\TT_i$ has a unique orbit which is periodic. In particular, it is ergodic. Moreover, it has the BAP on any open subsets of $T^a_\pm\TT_i$. (See Definition \ref{dfn:BAP}.)
\item The geodesic flow on $T^b(\Gamma_+(i)\backslash\TT_{A_{> i}},g_{\Gamma_+(i);A_{> i}})$ is Bernoulli with respect to the Liouville measure. Moreover, it has the SBAP on the set  $\cN_{\Gamma_+(i);A_{> i}}\cap T^b(\Gamma_+(i)\backslash\TT_{A_{> i}},g_{\Gamma_+(i);A_{> i}})$. (This is due to (2) applied to the case when $A,\Gamma$ are replaced by $A_{> i}, \Gamma_+(i)$.)
\end{itemize}
As a corollary of \eqref{eqn:dim.1=circle} and the above, the following holds:
\begin{itemize}
\item The geodesic flow restricted to each connected component of $T^{a,b}(\Gamma(i)\backslash\TT_A,g_{\Gamma;A;\{i\}})$ is ergodic with respect to the Liouville measure. (This follows from \eqref{eqn:j.tangent.bundle.ab} and the fact that the product of a mixing flow and an ergodic flow is ergodic \cite[Theorem 3.4.19, pages 186-187]{FH19}.)
\item The geodesic flow on $T^{a,b}(\Gamma(i)\backslash\TT_A,g_{\Gamma;A;\{i\}})$ has the BAP on the set $(\cN_{\Gamma_-(i);A_{\leq i}}\times \cN_{\Gamma_+(i);A_{>i}})\cap T^{a,b}(\Gamma(i)\backslash\TT_A,g_{\Gamma;A;\{i\}})$. (This follows from Definition \ref{dfn:SBAP}.)
\end{itemize}
The rest of the proof of (c) in \textbf{Case 2} follows from the same argument at the end of \textbf{Case 1}.

Now we conclude that (c) holds when $|A|=k$. Therefore (1) holds when $|A|=k$. This finishes the proof of Theorem \ref{thm:main.true}.
\end{proof}

\appendix
\section{A branched covering of $S^n$ by $\TT^n$}\label{appA}

\subsection{A $2^{n-1}$-fold branched covering of $S^n$ by $\TT^n$}
Due to technical reason, our treatment of the branched covering of $S^n$ by $\TT^n$ uses a slightly different parametrization compared to the classical approach. Normally, people heavily use $\sin(\cdot)$ and $\cos(\cdot)$ to construct an analytic branched covering of $S^n$ by $\TT^n$. However, this makes it significantly harder for us to insert generalized caps into $\TT^n$ in a neighborhood of the ramification locus while leaving the product metric unchanged elsewhere. To resolve this issue, we introduce the following functions:
\begin{dfn}\label{dfn:fake sin/cos}
We fix $\sn:\RR\to\RR$ and $\cn:\RR\to\RR$ to be $2\pi$-periodic, smooth functions satisfying the following properties:
\begin{enumerate}
\item[(1).] $\sn^2(t)+\cn^2(t)=1$. 
\item[(2).] $\sn(t)=\sn(\pi-t)=-\sn(t+\pi)$ and $\cn(t)=-\cn(\pi-t)=-\cn(t+\pi)$.
\item[(3).] $\sn'(t)>0$ on $(0,\pi/2)$ and $\cn'(t)<0$ on $(0,\pi/2)$.
\item[(4).] $\sn(t)=t$ for any $t\in[-\pi/8,\pi/8]$ and $\cn(t)=-t+(\pi/2)$ for any $t\in[3\pi/8,5\pi/8]$.
\end{enumerate}
\end{dfn}
\begin{rmk}
\begin{enumerate}
\item[(1).] One can easily check that $\sn,\cn$ always exist. With the above properties, one can easily verify that $\sn(m\pi/2)=\sin(m\pi/2)$ and $\cn(m\pi/2)=\cos(m\pi/2)$ for any $m\in\ZZ$. In particular, $\sn(m\pi)=\cn'(m\pi)=\cn(m\pi+\pi/2)=\sn'(m\pi+\pi/2)=0$ for any $m\in\ZZ$. Moreover, $\sn'(t)\sin'(t)>0$ whenever $\sin'(t)\neq 0$ and $\cn'(t)\cos'(t)>0$ whenever $\cos'(t)\neq 0$.
\item[(2).] The last property is the key technical assumption for verifying the second bullet point in Proposition \ref{prop:quotient.is.sphere}. See \eqref{eqn:sncn.near.sing}. 
\end{enumerate}
\end{rmk}

In this appendix, we always view $S^n$ as the ellipsoid in $\RR^{n+1}$ defined by
\begin{align}\label{eqn:standard.ellip}
\sum_{j=1}^{n+1}\frac{x_j^2}{j}=1.
\end{align}
We would like construct the branched covering $\Phi:\TT^n\to S^n$ such that for any $(\theta_1,\dots,\theta_n)\in\TT^n$, if we write $(x_1,\dots, x_{n+1}):=\Phi(\theta_1,\dots,\theta_n)\in S^n\subset \RR^{n+1}$, then we have 
\begin{align}\label{eqn:hyperboloids}
\sum_{j=1}^{n+1}\frac{x_j^2}{j-(i+\sn^2(\theta_i))}=1,~\text{ for any }i\in\{1,\dots,n\}
\end{align}
whenever the left-hand-side is well-defined. The classical treatment uses $\sin(\cdot)$ instead of $\sn(\cdot)$. See \cite{Kli95} or \cite{Jac1842}. The following lemma is the main reason for why we can find such a choice of $(x_1,\dots, x_{n+1})$.

\begin{lem}\label{ellip coor}
For any real numbers $\xi_1,\xi_2,\dots ,\xi_{n+1}$, we let 
\begin{align}\label{eqn:ellip.coor.eqn}
\cA:=\cA(\xi_1,\dots ,\xi_{n+1})=\mativ{\frac{1}{1-\xi_1}&\frac{1}{2-\xi_1}&\cdots&\frac{1}{{n+1}-\xi_1}}{\frac{1}{1-\xi_2}&\frac{1}{2-\xi_2}&\cdots&\frac{1}{{n+1}-\xi_2}}{\vdots&\vdots&\ddots&\vdots}{\frac{1}{1-\xi_{n+1}}&\frac{1}{2-\xi_{n+1}}&\cdots&\frac{1}{{n+1}-\xi_{n+1}}}.
\end{align}
When $\cA$ is well-defined, the $(n+1)$-dimensional vector $y=(y_1,\dots ,y_{n+1})^T$ defined by
\begin{align}\label{ellip coor formula}
y_j:=\frac{\displaystyle\prod_{l=1}^{n+1}(j-\xi_l)}{\displaystyle\prod_{\substack{1\leq l\leq n+1\\ l\neq j}}(j-l)}=\frac{\displaystyle\prod_{l=1}^{j}(j-\xi_l)}{(j-1)!}\frac{\displaystyle\prod_{l=1}^{n+1-j}(\xi_{j+l}-j)}{(n+1-j)!}, \quad j=1,\dots ,n+1
\end{align}
is a solution to $\cA x=(1,1,\dots ,1)^T$, where $(\cdot)^T$ denotes the transpose of matrices.

Moreover, if we assume in addition that $\xi_1<1<\xi_2<2<\cdots <\xi_{n+1}<n+1$, then the above solution is unique. 
\end{lem}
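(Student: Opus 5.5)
We need to show that $\sum_{j=1}^{n+1} y_j/(j-\xi_k)=1$ for every $k$, and that the solution is unique under the interlacing hypothesis. The natural route is partial fractions (Lagrange interpolation). Introduce the polynomials
\begin{align*}
P(t):=\prod_{l=1}^{n+1}(t-\xi_l),\qquad Q(t):=\prod_{l=1}^{n+1}(t-l).
\end{align*}
Both are monic of degree $n+1$, so $P-Q$ has degree at most $n$. Since $Q'(j)=\prod_{l\neq j}(j-l)$, the definition \eqref{ellip coor formula} reads $y_j=P(j)/Q'(j)$. Lagrange interpolation of $P-Q$ at the nodes $1,\dots,n+1$ gives the partial fraction decomposition
\begin{align*}
\frac{P(t)}{Q(t)}=1+\frac{P(t)-Q(t)}{Q(t)}=1+\sum_{j=1}^{n+1}\frac{P(j)}{Q'(j)}\cdot\frac{1}{t-j}=1-\sum_{j=1}^{n+1}\frac{y_j}{j-t},
\end{align*}
where we used $P(j)-Q(j)=P(j)$. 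Evaluating at $t=\xi_k$, which is allowed because $\cA$ well-defined means $\xi_k\notin\{1,\dots,n+1\}$, the left side vanishes since $P(\xi_k)=0$. Hence $\sum_j y_j/(j-\xi_k)=1$, which is exactly the $k$-th row of $\cA y=(1,\dots,1)^T$.

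The second expression in \eqref{ellip coor formula} is then a sign bookkeeping. Split $\prod_{l\neq j}(j-l)$ into the factors with $l<j$, which give $(j-1)!$, and those with $l>j$, which give $(-1)^{n+1-j}(n+1-j)!$. Split $\prod_l(j-\xi_l)$ at $l=j$ in the same way. The $(-1)^{n+1-j}$ factors cancel, which converts $(j-\xi_l)$ into $(\xi_l-j)$ for $l>j$; reindexing $l=j+l'$ gives the stated formula.

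For uniqueness it suffices to show $\cA$ is invertible. It is a Cauchy matrix, with entries $1/(a_k-b_j)$ where $a_k=\xi_k$ and $b_j=j$. Its determinant is
\begin{align*}
\det\cA=\pm\frac{\prod_{k<k'}(\xi_{k'}-\xi_k)\prod_{j<j'}(j'-j)}{\prod_{k,j}(j-\xi_k)}.
\end{align*}
This is nonzero whenever the $\xi_k$ are pairwise distinct and avoid the integers $1,\dots,n+1$. The interlacing assumption $\xi_1<1<\xi_2<\cdots<\xi_{n+1}<n+1$ guarantees exactly this, so the solution is unique.

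If one prefers not to quote the Cauchy determinant, there is a direct argument. Suppose $\cA z=0$ and set $R(t)=\sum_j z_j/(j-t)$. Then $R(t)Q(t)$ is a polynomial of degree at most $n$ that vanishes at the $n+1$ distinct points $\xi_k$. It is therefore identically zero, so every residue $z_j$ vanishes. This version is cleaner, and I would use it.

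The main obstacle is only the sign bookkeeping in the second expression for $y_j$; every other step is routine.
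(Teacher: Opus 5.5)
Your proof is correct, and it takes a different route from the paper's.

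\textbf{Existence.} The paper regards the $k$-th entry $f_k$ of $\cA y$ as a polynomial in the variables $\xi_l$ with $l\neq k$. This polynomial is affine in each variable separately. The paper checks that $f_k(1,\dots,n+1)=1$ and shows that every mixed partial derivative (with no repeated indices) vanishes at $(1,\dots,n+1)$. That step uses the auxiliary identity $\sum_i\prod_{j\neq i}(t_i-t_j)^{-1}=0$ of Lemma~\ref{elementary lemma}, and the finite Taylor expansion then gives $f_k\equiv 1$. You replace all of this with one partial-fraction identity, $P(t)/Q(t)=1+\sum_j y_j/(t-j)$, evaluated at the roots of $P$. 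This avoids the multilinearity and derivative bookkeeping and needs no auxiliary lemma. At heart the two arguments are the same kind: the paper's auxiliary identity is just the vanishing of the top coefficient of the Lagrange interpolant of a constant.

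\textbf{Uniqueness.} The paper proves $\det\cA>0$ by an inductive elimination argument (Lemma~\ref{ez linear alg-1}). That lemma applies to matrices that are positive on and above the diagonal, negative below it, and strictly decreasing along each row among entries of the same sign. This is exactly where the interlacing $\xi_1<1<\xi_2<\cdots<\xi_{n+1}<n+1$ is used. Your residue argument, or equivalently the Cauchy determinant, needs only that the $\xi_k$ are pairwise distinct and avoid $\{1,\dots,n+1\}$. That is a strictly weaker hypothesis than interlacing.

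What the paper's route gains is the sign of the determinant, which is more than uniqueness requires. Your argument is shorter and self-contained.
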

\begin{proof}
See \cite[pages 221-222]{Jac1842}.
\end{proof}
%
We are now ready to define the desired branched covering of $S^n$ by $\TT^n$: To simplify the formula for $\Phi$, in the rest of the appendix, we fix 
\begin{align}\label{eqn:covering.formula.simp}
\theta_0=0\text{ and }\theta_{n+1}=\pi/2. ~(\text{In particular, }\cn(\theta_0)=\sn(\theta_{n+1})=1.)
\end{align}
For any $(\theta_1,\dots,\theta_n)\in\TT^n$, the $j$-th component of $\Phi(\theta_1,\dots ,\theta_n)$ is given by
\begin{align}\label{covering formula}
\left(\Phi(\theta_1,\dots ,\theta_n)\right)_j
=\displaystyle\frac{\cn(\theta_{j-1})\sn(\theta_j)}{\sqrt{(j-1)!(n+1-j)!}}\sqrt{j\cdot\prod_{l=1}^{j-2}(j-l-\sn^2(\theta_l))\prod_{l=j+1}^n(l+\sn^2(\theta_l)-j)}.
\end{align}
By \eqref{covering formula} and the fact that $\sn^2(t)+\cn^2(t)=1$, we have 
\begin{align}\label{eqn:xj2}
\left(\left(\Phi(\theta_1,\dots ,\theta_n)\right)_j\right)^2=\frac{\displaystyle j\cdot\prod_{l=1}^{n}(j-l-\sn^2(\theta_l))}{\displaystyle\prod_{\substack{1\leq l\leq n+1\\ l\neq j}}(j-l)},\text{ for any }j\in\{1,\dots, n+1\}.
\end{align}
\begin{lem}\label{verify branched cover}
The map $\Phi$ defined in \eqref{covering formula} is a $\Gamma_n$-invariant surjective smooth map from $\TT^n$ to $S^n$ (identified with the ellipsoid in \eqref{eqn:standard.ellip}). Moreover, for any $(\theta_1,\dots,\theta_n)\in\TT^n$ such that $\theta_1,\dots,\theta_n\not\in(\pi/2)\ZZ$, $\Phi(\theta_1,\dots,\theta_n)$ lies on $\eqref{eqn:hyperboloids}$.
\end{lem}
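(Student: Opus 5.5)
We prove the four claims in turn: smoothness, $\Gamma_n$-invariance, that the image lies on the ellipsoid \eqref{eqn:standard.ellip} together with the confocal hyperboloids \eqref{eqn:hyperboloids}, and surjectivity.

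\textbf{Smoothness.} In \eqref{covering formula} the prefactor $\cn(\theta_{j-1})\sn(\theta_j)$ is smooth. The radicand is a product of factors $j-l-\sn^2(\theta_l)$ with $l\le j-2$ and $l+\sn^2(\theta_l)-j$ with $l\ge j+1$. Since $0\le\sn^2\le 1$, every such factor is at least $1$ in absolute value and positive. So the radicand is bounded below by $j>0$, and its square root is smooth.

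\textbf{$\Gamma_n$-invariance.} It suffices to check each generator $\gamma_i$, which sends $\theta_i\mapsto\pi-\theta_i$ and $\theta_{i+1}\mapsto-\theta_{i+1}$. By Definition \ref{dfn:fake sin/cos}(2) we have $\sn(\pi-t)=\sn(t)$, $\cn(\pi-t)=-\cn(t)$, $\sn(-t)=-\sn(t)$ and $\cn(-t)=\cn(t)$. The last two follow by combining the listed identities: $\sn(-t)=\sn(\pi+t)=-\sn(t)$, and $\cn(-t)=-\cn(\pi+t)=\cn(t)$. The radicand involves only the squares $\sn^2$, so it is unchanged. In the prefactor, the only coordinate whose value changes is the one indexed $j=i+1$, whose prefactor is $\cn(\theta_i)\sn(\theta_{i+1})$; it picks up two sign flips and is therefore unchanged. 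The coordinate $j=i$ contains $\sn(\theta_i)$, which is invariant, and the coordinate $j=i+2$ contains $\cn(\theta_{i+1})$, which is even, so both are unchanged.

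\textbf{Image on the ellipsoid and hyperboloids.} Squaring gives \eqref{eqn:xj2}. Write $x_j^2=y_j$ with $\xi_1=0$ and $\xi_{l+1}=l+\sn^2(\theta_l)$, so that $\prod_{l=1}^{n+1}(j-\xi_l)=j\prod_{l=1}^n(j-l-\sn^2(\theta_l))$. Then \eqref{eqn:xj2} is exactly formula \eqref{ellip coor formula}, and $y_j$ is a polynomial identity in the $\xi$'s. By Lemma \ref{ellip coor}, whenever $\cA$ is defined, that is whenever $\xi_{l+1}\notin\ZZ$, we get $\sum_j y_j/(j-\xi_m)=1$ for all $m$. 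The case $m=1$ gives \eqref{eqn:standard.ellip}, and $m=i+1$ gives \eqref{eqn:hyperboloids}. Definedness is exactly the condition $\theta_l\notin(\pi/2)\ZZ$, which yields the "moreover" clause. The ellipsoid equation is a polynomial identity after clearing denominators, or alternatively follows by continuity since the exceptional set has empty interior; hence it holds on all of $\TT^n$.

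\textbf{Surjectivity.} This is the main step. Take a point $x\in S^n$ with all $x_j\neq0$. The function $\xi\mapsto\sum_j x_j^2/(j-\xi)-1$ is strictly increasing on each interval $(j,j+1)$ and runs from $-\infty$ to $+\infty$. It therefore has exactly one root $\xi_{l+1}$ in each $(l,l+1)$ for $l=1,\dots,n$. Together with the root $\xi_1=0$, these give $n+1$ roots of a degree-$(n+1)$ numerator, so they are all the roots. Set $\sn^2(\theta_l)=\xi_{l+1}-l\in(0,1)$, which is solvable by Definition \ref{dfn:fake sin/cos}(3). By the uniqueness part of Lemma \ref{ellip coor}, $y_j=x_j^2$. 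The signs of the $x_j$ can then be matched one coordinate at a time, increasing in $j$: choosing the sign of $\sn(\theta_j)$ (via $\theta_j\mapsto-\theta_j$) flips $x_j$ and $x_{j+1}$ only, while the sign of $\cn(\theta_n)$ (via $\theta_n\mapsto\pi-\theta_n$) adjusts $x_{n+1}$ alone. So the image contains the dense set of points with all $x_j\neq0$. Since $\TT^n$ is compact, the image is closed, and hence $\Phi$ is surjective.
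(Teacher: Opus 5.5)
Your proof is correct and follows the paper's argument essentially step for step. Smoothness comes from the radicand being bounded below. $\Gamma_n$-invariance comes from checking that $\cn(\theta_{j-1})\sn(\theta_j)$ and $\sn^2(\theta_j)$ are unchanged by each generator $\gamma_i$. The image lies on the ellipsoid and hyperboloids by Lemma \ref{ellip coor} via \eqref{eqn:xj2} on the dense set $\theta_l\notin(\pi/2)\ZZ$, extended by continuity. Surjectivity uses the intermediate-value roots $\xi_{l+1}\in(l,l+1)$, the uniqueness half of Lemma \ref{ellip coor}, a choice of signs, and compactness.

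One harmless slip: since $\cn$ is even, $\theta_j\mapsto-\theta_j$ flips only $x_j$, not $x_j$ and $x_{j+1}$; it is $\theta_j\mapsto\theta_j+\pi$ that flips both. Your left-to-right sign matching still succeeds, because the signs of $\sn(\theta_l)$ and $\cn(\theta_l)$ can be chosen independently. This gives the paper's condition $x_j\cn(\theta_{j-1})\sn(\theta_j)>0$ for all $j$.
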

\begin{proof}
We first show that the image of $\Phi$ is in $S^n$ and that $\Phi(\theta_1,\dots,\theta_n)$ lies on \eqref{eqn:hyperboloids} whenever $\theta_1,\dots,\theta_n\not\in(\pi/2)\ZZ$. For any $(\theta_1,\dots,\theta_n)\in\TT^2$, let $(x_1,\dots, x_{n+1})=\Phi(\theta_1,\dots, \theta_n)\in\RR^{n+1}$ and 
\begin{align}\label{eqn:theta.to.xi}
(\xi_1,\dots,\xi_{n+1})=(0,1+\sn^2(\theta_1),2+\sn^2(\theta_2),\dots, n+\sn^2(\theta_n)).
\end{align} 
Then by \eqref{eqn:xj2} and Lemma \ref{ellip coor}, we have 
\begin{align}\label{eqn:rel.to.ellip}
\cA(\xi_1,\dots,\xi_{n+1})\cdot (x_1^2,x_2^2,\dots ,x_{n+1}^2)^T=(1,1,\dots, 1)^T,
\end{align}
where $\cA(\xi_1,\dots,\xi_{n+1})$ is defined in \eqref{eqn:ellip.coor.eqn}. Equivalently, $(x_1,\dots, x_{n+1})$ lies on \eqref{eqn:standard.ellip} and \eqref{eqn:hyperboloids} whenever $\theta_1,\dots,\theta_n\not\in(\pi/2)\ZZ$. This shows that there exists a dense subset $U\subset \TT^n$ such that $\Phi(U)\subset S^n$ (identified with the ellipsoid in \eqref{eqn:standard.ellip}). Since $\Phi$ is a smooth map from $\TT^n$ to $\RR^{n+1}$ due to \eqref{covering formula}, we conclude that $\Phi$ is a smooth map from $\TT^{n}$ to $S^n$.

Notice that for any $i\in\{1,\dots,n-1\}$ and for any $j\in\{1,\dots,n\}$, by \eqref{eqn:covering.formula.simp} and Definition \ref{dfn:fake sin/cos}, $\cn(\theta_{j-1})\sn(\theta_{j})$ and $\sn^2(\theta_j)$ are invariant under $\gamma_i$ when viewed as functions of $(\theta_1,\dots,\theta_n)\in\TT^n$. (See \eqref{eqn:generator}.) Therefore, by \eqref{covering formula}, $\Phi$ is invariant under $\Gamma_n$-action.

It remains to show that $\Phi$ is surjective. By the compactness of $\TT^n$ and the smoothness of $\Phi$, it suffices to show that $\Phi(\TT^n)$ contains $\{(x_1,\dots,x_{n+1})\in S^n:x_1,\dots, x_{n+1}\in\RR\setminus \{0\}\}$. Fix an arbitrary $(x_1,\dots, x_{n+1})\in S^n$ with $x_1,\dots, x_{n+1}\in\RR\setminus \{0\}$. For any $j\in\{1,\dots,n\}$, the function $f(t):=\sum_{k=1}^{n+1}\frac{x_k^2}{k-t}$ is smooth on $(j,j+1)$ with $\lim_{t\to j_+}f(t)=-\infty$ and $\lim_{t\to (j+1)_-}f(t)=\infty$. Hence, there exists $\xi_{j+1}\in(j,j+1)$ such that $f(t)=1$. This and \eqref{eqn:standard.ellip} imply that \eqref{eqn:rel.to.ellip} holds for $\xi_1=0$, $x_1,\dots, x_{n+1}$ and the above choice of $\xi_2,\dots, \xi_{n+1}$. Choose $\theta_j$ such that 
\begin{align}\label{eqn:verify branched cover-1}
j+\sn^2(\theta_j)=\xi_{j+1}\text{ for any }j\in\{1,\dots,n\},~(\theta_j\text{ exists due to Definition \ref{dfn:fake sin/cos}.})
\end{align}
and that 
\begin{align}\label{eqn:verify branched cover-2}
x_j\cn(\theta_{j-1})\sn(\theta_j)>0\text{ for any }j\in\{1,\dots,n+1\}.
\end{align} 
By Lemma \ref{ellip coor}, \eqref{eqn:xj2}, \eqref{eqn:verify branched cover-1} and \eqref{eqn:verify branched cover-2}, we have $\Phi(\theta_1,\dots,\theta_n)=(x_1,\dots, x_{n+1})$. This finishes the proof.
\end{proof}

\begin{lem}\label{lem:preimage.count}
For any $(\theta_1,\dots,\theta_n)\in\TT^n$ such that $\theta_1,\dots,\theta_n\not\in (\pi/2)\ZZ$, we have $\Phi^{-1}(\Phi(\theta_1,\dots,\theta_n))=\Gamma_n\cdot(\theta_1,\dots,\theta_n)$.
\end{lem}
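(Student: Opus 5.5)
The inclusion $\Gamma_n\cdot(\theta_1,\dots,\theta_n)\subset\Phi^{-1}(\Phi(\theta_1,\dots,\theta_n))$ is immediate from the $\Gamma_n$-invariance in Lemma \ref{verify branched cover}. For the reverse inclusion, fix $\theta=(\theta_1,\dots,\theta_n)$ with no $\theta_j\in(\pi/2)\ZZ$, put $x=\Phi(\theta)$, and let $\theta'\in\TT^n$ satisfy $\Phi(\theta')=x$.

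The plan is to first recover the values $\sn^2(\theta'_j)$, then the signs, and finally match signs with the group action.

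\textbf{Step 1: every coordinate of $x$ is nonzero.} Since $\sn(\theta_j)\neq 0$ and $\cn(\theta_{j-1})\neq 0$, using \eqref{eqn:covering.formula.simp} for $j=1$ and $j=n+1$, and since the product under the square root in \eqref{covering formula} is strictly positive (each factor $j-l-\sn^2(\theta_l)$ with $l\le j-2$ is at least $1$ in absolute value, and each $l+\sn^2(\theta_l)-j$ with $l\geq j+1$ is at least $1$), we get $x_j\neq 0$ for all $j$.

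\textbf{Step 2: recover $\sn^2(\theta'_j)$.} Set $\xi'=(0,1+\sn^2(\theta'_1),\dots,n+\sn^2(\theta'_n))$. Then $x_j\neq0$ and \eqref{covering formula} force $\sn(\theta'_j)\neq0$ and $\cn(\theta'_{j-1})\neq0$. Hence $\theta'_j\notin(\pi/2)\ZZ$ and $j<\xi'_{j+1}<j+1$. By Lemma \ref{verify branched cover}, $x$ lies on \eqref{eqn:hyperboloids} for both $\theta$ and $\theta'$, so $\xi_{j+1}$ and $\xi'_{j+1}$ are both roots in $(j,j+1)$ of $f(t)=\sum_k x_k^2/(k-t)=1$. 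Since $f'(t)=\sum_k x_k^2/(k-t)^2>0$, $f$ is strictly increasing on $(j,j+1)$, so the root is unique. Thus $\sn^2(\theta'_j)=\sn^2(\theta_j)$ for every $j$. (Alternatively, apply the uniqueness clause of Lemma \ref{ellip coor} directly.)

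\textbf{Step 3: match signs.} By Definition \ref{dfn:fake sin/cos} (symmetry and monotonicity of $\sn$), $\sn^2(\theta'_j)=\sn^2(\theta_j)$ means $\theta'_j\in\{\theta_j,\ \pi-\theta_j,\ \theta_j+\pi,\ -\theta_j\}$. Equivalently, $(\sn(\theta'_j),\cn(\theta'_j))=(\epsilon_j\sn(\theta_j),\delta_j\cn(\theta_j))$ with independent signs $\epsilon_j,\delta_j\in\{\pm1\}$. Comparing the signs of $x_j$ in \eqref{covering formula}, with the square-root factors equal, gives
\[
\delta_{j-1}\epsilon_j=1\quad\text{for }j=1,\dots,n+1,
\]
where $\delta_0=\epsilon_{n+1}=1$. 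So $\epsilon_1=1$, $\delta_n=1$, and $\delta_{j-1}=\epsilon_j$ for $2\le j\le n$. This leaves $2^{n-1}$ admissible sign patterns, parametrized by $\epsilon_2,\dots,\epsilon_n$.

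\textbf{Step 4: realize the patterns by $\Gamma_n$.} By \eqref{eqn:generator} and Definition \ref{dfn:fake sin/cos}(2), $\kappa_\pi$ flips $\cn$ and fixes $\sn$, while $\kappa_0$ flips $\sn$ and fixes $\cn$. Hence $\gamma_i$ flips exactly $\delta_i$ and $\epsilon_{i+1}$, which is exactly one admissible pattern. By Lemma \ref{lem:properties.good.subgp-(1)}, the products of the $\gamma_i$ give all $2^{n-1}$ patterns. Since $(\sn,\cn)$ determines a point of $\TT^1$, this yields $\theta'\in\Gamma_n\cdot\theta$.

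The main care point is Step 3, namely that the pair $(\sn,\cn)$ is injective on $\TT^1$ and that the sign constraints are exactly these. Both follow from properties (1)–(3) of Definition \ref{dfn:fake sin/cos}.
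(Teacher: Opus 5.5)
Your proof is correct and follows the paper's argument. You establish nonvanishing of the $x_j$, recover $\sn^2(\theta'_j)$ from the strictly increasing function $f(t)=\sum_k x_k^2/(k-t)$ on $(j,j+1)$, and derive the same sign constraints $\delta_{j-1}\epsilon_j=1$; the only difference is the last step, where the paper counts ($|\Gamma_n\cdot\theta|=2^{n-1}$ by freeness, versus $|\Phi^{-1}(x)|\le 2^{n-1}$ via its injective sign map $F$), while you realize each admissible pattern directly as a product of the $\gamma_i$, which avoids the freeness of the action at the cost of tracking exactly which signs each $\gamma_i$ flips.
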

\begin{proof}
Let $(\xi_1,\dots,\xi_{n+1})=(0,1+\sn^2(\theta_1),2+\sn^2(\theta_2),\dots,n+\sn^2(\theta_{n+1}))$ and $(x_1,\dots,x_{n+1})=\Phi(\theta_1,\dots,\theta_n)$. If $\theta_1,\dots,\theta_n\not\in (\pi/2)\ZZ$, then by Definition \ref{dfn:fake sin/cos} and \eqref{covering formula}, we have $x_1,\dots, x_{n+1}\in\RR\setminus\{0\}$ and $\xi_1<1<\xi_2<2<\dots<\xi_{n+1}<n+1$.

For any $(\theta'_1,\dots,\theta'_n)\in \Phi^{-1}(x_1,\dots,x_{n+1})$, we let $(\xi'_1,\dots,\xi'_{n+1})=(0,1+\sn^2(\theta'_1),2+\sn^2(\theta'_2),\dots,n+\sn^2(\theta'_{n+1}))$. By Definition \ref{dfn:fake sin/cos}, \eqref{covering formula} and the fact that $x_1,\dots, x_{n+1}\in\RR\setminus\{0\}$, we also have $\xi'_1<1<\xi'_2<2<\dots<\xi'_{n+1}<n+1$ and $\theta_1',\dots,\theta_n'\not\in(\pi/2)\ZZ$.

We first show that $\sn^2(\theta'_j)=\sn^2(\theta_j)$ for any $j\in\{1,\dots,n\}$, which is equivalent to $(\xi_1,\dots,\xi_{n+1})=(\xi_1',\dots,\xi'_{n+1})$. Indeed, by Lemma \ref{verify branched cover} and \eqref{eqn:hyperboloids}, for any $j\in\{1,\dots,n\}$, we have $f(\xi_{j+1})=f(\xi'_{j+1})=1$,
where $f(t):=\sum_{k=1}^{n+1}\frac{x_k^2}{k-t}$. Since $f(t)$ is strictly increasing on $(j,j+1)$, we have $\xi_{j+1}=\xi'_{j+1}$. Therefore, $(\xi_1,\dots,\xi_{n+1})=(\xi_1',\dots,\xi'_{n+1})$. ($\xi_1=\xi'_1=0$ by assumption.)

By Lemma \ref{verify branched cover}, the assumption that $\theta_1,\dots,\theta_n\not\in (\pi/2)\ZZ$ and the remark after Definition \ref{sing index}, we have 
$$|\Phi^{-1}(x_1,\dots,x_{n+1})|\geq |\Gamma_n\cdot (\theta_1,\dots,\theta_n)|=|\Gamma_n|=2^{n-1}.$$ 
It remains to show that $|\Phi^{-1}(x_1,\dots,x_{n+1})|\leq 2^{n-1}$. Consider the map $F:\Phi^{-1}(x_1,\dots,x_{n+1})\to\{-1,1\}^{2n}$, defined by  
$$F(\theta'_1,\dots,\theta'_n)=\left(\frac{\sn(\theta'_1)}{\sn(\theta_1)},\dots,\frac{\sn(\theta'_n)}{\sn(\theta_n)},\frac{\cn(\theta'_1)}{\cn(\theta_1)},\dots,\frac{\cn(\theta'_n)}{\cn(\theta_n)}\right).$$
(The right-hand-side of the above is well-defined due to Definition \ref{dfn:fake sin/cos} and the assumption that $\theta_1,\dots,\theta_n\not\in (\pi/2)\ZZ$. The image of $F$ is in $\{-1,1\}^{2n}$ due to the fact that $\sn^2(\theta'_j)=\sn^2(\theta_j)$ for any $j\in\{1,\dots,n\}$.) Then \eqref{covering formula} implies that $F$ is injective and that for any $(\theta'_1,\dots,\theta'_n)\in \Phi^{-1}(x_1,\dots,x_{n+1})$, we have 
$$\frac{\sn(\theta'_1)}{\sn(\theta_1)}=\frac{\cn(\theta'_1)}{\cn(\theta_1)}\frac{\sn(\theta'_2)}{\sn(\theta_2)}=\frac{\cn(\theta'_2)}{\cn(\theta_2)}\frac{\sn(\theta'_3)}{\sn(\theta_3)}=\dots=\frac{\cn(\theta'_{n-1})}{\cn(\theta_{n-1})}\frac{\sn(\theta'_n)}{\sn(\theta_n)}=\frac{\cn(\theta'_n)}{\cn(\theta_n)}=1.$$
Hence
\begin{align*}
|\Phi^{-1}(x_1,\dots,x_{n+1})|\leq&|F(\Phi^{-1}(x_1,\dots,x_{n+1}))|\\
\leq &|\{(\sigma_1,\dots,\sigma_{2n})\in\{-1,1\}^{2n}:\sigma_1=\sigma_2\sigma_{n+1}=\sigma_3\sigma_{n+2}=\dots=\sigma_n\sigma_{2n-1}=\sigma_{2n}=1\}|\\
=&2^{n-1}.
\end{align*}
This finishes the proof.
\end{proof}
\begin{lem}\label{lem:deriv.covering.map}
For any $p=(\theta_1,\dots,\theta_n)\in\TT^n$, the vectors $D\Phi(p)(\del/\del\theta_1),\dots,D\Phi(p)(\del/\del\theta_n)$ are orthogonal to each other. Moreover, for any $k\in\{1,\dots,n+1\}$, we have 
\begin{align*}
\left\|D\Phi(p)\left(\frac{\del}{\del\theta_k}\right)\right\|^2\geq \max\left\{\frac{(\sn'(\theta_k))^2}{2(n+2)!}(\cn^2(\theta_{k-1})+\sn^2(\theta_k)),\frac{(\cn'(\theta_k))^2}{2(n+2)!}(\cn^2(\theta_{k})+\sn^2(\theta_{k+1}))\right\},
\end{align*}
where $\theta_0,\theta_{n+1}$ is defined in \eqref{eqn:covering.formula.simp}.
\end{lem}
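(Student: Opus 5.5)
The plan is a direct computation from \eqref{eqn:xj2}. I first work on the dense open set $U$ where $\theta_1,\dots,\theta_n\notin(\pi/2)\ZZ$, and then pass to all of $\TT^n$ by continuity. On $U$, Lemma \ref{verify branched cover} and \eqref{eqn:hyperboloids} apply with $\xi_{k+1}=k+\sn^2(\theta_k)$ and all $x_j\neq 0$. Differentiating the logarithm of \eqref{eqn:xj2} with respect to $\theta_k$ gives
\begin{align*}
\frac{\del x_j}{\del\theta_k}=-\frac{\sn(\theta_k)\sn'(\theta_k)}{j-\xi_{k+1}}\,x_j,\qquad j\in\{1,\dots,n+1\}.
\end{align*}
Here $j-\xi_{k+1}\neq 0$ on $U$.

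\textbf{Orthogonality.} For $k\neq m$ set $a=\xi_{k+1}$ and $b=\xi_{m+1}$, so $a\neq b$. Then
\begin{align*}
\Big\langle D\Phi(\del_{\theta_k}),D\Phi(\del_{\theta_m})\Big\rangle=\sn\sn'(\theta_k)\,\sn\sn'(\theta_m)\sum_j\frac{x_j^2}{(j-a)(j-b)}.
\end{align*}
Using the partial fraction identity $\frac{1}{(j-a)(j-b)}=\frac{1}{a-b}\big(\frac{1}{j-a}-\frac{1}{j-b}\big)$, the sum becomes $\frac{1}{a-b}\big(\sum_j \frac{x_j^2}{j-a}-\sum_j\frac{x_j^2}{j-b}\big)$. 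By \eqref{eqn:hyperboloids} each of the two sums equals $1$, so the inner product is $0$. This is the classical orthogonality of confocal quadrics. Both sides are continuous on $\TT^n$ and $U$ is dense, so orthogonality holds everywhere.

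\textbf{Lower bound.} The statement is meaningful for $k\le n$, with $\theta_0,\theta_{n+1}$ as in \eqref{eqn:covering.formula.simp}. On $U$,
\begin{align*}
\|D\Phi(\del_{\theta_k})\|^2=(\sn\sn'(\theta_k))^2\sum_j \frac{x_j^2}{(j-\xi_{k+1})^2}.
\end{align*}
I drop all but one term at a time and use \eqref{eqn:xj2} in each case.

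(i) For $j=k$ we have $j-\xi_{k+1}=-\sn^2(\theta_k)$. By \eqref{covering formula}, $x_k^2=\cn^2(\theta_{k-1})\sn^2(\theta_k)\,C_k$, where $C_k$ is $k/((k-1)!(n+1-k)!)$ times a product of factors each $\ge 1$. Hence this term is at least $\sn'^2(\theta_k)\cn^2(\theta_{k-1})/(n+2)!$.

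(ii) For $j=k+1$ we have $j-\xi_{k+1}=\cn^2(\theta_k)$ and $x_{k+1}^2=\cn^2(\theta_k)\sn^2(\theta_{k+1})C_{k+1}$. Using $\sn\sn'=-\cn\cn'$, which follows from $\sn^2+\cn^2=1$, this term is at least $\cn'^2(\theta_k)\sn^2(\theta_{k+1})/(n+2)!$.

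(iii) Since $|j-\xi_{k+1}|\le n+1$ for every $j$, and $\sum_j x_j^2\ge \sum_j x_j^2/j=1$ by \eqref{eqn:standard.ellip}, the whole sum gives a bound of at least $(\sn\sn'(\theta_k))^2/(n+1)^2$. This quantity equals both $\sn'^2\sn^2(\theta_k)/(n+1)^2$ and $\cn'^2\cn^2(\theta_k)/(n+1)^2$, and $(n+1)^2\le (n+2)!$.

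Averaging (i) with (iii), and separately (ii) with (iii), yields the two expressions in the claimed maximum with the factor $\tfrac12$. Continuity then extends the bound from $U$ to all of $\TT^n$.

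The main obstacle is the bookkeeping in (i) and (ii). There one must check from \eqref{covering formula} that the leftover product factors, namely $j-l-\sn^2(\theta_l)$ for $l\le j-2$ and $l+\sn^2(\theta_l)-j$ for $l\ge j+1$, are all $\ge 1$. One must also handle the boundary cases $k=1$ and $k=n$, where $\theta_0$ and $\theta_{n+1}$ enter through \eqref{eqn:covering.formula.simp}.
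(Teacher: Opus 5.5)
Your proposal is correct and follows the paper's proof essentially step for step. It uses the same derivative formula $(\xi_{k+1}-j)\,\partial x_j/\partial\theta_k=\sn(\theta_k)\sn'(\theta_k)x_j$ on the dense set, gets orthogonality from the confocal-quadric identity (the paper cites it from Jacobi, you derive it by partial fractions), and obtains the lower bound by combining the $j=k$ and $j=k+1$ terms with a full-sum estimate. The only cosmetic difference is in that full-sum estimate: you bound the denominators by $(n+1)^2$ and use $\sum_j x_j^2\ge 1$, whereas the paper uses Cauchy--Schwarz with $\sum_j x_j^2\le (n+1)^2$; both yield $(\sn(\theta_k)\sn'(\theta_k))^2/(n+1)^2$.
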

\begin{proof}
By smoothness of $\Phi$, it suffices to proof this lemma under the additional assumption that $\theta_1,\dots,\theta_n\not\in(\pi/2)\ZZ$. Let $(x_1,\dots, x_{n+1})=\Phi(\theta_1,\dots,\theta_n)$ and $(\xi_1,\dots,\xi_{n+1})=(0,1+\sn^2(\theta_1),2+\sn^2(\theta_2),\dots,n+\sn^2(\theta_n))$.

By \eqref{eqn:xj2} and Definition \ref{dfn:fake sin/cos}, for any $k\in\{1,\dots, n\}$, we have 
\begin{align}\label{eqn:deriv.Phi}
(\xi_{k+1}-j)\frac{\del x_j}{\del\theta_k}(\theta_1,\dots,\theta_n)=\sn(\theta_k)\sn'(\theta_k)x_j=-\cn(\theta_k)\cn'(\theta_k)x_j.
\end{align}
Hence by \eqref{eqn:deriv.Phi} and \cite[Equation (26.4), page 224]{Jac1842}, for any $k,l\in\{1,\dots,n\}$ with $k\neq l$, we have 
\begin{align*}
\left\langle D\Phi(p)\left(\frac{\del}{\del\theta_k}\right),D\Phi(p)\left(\frac{\del}{\del\theta_l}\right)\right\rangle
=\sn(\theta_k)\sn'(\theta_k)\sn(\theta_l)\sn'(\theta_l)\sum_{j=1}^{n+1}\frac{x_j^2}{(\xi_{k+1}-j)(\xi_{l+1}-j)}=0.
\end{align*}
This proves that the vectors $D\Phi(p)(\del/\del\theta_1),\dots,D\Phi(p)(\del/\del\theta_n)$ are orthogonal to each other.

To prove the rest of this lemma, we first notice that for any $j\in\{1,\dots,n+1\}$, by Lemma \ref{verify branched cover}, \eqref{eqn:standard.ellip} and \eqref{covering formula}, we have 
\begin{align}\label{eqn:xj.est}
\frac{|\cn(\theta_{j-1})\sn(\theta_j)|}{\sqrt{n!}}\leq |x_j|\leq \sqrt{n+1}.
\end{align}
Hence for any $k\in\{1,\dots,n\}$, by \eqref{eqn:deriv.Phi} and \eqref{eqn:xj.est} we have  
\begin{align}\label{eqn:deriv.vector.norm.est-1}
\left\|D\Phi(p)\left(\frac{\del}{\del\theta_k}\right)\right\|^2=&(\sn(\theta_k)\sn'(\theta_k))^2\sum_{j=1}^{n+1}\frac{x^2_j}{(\xi_{k+1}-j)^2}\nonumber\\
\geq&(\sn(\theta_k)\sn'(\theta_k))^2\left(\sum_{j=1}^{n+1}{x^2_j}\right)^{-1}\left(\sum_{j=1}^{n+1}\frac{x^2_j}{j-\xi_{k+1}}\right)^2\nonumber\\
\geq&\frac{(\sn(\theta_k)\sn'(\theta_k))^2}{(n+1)^2}\left(\sum_{j=1}^{n+1}\frac{x^2_j}{j-\xi_{k+1}}\right)^2=\frac{(\sn(\theta_k)\sn'(\theta_k))^2}{(n+1)^2}=\frac{(\cn(\theta_k)\cn'(\theta_k))^2}{(n+1)^2}.
\end{align}
where the second last equality follows from Lemma \ref{verify branched cover} and \eqref{eqn:hyperboloids}. On the other hand, by \eqref{covering formula} and \eqref{eqn:xj.est}, we have 
\begin{align}\label{eqn:deriv.vector.norm.est-2}
\left\|D\Phi(p)\left(\frac{\del}{\del\theta_k}\right)\right\|^2\geq&\left(\frac{\del x_k}{\del \theta_k}\right)^2+\left(\frac{\del x_{k+1}}{\del \theta_{k}}\right)^2\geq \frac{(\cn(\theta_{k-1})\sn'(\theta_k))^2+(\cn'(\theta_k)\sn(\theta_{k+1}))^2}{n!}.
\end{align}
The lemma then follows from \eqref{eqn:deriv.vector.norm.est-1} and \eqref{eqn:deriv.vector.norm.est-2}.
\end{proof}

\subsection{Proof of Proposition \ref{prop:quotient.is.sphere}}\label{subsect:quotient.is.sphere}
Choose $\Phi_{\Gamma_n;\bV_n}:=\Phi$. To avoid lengthy formulas, for any $j\in\{1,\dots,n+1\}$ and for any $l\in\{1,\dots,n\}$, we let 
\begin{align}\label{eqn:formula.simplification.parts}
\Phi_{j,l}(\theta_l)=
\begin{cases}
\cn(\theta_l),~&\text{if }l=j-1,\\
\sn(\theta_l),~&\text{if }l=j,\\
\sqrt{|j-l-\sn^2(\theta_l)|},~&\text{otherwise}.
\end{cases}
\end{align}
Then for any $(\theta_1,\dots,\theta_n)\in\TT^n$ and for any $j\in\{1,\dots,n+1\}$, by \eqref{covering formula} and \eqref{eqn:formula.simplification.parts}, we have
\begin{align}\label{eqn:simplified.covering.formula}
(\Phi(\theta_1,\dots,\theta_n))_j=\frac{\sqrt{j}}{\sqrt{(j-1)!(n+1-j)!}}\prod_{l=1}^{n}\Phi_{j,l}(\theta_l).
\end{align}

For any $p=(\theta_1,\dots,\theta_n)\in\TT^n$, let $\cU_p$ be any good neighborhood of $p$. (See Definition \ref{dfn:good.nbhd}.) Then by Definition \ref{dfn:fake sin/cos}, \eqref{jth singularity}, Definition \ref{sing index} and the third property in Definition \ref{dfn:good.nbhd}, for any $l\in\text{Sing}_{\bV_n}(p)$ and for any $(\theta_1+\delta_1,\dots,\theta_n+\delta_n)\in\cU_p$ with $\delta_1,\dots,\delta_n\in(-\pi/8,\pi/8)$, there exist $\sigma_l,\sigma_{l+1},\sigma_{l,l+1}\in\{1,-1\}$ independent of the choice of $\delta_1,\dots,\delta_n$ such that
\begin{align}\label{eqn:sncn.near.sing}
\frac{\sn(\theta_l+\delta_l)}{\sqrt{1-\delta_l^2}}=\sigma_l,~\frac{\cn(\theta_{l+1}+\delta_{l+1})}{\sqrt{1-\delta_{l+1}^2}}=\sigma_{l+1}\text{ and }\cn(\theta_l+\delta_l)\sn(\theta_{l+1}+\delta_{l+1})=\sigma_{l,l+1}\delta_l\delta_{l+1}.
\end{align}
For any $(t,s)\in\RR^2$ with $|t|,|s|<\pi^2/32$, for any $l\in\text{Sing}_{\bV_n}(p)$, for any $j\in\{1,\dots,n+1\}\setminus \{l+1\}$, we define
\begin{align}\label{eqn:diffeo.after.z.to.z2.parts}
\Psi_{j,l}(t,s):=
\begin{cases}
\displaystyle\sigma_{l+1}\sqrt{(j-l-1)^2+(j-l-1)t-\frac{s^2}{4}},~&\text{if }l=j-2,\\
\displaystyle\frac{\sigma_{l,l+1}s}{2}, ~&\text{if }l=j-1,\\
\displaystyle\sigma_{l}\sqrt{(j-l-1)^2+(j-l-1)t-\frac{s^2}{4}},~&\text{if }l=j,\\
\displaystyle\sqrt{(j-l-1)^2+(j-l-1)t-\frac{s^2}{4}},~&\text{otherwise}.
\end{cases}
\end{align}
Then for any $(\theta_1+\delta_1,\dots,\theta_n+\delta_n)\in \cU_p$, by Definition \ref{dfn:fake sin/cos}, \eqref{eqn:formula.simplification.parts}, \eqref{eqn:sncn.near.sing} and \eqref{eqn:diffeo.after.z.to.z2.parts}, we have 
\begin{align}\label{eqn:large.terms.covering.formula}
\Phi_{j,l}(\theta_l+\delta_l)\Phi_{j,l+1}(\theta_{l+1}+\delta_{l+1})=\Psi_{j,l}(\delta_l^2-\delta^2_{l+1},2\delta_l\delta_{l+1}).
\end{align}
Hence, we define $\Psi_{\Gamma_n;\bV_n;p}:\Xi_{\Gamma_n;\bV_n;p}(\cU_p)\to \RR^{n+1}$ such that for any $j\in\{1,\dots,n+1\}$, we have
\begin{align}\label{eqn:diffeo.after.z.to.z2}
(\Psi_{\Gamma_n;\bV_n;p}(\zeta_1,\dots,\zeta_n))_j=\frac{\sqrt{j}\displaystyle\left(\prod_{l\in\bV_n\setminus\text{Sing}^+_{\bV_n}(p)}\Phi_{j,l}(\theta_l+\zeta_l)\right)\cdot\left(\prod_{l\in\text{Sing}_{\bV_n}(p)}\Psi_{j,l}(\zeta_l,\zeta_{l+1})\right)}{\sqrt{(j-1)!(n+1-j)!}}.
\end{align}
The fact that $\Phi|_{U_p}=\Psi_{\Gamma_n;\bV_n;p}\circ \Xi_{\Gamma_n;\bV_n;p}$ follows directly from \eqref{eqn:z.to.z2}, \eqref{eqn:simplified.covering.formula}, \eqref{eqn:large.terms.covering.formula} and \eqref{eqn:diffeo.after.z.to.z2}. Hence $\Psi_{\Gamma_n;\bV_n;p}$ is a smooth map from $\Xi_{\Gamma_n;\bV_n;p}(\cU_p)$ to $S^n$ due to \eqref{eqn:formula.simplification.parts}, \eqref{eqn:diffeo.after.z.to.z2.parts} and \eqref{eqn:diffeo.after.z.to.z2}. 

Suppose that $\text{Jac}\Psi_{\Gamma_n;\bV_n;p}$ does not vanish, then by the fact that $\Phi|_{\cU_p}=\Psi_{\Gamma_n;\bV_n;p}\circ \Xi_{\Gamma_n;\bV_n;p}$ and the fact that $\Xi_{\Gamma_n;\bV_n;p}$ is an open map, $\Phi$ is also an open map. Therefore, by Lemma \ref{lem:preimage.count} and the fact that $\Phi$ is smooth and open, for any $p\in\TT^n$, we have $\Phi^{-1}(\Phi(p))=\Gamma_n\cdot p$. Then by Lemma \ref{verify branched cover} and the above, $\Phi=\overline{\Phi}\circ\Pi_{\Gamma_n}$ for some homeomorphism $\overline{\Phi}:\Gamma_n\backslash\TT^n\to S^n$. In particular, we have 
$$\Pi_{\Gamma_n}|_{\cU_p}:=(\overline{\Phi})^{-1}\circ\Phi|_{\cU_p}=\left((\overline{\Phi})^{-1}\circ\Psi_{\Gamma_n;\bV_n;p}\right)\circ\Xi_{\Gamma_n;\bV_n;p}.$$ 
Apply Lemma \ref{lem:local.decomp.proj} to the above, we have $\Psi_{\Gamma_n;\bV_n;p}$ is a homeomorphism onto its image. Hence, by the assumption that $\text{Jac}\Psi_{\Gamma_n;\bV_n;p}$ does not vanish, $\Psi_{\Gamma_n;\bV_n;p}$ is a diffeomorphism onto its image. Therefore, Proposition \ref{prop:quotient.is.sphere} is proved if we can show that $\text{Jac}\Psi_{\Gamma_n;\bV_n;p}$ does not vanish.

For any $q=(\theta_1+\delta_1,\dots,\theta_n+\delta_n)\in \cU_p$ such that $\delta_1,\dots,\delta_n\in(-\pi/8,\pi/8)$, we have 
\begin{align}\label{eqn:Jac.Xi}
\text{Jac}\Xi_{\Gamma_n;\bV_n;p}(q)=4^{|\text{Sing}_{\bV_n}(p)|}\prod_{l\in\text{Sing}_{\bV_n}(p)}(\delta^2_l+\delta^2_{l+1}).
\end{align}


On the other hand, by Definition \ref{dfn:fake sin/cos}, Lemma \ref{lem:deriv.covering.map} and \eqref{eqn:sncn.near.sing}, for any $l\in\text{Sing}_{\bV_n}(p)$, we have 
\begin{align}\label{eqn:Jac.Phi.Sing}
\left\|D\Phi(q)\left(\frac{\del}{\del\theta_l}\right)\right\|\left\|D\Phi(q)\left(\frac{\del}{\del\theta_{l+1}}\right)\right\|
\geq& \frac{|\cn'(\theta_l+\delta_l)\sn'(\theta_{l+1}+\delta_{l+1})|}{2(n+2)!}(\cn^2(\theta_l+\delta_l)+\sn^2(\theta_{l+1}+\delta_{l+1}))\nonumber\\
=&\frac{1}{2(n+2)!}(\delta_l^2+\delta^2_{l+1}).
\end{align}
By Definition \ref{dfn:fake sin/cos}, Lemma \ref{lem:deriv.covering.map} and Definition \ref{dfn:good.nbhd}, there exists a constant $C(n,\cU_p)>0$ such that for any $q\in\cU_p$ and for any $l\in\bV_n\setminus\text{Sing}^+_{\bV_n}(p)$, we have
\begin{align}\label{eqn:Jac.Phi.reg}
\left\|D\Phi(q)\left(\frac{\del}{\del\theta_l}\right)\right\|\geq& \frac{\displaystyle\min_{\theta\in[0,\pi/2]}(|\sn'(\theta)|+|\cn'(\theta)|)}{2\sqrt{2(n+2)!}}\min_{\substack{s\in\{0,\dots,n\}\setminus\text{Sing}_{\bV_n}(p)\\
(\zeta_1,\dots,\zeta_n)\in\overline{\cU_p}\\
\zeta_0=0,\zeta_{n+1}=\pi/2}}\sqrt{\cn^2(\zeta_s)+\sn^2(\zeta_{s+1})}\nonumber\\
=:&C(n,\cU_p)>0.
\end{align} 
(Here, the assumptions $\zeta_0=0$ and $\zeta_{n+1}=\pi/2$ follows from \eqref{eqn:covering.formula.simp}, which is used in the statement of Lemma \ref{lem:deriv.covering.map}). By Lemma \ref{lem:deriv.covering.map}, \eqref{eqn:Jac.Phi.Sing} and \eqref{eqn:Jac.Phi.reg}, we have
\begin{align}\label{eqn:Jac.Phi}
\text{Jac}\Phi(q)=&\prod_{l=1}^n\left\|D\Phi(q)\left(\frac{\del}{\del\theta_l}\right)\right\|\nonumber\\
=&\left(\prod_{l\in\bV_n\setminus\text{Sing}^+_{\bV_n}(p)}\left\|D\Phi(q)\left(\frac{\del}{\del\theta_l}\right)\right\|\right)\left(\prod_{l\in\text{Sing}_{\bV_n}(p)}\left\|D\Phi(q)\left(\frac{\del}{\del\theta_l}\right)\right\|\left\|D\Phi(q)\left(\frac{\del}{\del\theta_{l+1}}\right)\right\|\right)\nonumber\\
\geq& \frac{(C(n,\cU_p))^{n-|\text{Sing}_{\bV_n}^+(p)|}}{(2(n+2)!)^{|\text{Sing}_{\bV_n}(p)|}}\prod_{l\in\text{Sing}_{\bV_n}(p)}(\delta^2_l+\delta^2_{l+1}).
\end{align}
Since $\text{Jac}\Xi_{\Gamma_n;\bV_n;p}(q)=\text{Jac}\Phi(q)\cdot(\text{Jac}\Xi_{\Gamma_n;\bV_n;p}(q))^{-1}$ holds for any $q$ in a dense subset of $\cU_p$, by \eqref{eqn:Jac.Xi} and \eqref{eqn:Jac.Phi}, $\text{Jac}\Xi_{\Gamma_n;\bV_n;p}(q)>0$ for any $q\in\cU_p$. This completes the proof of Proposition \ref{prop:quotient.is.sphere}.

\section{Existence of special caps}\label{sect:special.caps.exist}
In this section, we prove the following proposition:
\begin{prop}\label{prop:special.caps.exist}
For any $k\in\ZZ_+$ and for any $R>0$, there exists a special $k$-cap with Euclidean radius $R$. (See Definition \ref{dfn:special caps}.)
\end{prop}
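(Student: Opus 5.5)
We will build the special $k$-cap from a rotationally symmetric extended cap written in geodesic polar coordinates, pull it back by the map $F(z)=h(|z|)|z|^{-k}z^k$, and arrange that near $|z|=R$ the pulled-back tensor is exactly $dr^2+r^2d\phi^2$. In polar coordinates $z=re^{\mathbf{i}\phi}$ a radially symmetric metric on the target disc has the form $g_C=d\rho^2+f(\rho)^2d\psi^2$. The map $F$ sends $(r,\phi)$ to $(\rho,\psi)=(u(r),k\phi)$, where $u$ is the distance function along radii in $C$ composed with $h$. Then
\[
F^*g_C=u'(r)^2\,dr^2+k^2f(u(r))^2\,d\phi^2 .
\]
For this to equal the Euclidean metric $dr^2+r^2d\phi^2$ for $r$ near $R$ (and hence extend smoothly by the flat metric outside $C_R$), we need $u'(r)=1$ and $kf(u(r))=r$ there. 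In other words, near its outer edge the target surface must itself be a flat cone of total angle $2\pi/k$, written in the form $f(\rho)=(\rho+c)/k$. We may then take $u(r)=r-c$, and $h$ is recovered from $u$ through whatever radial coordinate we use for $C$.

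The construction of the profile $f$ on $[0,\rho_1]$ will have the following features:
\begin{itemize}
\item near $0$ it equals $\sin$-like behaviour of a round cap, with $f(\rho)=\rho$ near $0$ so the metric is smooth at the center;
\item there is $\rho_0$ where $f'(\rho_0)=0$ and $f''<0$ on $(0,\rho_0)$ with curvature $-f''/f$ nonincreasing, vanishing at $\rho_0$, so the inner disc is a monotone-curvature cap with geodesic boundary;
\item on $(\rho_0,\rho_1)$ we have $f''>0$, giving negative curvature, with $f'$ increasing from $0$ up to exactly $1/k$, after which $f$ is affine with slope $1/k$.
\end{itemize}
The affine part is flat, not negatively curved, so strictly the extended cap is the region $\rho<\rho_1$ and the flat collar lies outside it. This is fine because Definition \ref{dfn:special caps} only requires that $g$ on $C_R$ extends by the flat metric. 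We therefore choose $R$ corresponding to $\rho_1$ and use the affine collar only to check smoothness of the extension across $|z|=R$: since $f$ is smooth and affine beyond $\rho_1$, all derivatives match. Existence of such an $f$ is an elementary smooth-gluing exercise. The one constraint is that $f'$ must travel from $1$ at $0$ down to $0$ and then back up to $1/k<1$ or $=1$, which convex/concave interpolation handles; \cite[Proposition 2.1]{BG89a} already provides the inner cap part.

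Next we must match the near-center condition $h(r)=r^k$ on $[0,R_0]$. Near the center $g_C$ is smooth with $f(\rho)=\rho+O(\rho^3)$, and $F(z)=z^k$ there, so $g=F^*g_C$ is the smooth tensor $(z^k)^*g_C$, degenerate only at $0$, which is what the definition asks. We then need a single strictly increasing smooth $h$ with $h(r)=r^k$ near $0$ and, after conversion to distance, $u(r)=r-c$ near $R$. Between these two regimes, $h$ is an arbitrary increasing interpolation, and $F$ remains a smooth map because $h(|z|)|z|^{-k}z^k$ is smooth away from $0$ and equals $z^k$ near $0$.

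Finally, to get an arbitrary Euclidean radius $R$ we rescale. Scaling the target metric by $\lambda^2$ and the domain by $\lambda$ preserves the structure (the cap is monotone, the collar is negatively curved, and the outer region is flat), while the condition $h(r)=r^k$ becomes $h(r)=\lambda^{1-k}r^k$. This can be absorbed by also scaling the target disc coordinate, since caps are defined only up to isometry with the disc $\{|z|<h(R)\}$. The main obstacle is exactly this bookkeeping: the paper defines $C$ as a Euclidean disc of radius $h(R)$ with $F$ acting in Euclidean coordinates. We must therefore choose the target's Euclidean radial coordinate $\sigma(\rho)$ so that simultaneously $\sigma=\rho$ near $0$ (so that $h=r^k$ really gives $z^k$) and $\sigma\circ u$ is the prescribed outer function. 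We handle this by defining $h:=\sigma\circ u$, with $u(r)=\sigma^{-1}(r^k)$ near $0$ and $u(r)=r-c$ near $R$, glued monotonically in between.
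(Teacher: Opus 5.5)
Your proposal is correct and is essentially the paper's construction. The paper takes a warped-product extended cap $dr^2+\rho_{\widetilde R}(r)^2d\theta^2$ on $C_{\widetilde R}$ whose profile becomes affine with slope $1/k$ (a flat cone of angle $2\pi/k$) beyond $\widetilde R$, and pulls it back by $F(z)=h(|z|)|z|^{-k}z^k$ with $h(r)=r^k$ near $0$ and $h(r)=r+\mathrm{const}$ near $R$, exactly as you do; it builds the scale in through $\widetilde R=R/(1-kL)$ instead of rescaling at the end.

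There are two small slips. First, literally requiring $f(\rho)=\rho$ near $0$ would make the cap flat near its center. What you actually need is that $f$ extends to a smooth odd function with $f'(0)=1$ and $f''<0$ on $(0,\rho_0)$; the paper gets this by taking $f''$ to be a multiple of $-\sin$ near $0$. Second, your coordinate change $\sigma$ is unnecessary: taking the geodesic radius as the Euclidean radius of the target disc ($\sigma=\mathrm{id}$, so $h=u$) already gives a smooth metric on $C$.
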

Before proving Proposition \ref{prop:special.caps.exist}, we first recall a few elementary facts without proofs.
\begin{fact}\label{fact.warped.prod.curv}
Let $U$ be an open subset of $\RR^2$ equipped with the warped-product metric $g=dx^2+(\rho(x))^2dy^2$. Then the sectional curvature of $(U,g)$ at $(x,y)$ is $-\rho''(x)/\rho(x)$.
\end{fact}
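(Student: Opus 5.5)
The plan is a direct computation with Cartan's structure equations for the orthonormal coframe adapted to the warped product, with a Jacobi-field argument as an independent check. Throughout I assume, as is implicit in the statement, that $\rho>0$ on the relevant $x$-interval, so that $g$ is a Riemannian metric on $U$.

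\textbf{Step 1: connection form.} Take the orthonormal coframe $e^1=dx$ and $e^2=\rho(x)\,dy$. Then
\begin{align*}
de^1=0,\qquad de^2=\rho'(x)\,dx\wedge dy=\frac{\rho'(x)}{\rho(x)}\,e^1\wedge e^2 .
\end{align*}
The first structure equations read $de^1=-\omega^1{}_2\wedge e^2$ and $de^2=-\omega^2{}_1\wedge e^1$, with $\omega^2{}_1=-\omega^1{}_2$. They are solved by
\begin{align*}
\omega^1{}_2=-\rho'(x)\,dy .
\end{align*}
To check this: $-\omega^1{}_2\wedge e^2=\rho'\rho\,dy\wedge dy=0$, and $-\omega^2{}_1\wedge e^1=-\rho'\,dy\wedge dx=\rho'\,dx\wedge dy$. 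Uniqueness of the Levi-Civita connection form then identifies $\omega^1{}_2$.

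\textbf{Step 2: curvature.} The second structure equation in dimension two is $d\omega^1{}_2=K\,e^1\wedge e^2$, with the sign convention matching the first equations above. Differentiating,
\begin{align*}
d\omega^1{}_2=-\rho''(x)\,dx\wedge dy=-\frac{\rho''(x)}{\rho(x)}\,e^1\wedge e^2 ,
\end{align*}
which gives $K(x,y)=-\rho''(x)/\rho(x)$.

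\textbf{Main obstacle and cross-check.} The only delicate point is the sign convention in Steps 1 and 2, so I would confirm the sign independently. The curves $y=\mathrm{const}$ are unit-speed geodesics: they are fixed-point sets of the reflection $y\mapsto 2y_0-y$, which is an isometry of $g$, and they are parametrized by arclength $x$. Since $\del_y$ is a Killing field, it restricts along each such geodesic to a Jacobi field, perpendicular to the geodesic and of length $\rho(x)$. For a perpendicular Jacobi field $J=f\,E$ along a geodesic in a surface, with $E$ a parallel unit normal, the Jacobi equation reduces to $f''+Kf=0$. Applying this with $f=\rho$ gives $\rho''+K\rho=0$, that is $K=-\rho''/\rho$, in agreement with Step 2.
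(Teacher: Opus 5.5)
Your proof is correct. The paper gives no proof to compare against: it lists this statement among ``elementary facts without proofs'' at the start of Appendix~\ref{sect:special.caps.exist}. Your Cartan structure-equation computation is the standard argument, and your signs are consistent. With $de^1=-\omega^1{}_2\wedge e^2$ one does get $d\omega^1{}_2=K\,e^1\wedge e^2$, as the test case $\rho=\sin x$, $K=1$ confirms.

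Your Jacobi-field check is in fact a complete proof by itself. It is also closer in spirit to how the paper uses curvature, namely through the scalar equation $y''+k(t)y=0$ for orthogonal Jacobi fields in Lemma~\ref{lem:appC.BG94.Lem5.9}.

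One small point to tidy in that argument: the reflection $y\mapsto 2y_0-y$ need not map an arbitrary open $U$ to itself. Either argue on a small rectangle symmetric about $y=y_0$, or note directly that $\Gamma^x_{xx}=\Gamma^y_{xx}=0$ for this metric, so $x\mapsto(x,y_0)$ is a unit-speed geodesic.

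Your standing assumption $\rho>0$ is exactly what holds in the paper's application. There $\rho=\rho_{\widetilde R}(r)$ in polar coordinates, which is positive for $r>0$.
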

\begin{fact}\label{fact:stupid}
There exists a bounded smooth function $f_1:[0,\infty)\to\RR$ such that the following holds:
\begin{itemize}
\item $f_1(x)=-3\pi\sin(2\pi x)/2$ on $[0,1/4]$.
\item $f_1(x)$ is increasing on $(1/4,1/2]$ with $f_1(1/2)=0$ and $\int_0^{1/2}f_1(x)dx=-1$.
\item $f_1(x)=0$ for any $x\in[1,\infty)$, and $f_1(x)>0$ for any $x\in(1/2,1)$
\item $\int_0^1f_1(x)dx=-1+\frac{1}{k}$.
\end{itemize}
\end{fact}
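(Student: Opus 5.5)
The plan is to build $f_1$ on the three intervals $[0,1/4]$, $[1/4,1/2]$ and $[1/2,\infty)$ separately, and to glue so that all derivatives agree at $1/4$ and $1/2$. The key choice is to make $f_1$ vanish to infinite order at $1/2$ from both sides; then the pieces on the two sides of $1/2$ can be chosen independently. The middle piece is prescribed through its derivative, which turns monotonicity into positivity of a density.

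\textbf{Step 1 (bookkeeping on $[0,1/4]$).} A direct computation gives
$$\int_0^{1/4}-\tfrac{3\pi}{2}\sin(2\pi x)\,dx=-\tfrac34 \quad\text{and}\quad f_1(1/4)=-\tfrac{3\pi}{2}.$$
Also $s(x):=-\tfrac{3\pi}{2}\sin(2\pi x)$ has $s'(x)=-3\pi^2\cos(2\pi x)$, which is $\ge 0$ on $[1/4,1/4+\eta]$ for small $\eta$. The middle piece must therefore satisfy two conditions: it rises from $-3\pi/2$ to $0$, and its integral over $[1/4,1/2]$ is $-1/4$.

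\textbf{Step 2 (middle piece via its derivative).} Look for $f_1(x)=-\int_x^{1/2}\varphi(t)\,dt$ on $[1/4,1/2]$, where $\varphi\ge 0$ is smooth. Integration by parts turns the constraints into a mass condition and a first-moment condition:
$$\int_{1/4}^{1/2}\varphi=\tfrac{3\pi}{2},\qquad \int_{1/4}^{1/2}(t-\tfrac14)\varphi(t)\,dt=\tfrac14 .$$
Equivalently, the probability density $\varphi/(3\pi/2)$ must have barycenter $1/4+1/(6\pi)\in(1/4,1/2)$. Take $\varphi=\chi\, s'+a\beta_1+b\beta_2$, where:
\begin{itemize}
\item $\chi$ is a cutoff equal to $1$ near $1/4$ and supported in $[1/4,1/4+\eta)$;
\item $\beta_1$ and $\beta_2$ are nonnegative bumps with compact support in $(1/4,1/2)$, centred at points $c_1<1/4+1/(6\pi)<c_2$ and having unit mass.
\end{itemize}
Near $1/4$ we then have $\varphi=s'$, so $f_1$ agrees with $s$ to infinite order at $1/4$ and is smooth there. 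Because $\varphi$ vanishes near $1/2$, $f_1\equiv 0$ on a left neighbourhood of $1/2$. The two constraints are linear in $(a,b)$. With $\eta$ and the bump widths small, the contribution of $\chi s'$ has small mass and small moment, so the solution $(a,b)$ is close to the solution for point masses at $c_1,c_2$, which is strictly positive because the target barycenter lies strictly between $c_1$ and $c_2$. Hence $a,b>0$, so $\varphi\ge 0$, and $f_1$ is nondecreasing on $(1/4,1/2]$.

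\textbf{Step 3 (right piece).} On $[1/2,1]$ set $f_1(x)=c\exp\!\big(-1/((x-\tfrac12)(1-x))\big)$, and set $f_1\equiv 0$ on $[1,\infty)$. Choose $c>0$ so that $\int_{1/2}^1 f_1=1/k$. This function is positive on $(1/2,1)$ and flat at both endpoints, so it glues smoothly to the zero germ at $1/2$ and to $0$ at $1$. Then $\int_0^1 f_1=-1+1/k$, and $f_1$ is bounded since it is continuous and compactly supported.

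The main obstacle is Step 2: we must satisfy both integral constraints with a nonnegative derivative that also matches the full Taylor jet of the sine at $1/4$. The two-bump linear-algebra argument with small supports is what I would try first. The only care needed is to check that the perturbation from $\chi s'$ stays small enough to keep $a,b>0$.
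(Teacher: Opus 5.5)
Your plan is sound except at one point, and that point matters. By letting $\varphi$ vanish on a left neighbourhood of $1/2$ you force $f_1\equiv 0$ on some interval $[1/2-\delta,1/2]$. You also make $f_1$ constant on any gap between $\mathrm{supp}\,\chi$, $\mathrm{supp}\,\beta_1$ and $\mathrm{supp}\,\beta_2$. So, as you yourself write, you only get that $f_1$ is nondecreasing on $(1/4,1/2]$, whereas the fact asks for it to be increasing.

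The paper records this fact without proof, but the strict reading (at least $f_1<0$ on $(1/4,1/2)$) is the one it needs. Fact \ref{fact:stupid.2} is claimed as a direct corollary, and it fails for your $f_1$:
\begin{itemize}
\item $\rho_R'(x)=1+\int_0^{x/R}f_1$ would vanish on all of $[R/2-\delta R,R/2]$, contradicting Fact \ref{fact:stupid.2-2}, which says $\rho_R'(x)=0$ only at $x=R/2$.
\item The curvature $-\rho_R''/\rho_R$ would be $0$ on $(R/2-\delta R,R/2)$, contradicting Fact \ref{fact:stupid.2-3}.
\end{itemize}
In the proof of Proposition \ref{prop:special.caps.exist}, the band $R/2-\delta R<r<R/2$ would then be a flat cylinder. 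The disc of radius $R/2$ would therefore not be a cap in the sense of Definition \ref{dfn:caps}, which requires positive curvature in the interior. Making $f_1$ vanish to infinite order at $1/2$ is a good idea, but it must vanish flatly while staying negative, not identically.

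The repair keeps your architecture. Add to $\varphi$ a term $\varepsilon\beta_3$, where $\beta_3(t)=\exp\bigl(-1/((t-\tfrac14)(\tfrac12-t))\bigr)$ on $(1/4,1/2)$. This function is positive on the open interval and flat at both endpoints.
\begin{itemize}
\item Since $\chi s'\ge 0$, you get $\varphi\ge\varepsilon\beta_3>0$ on $(1/4,1/2)$, so $f_1$ is strictly increasing on $[1/4,1/2]$ and negative on $(1/4,1/2)$.
\item Near $1/4$ the difference $\varphi-s'=\varepsilon\beta_3$ is flat at $1/4$. So $f_1$ has the full jet of $s$ at $1/4$; you now match to infinite order rather than on an interval, which suffices for smoothness.
\item Near $1/2$ you have $f_1(x)=-\varepsilon\int_x^{1/2}\beta_3<0$, which is still flat at $1/2$. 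So it glues smoothly to your Step 3 piece.
\item The new term changes the mass and first moment by $O(\varepsilon)$. Your perturbation argument then still gives $a,b>0$ for $\varepsilon$ and $\eta$ small.
\end{itemize}
Steps 1 and 3 need no change.
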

As a direct corollary of Fact \ref{fact:stupid}, the following holds:
\begin{fact}\label{fact:stupid.2}
For any $R\in\RR_+$, let 
\begin{align}\label{eqn:f.R.rho.R}
f_R(x):=f_1(x/R)/R\text{ and }\rho_R(x):=x+\int_0^x\int_0^sf_R(t)dtds.
\end{align} 
Then the following holds:
\begin{enumerate}[ref=\ref{fact:stupid.2}(\arabic*)]
\item $\rho_R''(x)=f_R(x)$. \label{fact:stupid.2-1}
\item $\rho_R'(x)\geq 0$, with $\rho_R'(x)=0$ if and only if $x=R/2$.\label{fact:stupid.2-2}
\item $-\rho_R''(x)/\rho_R(x)$ is positive and decreasing on $(0,R/2)$; $-\rho_R''(x)/\rho_R(x)<0$ on $(R/2,R)$; $-\rho_R''(x)/\rho_R(x)=0$ on $[R,\infty)$.\label{fact:stupid.2-3}
\item $\rho_R(x)=\frac{x}{k}-RL$ on $[R,\infty)$, where $L= -\int_0^1\int_0^sf_1(t)dtds-1+\frac{1}{k}\in(-1+\frac{1}{k},\frac{1}{k})$. \label{fact:stupid.2-4}
\end{enumerate}
\end{fact}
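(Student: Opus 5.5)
The plan is to reduce everything to $R=1$ by scaling and then read each property off the shape of $f_1$ given in Fact \ref{fact:stupid}.

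\textbf{Reduction to $R=1$.} I substitute $t=Rv$ and $s=Ru$ in \eqref{eqn:f.R.rho.R}. This gives
\begin{align*}
\int_0^x\int_0^s f_R(t)\,dt\,ds = R\int_0^{x/R}\int_0^u f_1(v)\,dv\,du,
\end{align*}
so $\rho_R(x)=R\,\rho_1(x/R)$. It follows that $\rho_R'(x)=\rho_1'(x/R)$ and $-\rho_R''(x)/\rho_R(x)=R^{-2}\cdot\bigl(-\rho_1''/\rho_1\bigr)(x/R)$. Every assertion (sign, monotonicity, location of zeros) therefore transfers from $R=1$ to general $R$. Item (1) is just the fundamental theorem of calculus.

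\textbf{Item (2).} I use $\rho_1'(x)=1+\int_0^x f_1$. On $(0,1/4]$ we have $f_1<0$. On $(1/4,1/2)$, $f_1$ is increasing with $f_1(1/2)=0$, so $f_1<0$ there too. Hence $\rho_1'$ strictly decreases on $[0,1/2]$ from $1$ to $1+\int_0^{1/2}f_1=0$, and it is positive on $[0,1/2)$. On $(1/2,1)$, $f_1>0$, so $\rho_1'$ strictly increases from $0$ and is positive. On $[1,\infty)$, $\rho_1'\equiv 1+\int_0^1f_1=1/k>0$. This shows $\rho_1'\ge0$ with the only zero at $x=1/2$.

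\textbf{Item (3).} Since $\rho_1(0)=0$ and $\rho_1'>0$ away from $1/2$, we get $\rho_1>0$ on $(0,\infty)$. The signs of $-\rho_1''/\rho_1=-f_1/\rho_1$ on $(0,1/2)$, $(1/2,1)$ and $[1,\infty)$ then follow directly from the signs of $f_1$. The monotone decrease on $(0,1/2)$ is the step I expect to be the main obstacle, and I split it at $1/4$.

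On $(1/4,1/2)$: $-f_1$ is positive and nonincreasing, while $\rho_1$ is positive and increasing. So the quotient is decreasing.

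On $(0,1/4]$: I compute explicitly $\rho_1'(x)=\tfrac14+\tfrac34\cos(2\pi x)$ and $\rho_1(x)=\tfrac x4+\tfrac{3}{8\pi}\sin(2\pi x)$. This gives
\begin{align*}
-\frac{f_1(x)}{\rho_1(x)}=\frac{3\pi/2}{\dfrac{x}{4\sin(2\pi x)}+\dfrac{3}{8\pi}}.
\end{align*}
The function $x/\sin(2\pi x)$ is increasing on $(0,1/4]$, because $y/\sin y$ is increasing on $(0,\pi/2]$. So this expression is decreasing.

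Continuity of $f_1$ and $\rho_1$ at $1/4$ glues the two pieces into a single decreasing function on $(0,1/2)$.

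\textbf{Item (4).} For $x\ge1$, using $\rho_1'\equiv 1/k$ there, I get $\rho_1(x)=\rho_1(1)+(x-1)/k$. Here $\rho_1(1)=1+\int_0^1\int_0^sf_1$, so $\rho_1(x)=x/k-L$ with $L$ as defined in the statement.

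For the bound on $L$: by the monotonicity established in (2), $\int_0^sf_1=\rho_1'(s)-1$ takes values in $[-1,0]$, and it equals an endpoint only at $s=0$ or $s=1/2$. Hence $\int_0^1\int_0^sf_1\in(-1,0)$, which gives $L\in(-1+\tfrac1k,\tfrac1k)$.

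Finally, $\rho_R(x)=R\rho_1(x/R)=x/k-RL$ for $x\ge R$.
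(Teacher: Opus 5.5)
Your proof is correct. The paper gives no argument for this Fact; it records it as a direct corollary of Fact~\ref{fact:stupid}. Your reduction $\rho_R(x)=R\,\rho_1(x/R)$, the sign bookkeeping for $\rho_1'=1+\int_0^x f_1$, and the explicit formula $\rho_1(x)=\tfrac{x}{4}+\tfrac{3}{8\pi}\sin(2\pi x)$ on $(0,1/4]$ supply exactly the verification the paper leaves to the reader.

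Two small remarks.

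First, when $k=1$ one has $\int_0^1 f_1=0$, so $\int_0^s f_1$ also attains the endpoint value $0$ at $s=1$, not only at $s=0$ and $s=1/2$. This is harmless. The integrand is continuous with values in $[-1,0]$ and takes values strictly inside that interval, so its integral still lies strictly in $(-1,0)$.

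Second, the uniqueness of the zero of $\rho_1'$ and the positivity of $-\rho_1''/\rho_1$ on $(1/4,1/2)$ both genuinely require $f_1<0$ there. That is, \emph{increasing} in Fact~\ref{fact:stupid} must be read as strictly increasing, which is how you used it.
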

Now, we prove Proposition \ref{prop:special.caps.exist}.
\begin{proof}[Proof of Proposition \ref{prop:special.caps.exist}]
For any $T>0$, we let $C_T:=\{z\in\CC:|z|<T\}$. For simplicity, we write $\widetilde{R}:=R+\frac{kRL}{1-kL}=\frac{R}{1-kL}$, where $L$ is introduced in \eqref{eqn:f.R.rho.R}.

We first show that $(C_{\widetilde{R}},dr^2+(\rho_{\widetilde{R}}(r))^2d\theta^2)$ is an extended cap in the sense of Definition \ref{dfn:caps}, where $z=re^{i\theta}$. (See \eqref{eqn:f.R.rho.R} for the definition of $\rho_{\widetilde{R}}(\cdot)$.) Observe the following:
\begin{itemize}
\item There exists a smooth function $J:[0,\infty)\to\RR$ such that $\rho_{\widetilde{R}}(r)=r(1+r^2J(r^2))$. In particular, $dr^2+(\rho_{\widetilde{R}}(r))^2d\theta^2$ is a smooth Riemannian metric. (This is due to \eqref{eqn:f.R.rho.R} and Fact \ref{fact:stupid.2-2}.)
\item $\{z:|z|=\widetilde{R}/2\}$ is a closed geodesic in $(C_{\widetilde{R}},dr^2+(\rho_{\widetilde{R}}(r))^2d\theta^2)$. (This is due to Fact \ref{fact:stupid.2-2}.)
\item The sectional curvature of $(C_{\widetilde{R}},dr^2+(\rho_{\widetilde{R}}(r))^2d\theta^2)$ at $z=re^{i\theta}$ is $-\rho_{\widetilde{R}}''(r)/\rho_{\widetilde{R}}(r)$. (This is due to Fact \ref{fact.warped.prod.curv}.)
\item $-\rho_{\widetilde{R}}''(r)/\rho_{\widetilde{R}}(r)$ is positive and decreasing on $(0,\widetilde{R}/2)$; $-\rho_{\widetilde{R}}''(r)/\rho_{\widetilde{R}}(r)$ is negative on $(\widetilde{R}/2,\widetilde{R})$. (This is due to Fact \ref{fact:stupid.2-3}.)
\end{itemize}
These verify that $(C_{\widetilde{R}},dr^2+(\rho_{\widetilde{R}}(r))^2d\theta^2)$ is an extended cap in the sense of Definition \ref{dfn:caps}.

Choose $0<\epsilon\ll 1$ and a smooth, increasing function $h:[0,\infty)\to[0,\infty)$ such that the following holds:
\begin{itemize}
\item $h(r)=r^k$ for any $r\in[0,\epsilon R]$.
\item $h(r)=r+\frac{kLR}{1-kL}$ for any $r\geq R(1-\epsilon)$.
\end{itemize}
Let $F:C_R\to \CC$ be the smooth map defined as $F(z)=h(|z|)|z|^{-k}z^k$. Then, $F(C_R)=C_{R/(1-kL)}=C_{\widetilde{R}}$. Moreover, by Fact \ref{fact:stupid.2-4}, $(C_R, F^*(dr^2+(\rho_{\widetilde{R}}(r))^2d\theta^2))$ is a special $k$-cap with Euclidean radius $R$ in the sense of Definition \ref{dfn:special caps}.
\end{proof}

\newpage
\section{Details of certain claims without proofs}\label{sect:garbage}
This appendix collects proofs and additional details for a number of claims that are stated without proof in the main text. Most of these arguments are elementary or straightforward in nature, but their details can be repetitive and lengthy, and would obscure the main line of the paper.

The arguments presented here are not intended to be the most efficient or elegant proofs of the corresponding statements. Rather, this appendix is meant to serve as a reference: when a reader wishes to verify a particular claim or clarify a detail omitted from the main text, additional details may be found here. Accordingly, this appendix is intended primarily for consultation rather than continuous reading; its length also reflects why many of these details have been omitted from the main text.
\subsection{Verifying a claim in the remark after Definition \ref{dfn:consecutive}} We  prove the following claim:
\begin{center}
\emph{
The map $\{A_1,\dots, A_l\}\to E_A(A_1,\dots, A_l)$ gives a bijection between the collection of consecutive partitions of $A$ and the collection of subsets of $\cE(A)$.}
\end{center}
For any two subsets $B_1,B_2$ of $\RR$, we say that $B_1<B_2$ ($B_1>B_2$ resp.) if  $b_1<b_2$ ($b_1>b_2$ resp.) for any $b_1\in B_1$ and $b_2\in B_2$. Then, for any consecutive partition $\{A_1,\dots, A_l\}$ of $A$ and for any distinct $i,j\in\{1,\dots,l\}$, either $A_i<A_j$ or $A_j<A_i$. Indeed, by Definition \ref{dfn:consecutive}, $A_1,\dots,A_l$ are all consecutive subsets. In particular, $A_i=\{n\in\ZZ:\min A_i\leq n\leq \max A_i\}$ and $A_j=\{n\in\ZZ:\min A_j\leq n\leq \max A_j\}$. Since $\{A_1,\dots,A_l\}$ forms a partition of $A$, $A_i\cap A_j=\emptyset$. Hence, either $\max A_i<\min A_j$ or $\min A_i>\max A_j$. Equivalently, either $A_i<A_j$ or $A_j<A_i$.

Following the discussion of the previous paragraph, we assume WLOG that $A_1<A_2<\dots< A_l$. Since $\{A_1,\dots, A_l\}$ forms a partition of $A$ and that $A$ is consecutive, we have $\min A_1=\min A$, $1+\max A_j=\min A_{j+1}$ for any $j\in\{1,\dots, l-1\}$, and $\max A_l=\max A$. We then define two maps $\Phi:\{\text{consecutive partitions of }A\}\to \{E: E\subset \cE(A)\}$ and $\Psi:\{E: E\subset \cE(A)\}\to \{\text{consecutive partitions of }A\}$ as follows: for any consecutive partition $\{A_1,\dots, A_l\}$ of $A$, we let
$$\Phi(\{A_1,\dots, A_l\})=\{\max A_j:1\leq j\leq l-1\}=E_A(A_1,\dots, A_l).$$
For any nonempty subset $E$ of $\cE(A)$ with $E=\{j_1,\dots, j_l\}$ and $j_1<\dots <j_l$, we let 
\begin{align*}
\Psi(E)=\Psi(\{j_1,\dots, j_l\})=\{&\{n\in\ZZ:\min A\leq n\leq j_1\},\{n\in\ZZ:j_1+1\leq n\leq j_2\},\\
&\dots,\{n\in\ZZ:j_{l-1}+1\leq n\leq j_l\} ,\{n\in\ZZ:j_l+1\leq n\leq \max A\}\}.
\end{align*}
We also let $\Psi(\emptyset)=\{A\}$, the consecutive partition of $A$ with single atom $A$. Recall from Definition \ref{dfn:consecutive} that $\cE(A)=A\setminus\max A=\{n\in\ZZ:\min A\leq n\leq \max A-1\}$. It is not hard to verify that both $\Phi$ and $\Psi$ are well-defined. Moreover, it is not hard to verify that $\Psi\circ\Phi$ is the identity map on the set of consecutive partitions of $A$ and that $\Phi\circ\Psi$ is the identity map on the set of all subsets of $\cE(A)$. This verifies the claim.
\subsection{Proof of Lemma \ref{lem:properties.good.subgp}}
\begin{enumerate}
\item Commutativity of $\Gamma_n$ follows directly from \eqref{eqn:commute}. Notice that by \eqref{eqn:commute}, we have $\gamma_i^2=\id$. Therefore the map $(i_1,\dots,i_{n-1})\to\gamma_1^{i_1}\circ\cdots\circ\gamma_{n-1}^{i_{n-1}}$ is a well-defined, surjective homomorphism from $(\ZZ/2\ZZ)^{n-1}$ to $\Gamma_n$. It remains to show that $(i_1,\dots,i_{n-1})\to\gamma_1^{i_1}\circ\cdots\circ\gamma_{n-1}^{i_{n-1}}$ is injective. For any $(i_1,\dots,i_{n-1})$, let $j\in\{1,\dots,n-1\}$ be the smallest element such that $i_j\equiv 1(\text{mod }2)$. Then, by \eqref{eqn:commute} and \eqref{eqn:generator}, $\pr^{\bV_n}_j\circ(\gamma_1^{i_1}\circ\cdots\circ\gamma_{n-1}^{i_{n-1}})=\kappa_\pi\circ\pr^{\bV_n}_j$. In particular, this implies that $\gamma_1^{i_1}\circ\cdots\circ\gamma_{n-1}^{i_{n-1}}\neq \id$. Hence the map $(i_1,\dots,i_{n-1})\to\gamma_1^{i_1}\circ\cdots\circ\gamma_{n-1}^{i_{n-1}}$ is injective. This finishes the proof of Lemma \ref{lem:properties.good.subgp-(1)}.
\item We define $\Phi_A:(\ZZ/2\ZZ)^{k-1}\to\Gamma_A$ as $\Phi_A(i_1,\dots, i_{k-1})=\gamma_{m+1}^{i_1}\circ\cdots\circ\gamma_{m+k-1}^{i_{k-1}}$. Then, following the same arguments in the proof of Lemma \ref{lem:properties.good.subgp-(1)}, $\Phi_A$ is an isomorphism. Hence $\Phi_{\bV_{k}}\circ\Phi_{A}^{-1}:\Gamma_A\to\Gamma_k$ is an isomorphism such that $\Phi_{\bV_{k}}\circ\Phi_{A}^{-1}(\gamma_{m+j})=\gamma_j$ for any $\gamma_j\in\cS_k$. Let $\Psi^A_{\bV_k}:\TT_A\to\TT_k$ such that $\pr^{\bV_k}_j(\Psi^A_{\bV_k}(\theta_{m+1},\dots,\theta_{m+k}))=\theta_{m+j}$ for any $(\theta_{m+1},\dots,\theta_{m+k})\in\TT_A$ and for any $j\in\{1,\dots,k\}$. (In other words, $\Psi^A_{\bV_k}(\theta_{m+1},\dots,\theta_{m+k})=(\theta_{m+1},\dots,\theta_{m+k})$, where the $\theta_{m+j}$ is the $j$-th entry of the right-hand-side.) Then for any $(\theta_{m+1},\dots,\theta_{m+k})\in\TT_A$  and for any $j\in\{1,\dots,k-1\}$, we have
\begin{align*}
\gamma_{j}(\Psi^A_{\bV_k}(\theta_{m+1},\dots,\theta_{m+k}))=&\gamma_j(\theta_{m+1},\dots,\theta_{m+k})\\
=&(\theta_{m+1},\dots,\theta_{m+j-1},\kappa_\pi(\theta_{m+j}),\kappa_0(\theta_{m+j+1}),\dots,\theta_{m+k})\\
=&\Psi^A_{\bV_k}(\theta_{m+1},\dots,\theta_{m+j-1},\kappa_\pi(\theta_{m+j}),\kappa_0(\theta_{m+j+1}),\dots,\theta_{m+k})\\
=&\Psi^A_{\bV_k}(\gamma_{m+j}(\theta_{m+1},\dots,\theta_{m+k})).
\end{align*}
This proves that $\gamma_j\circ\Psi^A_{\bV_k}=\Psi^A_{\bV_k}\circ\gamma_{m+j}$. Hence, by \eqref{eqn:Gamma.A}, we have $\Psi^A_{\bV_k}\circ\gamma=\Phi_{\bV_{k}}\circ\Phi_{A}^{-1}(\gamma)\circ\Psi^A_{\bV_k}$ for any $\gamma\in\Gamma_A$. This proves the equivalence of the $\Gamma_A$-action on $\TT_A$ and the $\Gamma_k$-action on $\TT^k$ under $\Phi_{\bV_{k}}\circ\Phi_{A}^{-1}$ and $\Psi^A_{\bV_k}$.
\item Recall that in the proof of Lemma \ref{lem:properties.good.subgp-(2)}, for any $A=\{m+1,\dots, m+k\}$, the map $\Phi_A:(\ZZ/2\ZZ)^{k-1}\to\Gamma_A$, defined as $\Phi_A(i_1,\dots, i_{k-1})=\gamma_{m+1}^{i_1}\circ\cdots\circ\gamma_{m+k-1}^{i_{k-1}}$, is an isomorphism. Then, by \eqref{eqn:good.subgroup.struct}, Definition \ref{dfn:good subgp} and the above, for any $E\subset A$, we have $\Phi^{-1}_A(\Gamma_A(E))=\{(i_1,\dots,i_{k-1})\in(\ZZ/2\ZZ)^{k-1}:i_{j}\equiv 0(\text{mod }2)\text{ for any }m+j\in E\}$. Similarly, by \eqref{eqn:Gamma.A} and the above, for any $A'\subset A$, we have $\Phi_A^{-1}(\Gamma_{A'})=\{(i_1,\dots,i_{k-1})\in(\ZZ/2\ZZ)^{k-1}:i_{j}\equiv 0(\text{mod }2)\text{ for any }m+j\in \cE(A)\setminus\cE(A')\}$. Hence
\begin{align*}
&\Phi_A^{-1}(\Gamma_A(E)\cap\Gamma_{A'})\\
=&\Phi_A^{-1}(\Gamma_A(E))\cap\Phi_A^{-1}(\Gamma_{A'})\\
=&\{(i_1,\dots,i_{k-1})\in(\ZZ/2\ZZ)^{k-1}:i_{j}\equiv 0(\text{mod }2)\text{ for any }m+j\in E\cup(\cE(A)\setminus\cE(A'))\}\\
=&\Phi_A^{-1}(\Gamma_A(E\cup(\cE(A)\setminus\cE(A')))).
\end{align*}
In particular, by \eqref{eqn:good.subgroup.struct}, $\Gamma_A(E)\cap\Gamma_{A'}$ is generated by $\{\gamma_i\in\cS_A:i\not\in E\cup(\cE(A)\setminus\cE(A'))\}=\{\gamma_i\in\cS_{A'}:i\not\in E\}=\{\gamma_i\in\cS_{A'}:i\not\in E\cap\cE(A')\}$. Hence, by Definition \ref{dfn:good subgp}, we have $\Gamma_A(E)\cap \Gamma_{A'}=\Gamma_{A'}(E\cap\cE(A'))=\Gamma_A(E\cup(\cE(A)\setminus\cE(A')))$.
\item Recall that in the proof of Lemma \ref{lem:properties.good.subgp-(2)}, for any $A=\{m+1,\dots, m+k\}$, the map $\Phi_A:(\ZZ/2\ZZ)^{k-1}\to\Gamma_A$, defined as $\Phi_A(i_1,\dots, i_{k-1})=\gamma_{m+1}^{i_1}\circ\cdots\circ\gamma_{m+k-1}^{i_{k-1}}$, is an isomorphism. Then, by \eqref{eqn:good.subgroup.struct}, Definition \ref{dfn:good subgp} and the above, for any $E\subset A$, we have $\Phi^{-1}_A(\Gamma_A(E))=\{(i_1,\dots,i_{k-1})\in(\ZZ/2\ZZ)^{k-1}:i_{j}\equiv 0(\text{mod }2)\text{ for any }m+j\in E\}$. Therefore,
\begin{align*}
&\Phi_A^{-1}(\Gamma_A(E_1)\cap\Gamma_{A}(E_2))\\
=&\Phi_A^{-1}(\Gamma_A(E_1))\cap\Phi_A^{-1}(\Gamma_{A}(E_2))\\
=&\{(i_1,\dots,i_{k-1})\in(\ZZ/2\ZZ)^{k-1}:i_{j}\equiv 0(\text{mod }2)\text{ for any }m+j\in E_1\cup E_2\}\\
=&\Phi_A^{-1}(\Gamma_A(E_1\cup E_2)).
\end{align*}
This proves that $\Gamma_A(E_1)\cap\Gamma_{A}(E_2)=\Gamma_A(E_1\cup E_2)$. Similarly, 
\begin{align*}
&\Phi_A^{-1}(\langle\Gamma_A(E_1),\Gamma_{A}(E_2)\rangle)\\
=&\langle\Phi_A^{-1}(\Gamma_A(E_1)),\Phi_A^{-1}(\Gamma_{A}(E_2))\rangle\\
=&\{(i_1,\dots,i_{k-1})\in(\ZZ/2\ZZ)^{k-1}:i_{j}\equiv 0(\text{mod }2)\text{ for any }m+j\in E_1\cap E_2\}\\
=&\Phi_A^{-1}(\Gamma_A(E_1\cap E_2)).
\end{align*}
This proves that $\langle\Gamma_A(E_1),\Gamma_{A}(E_2)\rangle=\Gamma_A(E_1\cap E_2)$. 

Assume in addition that $E_1\cup E_2=\cE(A)$, by \eqref{eqn:good.subgroup.struct} and the above discussions, we have $\Gamma_A(E_1)\cap\Gamma_A(E_2)=\{\id\}$. Since in addition that $\Gamma_A$ is commutative and that $\Gamma_A(E_1),\Gamma_A(E_2)$ are subgroups of $\Gamma_A$, the map $(h_1,h_2)\to h_1h_2$ from $\Gamma_A(E_1)\times\Gamma_A(E_2)$ to $\langle\Gamma_A(E_1),\Gamma_A(E_2)\rangle$ is an isomorphism. This finishes the proof of Lemma \ref{lem:properties.good.subgp-(4)}.
\item We first show that $\cE(A)$ is the disjoint union of $E$ and $\cE(A_1),\dots,\cE(A_l)$. Indeed, since $A_1,\dots, A_l$ are pairwise disjoint due to the assumption that they form a consecutive partition of $A$, by \eqref{eqn:cutpoint}, $\cE(A_1),\dots,\cE(A_l)$ are pairwise disjoint. By \eqref{eqn:cutpoint} and the remark after Definition \ref{dfn:consecutive}, $E\cap\cE(A_i)=\emptyset$ for any $i\in\{1,\dots,l\}$. It remains to show that $E\cup\cE(A_1)\cup\cdots\cup\cE(A_l)=\cE(A)$. For any $i\in\cE(A)$, if $\{i,i+1\}\not\subset A_j$ for any $j\in\{1,\dots,l\}$, then by the remark after Definition \ref{dfn:consecutive}, $i\in E$; if $\{i,i+1\}\subset A_j$ for some $j\in\{1,\dots,l\}$, then by \eqref{eqn:cutpoint}, $i\in\cE(A_j)$. Since $E,\cE(A_1),\dots,\cE(A_j)$ are subsets of $\cE(A)$, the above discussion shows that $E\cup\cE(A_1)\cup\cdots\cup\cE(A_l)=\cE(A)$.

Let $\Gamma=\Gamma_A(E')$ for some $E'\subset\cE(A)$. (See Definition \ref{dfn:good subgp}.) Then by Lemma \ref{lem:properties.good.subgp-(3)} and Lemma \ref{lem:properties.good.subgp-(4)}, $\Gamma\cap\Gamma'=\Gamma_A(E\cup E')$ and $\Gamma'\cap\Gamma_{A_j}=\Gamma_A(E'\cup(\cE(A)\setminus\cE(A_j)))$ for any $j\in\{1,\dots, l\}$. For simplicity, we let $\Gamma(j)=\langle\Gamma_{A_1}\cap\Gamma',\dots,\widehat{\Gamma_{A_j}\cap\Gamma'},\dots,\Gamma_{A_l}\cap\Gamma'\rangle$. By consecutive application of Lemma \ref{lem:properties.good.subgp-(4)} and the fact that $\cE(A)$ is the disjoint union of $E$ and $\cE(A_1),\dots,\cE(A_l)$, we have 
$$\Gamma(j)=\Gamma_A\left(\bigcap_{1\leq i\leq l, i\neq j}E'\cup(\cE(A)\setminus\cE(A_i))\right)=\Gamma_A(E'\cup E\cup\cE(A_j))\text{ for any }j\in\{1,\dots,l\}$$
and
$$\Gamma'\cap\Gamma=\langle  \Gamma_{A_1}\cap\Gamma',\dots,\Gamma_{A_l}\cap\Gamma' \rangle.$$
In particular, by Lemma \ref{lem:properties.good.subgp-(4)}, for any $j\in\{1,\dots,l\}$, we have
$$\Gamma(j)\cap(\Gamma_{A_j}\cap\Gamma')=\Gamma_A((E'\cup E\cup\cE(A_j))\cup (E'\cup(\cE(A)\setminus\cE(A_j))))=\Gamma_A(\cE(A))=\{\id\}.$$
Since $\Gamma_A$ is commutative due to Lemma \ref{lem:properties.good.subgp-(1)}, the above equations imply that $(\Gamma_{A_1}\cap\Gamma')\times \cdots\times (\Gamma_{A_l}\cap\Gamma')$ is isomorphic to $\Gamma\cap\Gamma'$ via the isomorphism $(h_1,\dots,h_l)\to h_1h_2\dots h_l$. Notice that for any $\theta=(\eta_1,\dots,\eta_l)\in\TT_A$ with $\eta_j\in\TT_{A_j}$ and for any $h=h_1h_2\dots h_l\in\Gamma\cap\Gamma'$ with $h_j\in\Gamma_{A_j}\cap\Gamma'$, following the convention after \eqref{eqn:Gamma.A}, we have $h(\theta)=(h_1(\eta_1),\dots,h_l(\eta_l))$. Therefore, the $\Gamma\cap\Gamma'$-action on $\TT_A$ is equivalent to the product of the $\Gamma_{A_j}\cap\Gamma'$-action on $\TT_{A_j}$ via the isomorphism $(h_1,\dots,h_l)\to h_1h_2\dots h_l$ from $(\Gamma_{A_1}\cap\Gamma')\times \cdots\times (\Gamma_{A_l}\cap\Gamma')$ to $\Gamma\cap\Gamma'$, and the identity map on $\TT_A$.
\end{enumerate}

\subsection{Verifying a claim in Lemma \ref{lem:fixed points of gamma}}
At the end of the proof of Lemma \ref{lem:fixed points of gamma}, we claimed that ``the rest of the proof follows directly from \eqref{eqn:commute} and \eqref{eqn:generator}''. Since the proof of Lemma \ref{lem:fixed points of gamma} has already addressed why any $\gamma\in\Gamma_A$ with fixed points must be of the form $\gamma=\gamma_{i_1}\circ\cdots\circ\gamma_{i_q}$ for some $i_1,\dots,i_q\in\cE(A)$ with $i_j<i_{j+1}-1$ for any $j\in\{1,\dots,q-1\}$. It remains to prove that for such $\gamma$, the set of fixed points of $\gamma$ acting on $\TT_A$ is $\bigcap_{j=1}^q\Omega_{A}(i_j)$.

Indeed, for any $x\in\TT_A$ and for any $i\in A$, by \eqref{eqn:commute} and \eqref{eqn:generator}, we have 
$$\pr^A_i(\gamma(x))=\begin{cases}
\pr^A_i(x),~&\text{if }i\not\in\{i_1,i_1+1,i_2,i_2+1,\dots,i_q,i_q+1\},\\
\kappa_\pi(\pr^A_i(x)),~&\text{if }i\in\{i_1,\dots,i_q\},\\
\kappa_0(\pr^A_i(x)),~&\text{if }i\in\{i_1+1,\dots,i_q+1\}. 
\end{cases}$$ 
Hence $\gamma(x)=x$ if and only if $(\kappa_\pi(\pr^A_{i_j}(x)),\kappa_0(\pr^A_{i_j+1}(x)))=(\pr^A_{i_j}(x),\pr^A_{i_j+1}(x))$ for any $j\in\{1,\dots,q\}$. By \eqref{jth singularity}, this is equivalent to $x\in\Omega_A(i_j)$ for any $j\in\{1,\dots,q\}$. Hence the set of fixed points of $\gamma$ on $\TT_A$ is exactly $\TT_A$ is $\bigcap_{j=1}^q\Omega_{A}(i_j)$.

\subsection{Proof of the formula for $\text{Stab}_\Gamma(p)$ in the remark after Definition \ref{sing index}}
For any $\gamma\in\Gamma$, if $\gamma$ have fixed points, then by Lemma \ref{lem:fixed points of gamma}, $\gamma=\gamma_{i_1}\circ\gamma_{i_2}\circ\cdots \circ\gamma_{i_q}$ for some $i_1,\dots,i_q\in\cE(A)$ such that $i_j+1<i_{j+1}$ for any $j\in\{1,\dots, q-1\}$. Moreover, by Definition \ref{dfn:good subgp}, Lemma \ref{lem:properties.good.subgp-(1)} and Lemma \ref{lem:properties.good.subgp-(2)}, $\gamma_{i_1},\dots,\gamma_{i_q}\in\Gamma$. Hence, by Lemma \ref{lem:fixed points of gamma} and \eqref{total singularity subgp}, if $p\not\in\Omega_{\Gamma;A}$, then $\text{Stab}_\Gamma(p)=\{\id\}$.

For any $p\in\Omega_{\Gamma;A}$, if $p$ is fixed by $\gamma=\gamma_{i_1}\circ\cdots\circ\gamma_{i_s}$ for some $\gamma_{i_1},\dots,\gamma_{i_s}\in\Gamma$ such that $i_j+1<i_{j+1}$ for any $j\in\{1,\dots, s-1\}$, then by \eqref{eqn:generator} and \eqref{jth singularity}, $p\in\Omega_A(i_j)$ for any $j\in\{1,\dots,s\}$. Hence, by Definition \ref{sing index}, $i_1,\dots, i_s\in\Sing_{\Gamma;A}(p)$. This proves that $\text{Stab}_\Gamma(p)\subset\langle \gamma_i:i\in\Sing_{\Gamma;A}(p)\rangle$. On the other hand, for any $i\in\Sing_{\Gamma;A}(p)$, by Definition \ref{sing index}, $\gamma_i\in\Gamma$ and $p\in\Omega_A(i)$. Therefore, by \eqref{eqn:generator} and \eqref{jth singularity}, $\gamma_i(p)=p$. Hence $\text{Stab}_\Gamma(p)=\langle \gamma_i:i\in\Sing_{\Gamma;A}(p)\rangle$. This completes the proof of the formula for $\text{Stab}_\Gamma(p)$.

\subsection{Verifying that $\cU_p$ in the remark of Definition \ref{dfn:good.nbhd} is a good neighborhood}
Let $A=\{m+1,\dots,m+k\}$ and $p=(\theta_{m+1},\dots, \theta_{m+k})$ Recall in \eqref{eqn:good.nbhd}, we have 
$$\cU_p=\left(\prod_{i\in A\setminus\text{Sing}_{A}^+(p)}B_i(\theta_i,\epsilon'))\right)\times\left(\prod_{i\in\text{Sing}_{A}(p)}B_{i,i+1}((\theta_i,\theta_{i+1}),\epsilon)\right).$$
We would like to show that for any $\epsilon\in(0,\pi/8)$ and for any $0<\epsilon'\ll 1$ sufficiently small (depending on $p$), $\cU_p$ is a good neighborhood of $p$ in the sense of Definition \ref{dfn:good.nbhd}.
\begin{enumerate}
\item We let $0<\epsilon'\ll \pi/8$ so that for any $i\in A\setminus\Sing_A(p)$ and for any $N_1,N_2\in\ZZ$, $|\theta_i-\kappa_\pi(\theta_i)+2N_1\pi|+|\theta_{i+1}-\kappa_0(\theta_{i+1})+2N_2\pi|>5\epsilon'$. (If such an $\epsilon'$ cannot be found, then $p\in\Omega_A(i)$ for some $i\in A\setminus\Sing_A(p)$ due to \eqref{jth singularity}. Hence $i\in\Sing_A(p)$ according to Definition \ref{sing index}. This leads to a contradiction.)
Then, for any $i\in A\setminus((\Sing_A(p)-1)\cup\Sing_A(p)\cup(\Sing_A(p)+1))$, for any $q=(\theta_{m+1}',\dots, \theta_{m+k}')\in\overline{\cU}_p$ and for any $N_1,N_2\in\ZZ$, we have 
\begin{align*}
&|\theta_i'-\kappa_\pi(\theta_i')+2N_1\pi|+|\theta_{i+1}'-\kappa_0(\theta_{i+1}')+2N_2\pi|\\
\geq &|\theta_i-\kappa_\pi(\theta_i)+2N_1\pi|+|\theta_{i+1}-\kappa_0(\theta_{i+1})+2N_2\pi|\\
&-|\theta_i'-\kappa_\pi(\theta_i')-(\theta_i-\kappa_\pi(\theta_i))|-|\theta_{i+1}'-\kappa_0(\theta_{i+1}')-(\theta_{i+1}-\kappa_0(\theta_{i+1}))|\\
> &5\epsilon'-2|\theta_i'-\theta_i|-2|\theta_{i+1}'-\theta_{i+1}|\geq \epsilon',
\end{align*}
where we used the definition of $\cU_p$ in the last inequality. Recall that $\Sing_A(q)\cap(\Sing_A(q)+1)=\emptyset$, we have $\Sing_A(q)\cap(\Sing_A(q)-1)=\emptyset$. For any $i\in A\cap(\Sing_A(p)+1)$, we have $\theta_i=0$ or $\pi$ mod $2\pi$. Hence, for any $q=(\theta_{m+1}',\dots, \theta_{m+k}')\in\overline{\cU}_p$ and for any $N_1,N_2\in\ZZ$, we have 
\begin{align*}
&|\theta_i'-\kappa_\pi(\theta_i')+2N_1\pi|+|\theta_{i+1}'-\kappa_0(\theta_{i+1}')+2N_2\pi|\\
\geq &|\theta_i-\kappa_\pi(\theta_i)+2N_1\pi|-|\theta_i'-\kappa_\pi(\theta_i')-(\theta_i-\kappa_\pi(\theta_i))|\\
\geq &\pi-2|\theta_i'-\theta_i|\geq \pi-\frac{\pi}{4}>\epsilon',
\end{align*}
where we used the definition of $\cU_p$ in the second last inequality. Similarly, for any $i\in A\cap(\Sing_A(p)-1)$, we have $\theta_{i+1}=\pi/2$ or $-\pi/2$ mod $2\pi$. Hence, for any $q=(\theta_{m+1}',\dots, \theta_{m+k}')\in\overline{\cU}_p$ and for any $N_1,N_2\in\ZZ$, we have 
\begin{align*}
&|\theta_i'-\kappa_\pi(\theta_i')+2N_1\pi|+|\theta_{i+1}'-\kappa_0(\theta_{i+1}')+2N_2\pi|\\
\geq &|\theta_{i+1}-\kappa_0(\theta_{i+1})+2N_2\pi|\\
&-|\theta_{i+1}'-\kappa_0(\theta_{i+1}')-(\theta_{i+1}-\kappa_0(\theta_{i+1}))|\\
\geq &\pi-2|\theta_{i+1}'-\theta_{i+1}| \geq \pi-\frac{\pi}{4}>\epsilon',
\end{align*}
where we used the definition of $\cU_p$ in the second last inequality. The above shows that for any $q\in\overline{\cU_p}$ and for any $i\in A\setminus \Sing_A(p)$, we have $q\not\in\Omega_A(i)$. Hence, $\Sing_A(q)\subset \Sing_A(p)$.

\item If $\gamma\in\Gamma_{A'}$ does not have fixed points, then there exists some $i\in A'$ such that $\pr^A_i\circ\gamma(p)=\pr^A_i(p)+\pi$ for any $p\in\TT_A$. (This is due to \eqref{eqn:commute}, \eqref{eqn:generator} and Lemma \ref{lem:fixed points of gamma}.) By the definition of $\cU_p$, we have 
$$\pr^A_{A'}(\overline{\gamma\cU_p})\cap\pr^A_{A'}(\overline{\cU_p})=\emptyset.$$ Therefore, we only consider $\gamma\in\Gamma_{A'}$ which admits fixed points. In particular, by Lemma \ref{lem:fixed points of gamma}, $\gamma=\gamma_{i_1}\circ\cdots\circ\gamma_{i_s}$ for some $i_1<\cdots<i_s$ such that $i_{j}+1<i_{j+1}$ for any $j\in\{1,\dots,s-1\}$.

When $\pr^A_{A'}(\gamma p)= \pr^A_{A'}(p)$, by the assumption that $\gamma\in\Gamma_{A'}$, we have $\gamma p= p$. Hence by Lemma \ref{lem:fixed points of gamma} and the remark after Definition \ref{sing index}, we have $i_1,\dots,i_s\in\Sing_{A'}(\pr^A_{A'}(p))=\Sing_A(p)\cap\cE(A')$. In this case, it suffices to prove $\gamma\cU_p=\cU_p$ when $s=1$, and this follows directly from the fact that $\gamma_i(B_{i,i+1}(\pr^A_{i,i+1}(p)),\epsilon)=(B_{i,i+1}(\pr^A_{i,i+1}(p)),\epsilon)$.


When $\pr^A_{A'}(\gamma p)\neq \pr^A_{A'}(p)$, by the previous case and the fact that $\Gamma_{A'}$ is commutative, we assume without loss of generality that $i_1\not\in\Sing_A(p)$. To avoid cumbersome notations, we let $i=i_1$. We discuss three subcases:

\textbf{Case 1}: When $i\not\in\Sing_A(p)\pm1$, we have $i,i+1\not\in\Sing_A^+(p)$. For any $q=(\theta_{m+1}',\dots, \theta_{m+k}')\in\overline{\cU_p}$, we have 
\begin{align*}
&|\pr^A_i(\gamma(q))-\pr^A_i(p)|+|\pr^A_{i+1}(\gamma(q))-\pr^A_{i+1}(p)|\\
=&\left(\min_{N\in\ZZ}|\kappa_\pi(\theta_i')-\theta_i+2N\pi|\right)+\left(\min_{N\in\ZZ}|\kappa_0(\theta_{i+1}')-\theta_{i+1}+2N\pi|\right)\\
\geq& -|\kappa_\pi(\theta_i')-\kappa_\pi(\theta_i)|+\min_{N\in\ZZ}|\kappa_\pi(\theta_i)-\theta_i+2N\pi|\\
&-|\kappa_0(\theta_{i+1}')-\kappa_0(\theta_{i+1})|+\min_{N\in\ZZ}|\kappa_0(\theta_{i+1})-\theta_{i+1}+2N\pi|\\
=& -|\theta_i'-\theta_i|+\min_{N\in\ZZ}|\kappa_\pi(\theta_i)-\theta_i+2N\pi|\\
&-|\theta_{i+1}'-\theta_{i+1}|+\min_{N\in\ZZ}|\kappa_0(\theta_{i+1})-\theta_{i+1}+2N\pi|\\
\geq&-2\epsilon'+5\epsilon'=3\epsilon'>2\epsilon'.
\end{align*}
where we used the definition of $\cU_p$ and the assumption in (1) regarding $\epsilon'$ in the second last inequality.
This implies that $\pr^A_{i,i+1}(\gamma(q))\not\in\pr^A_{i,i+1}(\overline{\cU_p})$. Hence,  $\pr^A_{A'}(\overline{\gamma\cU_p})\cap\pr^A_{A'}(\overline{\cU_p})=\emptyset.$

\textbf{Case 2}: When $i\in\Sing_A(p)+1$, we have $\theta_i=0$ or $\pi$ mod $2\pi$. Hence, for any $q=(\theta_{m+1}',\dots, \theta_{m+k}')\in\overline{\cU_p}$, we have 
\begin{align*}
&|\pr^A_i(\gamma(q))-\pr^A_i(p)|\\
=&\left(\min_{N\in\ZZ}|\kappa_\pi(\theta_i')-\theta_i+2N\pi|\right)\\
\geq& -|\kappa_\pi(\theta_i')-\kappa_\pi(\theta_i)|+\min_{N\in\ZZ}|\kappa_\pi(\theta_i)-\theta_i+2N\pi|\\
=& -|\theta_i'-\theta_i|+\pi
\geq-\epsilon+\pi\geq 3\pi/4>2\epsilon+2\epsilon'.
\end{align*}
This implies that $\pr^A_{i}(\gamma(q))\not\in\pr^A_{i}(\overline{\cU_p})$. Hence,  $\pr^A_{A'}(\overline{\gamma\cU_p})\cap\pr^A_{A'}(\overline{\cU_p})=\emptyset.$

\textbf{Case 3}: When $i\in\Sing_A(p)-1$, we have $\theta_{i+1}=\pm\pi/2$ mod $2\pi$. Hence, for any $q=(\theta_{m+1}',\dots, \theta_{m+k}')\in\overline{\cU_p}$, we have 
\begin{align*}
&|\pr^A_{i+1}(\gamma(q))-\pr^A_{i+1}(p)|\\
=&\left(\min_{N\in\ZZ}|\kappa_\pi(\theta_{i+1}')-\theta_{i+1}+2N\pi|\right)\\
\geq& -|\kappa_\pi(\theta_{i+1}')-\kappa_\pi(\theta_{i+1})|+\min_{N\in\ZZ}|\kappa_\pi(\theta_{i+1})-\theta_{i+1}+2N\pi|\\
=& -|\theta_{i+1}'-\theta_{i+1}|+\pi
\geq-\epsilon+\pi\geq 3\pi/4>2\epsilon+2\epsilon'.
\end{align*}
This implies that $\pr^A_{i+1}(\gamma(q))\not\in\pr^A_{i+1}(\overline{\cU_p})$. Hence,  $\pr^A_{A'}(\overline{\gamma\cU_p})\cap\pr^A_{A'}(\overline{\cU_p})=\emptyset.$

\item This is obvious from the definition of $\cU_p$ once $\epsilon'$ is assumed to be smaller than $\pi/8$.
\end{enumerate}

\subsection{Proof of \eqref{eqn:local.decomp.proj-1}}
Let $p=(\theta_{m+1},\dots,\theta_{m+k})\in\TT_A$ and $q=(\theta_{m+1}+\delta_{m+1},\dots,\theta_{m+k}+\delta_{m+k}), x=(\theta_{m+1}+\delta_{m+1}',\dots,\theta_{m+k}+\delta_{m+k}')\in\cU_p$ with $\delta_j,\delta_j'\in[-\pi/8,\pi/8]$ for any $j\in \{m+1,\dots,m+k\}=A$, where $\cU_p$ is a good neighborhood of $p$. Then by \eqref{eqn:z.to.z2} and the comments after \eqref{eqn:z.to.z2}, $\Xi_{\Gamma;A;p}(x)=\Xi_{\Gamma;A;p}(q)$ if and only if the following holds:
\begin{itemize}
\item $\delta_i=\delta_i'$ for any $i\in A\setminus \Sing^+_{\Gamma;A}(p)$.
\item $(\delta_i+\mathbf{i}\delta_{i+1})^2=(\delta_i'+\mathbf{i}\delta_{i+1}')^2$ for any $i\in\Sing_{\Gamma;A}(p)$, where $\mathbf{i}^2=-1$. This is equivalent to $(\delta_i,\delta_{i+1})=\pm(\delta_i',\delta_{i+1}')$ for any $i\in\Sing_{\Gamma;A}(p)$.
\end{itemize}
On the other hand, by the remark after Definition \ref{sing index}, $\text{Stab}_\Gamma(p)=\langle\gamma_i:i\in\Sing_{\Gamma;A}(p)\rangle$. Moreover, by \eqref{eqn:generator}, for any $i\in\Sing_{\Gamma;A}(p)$, the following holds:
\begin{itemize}
\item $\pr^A_j(\gamma_i(q))=\theta_j+\delta_j=\pr^A_j(q)$ for any $j\in A\setminus\{i,i+1\}$.
\item By \eqref{jth singularity} and Definition \ref{sing index}, $\theta_i\in\{\pi/2,3\pi/2\}$ and $\theta_{i+1}\in\{0,\pi\}$. In addition, $\pr^A_{i,i+1}(\gamma_i(q))=(\kappa_\pi(\theta_i+\delta_i),\kappa_0(\theta_{i+1}+\delta_{i+1}))=(\theta_i-\delta_i,\theta_{i+1}-\delta_{i+1})$.
\end{itemize}
Hence, \eqref{eqn:local.decomp.proj-1} follows from the second property of Definition \ref{dfn:good.nbhd} (with $A'$ chosen to be $A$) and the above four bullet points.

\subsection{Proof of Corollary \ref{cor:quotient is mfld}}
Let $\Gamma=\Gamma_A(E)$ and $\{A_1,\dots,A_q\}$ be the consecutive partition of $A$ induced by $E$ with $|A_j|=k_j>0$ for any $j\in\{1,\dots, q\}$. Following the remark after Proposition \ref{prop:quotient.is.sphere}, $\Gamma\backslash\TT_A$ is equipped with a differential structure such that $\overline{\Phi}_{\Gamma;A}$ is a diffeomorphism. Hence, it suffices to show that the map $g\to\Phi_{\Gamma;A}^*g$ defines a bijection from the set of Riemannian metrics on $S^{k_1}\times \cdots\times S^{k_q}$ to $\cM_\Gamma(\TT_A)$.

For any Riemannian metric $g$ on $S^{k_1}\times \cdots\times S^{k_q}$, $\Phi_{\Gamma;A}^*g$ is a positive semidefinite symmetric $2$-tensor field on $\TT_A$ due to the fact that Riemannian metrics are positive definite symmetric $2$-tensor fields. $\Gamma$-symmetry of $\Phi_{\Gamma;A}^*g$ follows directly from the fact that $\Phi_{\Gamma;A}$ is $\Gamma$-invariant. (See the first bullet point in Proposition \ref{prop:quotient.is.sphere}.) Compatibility of $\Phi_{\Gamma;A}^*g$ follows directly from the second bullet point in Proposition \ref{prop:quotient.is.sphere}. This proves that the map $g\to\Phi_{\Gamma;A}^*g$ is a well defined map from the set of Riemannian metrics on $S^{k_1}\times \cdots\times S^{k_q}$ to $\cM_\Gamma(\TT_A)$.

Assume that $g_1,g_2$ are Riemannian metrics on $S^{k_1}\times \cdots\times S^{k_q}$, with $g_1|_{\Phi_{\Gamma;A}(p)}\neq g_2|_{\Phi_{\Gamma;A}(p)}$ for some $p\in\TT_A$. (Such a $p$ always exists due to the surjectivity of $\Phi_{\Gamma;A}$. See the first bullet point in Proposition \ref{prop:quotient.is.sphere}.) Let $\cU_p$ be a good neighborhood of $p$ and $\cV$ be a neighborhood of $\Phi_{\Gamma;A}(p)$ such that $g_1$ and $g_2$ are different when restricted to the tangent space of every point of $\cV$. By \eqref{eqn:z.to.z2} and the second bullet point in Proposition \ref{prop:quotient.is.sphere}, the set of points in $\cU_p\cap\Phi_{\Gamma;A}^{-1}(\cV)$ at which $D\Phi_{\Gamma;A}$ is degenerate has zero Lebesgue measure. Hence, $\Phi_{\Gamma;A}^{*}g_1\neq \Phi_{\Gamma;A}^*g_2$. This proves that the map $g\to\Phi_{\Gamma;A}^*g$ from the set of Riemannian metrics on $S^{k_1}\times \cdots\times S^{k_q}$ to $\cM_\Gamma(\TT_A)$ is injective.

For any $\widetilde{g}\in\cM_\Gamma(\TT_A)$, for any $p\in\TT_A$ and for any good neighborhood $\cU_p$, there is a Riemannian metric $g'$ on $\Xi_{\Gamma;A;p}(\cU_p)$ such that $\widetilde{g}|_{\cU_p}=\Xi_{\Gamma;A;p}^*(g')$. Therefore, we define a $2$-tensor field $g$ on $S^{k_1}\times \cdots\times S^{k_q}$ such that
\begin{align}\label{eqn:appC.quotient is mfld-1}
g|_{\Phi_{\Gamma;A}(\cU_p)}=((\Psi_{\Gamma;A;p})^{-1})^*(g').
\end{align}
Assume that $g$ is a well-defined Riemannian metric on $S^{k_1}\times \cdots\times S^{k_q}$, then by the second bullet point in Proposition \ref{prop:quotient.is.sphere}, 
\begin{align*}
\Phi_{\Gamma;A}^*(g|_{\Phi_{\Gamma;A}(\cU_p)})=\Phi_{\Gamma;A}^*(((\Psi_{\Gamma;A;p})^{-1})^*(g'))=\Xi_{\Gamma;A;p}^*(g')=\widetilde{g}|_{\cU_p}
\end{align*}
Hence $\Phi_{\Gamma;A}^*g=\widetilde{g}$. This proves that the map $g\to\Phi_{\Gamma;A}^*g$ from the set of Riemannian metrics on $S^{k_1}\times \cdots\times S^{k_q}$ to $\cM_\Gamma(\TT_A)$ is surjective.

It remains for us to verify that $g$ defined by \eqref{eqn:appC.quotient is mfld-1} is a well-defined Riemannian metric on $S^{k_1}\times \cdots\times S^{k_q}$. Let $\cD$ be the set of points in $\TT_A$ at which $D\Phi_{\Gamma;A}$ is degenerate. By Sard's theorem, $\Phi_{\Gamma;A}(\cD)$ has zero Lebesgue measure. Hence, it suffices to show the following two bullet points:
\begin{itemize}
\item $g$ is a well-defined Riemannian metric when restricted $S^{k_1}\times\cdots\times S^{k_q}\setminus\Phi_{\Gamma;A}(\cD)$.
\item $g$ can be smoothly extended to a Riemannian metric when restrict to a neighborhood of any point in $\Phi_{\Gamma;A}(\cD)$.
\end{itemize} 
For any point $p\in\TT_A\setminus \cD$, there exists a small neighborhood $\cV_p\subset\cU_p$ of $p$ such that $\Phi_{\Gamma;A}|_{\cV_p}$ is a diffeomorphism onto its image. Hence, by \eqref{eqn:appC.quotient is mfld-1} and the second bullet point in Proposition \ref{prop:quotient.is.sphere}, we have
\begin{align*}
g|_{\Phi_{\Gamma;A}(\cV_p)}=&((\Phi_{\Gamma;A}|_{\cV_p})^{-1})^*((\Phi_{\Gamma;A}^*g)|_{\cV_p})\\
=&((\Phi_{\Gamma;A}|_{\cV_p})^{-1})^*(\Phi_{\Gamma;A}^*(((\Psi_{\Gamma;A;p})^{-1})^*(g'))|_{\cV_p})\\
=&((\Phi_{\Gamma;A}|_{\cV_p})^{-1})^*(\Xi_{\Gamma;A;p}^*(g')|_{\cV_p})=((\Phi_{\Gamma;A}|_{\cV_p})^{-1})^*(\widetilde{g}|_{\cV_p}).
\end{align*}
By the $\Gamma$-invariance of $\Phi_{\Gamma;A}$, the $\Gamma$-symmetry of $\widetilde{g}$ and the above, $g$ is a well-defined Riemannian metric when restricted $S^{k_1}\times\cdots\times S^{k_q}\setminus\Phi_{\Gamma;A}(\cD)$. Notice that for any $p\in\cD$ and for any good neighborhood $\cU_p$ of $p$, by \eqref{eqn:appC.quotient is mfld-1} and the second bullet point in Proposition \ref{prop:quotient.is.sphere}, $g|_{\Phi_{\Gamma;A}(\cU_p)\setminus\Phi_{\Gamma;A}(\cD)}$ can be smoothly extended to a smooth Riemannian metric on $\Phi_{\Gamma;A}(\cU_p)$. Therefore, by the fact that $\Phi_{\Gamma;A}(\cD)$ has zero Lebesgue measure, $g$ is a well-defined Riemannian metric on $S^{k_1}\times\cdots\times S^{k_q}$. This finishes the proof of Corollary \ref{cor:quotient is mfld}.

\subsection{Proof of Lemma \ref{lem:branching.locus}}
\begin{enumerate}
\item By the remark after Proposition \ref{prop:quotient.is.sphere}, $\Gamma\backslash\TT_A$ is equipped with a differential structure such that $\overline{\Phi}_{\Gamma;A}$ is a diffeomorphism. Hence, by the first bullet point in Proposition \ref{prop:quotient.is.sphere}, a point on $\TT_A$ is a ramification point of $\Pi_\Gamma$ if and only if it is a ramification point for $\Phi_{\Gamma;A}$. For any $p\in\TT_A$, let $\cU_p$ be a good neighborhood of $p$. Then, by the second bullet point in Proposition \ref{prop:quotient.is.sphere}, $D\Phi_{\Gamma;A}$ is degenerate at $p$ if and only if $\Xi_{\Gamma;A;p}$ is degenerate at $0$. Following \eqref{eqn:z.to.z2}, this is equivalent to $\Sing_{\Gamma;A}(p)\neq \emptyset$. In particular, the ramification locus of $\Phi_{\Gamma;A}$, which is the same as the ramification locus of $\Pi_\Gamma$, is $\Omega_{\Gamma;A}$ due to the remark after Definition \ref{sing index}.

Recall from Lemma \ref{lem:properties.good.subgp-(1)} and Lemma \ref{lem:properties.good.subgp-(2)} that $\Gamma_A$ is commutative. In particular, for any $p\in\TT_A$ and for any $\gamma\in\Gamma_A$, $\text{Stab}_\Gamma(p)=\text{Stab}_\Gamma(\gamma(p))$. Observe that by the remark after Definition \ref{sing index}, $p\in\Omega_{\Gamma;A}$ if and only if $\text{Stab}_\Gamma(p)\neq \{\id\}$. Therefore, $\gamma\Omega_{\Gamma;A}=\Omega_{\Gamma;A}$ for any $\gamma\in\Gamma_A$.

\item Let $A=\{m+1,\dots,m+k\}$. For any $p=(\theta_{m+1},\dots, \theta_{m+k})\in\TT_A$, let $\cU_p$ be a good neighborhood of $p$. Notice that $\Omega_A(i)$ is the union of finitely many codimension-$2$ submanifolds of $\TT_A$. (See \eqref{jth singularity}.) Therefore, by \eqref{total singularity subgp} and Lemma \ref{lem:branching.locus-1}, $D\Pi_{\Gamma'}$ is nondegenerate at Lebesgue almost every point in $\cU_p$.

Let $q$ be any point in $\cU_p$ such that $D\Pi_{\Gamma'}$ is nondegenerate at $q$. By Definition \ref{dfn:good.nbhd}, $q$ can be uniquely written as $q=(\theta_{m+1}+\delta_{m+1},\dots, \theta_{m+k}+\delta_{m+k})$ for some $\delta_{m+1},\dots,\delta_{m+k}\in[-\pi/8,\pi/8]$. It follows from \eqref{eqn:z.to.z2} and Proposition \ref{prop:quotient.is.sphere} that 
\begin{align}\label{eqn:appC.branching.locus-1}
&\text{Jac}(\Pi^\Gamma_{\Gamma'})(\Pi_\Gamma(q))\nonumber\\
=&\frac{\text{Jac}\Pi_{\Gamma'}(q)}{\text{Jac}\Pi_{\Gamma}(q)}\nonumber\\
=&\frac{\text{Jac}\overline{\Phi}_{\Gamma;A}(\Pi_\Gamma(q))}{\text{Jac}\overline{\Phi}_{\Gamma';A}(\Pi_{\Gamma'}(q))}\cdot\frac{\text{Jac}\Phi_{\Gamma';A}(q)}{\text{Jac}\Phi_{\Gamma;A}(q)}\nonumber\\
=&\frac{\text{Jac}\overline{\Phi}_{\Gamma;A}(\Pi_\Gamma(q))}{\text{Jac}\overline{\Phi}_{\Gamma';A}(\Pi_{\Gamma'}(q))}\cdot\frac{\text{Jac}\Psi_{\Gamma';A;p}(\Xi_{\Gamma';A;p}(q))}{\text{Jac}\Psi_{\Gamma;A;p}(\Xi_{\Gamma';A;p}(q))}\cdot\frac{\text{Jac}\Xi_{\Gamma';A;p}(q)}{\text{Jac}\Xi_{\Gamma;A;p}(q)}\nonumber\\
=&\frac{\text{Jac}\overline{\Phi}_{\Gamma;A}(\Pi_\Gamma(q))}{\text{Jac}\overline{\Phi}_{\Gamma';A}(\Pi_{\Gamma'}(q))}\cdot\frac{\text{Jac}\Psi_{\Gamma';A;p}(\Xi_{\Gamma';A;p}(q))}{\text{Jac}\Psi_{\Gamma;A;p}(\Xi_{\Gamma';A;p}(q))}\cdot\frac{\displaystyle\prod_{i\in\Sing_{\Gamma';A}(p)}(4(\delta_i^2+\delta_{i+1}^2))}{\displaystyle\prod_{i\in\Sing_{\Gamma;A}(p)}(4(\delta_i^2+\delta_{i+1}^2))}\nonumber\\
=&\frac{\text{Jac}\overline{\Phi}_{\Gamma;A}(\Pi_\Gamma(q))}{\text{Jac}\overline{\Phi}_{\Gamma';A}(\Pi_{\Gamma'}(q))}\cdot\frac{\text{Jac}\Psi_{\Gamma';A;p}(\Xi_{\Gamma';A;p}(q))}{\text{Jac}\Psi_{\Gamma;A;p}(\Xi_{\Gamma';A;p}(q))}\cdot{\displaystyle\prod_{i\in\Sing_{\Gamma';A}(p)\setminus\Sing_{\Gamma;A}(p)}(4(\delta_i^2+\delta_{i+1}^2))}.
\end{align}
It is easy to see that the right-hand-side of the above formula is a smooth function on $\cU_p$, hence the above formula holds for any $q\in\cU_p$. Recall from Proposition \ref{prop:quotient.is.sphere} that $\overline{\Phi}_{\Gamma;A}$, $\overline{\Phi}_{\Gamma';A}$, $\Psi_{\Gamma';A;p}$ and $\Psi_{\Gamma;A;p}$ are diffeomorphisms onto their images. Hence $\text{Jac}(\Pi^\Gamma_{\Gamma'})(\Pi_\Gamma(p))=0$ if and only if $\Sing_{\Gamma';A}(p)\setminus\Sing_{\Gamma;A}(p)\neq \emptyset$, which is equivalent to $\Sing_{\Gamma;A}(p)\subsetneq\Sing_{\Gamma';A}(p)$ due to Definition \ref{sing index}.

It remains to show that the ramification locus of $\Pi^{\Gamma}_{\Gamma'}:\Gamma\backslash\TT_A\to \Gamma'\backslash\TT_A$ is $\Pi_\Gamma(\overline{\Omega_{\Gamma';A}\setminus\Omega_{\Gamma;A}})$. By the previous paragraph, for any $p\in\TT_A$, $\Pi_\Gamma(p)$ is in the ramification locus of $\Pi^{\Gamma}_{\Gamma'}$ if and only if $\Sing_{\Gamma;A}(p)\subsetneq\Sing_{\Gamma';A}(p)$. Therefore, it suffices to show that $\Sing_{\Gamma;A}(p)\subsetneq\Sing_{\Gamma';A}(p)$ if and only if $p\in\overline{\Omega_{\Gamma';A}\setminus\Omega_{\Gamma;A}}$. Indeed, for any $p\in\TT_A$ such that $\Sing_{\Gamma;A}(p)\subsetneq\Sing_{\Gamma';A}(p)$, we first observe that $\Sing_{\Gamma';A}(p)\neq\emptyset$. It follows from the remark after Definition \ref{sing index} that $p\in\Omega_{\Gamma';A}$. Let $\cU_p$ be a good neighborhood of $p$ and $i\in\Sing_{\Gamma';A}(p)\setminus\Sing_{\Gamma;A}(p)$. 
By Definition \ref{sing index}, $p\in\Omega_A(i)$ and $\gamma_i\in\Gamma'\setminus\Gamma$. Hence, by \eqref{eqn:good.subgroup.struct}, \eqref{jth singularity} and \eqref{total singularity subgp}, we have that $\Omega_A(i)\subset\Omega_{\Gamma';A}$ and that $\overline{\Omega_A(i)\setminus\Omega_{\Gamma;A}}=\Omega_A(i)$. By \eqref{jth singularity}, \eqref{total singularity subgp} and Definition \ref{sing index}, $\cU_p\cap\Omega_{\Gamma';A}\supset\cU_p\cap\Omega_A(i)\supset\{q\in\cU_p:\pr^A_{i,i+1}(q)=\pr^A_{i,i+1}(p)\}$, which contains a neighborhood of $p$ in $\Omega_A(i)$. Hence, $p\in\overline{\Omega_{\Gamma';A}\setminus\Omega_{\Gamma;A}}$. 
On the other hand, if $\Sing_{\Gamma';A}(p)=\Sing_{\Gamma;A}(p)$, by the same computations in \eqref{eqn:appC.branching.locus-1}, we have $\text{Jac}(\Pi^{\Gamma}_{\Gamma'}(\Pi_\Gamma(q))>0$ for any $q\in\cU_p$. Hence, $\Sing_{\Gamma';A}(q)=\Sing_{\Gamma;A}(q)$ for any $q\in\cU_p$. By the remark after Definition \ref{sing index}, $\Sing_{\Gamma';A}(q)\neq\emptyset$ (resp. $\Sing_{\Gamma;A}(q)\neq\emptyset$) if and only if $q\in\Omega_{\Gamma';A}$ (resp. $q\in\Omega_{\Gamma;A}$). Therefore, $\cU_p\cap(\Omega_{\Gamma';A}\setminus\Omega_{\Gamma;A})=\emptyset$ and hence $p\not\in\overline{\Omega_{\Gamma';A}\setminus\Omega_{\Gamma;A}}$. This finishes the proof of Lemma \ref{lem:branching.locus-2}.
\end{enumerate}

\subsection{Verifying the claim after Definition \ref{dfn:geo rank}}
Let $c:I\to M$ be a geodesic in a nearly nonpositively curved manifold $M$, where $I$ is an interval. In the remark after Definition \ref{dfn:geo rank}, we made the claim that ``$f_{M}(Y,t)\equiv\text{constant}$'' is equivalent to ``$f_{M}(Y,t)\equiv 0$''. Before we present the proof, we first a technical lemma, which is a slightly strengthened version of \cite[Lemma 5.9]{BG94}.
\begin{lem}[Strengthened version of {\cite[Lemma 5.9]{BG94}}]\label{lem:appC.BG94.Lem5.9}
Let $(M,g)$ be a nearly nonpositively curved surface with $(C,g|_C)$ being a cap in the sense of Definition \ref{dfn:caps}. Let $c:I\to M$ be a unit-speed geodesic in $(M,g)$ which enters $\overline{C}$ at $t=t_1$, remains in ${C}$ on $(t_1,t_2)$ and exits $\overline{C}$ at $t=t_2$. Then, for any Jacobi field $Y$ along $c$, we have 
\begin{align}\label{eqn:appC.BG94.Lem5.9}
\langle Y(t_2), Y'(t_2)\rangle\geq \langle Y(t_1), Y'(t_1)\rangle.
\end{align}
Moreover, if $\langle Y(t_2), Y'(t_2)\rangle= \langle Y(t_1), Y'(t_1)\rangle$, then $\langle Y(t_2), Y'(t_2)\rangle= \langle Y(t_1), Y'(t_1)\rangle=0$.
\end{lem}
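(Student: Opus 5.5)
We reduce to the scalar Jacobi equation along $c$ inside the cap. Write $Y=a(t)c'(t)+y(t)N(t)$, where $N$ is a parallel unit normal field. The tangential part satisfies $a''=0$, so $a(t)=a_0+a_1t$ and its contribution $\langle a c',(ac')'\rangle=aa'$ changes by $a_1^2(t_2-t_1)\ge 0$. Therefore \eqref{eqn:appC.BG94.Lem5.9} reduces to the normal part, namely to showing $y(t_2)y'(t_2)\ge y(t_1)y'(t_1)$ for every solution of $y''+K(c(t))y=0$ on $[t_1,t_2]$. Note that the tangential contribution is strictly positive unless $a_1=0$, so in the equality case $a$ is constant.

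For the normal part, we use the monotone curvature cap structure as in \cite[Lemma 5.9]{BG94} and \cite[page 33]{BG89a}. By radial symmetry and monotonicity of $K$ in the distance to the center, $t\mapsto K(c(t))$ is symmetric about the midpoint $m=(t_1+t_2)/2$, and $K$ vanishes at $t_1,t_2$. Let $C(t),S(t)$ be the fundamental solutions at $t_1$, with $C(t_1)=1$, $C'(t_1)=0$, $S(t_1)=0$, $S'(t_1)=1$. Writing $y=\alpha C+\beta S$, the quantity $y y'|_{t_1}^{t_2}$ becomes a quadratic form $Q(\alpha,\beta)$ in $(\alpha,\beta)$, with coefficients built from $C(t_2),C'(t_2),S(t_2),S'(t_2)$. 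The Wronskian $CS'-C'S\equiv1$ simplifies these coefficients, and the symmetry of the potential gives the relation $C(t_2)=S'(t_2)$ (time reversal conjugates the monodromy matrix by $\mathrm{diag}(1,-1)$). The remaining input is the sign information from \cite{BG94}: $Q$ is positive semidefinite. Concretely, the proof of \cite[Lemma 5.9]{BG94} expresses $Q$ through a number $b\le 0$ (essentially $-C'(t_2)S(t_2)$ or $C(t_2)S(t_2)$ up to sign).

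The main point is the strengthening $b<0$. We will prove that $C'(t_2)\neq0$ and $S(t_2)\neq 0$, or whatever pair governs $b$, using the fact that a geodesic crossing a cap transversally is not tangent to the boundary geodesic. Indeed, if $b=0$, then some nonzero Jacobi field would have $y'(t_1)=y'(t_2)=0$ or $y(t_1)=y(t_2)=0$. We would then compare with the rotational Jacobi field from the $S^1$-symmetry of the cap, $\partial_\theta$ restricted to $c$; its normal component has constant sign and does not vanish in the interior of $C$. A Sturm comparison of this field against $y$ on $[t_1,t_2]$, where $K>0$, rules out such a degenerate solution. We expect this Sturm step to be the main obstacle. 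The fallback is to use the explicit formulas for $C,S$ in \cite[page 33]{BG89a} and check directly from the Wronskian identity that the discriminant is strict.

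With $b<0$, the form $Q$ is positive semidefinite and degenerate only along one line, and on that line both endpoint values $yy'$ vanish: the kernel direction is forced to have $y(t_1)y'(t_1)=0$, and by $Q=0$ also $y(t_2)y'(t_2)=0$. Combining this with the tangential part (where $a_1=0$ in the equality case) gives the equality statement, but only after checking that the tangential contribution $a_0a_1$ also vanishes. This needs care: in \cite{BG94} the Lyapunov form is applied to $Y$ orthogonal to $c'$. We will therefore first reduce to orthogonal Jacobi fields, as in \cite[Lemma 5.9]{BG94}, and treat the tangential component separately in the application rather than in this lemma.
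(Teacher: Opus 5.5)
Your treatment of the tangential part is fine, and its equality case is immediate. Each of the two changes is nonnegative, so equality forces $a_1=0$, and then $aa'=a_0a_1\equiv 0$. There is nothing to defer to ``the application'', and you could not defer it anyway, since the lemma is stated for every Jacobi field.

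The genuine gap is the strictness $b<0$, which is the whole content of the strengthening, and the mechanism you propose for it fails. Write the cap as $dr^2+\rho(r)^2d\theta^2$. The normal component of $\partial_\theta$ along $c$ is $\pm\rho(r(t))\,\dot r(t)$. It vanishes at the point of $c$ nearest the center and changes sign there. So it is a multiple of the odd solution $S_m$ about the midpoint $m=(t_1+t_2)/2$, not a solution of constant sign. Moreover, $\rho'=0$ on $\partial C$ gives $\nabla\partial_\theta=0$ along the boundary geodesic, hence $S_m'(t_1)=S_m'(t_2)=0$.

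Consequences:
\begin{itemize}
\item Your alternative ``$y'(t_1)=y'(t_2)=0$'' occurs for \emph{every} crossing and can never be excluded.
\item This is exactly what forces the monodromy from $t_1$ to $t_2$ to be $\begin{pmatrix}-1&b\\0&-1\end{pmatrix}$. In your normalization at $t_1$ this gives $Q(\alpha,\beta)=-b\beta^2$ with $b=S(t_2)$.
\item Thus $b=0$ means precisely that $c(t_1)$ and $c(t_2)$ are conjugate along $c$. Sturm comparison with $S_m$ cannot rule this out: a solution vanishing at $t_1$ and $t_2$ only requires $S_m$ to vanish in between, which it does at $m$.
\item The Wronskian identity also holds when the monodromy is $-I$, so your fallback cannot yield strictness. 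There are no explicit formulas for $C,S$ either; they solve $y''+ky=0$ for an arbitrary cap profile $k(t)=K(c(t))$.
\end{itemize}
Note that your Sturm step uses only $K>0$, whereas the paper's strictness argument uses the monotonicity of the curvature in an essential way.

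The missing idea is a proof that the even solution $C_m$ about the midpoint ($C_m(m)=1$, $C_m'(m)=0$) does not vanish at $t_2$. Since $C_m$ is even, the monodromy above gives $bC_m'(t_1)=2C_m(t_2)$, so $C_m(t_2)\neq 0$ forces $b\neq 0$. The paper argues as follows, after shifting so that $t_1=-T$, $m=0$, $t_2=T$.
\begin{itemize}
\item By \cite[Lemma 2.6]{BG89a}, $C_m$ has a first zero $\tau\in(0,T]$, and in fact $\tau<T$. Otherwise $C_m,S_m>0$ on $(0,T)$, and $H=C_m'S_m'+kC_mS_m$ satisfies $H(0)=H(T)=0$ and $H'=k'C_mS_m\le 0$, because $k$ is nonincreasing on $[0,T]$. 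This forces $k'\equiv 0$, hence $k\equiv 0$, contradicting positivity inside the cap.
\item A Riccati comparison finishes. Just after $\tau$ one has $C_m'/C_m>S_m'/S_m$, and the two quotients never coincide because their Wronskian is $-1$. Also $S_m'/S_m$ is finite and positive on $(\tau,T)$ and vanishes at $T$. So $C_m'/C_m$ cannot run off to $-\infty$ before or at $T$. Hence $C_m$ has no zero on $(\tau,T]$, and $C_m(T)\neq 0$.
\end{itemize}
This monotone-curvature argument is what has to replace your Sturm step.
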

\begin{proof}[Proof of Lemma \ref{lem:appC.BG94.Lem5.9}]
Apply a shift in the time parameter, we assume that $t_1=-T$ and $t_2=T$. Write $Y_\parallel(t):=\langle Y(t),c'(t)\rangle c'(t)$ and $Y_\perp(t)=Y(t)-Y_\parallel(t)$. In particular, $Y_\parallel$ is a Jacobi field which is everywhere tangent to $c$, and $Y_\perp$ is an orthogonal Jacobi field along $c$. Based on standard knowledge, we have the following facts:
\begin{itemize}
\item There exist some $a,b\in\RR$ such that $Y_\parallel(t)=(\xi+\lambda t)\cdot c'(t)$. In particular, $Y_\parallel'(t)=\lambda \cdot c'(t)$
\item Let $N(t)$ be the parallel unit normal vector field along $c$. Then, $Y_\perp(t)=y(t)\cdot N(t)$. Here, $y(t)$ is the solution to the following differential equation
\begin{align}\label{eqn:appC.BG94.Lem5.9-1}
y''(t)+k(t)y(t)=0,
\end{align}
where $k(t)$ is the sectional curvature at $c(t)$. In particular, by Definition \ref{dfn:caps}, $k(t)$ is an even function (due to the fact that caps are radially symmetric) and is positive on $(-T,T)$. In particular, for any solution nontrivial solution $y$ to \eqref{eqn:appC.BG94.Lem5.9-1} and for any $t\in(-T,T)$ such that $y(t)\neq 0$, we have 
\begin{align}\label{eqn:appC.BG94.Lem5.9-riccati}
\frac{d}{dt}\left(\frac{y'(t)}{y(t)}\right)=-\left(\frac{y'(t)}{y(t)}\right)^2-k(t)<0.
\end{align}
\item $\langle Y,Y'\rangle=\langle Y_\parallel,Y_\parallel'\rangle+\langle Y_\perp,Y_\perp'\rangle$.
\end{itemize}
By the first bullet point, we have $\langle Y_\parallel(T),Y_\parallel'(T)\rangle\geq \langle Y_\parallel(-T),Y_\parallel'(-T)\rangle$, with equality achieved if and only if $\lambda=0$ (and hence $\langle Y_\parallel,Y_\parallel'\rangle\equiv 0$). By the second and the third bullet points, it suffices to show that 
\begin{align}\label{eqn:appC.BG94.Lem5.9-2}
y(T)y'(T)\geq y(-T)y'(-T), \text{ with equality achieved if and only if }y'(-T)=0.
\end{align}
We recall the following known results: 
\begin{itemize}
\item[\hypertarget{BG1}{(BG1)}] There exist solutions $S(t),C(t)$ to \eqref{eqn:appC.BG94.Lem5.9-1} such that $S(0)=C'(0)=0$ and $S'(0)=C(0)=1$. Moreover, $S(t)$ is odd and $C(t)$ is even. (See \cite[page 33, lines 10-12]{BG89a})
\item[\hypertarget{BG2}{(BG2)}] When $t\in[-T,T]$, $S(t)=0$ if and only if $t=0$. Moreover, $S'(T)=S'(-T)=0$. (See \cite[Lemma 2.5]{BG89a}.)
\item[\hypertarget{BG3}{(BG3)}] $S(t)C'(t)-S'(t)C(t)\equiv -1$. (This follows directly from \eqref{eqn:appC.BG94.Lem5.9-1}, which is also mentioned in \cite[page 17, line -8]{BG94}.)
\item[\hypertarget{BG4}{(BG4)}] There exists some $b\leq  0$ such that
$$\begin{pmatrix}
y(T)\\
y'(T)
\end{pmatrix}=\matii{-1&b}{0&-1}\matii{y(-T)}{y'(-T)}.
$$
(See the proof of \cite[Lemma 5.9]{BG94}.)
\item[\hypertarget{BG5}{(BG5)}] There exists some $\tau\in(0,T]$ such that $\lim_{t\to\tau_-}C'(t)/C(t)=-\infty.$ (See \cite[Lemma 2.6]{BG89a}.)
\end{itemize}
We first prove that $\tau<T$ in \hyperlink{BG5}{(BG5)}. Assume the contrary that $\tau=T$, we have $C(T)=0$ due to smoothness of $C(\cdot)$ on $[-T,T]$ and \hyperlink{BG5}{(BG5)}. By \hyperlink{BG1}{(BG1)}, \hyperlink{BG2}{(BG2)} and \eqref{eqn:appC.BG94.Lem5.9-riccati}, $S'(\cdot)/S(\cdot)$ is well-defined, positive and decreasing on $(0,T)$ and vanishes at $T$. In particular, $S(t)>0$ for any $t\in(0,T]$. By \hyperlink{BG3}{(BG3)},
$$\frac{d}{dt}\left(\frac{C(t)}{S(t)}\right)=\frac{S(t)C'(t)-S'(t)C(t)}{(S(t))^2}=-\frac{1}{(S(t))^2}<0,\text{ for any }t\in(0,T].$$
Since $C(T)/S(T)=0$, the above implies that $C(t)>0$ for any $t\in[0,T)$. Consider the function
$$H(t)=C'(t)S'(t)+k(t)C(t)S(t).$$
In particular, $H(0)=H(T)=0$ due to \hyperlink{BG1}{(BG1)} and the fact that $C(T)=0$. 
Since $k(\cdot)$ is nonincreasing on $[0,T]$ due to \cite[Lemma 2.3]{BG89a} and Definition \ref{dfn:caps}, by \eqref{eqn:appC.BG94.Lem5.9-1} and \hyperlink{BG1}{(BG1)}, we have 
$$H'(t)=k'(t)C(t)S(t)\leq 0\text{ for any }t\in(0,T).$$
Recall that $S(t),C(t)>0$ for any $t\in(0,T)$ and that $H(0)=H(T)=0$. Therefore, $k'(t)\equiv 0$ on $[0,T]$. Since $k(\pm T)=0$ due to Definition \ref{dfn:caps}, we have $k(t)\equiv 0$ on $[0,T]$. This contradicts the fact that caps are positively curved. (See Definition \ref{dfn:caps}.) Thus, we conclude that $\tau<T$.

Notice that by \hyperlink{BG1}{(BG1)}, \hyperlink{BG2}{(BG2)} and \eqref{eqn:appC.BG94.Lem5.9-riccati}, $S'(t)/S(t)$ is well-defined, positive and strictly decreasing on $(0,T)$, and $S'(T)/S(T)=0$. Since $C(\cdot)$ is well-defined on $[-T,T]$ and is a nontrivial solution to \eqref{eqn:appC.BG94.Lem5.9-1}, \hyperlink{BG5}{(BG5)} implies that $C(\tau)=0$ and $C'(\tau)<0$. Hence, by \hyperlink{BG2}{(BG2)} and \eqref{eqn:appC.BG94.Lem5.9-riccati}, there exists $\epsilon\in(0, T-\tau)$ such that for any $t\in(\tau,\tau+\epsilon)$, we have $C'(t)/C(t)$ is well-defined and that 
$C'(t)/C(t)>S'(t)/S(t)>0$.
Recall that by \hyperlink{BG3}{(BG3)}, $C'(t)/C(t)\neq S'(t)/S(t)$ for any $t\in[-T,T]$ such that $C(t),S(t)\neq 0$. Thus, by \eqref{eqn:appC.BG94.Lem5.9-riccati}, we conclude that $C'(\cdot)/C(\cdot)$ is well-defined and decreasing on $(\tau, T]$, and that $C'(t)/C(t)>S'(t)/S(t)$ for any $t\in(\tau, T]$. (See \cite[Figure 1, page 33]{BG89a} for a picture of $S'(\cdot)/S(\cdot)$ and $C'(\cdot)/C(\cdot)$.) In particular, by \hyperlink{BG1}{(BG1)}, $C(\pm T), C'(\pm T)\neq 0$. Hence, by \hyperlink{BG1}{(BG1)} and \hyperlink{BG4}{(BG4)}, we have 
$$C(T)=-C(-T)+bC'(-T)=-C(T)+bC'(-T)\implies bC'(-T)=2C(T)\neq 0.$$
In particular, $b<0$. Therefore, by \hyperlink{BG4}{(BG4)}, we have
$$y(T)y'(T)=(-y(-T)+by'(-T))(-y'(-T))=y(-T)y'(-T)-b(y'(-T))^2\geq y(-T)y'(-T).$$
Equality in the above equation is achieved if and only if $y'(-T)=0$. This verifies \eqref{eqn:appC.BG94.Lem5.9-2} and hence finishes the proof of Lemma \ref{lem:appC.BG94.Lem5.9}.
\end{proof}

\begin{proof}[Proof of the equivalence between ``$f_{M}(Y,t)\equiv\text{constant}$'' and ``$f_{M}(Y,t)\equiv 0$'']
Since the latter condition is stronger, it suffices to prove that ``$f_{M}(Y,t)\equiv\text{constant}$'' implies ``$f_{M}(Y,t)\equiv 0$''. 
We split the proof into the following three cases:

\textbf{Case 1}: Assume that there exists a subinterval $I_0\subset I$ with nonempty interior such that $M$ is nonpositively curved at $c(t)$ for any $t\in I_0$. By \eqref{eqn:noninc baby Lyap fcn2}, \eqref{eqn:noninc Lyap fcn2} and the comments after \eqref{eqn:noninc Lyap fcn2}, we have 
$f_{M}(Y,t)=\langle Y(t), Y'(t)\rangle$ for any $t\in I_0$. Since $f_M(Y,t)$ is constant in $t$, after differentiating $f_M(Y,t)$ in $t$, we obtain 
$$\frac{d}{dt}f_{M}(Y,t)=|Y'(t)|^2-R(Y(t),c'(t),Y(t),c'(t))\equiv 0,\text{ for any }t\in I_0.$$
Since $R(Y(t),c'(t),Y(t),c'(t))\leq 0$ due to the assumption that $M$ is nonpositively curved at $c(t)$, we have $Y'(t)\equiv 0$ on $I_0$. Hence, by the assumption that $f_{M}(Y,t)$ is constant on $I$ and the above, we have $f_{M}(Y,t)\equiv 0$.

\textbf{Case 2}: Assume that $\dim(M)=2$. By \textbf{Case 1}, after passing to a subinterval of $I$ with nonempty interior if necessary, we assume that for any $t\in I$, $M$ is not nonpositively curved at $c(t)$. Then, by Definition \ref{dfn:caps} and Definition \ref{dfn:near nonpos curv}, $c(I)$ is contained in the cap part $C'$ of an extended cap $(C,g)$. In particular, $c(I)$ is not contained $\del C'$. By Definition \ref{dfn:caps} and its remark, there exist $a<a'\leq \inf I<\sup I\leq b'<b$ such that the following holds:
\begin{itemize}
\item $c$ can be extended to a geodesic defined on $[a,b]$.
\item $c([a,b])\subset C$.
\item $(a',b')=c^{-1}(C')\cap[a,b]$.
\item $c((a,a')),c((b',b))\subset C\setminus\overline{C'}$. (Since $c(I)\not\subset \del C'$, by the fact that $\del C'$ is totally geodesic in $(C,g)$, we have $c^{-1}(\del C')$ does not have accumulation points. This bullet point then follows if we replace $I$ by a subinterval.)
\end{itemize} 
By \eqref{eqn:infi Lyap2}, \eqref{eqn:noninc Lyap fcn2} and the definition of $Q^M_v$ for nearly nonpositively curved surfaces, we have 
\begin{align}\label{eqn:appC.geo rank-1}
f_M(Y,t)=&\frac{b'-t}{b'-a'}f_M(Y,a')+\frac{t-a'}{b'-a}f_M(Y,b')\nonumber\\
=&\frac{b'-t}{b'-a'}\langle Y(a'),Y'(a')\rangle+\frac{t-a'}{b'-a}\langle Y(b'),Y'(b')\rangle\nonumber\\
=&\frac{b'\langle Y(a'),Y'(a')\rangle-a'\langle Y(b'),Y'(b')\rangle}{b'-a'}+\frac{\langle Y(b'),Y'(b')\rangle-\langle Y(a'),Y'(a')\rangle}{b'-a'}\cdot t
\end{align}
By Lemma \ref{lem:appC.BG94.Lem5.9} and the above bullet points, $\langle Y(b'),Y'(b')\rangle\geq \langle Y(a'),Y'(a')\rangle$, with equality achieved only when $\langle Y(b'),Y'(b')\rangle\geq \langle Y(a'),Y'(a')\rangle=0$. Therefore, by \eqref{eqn:appC.geo rank-1} and the assumption that $f_M(Y,t)$ is constant on $I$, we have $f_M(Y,t)\equiv 0$ on $I$.

\textbf{Case 3}: Assume that $\dim(M)\geq 3$, then for any point $t\in I$, by Definition \ref{dfn:near nonpos curv}, $c(t)$ is contained in a regular neighborhood $U=U_0\times \cdots\times U_k$ of $p$, with $U_0$ nonpositively curved and $U_1,\dots, U_k$ being surfaces with nearly nonpositive curvature which contains a cap containing the $U_i$-component of $p$. Let $c_i, Y_i$ be the $U_i$-component of $c|_{c^{-1}(U)\cap I}$ and $Y|_{c^{-1}(U)\cap I}$. By \eqref{eqn:infi Lyap2}, \eqref{eqn:noninc Lyap fcn2} and the previous two cases, we have $f_{U_i}(Y_i,t)\equiv 0$ on $c^{-1}(U)\cap I$ for any $i\in\{0,\dots, k\}$. Hence, by \eqref{eqn:infi Lyap2}, \eqref{eqn:noninc Lyap fcn2} and the assumption that $f_M(Y,t)$ is constant on $I$, we have $f_M(Y,t)\equiv 0$ on $I$. This finishes the proof of ``$f_{M}(Y,t)\equiv 0$'' under the assumption that ``$f_{M}(Y,t)\equiv\text{constant}$''.
\end{proof}

\subsection{Proof of Fact \ref{fact:SBAP-3}}
The proof of Fact \ref{fact:SBAP-3} is essentially the same as the proof of \cite[Property 4, p.321]{KSS89}. In \cite[Property 4, p.321]{KSS89}. For reader's convenience, we provide the proof.

It is clear that $N\cap U$ has empty interior. Assume that for any connected balls $B_1\subset\cF$ and $B_2\subset X_0$ such that $B_1\times B_2\subset U$, $B_1\times B_2\setminus N$ is connected. Then for any connected open subset $O$ of $U$ and for any $p,q\in O\setminus N$, choose a continuous curve $c:[0,1]\to O$ such that $c(0)=p$ and $c(1)=q$. Then one can find open subsets $U_1,\dots, U_m$ of $O$ such that the following holds:
\begin{itemize}
\item $c([0,1])\subset U_1\cup\cdots\cup U_m$ with $p\in U_1$ and $q\in U_m$.
\item $U_j\cap U_{j+1}\neq\emptyset$ for any $j\in\{1,\dots,m\}$. In particular, there exists $p_j\in U_j\cap U_{j+1}\setminus N$ for any $j\in\{1,\dots,m\}$. (This is because $N\cap U$ has empty interior.)
\item $U_j\setminus N$ is connected. (Choose $U_j$ to be of the form $B_1\times B_2$ with $B_1\subset \cF$ and $B_2\subset X_0$ being connected balls satisfying $B_1\times B_2\subset U$.)
\end{itemize}
For simplicity, we write $p_0=p$ and $p_m=q$. Then for any $j\in\{1,\dots,m\}$, $p_{j-1}$ and $p_j$ are connected by a path in $U_j\setminus N$. Hence, $p,q$ are connected by a path in $O\setminus N$. This proves that $O\setminus N$ is connected due to arbitrariness of $p,q\in O\setminus N$. Therefore, $N$ is non-separating in $O$ due to arbitrariness of $O$ and Definition \ref{dfn:nonsep sets}.

It suffices to show that for any connected balls $B_1\subset\cF$ and $B_2\subset X_0$ such that $B_1\times B_2\subset U$, $B_1\times B_2\setminus N$ is connected. If not, then there exist disjoint, nonempty open subsets $G_1,G_2$ of $B_1\times B_2\setminus N$ such that $G_1\cup G_2=B_1\times B_2\setminus N$. For any $x\in B_2$, by the fact that $N_x$ is non-separating in $U_x$, either $(B_1\times \{x\})\setminus N\subset G_1$ or $(B_1\times \{x\})\setminus N\subset G_2$. Hence $G_j=(B_1\times H_j)\setminus N$ for any $j\in\{1,2\}$ and for some nonempty open subset $H_j$ of $B_2$. Moreover, $H_1\cup H_2= B_2$ and $H_1\cap H_2=\emptyset$, which contradicts the assumption that $B_2$ is connected. This finishes the proof.

\subsection{Proof of the remark after Definition \ref{dfn:cap.on.torus}}
Let $\iota_x:\CC\to\CC$, $\iota_y:\CC\to\CC$ such that $\iota_x(x+\mathbf{i}y)=-x+\mathbf{i}y$ and $\iota_y(x+\mathbf{i}y)=x-\mathbf{i}y$. Then, 
$$\iota_x\circ\phi_{p_{i,j}}\circ(\kappa_{y_i},\id)(\kappa_{y_i}(\theta_i)+\delta_i,\theta_j+\delta_j)=\iota_x\circ\phi_{p_{i,j}}(\theta_i-\delta_i,\theta_j+\delta_j)=\iota_x(-\delta_i+\mathbf{i}\delta_j)=\delta_i+\mathbf{i}\delta_j.$$
Notice that $\iota_x^{-1}(C_R)=C_R$ and $\iota_x^*g=g$ due to the remark after Definition \ref{dfn:gen caps}. Therefore, $(\iota_x\circ\phi_{p_{i,j}}\circ(\kappa_{y_i},\id))^{-1}(C_R)=(\kappa_{y_i},\id)^{-1}(B_{i,j})$ and $(\iota_x\circ\phi_{p_{i,j}}\circ(\kappa_{y_i},\id))^*g=(\phi_{p_{i,j}}\circ(\kappa_{y_i},\id))^*g=(\kappa_{y_i},\id)^*g_{\text{cap}}$. Hence, by Definition \ref{dfn:cap.on.torus}, $((\kappa_{y_i},\id)^{-1}(B_{i,j}),(\kappa_{y_i},\id)^*g_{\text{cap}})$ is a special $k$-cap centered at $(\kappa_{y_i}(\theta_i),\theta_j)$ with the same Euclidean radius. The proof of $((\id,\kappa_{y_j})^{-1}(B_{i,j}),(\id,\kappa_{y_j})^*g_{\text{cap}})$ being a special $k$-cap centered at $(\theta_i,\kappa_{y_j}(\theta_j))$ with the same Euclidean radius follows from a similar argument with $\iota_x$ replaced by $\iota_y$.

\subsection{Proof of the remark after Definition \ref{dfn:insert/forget tubes}}
\hyperlink{T2}{(T2)} follows directly from Definition \ref{dfn:insert/forget tubes}. Therefore, it remains to prove \hyperlink{T1}{(T1)} and \hyperlink{T3}{(T3)}.
\begin{enumerate}
\item[\hyperlink{T1}{(T1)}] 
We first show that the order of forgetting two tubes does not matter under the assumption that the order of inserting two tubes does not matter. Let $\cT_1,\cT_2$ be two tubes. Throughout the proof, for any $j\in\{1,2\}$, we fix the symmetric $2$-tensor field on the base of any special tube on $\cT_j$. Assume that the following constructions are well-defined:
\begin{itemize}
\item $(\TT_A,g_1)$ is obtained by forgetting the special tube structure on $\cT_1$ from $(\TT_A,g)$.
\item $(\TT_A,g_2)$ is obtained by forgetting the special tube structure on $\cT_2$ from $(\TT_A,g_1)$.
\end{itemize}
Then, by \hyperlink{T2}{(T2)}, the following constructions are well-defined:
\begin{itemize}
\item $(\TT_A,g_3)$ is obtained by inserting the special tube structure on $\cT_2$ back into $(\TT_A,g_2)$.
\item $(\TT_A,g_4)$ is obtained by inserting the special tube structure on $\cT_1$ back into $(\TT_A,g_3)$.
\end{itemize}
Moreover, by \hyperlink{T2}{(T2)}, $g_4=g$. Since we have assumed that the order of inserting two tubes does not matter, the following constructions are well-defined:
\begin{itemize}
\item $(\TT_A,g_3')$ is obtained by inserting the special tube structure on $\cT_1$ back into $(\TT_A,g_2)$.
\item $(\TT_A,g_4')$ is obtained by inserting the special tube structure on $\cT_2$ back into $(\TT_A,g_3')$.
\end{itemize}
Moreover, $g_4'=g_4=g$. By \hyperlink{T2}{(T2)} again, the following constructions are well-defined:
\begin{itemize}
\item $(\TT_A,g_5)$ is obtained by forgetting the special tube structure on $\cT_2$ from $(\TT_A,g_4')=(\TT_A,g)$.
\item $(\TT_A,g_6)$ is obtained by forgetting the special tube structure on $\cT_1$ from $(\TT_A,g_5)$.
\end{itemize}
Moreover, by \hyperlink{T2}{(T2)}, $g_6=g_2$. This proves that the order of forgetting two tubes does not matter.

It remains to show that the order of inserting two tubes does not matter. Let $(\cT_1,g|_{\cT_1})$ be a $\{i_1,j_1\}$-standard tube with base $B_1$ and $(\TT_A,g_1')$ is obtained by inserting a $\{i_1,j_1\}$-special tube with base $B_1$ into $(\TT_A,g)$. Assume that $(\cT_2,g_1'|_{\cT_2})$ is a $\{i_2,j_2\}$-standard tube with base $B_2$, we would like to show the following
\begin{itemize}
\item Starting with $(\TT_A,g)$, one can first insert a $\{i_2,j_2\}$-special tube with base $B_2$, and then insert a $\{i_1,j_1\}$-special tube with base $B_1$.
\item If we instead insert a $\{i_2,j_2\}$-special tube with base $B_2$ into $(\TT_A,g_1')$ and assume that for any $j\in\{1,2\}$, the symmetric $2$-tensor fields on $B_j$ involved in the special tubes in the above constructions are fixed, then the outcome of the constructions in both bullet points are the same.
\end{itemize}
We split the rest of the proof into the following three cases:

\textbf{Case 1}: Assume that $\{i_1,j_1\}\cap \{i_2,j_2\}=\emptyset$. For any $l_1,l_2\in A$ and for any $p\in\TT_A$, let 
\begin{align}\label{eqn:appC.insert/forget tubes-1.1a}
g_{1;l_1,l_2}'(p)=g_1'|_p(\del/\del\theta_{l_1},\del/\del\theta_{l_2}).
\end{align}
Then, by Definition \ref{dfn:special tubes} and the fact that $(\cT_2,g_1'|_{\cT_2})$ is a $\{i_2,j_2\}$-standard tube, for any $p\in \cT_2$ and for any $l\in A\setminus\{i_2,j_2\}$, we have 
\begin{align}\label{eqn:appC.insert/forget tubes-1.1a}
g_{1;i_2,l}'(p)=g_{1;j_2,l}'(p)=g_{1;l,i_2}'(p)=g_{1;l,j_2}'(p)=g_{1;i_2,j_2}'(p)=g_{1;j_2,i_2}'(p)=0
\end{align}
and 
\begin{align}\label{eqn:appC.insert/forget tubes-1.1b}
g_{1;i_2,i_2}'(p)=g_{1;j_2,j_2}'(p)=1.
\end{align}
Moreover, for any $l_1,l_2\in A\setminus\{i_2,j_2\}$ and for any $p\in\cT_2$, $g_{1;l_1,l_2}'(p)$ is independent of $\pr^A_{i_2,j_2}(p)$. By Definition \ref{dfn:insert/forget tubes}, $(\cT_1,g_1'|_{\cT_1})$ is a $\{i_1,j_1\}$-special tube with base $B_1$. Hence, by Definition \ref{dfn:special tubes}, for any $p\in \cT_1=B_1\times \TT_{A\setminus\{i_1,j_1\}}$ and for any $l\in A\setminus\{i_1,j_1\}$, we have
\begin{align}\label{eqn:appC.insert/forget tubes-1.1c}
g_{1;i_1,l}'(p)=g_{1;j_1,l}'(p)=g_{1;l,i_1}'(p)=g_{1;l,j_1}'(p)=0
\end{align}
and 
\begin{align}\label{eqn:appC.insert/forget tubes-1.1d}
\begin{split}
&(\pr^A_{i_1,i_2})^*g_{1,\text{cap}}\\
=&g_{1;i_1,i_1}'(p)d\theta_{i_1}\otimes d\theta_{i_1}+g_{1;i_1,j_1}'(p)d\theta_{i_1}\otimes d\theta_{j_1}+g_{1;j_1,i_1}'(p)d\theta_{j_1}\otimes d\theta_{i_1}+g_{1;j_1,i_1}'(p)d\theta_{j_1}\otimes d\theta_{j_1},
\end{split}
\end{align}
where $g_{1,\text{cap}}$ is the symmetric $2$-tensor field on the base of the special tube $(\cT_1,g_1'|_{\cT_1})$. Moreover, for any $l_1,l_2\in A\setminus\{i_1,j_1\}$ and for any $p\in\cT_2$, $g_{1;l_1,l_2}'(p)$ is independent of $\pr^A_{i_1,j_1}(p)$. As a corollary of \eqref{eqn:appC.insert/forget tubes-1.1a}-\eqref{eqn:appC.insert/forget tubes-1.1d}, there exist positive semidefinite symmetric $2$-tensor fields $g_{h;1}$ on $\TT_{A}\setminus\{i_1,j_1\}$, $g_{h;2}$ on $\TT_{A\setminus\{i_2,j_2\}}$ and $g_{h;1,2}$ on $\TT_{A\setminus\{i_1,i_2,j_1,j_2\}}$ such that 
\begin{align}\label{eqn:appC.insert/forget tubes-1.1e}
g_1'|_{\cT_1\setminus\cT_2}=(\pr^A_{i_1,j_1})^*(g_{1,\text{cap}})+(\pr^A_{A\setminus\{i_1,j_1\}})^*(g_{h;1}),
\end{align}
\begin{align}\label{eqn:appC.insert/forget tubes-1.1f}
g_1'|_{\cT_2\setminus \cT_1}= d\theta_{i_2}\otimes d\theta_{i_2}+d\theta_{j_2}\otimes d\theta_{j_2}+(\pr^A_{A\setminus\{i_2,j_2\}})^*(g_{h;2}),
\end{align}
and
\begin{align}\label{eqn:appC.insert/forget tubes-1.1g}
g_1'|_{\cT_1\cap \cT_2}=(\pr^A_{i_1,j_1})^*(g_{1,\text{cap}})+d\theta_{i_2}\otimes d\theta_{i_2}+d\theta_{j_2}\otimes d\theta_{j_2}+(\pr^A_{A\setminus\{i_1,i_2,j_1j_2\}})^*(g_{h;1,2}).
\end{align}
Apply \hyperlink{T2}{(T2)} and Definition \ref{dfn:insert/forget tubes} to \eqref{eqn:appC.insert/forget tubes-1.1e}-\eqref{eqn:appC.insert/forget tubes-1.1g}, we have 
\begin{align}\label{eqn:appC.insert/forget tubes-1.1e'}
g|_{\cT_1\setminus\cT_2}=d\theta_{i_1}\otimes d\theta_{i_1}+d\theta_{j_1}\otimes d\theta_{j_1}+(\pr^A_{A\setminus\{i_1,j_1\}})^*(g_{h;1}),
\end{align}
\begin{align}\label{eqn:appC.insert/forget tubes-1.1f'}
g|_{\cT_2\setminus \cT_1}= d\theta_{i_2}\otimes d\theta_{i_2}+d\theta_{j_2}\otimes d\theta_{j_2}+(\pr^A_{A\setminus\{i_2,j_2\}})^*(g_{h;2}),
\end{align}
and
\begin{align}\label{eqn:appC.insert/forget tubes-1.1g'}
g|_{\cT_1\cap \cT_2}=d\theta_{i_1}\otimes d\theta_{i_1}+d\theta_{j_1}\otimes d\theta_{j_1}+d\theta_{i_2}\otimes d\theta_{i_2}+d\theta_{j_2}\otimes d\theta_{j_2}+(\pr^A_{A\setminus\{i_1,i_2,j_1j_2\}})^*(g_{h;1,2}).
\end{align}
In particular, $(\cT_2,g|_{\cT_2})$ is a standard tube in the sense of Definition \ref{dfn:special tubes}. Hence one can define $(\TT_A, g_2')$ so that it is obtained by inserting a $\{i_2,j_2\}$-special tube with base $B_2$ into $(\TT_A,g)$. Assume that $g_{2,\text{cap}}$ is the symmetric $2$-tensor field on the base of the special tube $(\cT_2,g_2'|_{\cT_1})$. Then by Definition \ref{dfn:insert/forget tubes} and \eqref{eqn:appC.insert/forget tubes-1.1e'}-\eqref{eqn:appC.insert/forget tubes-1.1g'}, we have $g|_{\TT_A\setminus(\cT_1\cup\cT_2)}=g_2'|_{\TT_A\setminus(\cT_1\cup\cT_2)}$,
\begin{align}\label{eqn:appC.insert/forget tubes-1.1e''}
g_2'|_{\cT_1\setminus\cT_2}=d\theta_{i_1}\otimes d\theta_{i_1}+d\theta_{j_1}\otimes d\theta_{j_1}+(\pr^A_{A\setminus\{i_1,j_1\}})^*(g_{h;1}),
\end{align}
\begin{align}\label{eqn:appC.insert/forget tubes-1.1f''}
g_2'|_{\cT_2\setminus \cT_1}= (\pr^A_{i_2,j_2})^*(g_{2,\text{cap}})+(\pr^A_{A\setminus\{i_2,j_2\}})^*(g_{h;2}),
\end{align}
and
\begin{align}\label{eqn:appC.insert/forget tubes-1.1g''}
g_2'|_{\cT_1\cap \cT_2}=d\theta_{i_1}\otimes d\theta_{i_1}+d\theta_{j_1}\otimes d\theta_{j_1}+(\pr^A_{i_2,j_2})^*(g_{2,\text{cap}})+(\pr^A_{A\setminus\{i_1,i_2,j_1j_2\}})^*(g_{h;1,2}).
\end{align}
In particular, $(\cT_1,g_2'|_{\cT_1})$ is a standard tube in the sense of Definition \ref{dfn:special tubes}. Hence one can define $(\TT_A, g_2'')$ so that it is obtained by inserting a $\{i_1,j_1\}$-special tube with base $B_1$ into $(\TT_A,g_2')$, where the symmetric $2$-tensor field on $B_1$ is $g_{1,\text{cap}}$. By Definition \ref{dfn:insert/forget tubes} and \eqref{eqn:appC.insert/forget tubes-1.1e''}-\eqref{eqn:appC.insert/forget tubes-1.1g''}, we have $g|_{\TT_A\setminus(\cT_1\cup\cT_2)}=g_2''|_{\TT_A\setminus(\cT_1\cup\cT_2)}$,
\begin{align}\label{eqn:appC.insert/forget tubes-1.1e'''}
g_2''|_{\cT_1\setminus\cT_2}=(\pr^A_{i_1,j_1})^*(g_{1,\text{cap}})+(\pr^A_{A\setminus\{i_1,j_1\}})^*(g_{h;1}),
\end{align}
\begin{align}\label{eqn:appC.insert/forget tubes-1.1f'''}
g_2''|_{\cT_2\setminus \cT_1}= (\pr^A_{i_2,j_2})^*(g_{2,\text{cap}})+(\pr^A_{A\setminus\{i_2,j_2\}})^*(g_{h;2}),
\end{align}
and
\begin{align}\label{eqn:appC.insert/forget tubes-1.1g'''}
g_2''|_{\cT_1\cap \cT_2}=(\pr^A_{i_1,j_1})^*(g_{1,\text{cap}})+(\pr^A_{i_2,j_2})^*(g_{2,\text{cap}})+(\pr^A_{A\setminus\{i_1,i_2,j_1j_2\}})^*(g_{h;1,2}).
\end{align}
On the other hand, recall that $(\cT_2,g_1'|_{\cT_2})$ is a $\{i_2,j_2\}$-standard tube with base $B_2$. One can define $(\TT_A, g_1'')$ so that it is obtained by inserting a $\{i_2,j_2\}$-special tube with base $B_2$ into $(\TT_A,g_1')$, where the symmetric $2$-tensor field on $B_2$ is $g_{2,\text{cap}}$. By Definition \ref{dfn:insert/forget tubes} and \eqref{eqn:appC.insert/forget tubes-1.1e}-\eqref{eqn:appC.insert/forget tubes-1.1g}, we have $g|_{\TT_A\setminus(\cT_1\cup\cT_2)}=g_1'|_{\TT_A\setminus(\cT_1\cup\cT_2)}=g_1''|_{\TT_A\setminus(\cT_1\cup\cT_2)}$, \begin{align}\label{eqn:appC.insert/forget tubes-1.1e''''}
g_1''|_{\cT_1\setminus\cT_2}=(\pr^A_{i_1,j_1})^*(g_{1,\text{cap}})+(\pr^A_{A\setminus\{i_1,j_1\}})^*(g_{h;1}),
\end{align}
\begin{align}\label{eqn:appC.insert/forget tubes-1.1f''''}
g_1''|_{\cT_2\setminus \cT_1}= (\pr^A_{i_2,j_2})^*(g_{2,\text{cap}})+(\pr^A_{A\setminus\{i_2,j_2\}})^*(g_{h;2}),
\end{align}
and
\begin{align}\label{eqn:appC.insert/forget tubes-1.1g''''}
g_1''|_{\cT_1\cap \cT_2}=(\pr^A_{i_1,j_1})^*(g_{1,\text{cap}})+(\pr^A_{i_2,j_2})^*(g_{2,\text{cap}})+(\pr^A_{A\setminus\{i_1,i_2,j_1j_2\}})^*(g_{h;1,2}).
\end{align}
\eqref{eqn:appC.insert/forget tubes-1.1e'''}-\eqref{eqn:appC.insert/forget tubes-1.1g''''} then imply that $g_1''=g_2''$. This finishes the proof of \textbf{Case 1}.

\textbf{Case 2}: Assume that $|\{i_1,j_1\}\cap \{i_2,j_2\}|=1$. Without loss of generality, we assume that $i:=i_1=i_2$ and $j_1\neq j_2$. We would like to show that $\cT_1\cap \cT_2=\emptyset$. If not, there exists an open subset $O=\prod_{l\in A}O_l$ of $\cT_1\cap\cT_2$, where $O_l$ is an open subset of $\TT_l$.
Then by Definition \ref{dfn:special tubes} and the fact that $(\cT_2,g_1'|_{\cT_2})$ is a $\{i,j_2\}$-standard tube, for any $p\in O$, we have 
\begin{align}\label{eqn:appC.insert/forget tubes-1.2a}
g_{1;i,j_1}'(p)=g_{1;j_1,i}'(p)=0 \text{ and }g_{1;i,i}'(p)=1,
\end{align}
where we have used the notations in \eqref{eqn:appC.insert/forget tubes-1.1a}. Moreover, $g_{1;j_1,j_1}'(p)$ is independent of $\pr^A_{i,j_2}(p)$.
By Definition \ref{dfn:insert/forget tubes}, $(\cT_1,g_1'|_{\cT_1})$ is a $\{i_1,j_1\}$-special tube with base $B_1$. In particular, $(\cT_1,g_1'|_{\cT_1})$ is the product of a $\{i,j_1\}$-special cap on $B_1$ and $(\TT_{A\setminus\{i,j_1\}},g_h)$ for some positive semidefinite symmetric $2$-tensor field $g_h$. This implies that for any $p\in O$. $g_{1;j_1,j_1}'(p)$ only depends on $\pr^A_{i,j_1}(p)$. Following the discussion after \eqref{eqn:appC.insert/forget tubes-1.2a}, $g_{1;j_1,j_1}'(p)$ only depends on $\pr^A_{j_1}(p)$. This and \eqref{eqn:appC.insert/forget tubes-1.2a} imply that the symmetric $2$-tensor field of the aforementioned $\{i,j_1\}$-special cap on $B_1$ is equal to $f(\theta_{j_1})d\theta_{j_1}\otimes d\theta_{j_1}+d\theta_i\otimes d\theta_i$ when restricted to $O_i\times O_{j_1}$, where $f$ is a nonpositive smooth function on $O_i$. In particular, this special cap has zero curvature on $O_i\times O_{j_1}$. However, by Definition \ref{dfn:caps}, extended caps do not have an open subset with zero curvature. This leads to a contradiction due to Definition \ref{dfn:special caps} and Definition \ref{dfn:cap.on.torus}. Therefore, we conclude that $\cT_1\cap\cT_2=\emptyset$. Since inserting a special tube structure only changes the symmetric $2$-tensor field restricted to the tube, the order of inserting special tube structures into $\cT_1$ and $\cT_2$ does not change the outcome.

\textbf{Case 3}: Assume that $\{i_1,j_1\}=\{i_2,j_2\}$. Similar to \textbf{Case 2}, we would like to show that $\cT_1\cap \cT_2=\emptyset$, which is equivalent to $B_1\cap B_2=\emptyset$ in this case. Indeed, by Definition \ref{dfn:insert/forget tubes}, $(\cT_1,g_1'|_{\cT_1})$ is a $\{i_1,j_1\}$-special tube with base $B_1$. In particular, $(\cT_1,g_1'|_{\cT_1})$ is the product of a $\{i_1,j_1\}$-special cap on $B_1$ and $(\TT_{A\setminus\{i_1,j_1\}},g_h)$ for some positive semidefinite symmetric $2$-tensor field $g_h$. Since $(\cT_2,g_1'|_{\cT_2})$ is a $\{i_2,j_2\}$-standard tube with base $B_2$, $(\cT_2,g_1'|_{\cT_2})$ is the product of a $(B_2,d\theta_{i_2}\otimes d\theta_{i_2}+d\theta_{j_2}\otimes d\theta_{j_2})$ and $(\TT_{A\setminus\{i_2,j_2\}},g_h')$ for some positive semidefinite symmetric $2$-tensor field $g_h'$. Recall from Definition \ref{dfn:caps} and Definition \ref{dfn:special caps}, special caps do not have flat open subsets. Therefore, $B_1\cap B_2=\emptyset$, and hence $\cT_1\cap\cT_2=\emptyset$. Since inserting a special tube structure only changes the symmetric $2$-tensor field restricted to the tube, the order of inserting special tube structures into $\cT_1$ and $\cT_2$ does not change the outcome.

\item[\hyperlink{T3}{(T3)}] We first prove the ``forgetting'' part of \hyperlink{T3}{(T3)} while assuming the ``inserting'' part of \hyperlink{T3}{(T3)} holds. Assume that $(\TT_A,g')$ is obtained by forgetting a $\{i,j\}$-special tube with base $B$ from $(\TT_A,g)$. Then, by \hyperlink{T2}{(T2)}, $(\TT_A,g)$ is obtained by inserting the same $\{i,j\}$-special tube with base $B$ into $(\TT_A,g')$. By the ``inserting'' part of \hyperlink{T3}{(T3)}, $(\TT_A,g)\times (\TT_{A'\setminus A},h)$ is obtained by inserting a $\{i,j\}$-special tube with the same base into $(\TT_A,g')\times (\TT_{A'\setminus A},h)$. Then by \hyperlink{T2}{(T2)}, $(\TT_A,g')\times (\TT_{A'\setminus A},h)$ is obtained by forgetting the $\{i,j\}$-special tube with the same base from $(\TT_A,g)\times (\TT_{A'\setminus A},h)$. This finishes the proof of the ``forgetting'' part of \hyperlink{T3}{(T3)}.

It remains to prove the ``inserting'' part of \hyperlink{T3}{(T3)}. Let $\cT$ be a $\{i,j\}$-tube in $\TT_A$ with base $B$ and then $\cT'=\cT\times \TT_{A'\setminus A}$ is a $\{i,j\}$-tube in $\TT_A'$ with base $B$. Assume that one can insert a $\{i,j\}$-special tube into $(\TT_A,g)$ with base $B$, then by Definition \ref{dfn:insert/forget tubes}, $g|_{\cT}=d\theta_i\otimes d\theta_i+d\theta_j\otimes d\theta_j+(\pr^A_{A\setminus\{i,j\}})^*(g_h)$, where $g_h$ is a symmetric $2$-tensor field on $\TT_{A\setminus\{i,j\}}$. This implies that 
$$(\pr^{A'}_{A})^*g|_{\cT'}+(\pr^{A'}_{A'\setminus A})^*h=d\theta_i\otimes d\theta_i+d\theta_j\otimes d\theta_j+(\pr^{A'}_{A\setminus\{i,j\}})^*(g_h)+(\pr^{A'}_{A'\setminus A})^*h.$$
This implies that one can also insert a $\{i,j\}$-special tube into $(\TT_A,g)\times (\TT_{A\setminus A'},h)$ with base $B$. Assume that $(\TT_A,g')$ is obtained by inserting a $\{i,j\}$-special tube into $(\TT_A,g)$ with base $B$. Let $g_{\text{cap}}$ be the corresponding symmetric $2$-tensor field on $B$ in $(\cT,g'|_{\cT})$. Then, by Definition \ref{dfn:insert/forget tubes}, we have $g'|_{\TT_A\setminus \cT}=g|_{\TT_A\setminus \cT}$ and 
\begin{align}\label{eqn:appC.insert/forget tubes-1.3a}
g'|_{\cT}=(\pr^A_{i,j})^*(g_{\text{cap}})+(\pr^A_{A\setminus\{i,j\}})^*(g_h).
\end{align}
On the other hand, assume that $(\TT_{A'},\widetilde{g})$ is obtained by inserting a $\{i,j\}$-special tube into $(\TT_A,g)\times (\TT_{A'\setminus A},h)$ with base $B$. Assume in addition that $g_{\text{cap}}$ be the corresponding symmetric $2$-tensor field on $B$ in $(\cT',\widetilde{g}|_{\cT'})$. Then, by \eqref{eqn:appC.insert/forget tubes-1.3a} and Definition \ref{dfn:insert/forget tubes} we have 
$$g'|_{\TT_{A'}\setminus \cT'}=(\pr^{A'}_A)^*(g|_{\TT_A\setminus \cT})+(\pr^{A'}_{A'\setminus A})^*h=(\pr^{A'}_A)^*(g'|_{\TT_A\setminus \cT})+(\pr^{A'}_{A'\setminus A})^*h,$$ 
and 
$$\widetilde{g}|_{\cT'}=(\pr^{A'}_{i,j})^*(g_{\text{cap}})+(\pr^{A'}_{A\setminus\{i,j\}})^*(g_h)+(\pr^{A'}_{A'\setminus A})^*h=(\pr^{A'}_A)^*(g'|_{\TT_A\setminus \cT})+(\pr^{A'}_{A'\setminus A})^*h.$$
This proves that $(\TT_{A'},\widetilde{g})=(\TT_A,g)\times (\TT_{A'\setminus A},h)$ and hence verifies the ``inserting'' part of \hyperlink{T3}{(T3)}. 
\end{enumerate}

\subsection{The omitted part of the proof of Lemma \ref{lem:tube.index.properties-1}}
\begin{enumerate}
\item We split the proof into two cases:

\textbf{Case 1}: Assume that $j\not\in\{i,i+1\}$. Observe that $O_{i,i+1}\times \TT_{A\setminus \{i,i+1\}}$ is invariant under $\kappa_{0;j}$ and $\kappa_{\pi;j}$ for any subset $O_{i,i+1}$ of $\TT_{i,i+1}$. By \eqref{eqn:j-tubes} and the above observation, $\cT_{i;A}(R)$ and $\TT_A\setminus\cT_{i;A}(R)$ are invariant under $\kappa_{0;j}$ and $\kappa_{\pi;j}$.

\textbf{Case 2}: Assume that $j=i$ or $i+1$. Then, for any $f\in\{\kappa_{0;i},\kappa_{\pi;i},\kappa_{0;i+1},\kappa_{\pi;i+1}\}$ and for any set of the form $O_{i,i+1}\times \TT_{A\setminus \{i,i+1\}}$ with $O_{i,i+1}$ being a subset of $\TT_{i,i+1}$, $f(O_{i,i+1}\times \TT_{A\setminus \{i,i+1\}})=O_{i,i+1}\times \TT_{A\setminus \{i,i+1\}}$ if and only if $f(O_{i,i+1})=O_{i,i+1}$. (Here, for any isometry of $\TT_{i,i+1}$, we use the same notation for its product with the identity map on $\TT_{A\setminus\{i,i+1\}}$.) By \eqref{jth singularity} and \eqref{eqn:j-tubes}, $\cT_{i;A}(R)$ is of the form $O_{i,i+1}\times \TT_{A\setminus \{i,i+1\}}$ with 
\begin{align}\label{eqn:appC.tube.index.properties-1.1a}
O_{i,i+1}=B_{i,i+1}((\pi/2,0),R)\sqcup B_{i,i+1}((3\pi/2,0),R)\sqcup B_{i,i+1}((\pi/2,\pi),R)\sqcup B_{i,i+1}((3\pi/2,\pi),R).
\end{align}
Since $f$ is an isometry, it suffices to show that $f(O_{i,i+1})=O_{i,i+1}$ for any $f\in\{\kappa_{0;i},\kappa_{\pi;i},\kappa_{0;i+1},\kappa_{\pi;i+1}\}$. This can be verified by \eqref{eqn:appC.tube.index.properties-1.1a} and the following:
\begin{align*}
&\kappa_{0;i}(B_{i,i+1}((\pi/2,0),R))=B_{i,i+1}((-\pi/2,0),R)=B_{i,i+1}((3\pi/2,0),R).\\
&\kappa_{0;i}(B_{i,i+1}((3\pi/2,0),R))=B_{i,i+1}((-3\pi/2,0),R)=B_{i,i+1}((\pi/2,0),R).\\
&\kappa_{0;i}(B_{i,i+1}((\pi/2,\pi),R))=B_{i,i+1}((-\pi/2,\pi),R)=B_{i,i+1}((3\pi/2,\pi),R).\\
&\kappa_{0;i}(B_{i,i+1}((3\pi/2,\pi),R))= B_{i,i+1}((-3\pi/2,\pi),R)= B_{i,i+1}((\pi/2,\pi),R).\\
&\kappa_{\pi;i}(B_{i,i+1}((\pi/2,0),R))=B_{i,i+1}((\pi/2,0),R).\\
&\kappa_{\pi;i}(B_{i,i+1}((3\pi/2,0),R))=B_{i,i+1}((-\pi/2,0),R)=B_{i,i+1}((3\pi/2,0),R).\\
&\kappa_{\pi;i}(B_{i,i+1}((\pi/2,\pi),R))=B_{i,i+1}((\pi/2,\pi),R).\\
&\kappa_{\pi;i}(B_{i,i+1}((3\pi/2,\pi),R))=B_{i,i+1}((-\pi/2,\pi),R)=B_{i,i+1}((3\pi/2,\pi),R).\\
&\kappa_{0;i+1}(B_{i,i+1}((\pi/2,0),R))=B_{i,i+1}((\pi/2,0),R).\\
&\kappa_{0;i+1}(B_{i,i+1}((3\pi/2,0),R))=B_{i,i+1}((3\pi/2,0),R).\\
&\kappa_{0;i+1}(B_{i,i+1}((\pi/2,\pi),R))=B_{i,i+1}((\pi/2,-\pi),R)=B_{i,i+1}((\pi/2,\pi),R).\\
&\kappa_{0;i+1}(B_{i,i+1}((3\pi/2,\pi),R))=B_{i,i+1}((3\pi/2,-\pi),R)=B_{i,i+1}((3\pi/2,\pi),R).\\
&\kappa_{\pi;i+1}(B_{i,i+1}((\pi/2,0),R))=B_{i,i+1}((\pi/2,\pi),R).\\
&\kappa_{\pi;i+1}(B_{i,i+1}((3\pi/2,0),R))=B_{i,i+1}((3\pi/2,\pi),R).\\
&\kappa_{\pi;i+1}(B_{i,i+1}((\pi/2,\pi),R))=B_{i,i+1}((\pi/2,0),R).\\
&\kappa_{\pi;i+1}(B_{i,i+1}((3\pi/2,\pi),R))=B_{i,i+1}((3\pi/2,0),R).
\end{align*}
\item Since $p\in\cT_{i;A}(R)$, by \eqref{jth singularity}, \eqref{eqn:j-tubes} and the fact that $R\in(0,\pi/8)$, we have 
\begin{align}\label{eqn:appC.tube.index.properties-1.1b}
\pr^A_i(p)\in(3\pi/8,5\pi/8)\sqcup(11\pi/8,13\pi/8) \text{ and }\pr^A_{i+1}(p)\in(-\pi/8,\pi/8)\sqcup(7\pi/8,9\pi/8).
\end{align}
By \eqref{eqn:appC.tube.index.properties-1.1b} and the assumption on $q$, we have
\begin{align}\label{eqn:appC.tube.index.properties-1.1c}
\pr^A_i(q)\in(1\pi/8,7\pi/8)\sqcup(9\pi/8,15\pi/8) \text{ and }\pr^A_{i+1}(q)\in(-3\pi/8,3\pi/8)\sqcup(5\pi/8,11\pi/8).
\end{align}
In particular, \eqref{eqn:appC.tube.index.properties-1.1c} imply that 
$$\min\{|\pr^A_i(q)-0|,|\pr^A_i(q)-\pi|,|\pr^A_{i+1}(q)-\pi/2|,|\pr^A_{i+1}(q)-3\pi/2|\}\geq \pi/8>R'.$$
By \eqref{jth singularity}, \eqref{eqn:j-tubes} and the above, we have $q\not\in\cT_{j;A}(R')$ for any $j\in\{i-1,i+1\}\cap\cE(A)$.
\item Already proved after Lemma \ref{lem:tube.index.properties-1}.
\item For any $\Gamma$-invariant subset $O$ of $\TT_A$, we have $\Pi^{-1}_\Gamma(\Pi_\Gamma(O))=O$. Moreover, if $O$ is $\Gamma$-invariant, then $\TT_A\setminus O$ and $\overline{O}$ are also $\Gamma$-invariant. The rest of the proof then follows from Lemma \ref{lem:tube.index.properties-1-1}.
\item Since $\cT_{i;A}(R)$ is $\Gamma$-invariant due to Lemma \ref{lem:tube.index.properties-1-1}, $\overline{\cT_{i;A}(R)}$ is a $\Gamma$-invariant, compact subset of $\TT_A$. Hence, $\Pi_\Gamma(\overline{\cT_{i;A}(R)})$ is a closed subset of $\Gamma\backslash\TT_A$ containing $\Pi_\Gamma({\cT_{i;A}(R)})$. This proves that $\overline{\Pi_\Gamma({\cT_{i;A}(R)})}\subset\Pi_\Gamma(\overline{\cT_{i;A}(R)})$. Choose $R'\in(R,\pi/8)$. Then, by \eqref{eqn:j-tubes}, we have 
\begin{align}\label{eqn:appC.insert/forget tubes-1.5a}\overline{\Pi_\Gamma({\cT_{i;A}(R)})}\subset\Pi_\Gamma(\overline{\cT_{i;A}(R)})\subset \Pi_\Gamma(\cT_{i;A}(R')).
\end{align}
Similarly, 
\begin{align}\label{eqn:appC.insert/forget tubes-1.5b}\overline{\Pi_\Gamma({\cT_{i+1;A}(R)})}\subset \Pi_\Gamma(\cT_{i+1;A}(R')).
\end{align}
Notice that by Lemma \ref{lem:tube.index.properties-1-2} and Lemma \ref{lem:tube.index.properties-1-4}, we have 
\begin{align*}
\Pi^{-1}_\Gamma(\Pi_\Gamma(\cT_{i;A}(R'))\cap\Pi_\Gamma(\cT_{i+1;A}(R')))=&\Pi^{-1}_\Gamma(\Pi_\Gamma(\cT_{i;A}(R')))\cap\Pi^{-1}_\Gamma(\Pi_\Gamma(\cT_{i+1;A}(R')))\\
=&\cT_{i;A}(R')\cap\cT_{i+1;A}(R')=\emptyset.
\end{align*}
Therefore, $\Pi_\Gamma(\cT_{i;A}(R'))\cap\Pi_\Gamma(\cT_{i+1;A}(R'))=\emptyset$. The rest of the proof then follows from \eqref{eqn:appC.insert/forget tubes-1.5a}, \eqref{eqn:appC.insert/forget tubes-1.5b} and the above.
\end{enumerate}

\subsection{Proof of the remark after Definition \ref{dfn:construction}}\label{subsect:appC.construction}
Let $A=\{m+1,\dots,m+k\}$ and 
\begin{align}\label{eqn:appC.construction-a}
\widetilde{g}_{m}=\sum_{i=m+1}^{m+k}d\theta_i\otimes d\theta_i.
\end{align}
For any $i\in \cE(A)=A\setminus \{m+k\}$, we define $(\TT_A,\widetilde{g}_{i})$ inductively as follows:
\begin{itemize}
\item If $i\in E'$, let $\widetilde{g}_{i}=\widetilde{g}_{i-1}$.
\item If $i\not\in E'$, let $(\TT_A,\widetilde{g}_{i})$ be obtained by inserting all the $\{i,i+1\}$-special tubes described in Definition \ref{dfn:construction} into $(\TT_A,\widetilde{g}_{i-1})$.
\end{itemize}
For any $i\in\cE(A)$, let $O_{i,i+1}(\epsilon)$ be the same as in \eqref{eqn:appC.tube.index.properties-1.1a} with $R=\epsilon\in(\pi/10,\pi/8)$. In particular, $\cT_{i;A}(\epsilon)=O_{i,i+1}(\epsilon)\times \TT_{A\setminus\{i,i+1\}}$. For any $s\in\{1,2\}$, let $g_{s;i;\text{caps}}$ be the symmetric $2$-tensor field on $O_{i,i+1}(\pi/10)$ such that for any connected component $B$ of $O_{i,i+1}(\pi/10)$, $(B,g_{s;i;\text{caps}}|_B)$ is a $\{i,i+1\}$-special $s$-cap.
 
We would like to prove the following claims for any $i\in A-1=\{m\}\cup\cE(A)$:
\begin{itemize}
\item[\hypertarget{I1}{(I1)}] $(\TT_A,\widetilde{g}_{i})$ is well-defined for any $i\in A-1$. Moreover, $\widetilde{g}_m|_{\TT_A\setminus\bigcup_{j\in\cE(A)}\cT_{j;A}(\pi/10)}=\widetilde{g}_i|_{\TT_A\setminus\bigcup_{j\in\cE(A)}\cT_{j;A}(\pi/10)}$.
\item[\hypertarget{I2}{(I2)}] For any $i\in A-1$ and for any $i_1<\dots<i_l$ with $i_1,\dots,i_l\in\cE(A)$, we have
\begin{align}\label{eqn:appC.construction-b}
&\widetilde{g}_i|_{\left(\bigcap_{j=1}^l \cT_{i_j;A}(\pi/10)\right)\setminus\left(\bigcup_{s\in\cE(A)\setminus\{i_1,\dots,i_l\}}\cT_{s;A}(\pi/10)\right)}\nonumber\\
=&\left(\sum_{\substack{j:j\in\{1,\dots,l\}\\i_j\in E\setminus E'\\i_j\leq i}}(\pr^A_{i_j,i_j+1})^*g_{1;i_j;\text{caps}}\right)+\left(\sum_{\substack{j:j\in\{1,\dots,l\}\\i_j\in \cE(A)\setminus (E\cup E')\\i_j\leq i}}(\pr^A_{i_j,i_j+1})^*g_{2;i_j;\text{caps}}\right)\nonumber\\
&+\left(\sum_{\substack{j:j\in\{1,\dots,l\}\\i_j>i\text{ or }i_j\in E'}}d\theta_{i_j}\otimes d\theta_{i_j}+d\theta_{i_j+1}\otimes d\theta_{i_j+1}\right)+\left(\sum_{\substack{j:j\in A\\ j\not\in\{i_1,i_1+1,i_2,i_2+1,\dots, i_l,i_l+1\}}} d\theta_j\otimes d\theta_j\right),
\end{align}
provided that $i_j+1<i_{j+1}$ for any $j\in\{1,\dots,l-1\}$. (If $i_j+1=i_{j+1}$ for some $j\in\{1,\dots,l-1\}$, $\bigcap_{j=1}^l \cT_{i_j;A}(\pi/10)=\emptyset$ due to Lemma \ref{lem:tube.index.properties-1-5}.)
\end{itemize}
Assume that \hyperlink{I1}{(I1)} and \hyperlink{I2}{(I2)} hold, then the constructions in Definition \ref{dfn:construction} are well-defined. By the assumption on $\cU_q$, \eqref{eqn:good.nbhd} and \eqref{eqn:j-tubes}, we have $\cU_q=(\prod_{i\in\Sing_A(q)}B_{i,i+1}(\epsilon))\times O$, where $B_{i,i+1}(\epsilon)$ is a connected component of $O_{i,i+1}(\epsilon)$, and $O$ is an open subset of $\TT_{A\setminus \Sing_A^+(q)}$. Moreover, by \eqref{eqn:construction-1} and \eqref{eqn:j-tubes}, we have 
\begin{align*}
\cU_q\setminus\cT_{i;A}(\pi/10)=
\begin{cases}
\cU_q,~&\text{if }i\not\in\Sing_A(q),\\
(B_{i,i+1}(\epsilon)\setminus B_{i,i+1}(\pi/10))\times (\prod_{j\in\Sing_A(q)\setminus\{i\}}B_{j,j+1}(\epsilon))\times O,~&\text{if }i\in\Sing_A(q),
\end{cases}
\end{align*}
where $B_{i,i+1}(\pi/10)=B_{i,i+1}(\epsilon)\cap O_{i,i+1}(\pi/10)$ is a connected component of $O_{i,i+1}(\pi/10)$. The three bullet points in the remark after Definition \ref{dfn:construction} then follows from \hyperlink{I1}{(I1)}, \hyperlink{I2}{(I2)}, \eqref{eqn:construction-1} and the above.

Now we show that $\TT_A$ can be covered by good neighborhoods of the form \eqref{eqn:good.nbhd} with $\epsilon\in(\pi/10,\pi/8)$ and satisfying \eqref{eqn:construction-1}.  By Lemma \ref{lem:tube.index.properties-1-3}, there exists some $q\in\TT_A$ and a good neighborhood $\cU_q$ with the above choice of $\epsilon$ and a sufficiently small $\epsilon'\in(0,(\epsilon-(\pi/10))/2)$, such that $\Sing_A(q)=\{j\in\cE(A):p\in\cT_{j;A}(\epsilon)\}$. By \eqref{eqn:j-tubes} and the above, we have
\begin{align}\label{eqn:appC.construction-c}
(B_{j,j+1}(\pr^A_{j,j+1}(p),2\epsilon')\times \TT_{A\setminus\{j,j+1\}})\cap\cT_{j;A}(\pi/10)=\emptyset
\end{align} 
for any $j\in\cE(A)\setminus\Sing_A(q)=\{l\in\cE(A):p\not\in\cT_{l;A}(\epsilon)\}$.
It suffices to show that $\cU_q$ satisfies \eqref{eqn:construction-1}. If $\cU_q\cap\cT_{i;A}(\pi/10)\neq \emptyset$ for some $i\in\cE(A)\setminus \Sing_A(q)$, by Lemma \ref{lem:tube.index.properties-1-5} and the fact that $\cU_q$ is a subset of $\bigcap_{j\in\Sing_A(q)}\cT_{j;A}(\epsilon)$ when $\Sing_A(q)\neq \emptyset$, we have $\{i,i+1\}\cap\Sing^+_A(q)=\emptyset$. By \eqref{eqn:good.nbhd}, the fact that $p\in\cU_q$ and the fact that $\{i,i+1\}\cap\Sing^+_A(q)=\emptyset$, we have $\cU_q\subset B_{i,i+1}(\pr^A_{i,i+1}(p),2\epsilon')\times \TT_{A\setminus\{i,i+1\}}$, which contradicts \eqref{eqn:appC.construction-c} and the assumption that $\cU_q\cap\cT_{i;A}(\pi/10)\neq \emptyset$. This proves that $\cU_q$ satisfies \eqref{eqn:construction-1}.

It remains for us to prove \hyperlink{I1}{(I1)} and \hyperlink{I2}{(I2)}. It is clear from \eqref{eqn:appC.construction-a} that both \hyperlink{I1}{(I1)} and \hyperlink{I2}{(I2)} holds for $i=m$. Assume that \hyperlink{I1}{(I1)} and \hyperlink{I2}{(I2)} holds for $i=i_0$, then the well-definedness of $\widetilde{g}_{i_0+1}$ follows directly from \hyperlink{I2}{(I2)} in the case of $i=i_0$. The fact that $\widetilde{g}_m|_{\TT_A\setminus\bigcup_{j\in\cE(A)}\cT_{j;A}(\pi/10)}=\widetilde{g}_{i_0+1}|_{\TT_A\setminus\bigcup_{j\in\cE(A)}\cT_{j;A}(\pi/10)}$ and \eqref{eqn:appC.construction-b} in the case of $i=i_0+1$ then follow directly from Definition \ref{dfn:insert/forget tubes}. This proves \hyperlink{I1}{(I1)} when $i=i_0+1$. \hyperlink{I2}{(I2)} in the case of $i=i_0+1$ follows directly from \hyperlink{I2}{(I2)} in the case of $i=i_0$ and Definition \ref{dfn:insert/forget tubes}. By the above induction on $i$, the proof of \hyperlink{I1}{(I1)} and \hyperlink{I2}{(I2)} is complete.

\subsection{Proof of Lemma \ref{lem:properties.of.tg}}
\begin{enumerate}
\item We would like to prove that $\tg_{\Gamma;A;E'}$ is invariant under $\kappa_{0;i}$ and $\kappa_{\pi;i}$ for any $i\in A$, which is a stronger statement than Lemma \ref{lem:properties.of.tg-1} due to \eqref{eqn:generator} and \eqref{eqn:larger.gp}. 

We adopt the same notations in Subsection \ref{subsect:appC.construction}. Then, it suffices to show that $\tg_i$ is invariant under $\kappa_{0;j}$ and $\kappa_{\pi;j}$ for any $i\in A-1$ and for any $j\in A$. We prove this using induction on $i$.

Recall that $A=\{m+1,\dots,m+k\}$. When $i=m$, by \eqref{eqn:generator} and \eqref{eqn:larger.gp}, $\tg_i$ is clearly invariant under  $\kappa_{0;j}$ and $\kappa_{\pi;j}$ for any $j\in A$. Assume that $\tg_i$ is invariant under $\kappa_{0;j}$ and $\kappa_{\pi;j}$ for any $i\leq i_0$ and for any $j\in A$. Since $\tg_{i_0+1}=\tg_{i_0}$ when $i_0+1\in E'$, we only need to consider the case when $i_0+1\not\in E'$. By Definition \ref{dfn:insert/forget tubes} and Definition \ref{dfn:construction}, we have
\begin{align}\label{eqn:appC.properties.of.tg-1a}
\tg_{i_0+1}|_{\TT_A\setminus\cT_{i_0+1;A}(\pi/10)}=\tg_{i_0}|_{\TT_A\setminus\cT_{i_0+1;A}(\pi/10)}.
\end{align}
and there exists some $s\in\{1,2\}$ such that 
\begin{align}\label{eqn:appC.properties.of.tg-1b}
\tg_{i_0+1}|_{\cT_{i_0+1;A}(\pi/10)}-\tg_{i_0}|_{\cT_{i_0+1;A}(\pi/10)}=(\pr^A_{i_0+1,i_0+2})^*g_{s;i;\text{caps}}-(d\theta_{i_0+1}\otimes d\theta_{i_0+1}+d\theta_{i_0+2}\otimes d\theta_{i_0+2}).
\end{align}
By Lemma \ref{lem:tube.index.properties-1-1}, $\cT_{i_0+1;A}(\pi/10)$ and $\TT_A\setminus \cT_{i_0+1;A}(\pi/10)$ is invariant under $\kappa_{0;j}$ and $\kappa_{\pi;j}$. Moreover, by the remark after Definition \ref{dfn:special caps} and the remark after Definition \ref{dfn:cap.on.torus}, the right-hand-side of \eqref{eqn:appC.properties.of.tg-1b} is invariant under $\kappa_{0;j}$ and $\kappa_{\pi;j}$. Therefore, the invariance of $\tg_{i_0+1}$ under $\kappa_{0;j}$ and $\kappa_{\pi;j}$ follows from \eqref{eqn:appC.properties.of.tg-1a}, \eqref{eqn:appC.properties.of.tg-1b} and the invariance of $\tg_{i_0}$ under $\kappa_{0;j}$ and $\kappa_{\pi;j}$. This finishes the proof.

\item $\Gamma\cap\Gamma'$-symmetry of $\tg_{\Gamma;A;E'}$ has already been verified in Lemma \ref{lem:properties.of.tg-1}. It remains to verify the compatibility condition with $\Omega_{\Gamma\cap\Gamma';A}$ in Corollary \ref{cor:quotient is mfld} for $\tg_{\Gamma;A;E'}$.

By \eqref{eqn:z.to.z2}, Definition \ref{dfn:cap.on.torus} and the remark after Definition \ref{dfn:construction}, there exists a family of good neighborhoods $\{\cU_q\}_{q\in O\subset \TT_A}$ of the form \eqref{eqn:good.nbhd} with $\epsilon=\pi/10$ such that the following holds:
\begin{itemize}
\item $\bigcup_{q\in O}\cU_q=\TT_A$.
\item For any $q\in O$, $\tg_{\Gamma;A;E'}|_{\cU_q}=\Xi^*_{\Gamma\cap\Gamma';A;q}(g')$ for some Riemannian metric $g'$ on $\Xi_{\Gamma\cap\Gamma';A;q}(\cU_q)$.
\end{itemize}
For any good neighborhood $\cU_q$ of $q$ in the form of \eqref{eqn:good.nbhd} with $\epsilon=\pi/10$, and for any $p\in\cU_q$, by \eqref{eqn:good.nbhd} again, there is a good neighborhood $\cU_p$ of $p$ such that $\cU_p\subset \cU_q$. It remains to prove the following claim: For the above choice of $p,q$ and $\cU_p,\cU_q$, there exists a diffeomorphism $\Psi_{p,q}:\Xi_{\Gamma\cap\Gamma';A;p}(\cU_p)\to\Xi_{\Gamma\cap\Gamma';A;q}(\cU_p)$ such that
\begin{align}\label{eqn:appC.properties.of.tg-2a}
\Xi_{\Gamma\cap\Gamma';A;q}|_{\cU_p}=\Psi_{p,q}\circ\Xi_{\Gamma\cap\Gamma';A;p}.
\end{align}
Indeed, by Proposition \ref{prop:quotient.is.sphere}, there exist two diffeomorphisms 
\begin{align}\label{eqn:appC.properties.of.tg-2b}
\Psi_{\Gamma\cap\Gamma';A;q}:\Xi_{\Gamma\cap\Gamma';A;q}(\cU_q)\to\Phi_{\Gamma\cap\Gamma';A}(\cU_q)\text{ and }\Psi_{\Gamma\cap\Gamma';A;p}:\Xi_{\Gamma\cap\Gamma';A;p}(\cU_p)\to\Phi_{\Gamma\cap\Gamma';A}(\cU_p).
\end{align} 
such that
\begin{align}\label{eqn:appC.properties.of.tg-2c}
\Phi_{\Gamma\cap\Gamma';A}|_{\cU_p}=\Psi_{\Gamma\cap\Gamma';A;p}\circ\Xi_{\Gamma\cap\Gamma';A;p}=\Psi_{\Gamma\cap\Gamma';A;q}\circ\Xi_{\Gamma\cap\Gamma';A;q}|_{\cU_p}.
\end{align}
The existence of $\Psi_{p,q}$ satisfying \eqref{eqn:appC.properties.of.tg-2a} is then established by \eqref{eqn:appC.properties.of.tg-2b}, \eqref{eqn:appC.properties.of.tg-2c} and choosing $\Psi_{p,q}:= (\Psi_{\Gamma\cap\Gamma';A;q})^{-1}\circ\Psi_{\Gamma\cap\Gamma';A;p}$.

%

\item We adopt the same notations in Definition \ref{dfn:construction} and Subsection \ref{subsect:appC.construction}.  Let $E'=\{m+i_1,\dots, m+i_{q-1}\}$ with $0=:i_0<i_1<\dots<i_{q-1}<i_q:=m+k$, and $A_j=\{m+i_{j-1}+1, m+i_{j-1}+2,\dots, m+i_{j}\}$. Notice that by Lemma \ref{lem:properties.good.subgp-(3)} and the assumption that $\Gamma=\Gamma_A(E)$, we have $\Gamma\cap\Gamma_{A_j}=\Gamma_{A_j}(E\cap\cE(A_j))$ for any $j\in\{1,\dots, q\}$. Then, by Definition \ref{dfn:construction}, \hyperlink{T3}{(T3)}, \eqref{eqn:appC.construction-a} and the above, one can show that for any $j\in\{0,\dots, q\}$,
$$\tg_{m+i_j-1}=\left(\sum_{s=1}^j (\pr^A_{A_s})^*\tg_{\Gamma\cap\Gamma_{A_j};A_j}\right)+\sum_{s=j+1}^q\sum_{l\in A_s} d\theta_l\otimes d\theta_l.$$
In particular, the case when $j=q$ proves Lemma \ref{lem:properties.of.tg-3}.
\item By \hyperlink{T1}{(T1)}, the order of inserting special tube structures does not matter. In particular, by Definition \ref{dfn:construction}, $(\TT_A,\widetilde{g}_{\Gamma;A})$ is obtained by inserting the following special tubes into $(\TT_A,\widetilde{g}_{\Gamma;A;E'})$:
\begin{itemize}
\item The $\{i,i+1\}$-special $1$-tube with base $B_{i,i+1}(p_{i,i+1},\pi/10)$, for any $p_{i,i+1}\in\Omega(i)$ and for any $i\in  E\cap E'$.
\item The $\{i,i+1\}$-special $2$-tube with base $B_{i,i+1}(p_{i,i+1},\pi/10)$, for any $p_{i,i+1}\in\Omega(i)$ and for any $i\in E'\setminus E$.
\end{itemize}
By Definition \ref{dfn:insert/forget tubes}, the above operation does not change the symmetric $2$-tensor field on $\TT_A\setminus\left(\bigcup_{i\in E'}\cT_{i;A}(\pi/10)\right)$. Equivalently, $\tg_{\Gamma;A;E'}$ and $\tg_{\Gamma;A}$ are the same when restricted to $\TT_A\setminus\left(\bigcup_{i\in E'}\cT_{i;A}(\pi/10)\right)$.
\end{enumerate}

\subsection{The omitted part of the proof of Lemma \ref{lem:properties.of.g.Gamma}}
\begin{enumerate}
\item For any $q\in\TT_A$, if there exists a good neighborhood $\cU_q$ of the form \eqref{eqn:good.nbhd} with $\epsilon\in(\pi/8,\pi/10)$ satisfying \eqref{eqn:construction-1}, by the remark after Definition \ref{dfn:construction}, $(\cU_q,\tg_{\Gamma;A;E'}|_{\cU_q})$ is the product of the following manifolds equipped with positive semidefinite symmetric 2-tensor fields:
\begin{itemize}
\item The union of $(B_{i,i+1}(\pr^A_{i,i+1}(q),\epsilon)\setminus B_{i,i+1}(\pr^A_{i,i+1}(q),\pi/10), d\theta_i\otimes d\theta_i+d\theta_{i+1}\otimes d\theta_{i+1})$ and a $\{i,i+1\}$-special $1$-caps centered at $\pr^A_{i,i+1}(q)$ with Euclidean radius $\pi/10$, where $i\in \{j\in E\setminus E':q\in\Omega_A(j)\}=\Sing_{\Gamma';A}(q)\setminus\Sing_{\Gamma'\cap\Gamma;A}(q)$. (See Definition \ref{sing index} and its remark.)
\item The union of $(B_{i,i+1}(\pr^A_{i,i+1}(q),\epsilon)\setminus B_{i,i+1}(\pr^A_{i,i+1}(q),\pi/10), d\theta_i\otimes d\theta_i+d\theta_{i+1}\otimes d\theta_{i+1})$ and a $\{i,i+1\}$-special $2$-caps centered at $\pr^A_{i,i+1}(q)$ with Euclidean radius $\pi/10$, where $i\in \{j\in\cE(A)\setminus\{E\cup E'\}:q\in\Omega_A(j)\}=\Sing_{\Gamma'\cap\Gamma;A}(q)$. (See Definition \ref{sing index} and its remark.)
\item $\pr^A_{A\setminus\Sing^+_{\Gamma';A}(q)}(\cU_q)$ equipped with the standard Euclidean metric $\displaystyle \sum_{i\in A\setminus\Sing^+_{\Gamma';A}(q)}d\theta_i\otimes d\theta_i$. (See Definition \ref{sing index}.)
\end{itemize}
If $\pr^A_{i}(q)=\theta_i$ for any $i\in A$, then by \eqref{eqn:z.to.z2}, $\Xi_{\Gamma\cap\Gamma';A;q}$ is the product of the following maps:
\begin{itemize}
\item $(\theta_{i}+\delta_{i},\theta_{i+1}+\delta_{i+1})\to \delta_i+\mathbf{i}\delta_{i+1}$ from $B_{i,i+1}(\pr^A_{i,i+1}(q),\epsilon)$ to the ball of radius $\epsilon$ in $\CC$ centered at $0$. Here, $i\in\Sing_{\Gamma';A}(q)\setminus\Sing_{\Gamma\cap\Gamma';A}(q)$, and we have identified $\CC$ with $\RR^2$ via $z\to(\text{Re}(z),\text{Im}(z))$.
\item $(\theta_{i}+\delta_{i},\theta_{i+1}+\delta_{i+1})\to (\delta_i+\mathbf{i}\delta_{i+1})^2$ from $B_{i,i+1}(\pr^A_{i,i+1}(q),\epsilon)$ to the ball of radius $\epsilon^2$ in $\CC$ centered at $0$. Here, $i\in\Sing_{\Gamma\cap\Gamma';A}(q)$, and we have identified $\CC$ with $\RR^2$ via $z\to(\text{Re}(z),\text{Im}(z))$.
\item $(\theta_i+\delta_i)_{i\in A\setminus \Sing^+_{\Gamma';A}}\to (\delta_i)_{i\in A\setminus \Sing^+_{\Gamma';A}}$ from $\pr^A_{A\setminus\Sing^+_{\Gamma';A}(q)}(\cU_q)$ to an open subset of $\RR^{|A\setminus\Sing^+_{\Gamma';A}(q)|}$. 
\end{itemize}
Observe the following fact:
\begin{itemize}
\item Let $k\in\ZZ_+$, $C_R:=\{z\in\CC:|z|<R\}$. Assume that $(C_R,g)$ is a generalized extended $k$-cap in the sense of Definition \ref{dfn:gen caps}. Then there exists a Riemannian metric $g'$ on $C_{R^k}$ such that $(C_{R^k},g')$ is an extended cap, and that $g$ is the pullback of $g'$ under the map $z\to z^k$. (To see this, we adopt the notations in Definition \ref{dfn:gen caps}. Consider the map $\widetilde{F}:C_{R^k}\to C$ defined as $\widetilde{F}(z)=z\cdot(h(|z|^{1/k})/|z|)$. The first bullet point in Definition \ref{dfn:gen caps} imply that $\widetilde{F}$ is a diffeomorphism onto its image. Choose $g'=\widetilde{F}^*(g_C)$. Then by the fact that $(C,g_C)$ is an extended cap, the fact that $h(z)$ is strictly increasing, and the fact that $\widetilde{F}$ is radially symmetric, $(C_{R^k},g')$ is also an extended cap in the sense of Definition \ref{dfn:caps}.)
\end{itemize}
Then, by the above seven bullet points, there exists a Riemannian metric $g'$ on $\Xi_{\Gamma\cap\Gamma';A;q}(\cU_q)$ such that $\Xi_{\Gamma\cap\Gamma';A;q}^*(g')=\tg_{\Gamma;A;E'}$ and that $(\Xi_{\Gamma\cap\Gamma';A;q}(\cU_q),g')$ is isometric to the product of the following types of manifolds:
\begin{itemize}
\item  Open surfaces of the form $(C,g)$, where $C$ is a disc in $\RR^2$ containing a smaller disc $C'$ with the same center. In addition, $(C',g|_{C'})$ is an extended cap, and $g|_{C\setminus C'}$ is flat. Such surfaces are nearly nonpositively curved in the sense of Definition \ref{dfn:near nonpos curv} due to the definition of extended caps. (See Definition \ref{dfn:caps}.)
\item  An open subset of $\RR^{|A\setminus\Sing^+_{\Gamma';A}(q)|}$ with the standard Euclidean metric. 
\end{itemize} 
Recall from the remark after Proposition \ref{prop:quotient.is.sphere} that $(\Gamma\cap\Gamma')\backslash\TT_A$ is equipped with the differential structure so that $\overline{\Phi}_{\Gamma\cap\Gamma'}$ is a diffeomorphism. Therefore, by Definition \ref{dfn:construction.of.g} and Proposition \ref{prop:quotient.is.sphere}, $g_{\Gamma;A;E'}=(\Psi_{\Gamma\cap\Gamma';A;q}^{-1}\circ\overline{\Phi}_{\Gamma\cap\Gamma'})^*(g')$, where $\Psi_{\Gamma\cap\Gamma';A;q}^{-1}\circ\overline{\Phi}_{\Gamma\cap\Gamma'}$ is a diffeomorphism from $\Pi_{\Gamma\cap\Gamma'}(\cU_q)$ to $\Xi_{\Gamma\cap\Gamma';A;q}(\cU_q)$. This proves that $(\Pi_{\Gamma\cap\Gamma'}(\cU_q),g_{\Gamma;A;E'}|_{\Pi_{\Gamma\cap\Gamma'}(\cU_q)})$ is isometric to the product of nearly nonpositively curved manifolds in the sense of Definition \ref{dfn:near nonpos curv}. Hence, it is nearly nonpositively curved due to Definition \ref{dfn:near nonpos curv}. Recall that open subsets of the form $\cU_q$ satisfying \eqref{eqn:construction-1} cover $\TT_A$ due to the remark after Definition \ref{dfn:construction}. Hence, $(\TT_A,g_{\Gamma;A;E'})$ is nearly nonpositively curved in the sense of Definition \ref{dfn:near nonpos curv}.
\item Already proved after Lemma \ref{lem:properties.of.g.Gamma}.
\item Observe that $\Pi_{\Gamma\cap\Gamma'}$ is an open map, by \eqref{eqn:j-tubes}, $\Pi_{\Gamma\cap\Gamma'}(\TT_A\setminus\bigcup_{i\in E'}\cT_{i,A}(\pi/10))$ is an open subset. Notice that by Sard's theorem, the branching locus of $\Pi_{\Gamma\cap\Gamma'}$ has zero Lebesgue measure. Therefore, it suffices to show that $(\Pi_{\Gamma\cap\Gamma'})^*g_{\Gamma;A;E'}=(\Pi_{\Gamma\cap\Gamma'})^*((\Pi^{\Gamma\cap\Gamma'}_\Gamma)^*g_{\Gamma;A})$ on $(\Pi_{\Gamma\cap\Gamma'})^{-1}(\Pi_{\Gamma\cap\Gamma'}(\TT_A\setminus\bigcup_{i\in E'}\cT_{i,A}(\pi/10)))$. The rest of the proof then follows directly from \eqref{eqn:quotient.maps}, Lemma \ref{lem:tube.index.properties-1-4}, Lemma \ref{lem:properties.of.tg-4} and Definition \ref{dfn:construction.of.g}.
\item Already proved after Lemma \ref{lem:properties.of.g.Gamma}.
\item By Lemma \ref{lem:tube.index.properties-1-3} and Lemma \ref{lem:properties.of.g.Gamma-1}, it suffices to show that for any $p\in\TT_A\setminus(\bigcup_{j\in E''}\overline{\cT_{j;A}(\pi/10)})$, there exist $q\in\TT_A$ and a good neighborhood $\cU_q$ of the form \eqref{eqn:good.nbhd} with $\epsilon\in(\pi/10,\pi/8)$ and satisfying \eqref{eqn:construction-1}, such that $\Sing_A(q)=\{j\in\cE(A):p\in\cT_{j;A}(\epsilon)\}$ and $p\in\cU_q\subset \TT_A\setminus(\bigcup_{j\in E''}\overline{\cT_{j;A}(\pi/10)})$. 

Since $p\not\in \bigcup_{j\in E''}\overline{\cT_{j;A}(\pi/10)}$, there exists $\epsilon\in(\pi/10,\pi/8)$ such that 
\begin{align}\label{eqn:appC.properties.of.g.Gamma-0}
p\not\in \bigcup_{j\in E''}{\cT_{j;A}(\epsilon)}.
\end{align}
By the remark after Definition \ref{dfn:construction}, there exist some $q\in\TT_A$ and a good neighborhood $\cU_q$ with the above choice of $\epsilon$ and a sufficiently small $\epsilon'\in(0,(\epsilon-(\pi/10))/2)$, such that $\Sing_A(q)=\{j\in\cE(A):p\in\cT_{j;A}(\epsilon)\}$. Moreover, $\cU_q$ satisfies \eqref{eqn:construction-1}. It remains to prove that $\cU_q\subset \TT_A\setminus(\bigcup_{j\in E''}\overline{\cT_{j;A}(\pi/10)})$. We claim the following stronger statement: For any $i\in E''$, we have
\begin{align}\label{eqn:appC.properties.of.g.Gamma-1}
\cU_q\cap {\cT_{i;A}((\pi/10+\epsilon)/2)}=\emptyset.
\end{align}
We split the proof \eqref{eqn:appC.properties.of.g.Gamma-1} into the following three cases:

\textbf{Case 1}: Assume that $i-1,i+1\not\in\Sing_A(q)$. By \eqref{eqn:appC.properties.of.g.Gamma-1} and \eqref{eqn:j-tubes}, we have $i\not\in\Sing_A(q)$. Therefore, by \eqref{eqn:good.nbhd}, $|\pr^A_j(q')-\pr^A_j(q)|<\epsilon'<(\epsilon-(\pi/10))/2$ for any $j\in\{i,i+1\}$ and for any $q'\in\cU_q$. Hence, 
\begin{align}\label{eqn:appC.properties.of.g.Gamma-2}
d_{\TT_{i,i+1}}(\pr^A_{i,i+1}(q'),\pr^A_{i,i+1}(q))<\epsilon-(\pi/10),\text{ for any }q'\in\cU_q,
\end{align} 
where $d_{\TT_{i,i+1}}(\cdot,\cdot)$ is the distance function on $\TT_{i,i+1}$ under the standard product metric. Recall from \eqref{jth singularity} and \eqref{eqn:j-tubes} that for any $R\in(\pi/8,\pi/10)$, we have
\begin{align}\label{eqn:appC.properties.of.g.Gamma-3}
\cT_{i;A}(R)=\{q'\in\TT_A:d_{\TT_{i,i+1}}(\Omega(i),\pr^A_{i,i+1}(q'))<R\}.
\end{align}
Therefore, by \eqref{eqn:appC.properties.of.g.Gamma-0}, \eqref{eqn:appC.properties.of.g.Gamma-2} and \eqref{eqn:appC.properties.of.g.Gamma-3}, $\cU_q\cap\overline{\cT_{i;A}(\pi/10)}=\emptyset$. This proves \eqref{eqn:appC.properties.of.g.Gamma-1} in this case.

\textbf{Case 2}: Assume that  $i-1\in\Sing_A(q)$. Then, by Definition \ref{sing index} and \eqref{jth singularity}, we have $\pr^A_i(q)\in\{0,\pi\}$. Observe the following:
\begin{itemize}
\item $\pr^{i,i+1}_i(\Omega(i))=\{\pi/2,3\pi/2\}$. (This is due to \eqref{jth singularity}.)
\item $|\pr^A_i(q')-\pr^A_i(q)|\leq \pi/8$ for any $q'\in\cU_q$. (This is due to \eqref{eqn:good.nbhd}.)
\end{itemize}
Therefore, by \eqref{eqn:appC.properties.of.g.Gamma-3} and the above two bullet points, for any $q'\in\cU_q$ and for any $q''\in \overline{\cT_{i;A}(\pi/10)}$, we have 
$$|\pr^A_i(q')-\pr^A_i(q'')|\geq\min_{y\in\{\pi/2,3\pi/2\}}\{|\pr^A_i(q)-y|-|\pr^A_i(q')-\pr^A_i(q)|-|y-\pr^A_i(q'')|\}>\pi/4>0.$$
In particular, this proves \eqref{eqn:appC.properties.of.g.Gamma-1} in this case.

\textbf{Case 3}: Assume that  $i+1\in\Sing_A(q)$. Then, by Definition \ref{sing index} and \eqref{jth singularity}, we have $\pr^A_{i+1}(q)\in\{\pi/2,3\pi/2\}$. Observe the following:
\begin{itemize}
\item $\pr^{i,i+1}_{i+1}(\Omega(i))=\{0,\pi\}$. (This is due to \eqref{jth singularity}.)
\item $|\pr^A_{i+1}(q')-\pr^A_{i+1}(q)|\leq \pi/8$ for any $q'\in\cU_q$. (This is due to \eqref{eqn:good.nbhd}.)
\end{itemize}
Therefore, by \eqref{eqn:appC.properties.of.g.Gamma-3} and the above two bullet points, for any $q'\in\cU_q$ and for any $q''\in \overline{\cT_{i;A}(\pi/10)}$, we have 
$$|\pr^A_{i+1}(q')-\pr^A_{i+1}(q'')|\geq\min_{y\in\{0,\pi\}}\{|\pr^A_{i+1}(q)-y|-|\pr^A_{i+1}(q')-\pr^A_{i+1}(q)|-|y-\pr^A_{i+1}(q'')|\}>\pi/4>0.$$
In particular, this proves \eqref{eqn:appC.properties.of.g.Gamma-1} in this case.

%
\end{enumerate}

\subsection{Proof of the remark after Definition \ref{dfn:geo lift}}
We first show the existence of a lift of a geodesic $\tc$ of $c$ if $c^{-1}(\Pi_\Gamma(\Omega_{\Gamma;A}))$ does not have accumulation points. To prove the existence, it suffices to show that for any connected component $I'$ of $I\setminus c^{-1}(\Pi_\Gamma(\Omega_{\Gamma;A}))$, one can verify the following two properties:
\begin{itemize}
\item Any lift $\tc_{I'}:I' \to\TT_A$ of $c|_{I'}$ can be extended to a continuous function on $\overline{I'}$. 
\item For any $q\in\Pi_\Gamma^{-1}(c(\inf I'))$, there exists a lift $\tc_{I',q}:I'\to \TT_A$, such that $\lim_{t\to(\inf I')^+}\tc_{I',q}(t)=q$.
\end{itemize}

Since $\Omega_{\Gamma;A}$ is $\Gamma$-invariant and is the ramification locus of $\Pi_\Gamma$ (due to Lemma \ref{lem:branching.locus-1}), for any connected component $I'$ of $I\setminus c^{-1}(\Pi_\Gamma(\Omega_{\Gamma;A}))$, there exists a $\Gamma$-invariant open neighborhood $\cU$ of $c(I')$ such that $\Pi_\Gamma|_{\Pi_\Gamma^{-1}(\cU)}$ is a covering map. In particular, by Definition \ref{dfn:construction.of.g}, for any $t\in I'$ and any $p\in\Pi_\Gamma^{-1}(c(t))$, there exists a unique lift $\tc_{t,p}:I'\to \TT_A$ of $c|_{I'}$ such that $\tc_{t,p}(t)=p$. A direct consequence of this and the finiteness of $|\Gamma|$ is that $c|_{I'}$ has finitely many lifts. (See Definition \ref{dfn:good subgp} and Lemma \ref{lem:properties.good.subgp-(1)}.) 

Write $I'=(a,b)$, By the discussion in the previous paragraph, for any $q\in\Pi_\Gamma^{-1}(c(a))$, there exists a lift $\tc_{I',q}:I'\to\TT_A$ of $c|_{I'}$ and a sequence $\{t_n\}_{n\in\ZZ_+}\subset I'$, such that $\lim_{n\to \infty}t_n= a$ and $\lim_{n\to \infty}\tc_{I',q}(t_n)=q$.
If we assume that the first bullet point holds, then the second bullet point also holds. Therefore, it remains to verify the first bullet point. 

Let $\tc_{I'}:I'\to\TT_A$ be an arbitrary lift of $c|_{T'}$ and $q\in\Pi^{-1}_\Gamma(c(a))$, where $a=\inf I'$. For any $\epsilon>0$, let $\cV_q(\epsilon)$ be the ball of radius $\epsilon$ centered at $q$ with respect to the standard product metric. Then, there exists some $\tau=\tau(\epsilon)>0$ such that $c([a,a+\tau))\subset \Pi_\Gamma(\cV_q(\epsilon))$. By the openness of $\Pi_\Gamma$ and the above, we have 
\begin{align*}
\tc_{I'}((a,a+\tau))\subset\Pi_\Gamma^{-1}( \Pi_\Gamma(\cV_q(\epsilon)))=\bigcup_{\gamma\in\Gamma}\gamma\cV_q(\epsilon).
\end{align*}
Hence, there exists some $\gamma_{\epsilon}\in\Gamma$ such that 
\begin{align}\label{eqn:appC.geo lift-a}
\tc_{I'}((a,a+\tau))\cap\gamma_\epsilon\cV_q(\epsilon)\neq\emptyset.
\end{align}
Notice that for any $\gamma\in\Gamma$, if we assume that $\epsilon$ is sufficiently small, then $\gamma\cV_q(\epsilon)=\cV_q(\epsilon)$ if and only if $\gamma q=q$; $\gamma\cV_q(\epsilon)\cap\cV_q(\epsilon)=\emptyset$ if and only if $\gamma q\neq q$. (This is because $\Gamma$ acts on $\TT_A$ isometrically with respect to the standard product metric.) 
Therefore, by the connectedness of $\tc_{I'}$, \eqref{eqn:appC.geo lift-a} and the above observation, there exists a unique $\gamma \text{Stab}_\Gamma(q)\in\Gamma/\text{Stab}_\Gamma(q)$ independent of $\epsilon$, such that $\tc_{I'}((a,a+\tau))\subset \gamma\cV_q(\epsilon)$. As $\epsilon\to 0^+$, we have $\lim_{t\to a^+}\tc_{I'}(t)=\gamma q$. This extends $\tc$ continuously to $[\inf I',\sup I')$. The continuous extension to $\overline{I'}=[\inf I',\sup I']$ is similar. This finishes the proof of the first bullet point, and hence verifies the existence of a lift $\tc$ of $c$.

In the remark after Definition \ref{dfn:geo lift}, we also claimed that for any $\Gamma$-invariant subset $U$ of $\TT_A$, we have $c^{-1}(\Pi_\Gamma(U))=\tc^{-1}(U)$. To prove this, we first observe that $U=\Pi_{\Gamma}^{-1}(\Pi_{\Gamma}(U))$ due to the $\Gamma$-invariance of $U$. Then, by Definition \ref{dfn:geo lift},
$$c^{-1}(\Pi_\Gamma(U))=\tc^{-1}(\Pi_\Gamma^{-1}((\Pi_\Gamma(U)))=\tc^{-1}(U).$$

\subsection{Verifying the two bullet points in the proof of Lemma \ref{lem:pass.thru.not.crazy}}
%
%
\begin{itemize}
\item Let $t_1,t_2\in I$ such that $\tc(t_1)\in\cT_{i;A}(\pi/10)$ and $\tc(t_2)\in\TT_A\setminus\cT_{i;A}(\pi/9)$. Let 
\begin{align*}
t_+:=\inf\{t\in[t_1,t_2]:\tc(t)\in \TT_A\setminus\cT_{i;A}(\pi/9)\}\text{ and }t_-:=\sup\{t\in[t_1,t_+]:\tc(t)\in \overline{\cT_{i;A}(\pi/10)}\}.
\end{align*}
A direct corollary of the above is that 
\begin{align}\label{eqn:appC.pass.thru.not.crazy-1}
\tc((t_-,t_+))\subset \cT_{i;A}(\pi/9)\setminus \overline{\cT_{i;A}(\pi/10)}.
\end{align}
To prove the first bullet point, it suffices to show that $t_+-t_-\geq \pi/90$. Since $\overline{\cT_{i;A}(\pi/9)}\cap\overline{\cT_{j;A}(\pi/9)}=\emptyset$ for any $j\in\{i-1,i+1\}\cap\cE(A)$, by \eqref{eqn:appC.pass.thru.not.crazy-1}, $\tc((t_-,t_+))\cap\overline{\cT_{j;A}(\pi/10)}=\emptyset$ for any $j\in\{i-1,i,i+1\}\cap\cE(A)$. Hence, by the remark after Definition \ref{dfn:construction}, for any $t\in (t_-,t_+)$, there exists some  neighborhood $\cV_{\tc(t)}$ of $\tc(t)$ such that 
\begin{align}\label{eqn:appC.pass.thru.not.crazy-2}
\tg_{\Gamma;A}|_{\cV_{\tc(t)}}=d\theta_i\otimes d\theta_i+d\theta_{i+1}\otimes d\theta_{i+1}+(\pr^A_{A\setminus\{i,i+1\}})^*g'
\end{align}
where $g'$ is a symmetric $2$-tensor field on $\pr^A_{A\setminus\{i,i+1\}}(\cV_{\tc(t)})$. Since $\tc$ is a lift of a geodesic $c$ in $(\Gamma\backslash\TT_A,g_{\Gamma;A})$, by Definition \ref{dfn:construction.of.g} and Lemma \ref{lem:branching.locus-1}, the restriction of $\tc$ to any connected component of ${(t_-,t_+)\setminus c^{-1}(\Pi_\Gamma(\Omega_{\Gamma;A}))}$ is a geodesic in $(\TT_A\setminus \Omega_{\Gamma;A},\tg_{\Gamma;A})$ with the same speed as $c$. In view of \eqref{eqn:appC.pass.thru.not.crazy-2} and the fact that $c$ has speed at most one, the restriction of $\pr^A_{i,i+1}\circ\tc$ to any connected component of ${(t_-,t_+)\setminus c^{-1}(\Pi_\Gamma(\Omega_{\Gamma;A}))}$ is a geodesic in $\TT_{i,i+1}$ equipped with the standard product measure. Moreover, the speed of $\pr^A_{i,i+1}\circ\tc|_{(t_-,t_+)}$ is no faster than the speed of $\tc|_{(t_-,t_+)}$, which is no larger than one. Recall the assumption that $c^{-1}(\Pi_\Gamma(\Omega_{\Gamma;A}))$ does not have accumulation points. Since $\tc|_{(t_-,t_+)}$ is a continuous curve segment from a point in $\del\cT_{i;A}(\pi/10)$ to a point in $\del\cT_{i;A}(\pi/9)$, by \eqref{jth singularity}, \eqref{eqn:j-tubes} and the above discussion regarding $\pr^A_{i,i+1}\circ\tc$ and its speed, $t_+-t_-$ is larger than or equal to the distance between $\del\cT_{i;A}(\pi/10)$ and $\del\cT_{i;A}(\pi/9)$ under the standard product metric. Equivalently, $t_+-t_-\geq (\pi/9)-(\pi/10)=\pi/90$.
\item Assume that the second bullet point in Lemma \ref{lem:pass.thru.not.crazy} holds when $|A|=2$, we consider the case when $|A|\geq 3$. In particular, $E':=\{i-1,i+1\}\cap\cE(A)\neq \emptyset$. By the assumption that $0<t_2-t_1<2\delta<\pi/90$ and the first bullet point in Lemma \ref{lem:pass.thru.not.crazy}, we have $\tc([t_1,t_2])\subset \cT_{i;A}(\pi/9)$. In particular, by Lemma \ref{lem:tube.index.properties-1-4} and Lemma \ref{lem:tube.index.properties-1-5}, we have 
$$c([t_1,t_2])\subset\Pi_\Gamma\left(\TT_A\setminus\left(\bigcup_{j\in E'}\cT_{j;A}(\pi/10)\right)\right).$$ 
Apply Lemma \ref{lem:properties.of.g.Gamma-4} to $c|_{[t_1,t_2]}$ and $E'$, $\Pi_{\Gamma\cap\Gamma'}\circ\tc|_{[t_1,t_2]}$ is a geodesic in $((\Gamma\cap\Gamma')\backslash\TT_A,g_{\Gamma;A;E'})$, where $\Gamma'=\Gamma_A(E')$. Moreover, by Lemma \ref{lem:properties.of.g.Gamma-3}, $\Pi_{\Gamma\cap\Gamma'}\circ\tc|_{[t_1,t_2]}$ has the same speed as $c$. Since $\{i,i+1\}$ is an atom in the consecutive partition induced by $E'$, by Lemma \ref{lem:properties.of.g.Gamma-2}, $\Pi_{\Gamma\cap\Gamma_{\{i,i+1\}}}\circ\pr^A_{i,i+1}\circ\tc|_{[t_1,t_2]}$, which is equal to the $((\Gamma\cap\Gamma_{\{i,i+1\}})\backslash\TT_{i,i+1},g_{\Gamma\cap\Gamma_{\{i,i+1\};\{i,i+1\}}})$-component of $\Pi_{\Gamma\cap\Gamma'}\circ\tc|_{[t_1,t_2]}$, is a geodesic in $((\Gamma\cap\Gamma_{\{i,i+1\}})\backslash\TT_{i,i+1},g_{\Gamma\cap\Gamma_{\{i,i+1\};\{i,i+1\}}})$ with speed no larger than the speed of $c$. Recall that $c$ has speed no larger than one. By the $|A|=2$ case, we have 
$$\Pi_{\Gamma\cap\Gamma_{\{i,i+1\}}}\circ\pr^A_{i,i+1}\circ\tc([t_1,t_2])\subset \Pi_{\Gamma\cap\Gamma_{\{i,i+1\}}}(\cT_{i,\{i,i+1\}}(\pi/10)).$$
Apply Lemma \ref{lem:tube.index.properties-1-4} and \eqref{eqn:j-tubes} to the above, we have
$$\pr^A_{i,i+1}\circ\tc([t_1,t_2])\subset \cT_{i,\{i,i+1\}}(\pi/10)=\pr^A_{i,i+1}(\cT_{i;A}(\pi/10)).$$
By \eqref{eqn:j-tubes} again, we have $\tc([t_1,t_2])\subset\cT_{i;A}(\pi/10)$, and hence $c([t_1,t_2])\subset \Pi_\Gamma(\cT_{i;A}(\pi/10))$.
\end{itemize}

\subsection{Verifying the part in the proof of Lemma \ref{lem:rank-1.not.in.sing-2} which is similar to Lemma \ref{lem:rank-1.not.in.sing-1}}
We provide details for the part of the proof of Lemma \ref{lem:rank-1.not.in.sing-2} before \eqref{eqn:rank-1.not.in.sing-2}. This is the part in the proof of Lemma \ref{lem:rank-1.not.in.sing-2} which is similar to Lemma \ref{lem:rank-1.not.in.sing-1}.

If $c(\RR)\subset \Pi_\Gamma(\overline{\cT_{i;A}(\pi/10)})$, by Lemma \ref{lem:tube.index.properties-1-5}, $c(\RR)\cap\Pi_\Gamma(\cT_{j;A}(\pi/10)))=\emptyset$ for any $j\in E'$. Then, by Lemma \ref{lem:missing.tube=higher.rank}, $c$ is either constant, or has rank at least two. 

It remains to show that $c(\RR)\subset \Pi_\Gamma(\overline{\cT_{i;A}(\pi/10)})$. By Lemma \ref{lem:tube.index.properties-1-5}, $c(I')\cap\Pi_\Gamma(\cT_{j;A}(\pi/10)))=\emptyset$ for any $j\in E'$. Then, by Lemma \ref{lem:tube.index.properties-1-4}, we have 
$$c(I')\subset\Pi_\Gamma\left(\TT_A\setminus\left(\bigcup_{j\in E'}\cT_{j;A}(\pi/10)\right)\right).$$ 
Let $\Gamma'=\Gamma_A(E')$. Apply Lemma \ref{lem:properties.of.g.Gamma-4} to $c|_{I'}$ and $E'$, there is a smooth geodesic $\tc_{\Gamma\cap\Gamma'}:I_0\to\Pi_{\Gamma\cap\Gamma'}(\TT_A\setminus\bigcup_{j\in E'}\cT_{j;A}(\pi/10))$ in $((\Gamma\cap\Gamma')\backslash\TT_A,g_{\Gamma;A;E'})$ such that $\Pi_{\Gamma}^{\Gamma\cap\Gamma'}\circ\tc_{\Gamma\cap\Gamma'}=c|_{I_0}$.

\subsection{Proof of Lemma \ref{sublem:nomadic=rank-1-2}, similar to Lemma \ref{sublem:nomadic=rank-1-1}} Let $E'=\{i-1,i\}\cap\cE(A)$ and $\Gamma=\Gamma_A(E)$ for some $E\subset \cE(A)$. Then, by Lemma \ref{lem:properties.good.subgp-(4)}, we have $\Gamma\cap\Gamma'=\Gamma_A(E'\cup E)$. 
%
%

By Lemma \ref{lem:tube.index.properties-1-4}, Lemma \ref{lem:tube.index.properties-1-5} and the assumption on $I$, we have 
\begin{align}\label{eqn:appC.non.ints.bad}
c(I)\subset\Pi_\Gamma\left(\TT_A\setminus\left( \bigcup_{j\in E'}\cT_{j;A}(\pi/10)\right)\right).
\end{align}
Hence, by Definition \ref{dfn:geo lift} and Lemma \ref{lem:properties.of.g.Gamma-4}, the following holds:
\begin{itemize}
\item $\Pi_{\Gamma\cap\Gamma'}\circ\tc|_{I}$ is a geodesic in $(\Gamma\cap\Gamma'\backslash\TT_A,g_{\Gamma;A;E'})$. 
\item $(\Pi_{\Gamma\cap\Gamma'})_*\tY|_{I\setminus\Sigma}$ extends to a smooth Jacobi field along $\Pi_{\Gamma\cap\Gamma'}\circ\tc|_{I}$ in $(\Gamma\cap\Gamma'\backslash\TT_A,g_{\Gamma;A;E'})$.  (We use the same notation for the extended Jacobi field.)
\end{itemize}
Let 
\begin{align}\label{eqn:appC.non.ints}
I_0:=\left\{t\in I: c(t)\in \Pi_\Gamma\left(\TT_A\setminus\left( \bigcup_{j\in E'}\overline{\cT_{j;A}(\pi/10)}\right)\right)\right\}.
\end{align}
We claim that $I\setminus I_0$ does not have accumulation points. Let $I'$ be an arbitrary connected component of $I\setminus c^{-1}(\Pi_\Gamma(\Omega_{\Gamma;A}))$. Since $ c^{-1}(\Pi_\Gamma(\Omega_{\Gamma;A}))$ does not have accumulation points, it suffices to show that $\overline{I'}\setminus I_0$ does not have accumulation points. 

By Lemma \ref{lem:branching.locus}, Definition \ref{dfn:construction.of.g} and the fact that $\Pi_{\Gamma\cap\Gamma'}\circ\tc|_{I}$ is a geodesic in $(\Gamma\cap\Gamma'\backslash\TT_A,g_{\Gamma;A;E'})$, we have that $\tc|_{I'}$ is a geodesic in $(\TT_A\setminus\Omega_{\Gamma\cap\Gamma';A},\tg_{\Gamma;A;E'}|_{\TT_A\setminus\Omega_{\Gamma\cap\Gamma';A}})$. By the remark after Definition \ref{sing index}, we have
\begin{align}\label{eqn:appC.nomadic=rank-1-2-a}
\Sing_{\Gamma';A}(p)\cap E'=\emptyset\text{ for any }p\in \TT_A.
\end{align}
Assume that there is an accumulation point $t$ of $\overline{I'}\setminus I_0$. Apply Definition \ref{dfn:geo lift} and Lemma \ref{lem:tube.index.properties-1-4} to \eqref{eqn:appC.non.ints}, we have $\tc(t)\in \bigcup_{j\in E'}\overline{\cT_{j;A}(\pi/10)}$. Since $E'=\{i-1,i\}\cap\cE(A)$, by Lemma \ref{lem:tube.index.properties-1-5}, there exists a unique $i_0\in E'$ such that $\tc(t)\in\overline{\cT_{i_0;A}(\pi/10)}$. Moreover, $\{i_0-1,i_0+1\}\cap\cE(A)\supset E'\setminus\{i_0\}$. By Lemma \ref{lem:tube.index.properties-1-5}, $\tc(t)\in \TT_A\setminus\left(\bigcup_{j\in \{i_0-1,i_0+1\}\cap\cE(A)}\overline{\cT_{j;A}(\pi/10)}\right)$. Hence, By Lemma \ref{lem:properties.of.g.Gamma-5}, one can find $q\in\TT_A$ and a good neighborhood $\cU_q$ of the form \eqref{eqn:good.nbhd} containing $\tc(t)$, such that $\cU_q$ satisfies \eqref{eqn:construction-1} and that 
$$\cU_q\subset \TT_A\setminus\left(\bigcup_{j\in \{i_0-1,i_0+1\}\cap\cE(A)}\overline{\cT_{j;A}(\pi/10)}\right).$$
In particular, for any $p\in\cU_q$, by \eqref{eqn:j-tubes} and Definition \ref{sing index}, we have 
\begin{align}\label{eqn:appC.nomadic=rank-1-2-b}
\Sing_{\Gamma';A}(p)\cap\{i_0-1,i_0+1\}=\emptyset.
\end{align}
Hence, by \eqref{eqn:appC.nomadic=rank-1-2-a}, \eqref{eqn:appC.nomadic=rank-1-2-b} and Definition \ref{sing index}, we have $\Sing_{\Gamma';A}^+(p)\cap\{i_0,i_0+1\}=\emptyset$ for any $p\in\cU_q$. Apply the remark after Definition \ref{dfn:construction} to $\cU_q$, there exists a symmetric $2$-tensor field $g'$ on $\pr^A_{A\setminus \{i_0,i_0+1\}}(\cU_q)$ such that
\begin{align}\label{eqn:appC.nomadic=rank-1-2-c}
\tg_{\Gamma;A;E'}|_{\cU_q}=d\theta_{i_0}\otimes d\theta_{i_0}+d\theta_{i_0+1}\otimes d\theta_{i_0+1}+(\pr^A_{A\setminus \{i_0,i_0+1\}})*(g').
\end{align}
Recall that $\tc|_{I'}$ is a geodesic in $(\TT_A\setminus\Omega_{\Gamma\cap\Gamma';A},\tg_{\Gamma;A;E'}|_{\TT_A\setminus\Omega_{\Gamma\cap\Gamma';A}})$. \eqref{eqn:appC.nomadic=rank-1-2-c} implies that $\pr^A_{i_0,i_0+1}\circ\tc|_{I'\cap \tc^{-1}(\cU_q)}$ is a geodesic under the standard product metric $d\theta_{i_0}\otimes d\theta_{i_0}+d\theta_{i_0+1}\otimes d\theta_{i_0+1}$ on $\TT_{i_0,i_0+1}$. Observe from \eqref{eqn:appC.non.ints} that $\overline{I'}\setminus I_0\subset \tc^{-1}\left(\bigcup_{j\in E'} \del\cT_{j;A}(\pi/10)\right)$. Hence, by \eqref{eqn:j-tubes} and the assumptions on $t$ and $i_0$, we have that $\pr^A_{i_0,i_0+1}\circ\tc|_{I'\cap\tc^{-1}(\cU_q)}$ is constant. In particular, there exists an open interval $I''\subset I'\cap\tc^{-1}(\cU_q)$ and a point $p_0\in\overline{\cT_{i_0;A}(\pi/10)}$, such that $c(I'')\subset \Pi_\Gamma(\pr^A_{i_0,i_0+1}(\{p_0\})\times \TT_{A\setminus\{i_0,i_0+1\}}$. Apply Lemma \ref{lem:rank-1.not.in.sing-1} to $c$, we have $c(\RR)\subset \Pi_\Gamma(\overline{\cT_{i_0;A}(\pi/10)})$. Recall that $|A|\geq 3$ and hence $\{i_0-1,i_0+1\}\cap\cE(A)\neq\emptyset$. Therefore, by Lemma \ref{lem:tube.index.properties-1-5}, $c(\RR)\cap\cT_{j;A}(\pi/10)=\emptyset$ for any $j\in\{i_0-1,i_0+1\}\cap\cE(A)$. This contradicts the assumption that $c$ is nomadic. Therefore, we conclude that $I'\setminus I_0$ does not have accumulation points. Hence, $I\setminus I_0$ does not have accumulation points.

We then claim that for any $t\in I$,
\begin{align}\label{eqn:appC.middle.cover.lyap.const}
f_{(\Gamma\backslash\TT_A,g_{\Gamma;A})}(Y,t)=f_{(\Gamma\cap\Gamma')\backslash\TT_A, g_{\Gamma;A;E'})}((\Pi_{\Gamma\cap\Gamma'})_*\tY,t)\equiv 0.
\end{align}
Since $I\setminus I_0$ does not have accumulation points, by the continuity of  the functions $f_{(\Gamma\backslash\TT_A,g_{\Gamma;A})}(Y,\cdot)$ and $f_{(\Gamma\cap\Gamma')\backslash\TT_A, g_{\Gamma;A;E'})}((\Pi_{\Gamma\cap\Gamma'})_*\tY,\cdot)$, it suffices to verify \eqref{eqn:appC.middle.cover.lyap.const} for any connected component $I'$ of $I_0$ and for any $t\in I'$.

For any $t\in I'$, by Lemma \ref{lem:tube.index.properties-1-4}, Definition \ref{dfn:geo lift} and \eqref{eqn:appC.non.ints}, we have $\tc(t)\in\TT_A\setminus\left(\bigcup_{j\in E'}{\cT_{j;A}(\pi/10)}\right).$
By Lemma \ref{lem:properties.of.g.Gamma-1} and Lemma \ref{lem:properties.of.g.Gamma-5}, there exists a point $q\in\TT_A$ and a good neighborhood $\cU_q$ of $q$ such that $\tc(t)\in\cU_q\subset \TT_A\setminus\left(\bigcup_{j\in E'}\overline{\cT_{j;A}(\pi/10)}\right)$ and that $(\Pi_\Gamma(\cU_q),g_{\Gamma;A}|_{\Pi_\Gamma(\cU_q)})$ is nearly nonpositively curved.
Observe the following:
\begin{itemize}
\item $g_{\Gamma;A;E'}=(\Pi^{\Gamma\cap\Gamma'}_\Gamma)^*g_{\Gamma;A}$ when restricted to $\Pi_{\Gamma\cap\Gamma'}(\TT_A\setminus\bigcup_{j\in E'}\overline{\cT_{j,A}(\pi/10)})$. (This is due to Lemma \ref{lem:properties.of.g.Gamma-3}.)
\item For any nearly nonpositively curved manifolds $(M,g)$, for any open subset $U$ of $M$, for any geodesic $c_M:I_M\to (U,g|_U)$, for any Jacobi field $Y_M$ along $c_M$ and for any $t\in I_M\in c^{-1}(U)$, if $(U,g|_U)$ is nearly nonpositively curved, then $f_{(M,g)}(Y_M,t)=f_{(U,g|_U)}(Y_M,t)$. (This is due to \eqref{eqn:infi Lyap2} and \eqref{eqn:noninc Lyap fcn2}.)
\item For any $p\in\TT_A$ and any good neighborhood $\cU_p$ of $p$. If $\cU_p\subset \TT_A\setminus \left(\bigcup_{j\in E'}\overline{\cT_{j;A}(\pi/10)}\right)$, then $\Pi^{\Gamma\cap\Gamma'}_{\Gamma}|_{\Pi_{\Gamma\cap\Gamma'}(\cU_p)}$ is a diffeomorphism onto its image. (To see this, for any $q_1,q_2\in\cU_p$, if $\Pi_\Gamma(q_1)=\Pi_\Gamma(q_2)$, by Definition \ref{dfn:good.nbhd}, there exists $\gamma\in\text{Stab}_\Gamma(p)$ such that $\gamma(q_1)=q_2$. Observe that $E'\cap\Sing_A(p)=\emptyset$ due to Definition \ref{sing index} and \eqref{eqn:j-tubes}. Write $\Gamma=\Gamma_A(E)$. Recall that $\Gamma'=\Gamma_A(E')$. By Lemma \ref{lem:properties.good.subgp-(4)} and the remark after Definition \ref{sing index}, we have 
$$\Sing_{\Gamma\cap\Gamma';A}(p)=\Sing_A(p)\setminus(E\cup E')=\Sing_A(p)\setminus E=\Sing_{\Gamma;A}(p).$$ 
Hence, by the remark after Definition \ref{sing index} again, we have $\text{Stab}_\Gamma(p)=\text{Stab}_{\Gamma\cap\Gamma'}(p)$. This implies that $\Pi_{\Gamma\cap\Gamma'}(q_1)=\Pi_{\Gamma\cap\Gamma'}(q_2)$. Notice that by Lemma \ref{lem:properties.of.g.Gamma-3}, $\Pi^{\Gamma\cap\Gamma'}_{\Gamma}$ is everywhere nondegenerate when restricted to $\Pi_{\Gamma\cap\Gamma'}(\TT_A\setminus\bigcup_{j\in E'}\cT_{j,A}(\pi/10))$. Therefore, $\Pi^{\Gamma\cap\Gamma'}_{\Gamma}|_{\Pi_{\Gamma\cap\Gamma'}(\cU_p)}$ is a diffeomorphism onto its image.)
\end{itemize}
Hence, by the three bullet points in the above, we have 
\begin{align*}
f_{(\Gamma\backslash\TT_A,g_{\Gamma;A})}(Y,t)=&f_{(\Pi_\Gamma(\cU_q),g_{\Gamma;A}|_{\Pi_\Gamma(\cU_q)})}(Y,t)\\
=&f_{(\Pi_{\Gamma\cap\Gamma'}(\cU_q), g_{\Gamma;A;E'}|_{\Pi_{\Gamma\cap\Gamma'}(\cU_q)})}((\Pi_{\Gamma\cap\Gamma'})_*\tY,t)=f_{((\Gamma\cap\Gamma')\backslash\TT_A, g_{\Gamma;A;E'})}((\Pi_{\Gamma\cap\Gamma'})_*\tY,t)
\end{align*}
\eqref{eqn:appC.middle.cover.lyap.const} then holds for any $t\in I'$ due to the assumption that $f_{(\Gamma\backslash\TT_A,g_{\Gamma;A})}(Y,t)\equiv 0$ and the above. By the discussion after \eqref{eqn:appC.middle.cover.lyap.const}, we have \eqref{eqn:appC.middle.cover.lyap.const} holds for any $t\in I$.

Notice that $\{i\}$ is an atom in the consecutive partition induced by $E'$. (See the remark after Definition \ref{dfn:consecutive}.) Therefore, by Lemma \ref{lem:properties.of.g.Gamma-2}, the following holds:
\begin{itemize}
\item $ \pr^A_{i}\circ\tc|_{I}=\tc_i|_{I}$ is a geodesic in $\TT_{i}$. 
\item $(\pr^A_{i})_*\tY|_{I\setminus\Sigma}=\tY_i|_{I\setminus \Sigma}$ extends to a smooth Jacobi field along $\tc_i|_{I}$ in $\TT_{i}$. (We use the same notation for the extended Jacobi field.)
\end{itemize}
Moreover, by Lemma \ref{lem:rank-1 lem 3}, Lemma \ref{lem:properties.of.g.Gamma-2} and \eqref{eqn:appC.middle.cover.lyap.const}, we have
\begin{align}\label{eqn:appC.factor.middle.cover.lyap.const}
f_{\TT_i}(\tY_i,t)\equiv 0,~\forall t\in I.
\end{align}
$\tc_i'|_{I}(\cdot)$ is then a constant function due to the fact that it is a geodesic in the circle $\TT_i$. Since $\TT_i$ has constant zero curvature tensor, by \eqref{eqn:noninc baby Lyap fcn2}, \eqref{eqn:noninc Lyap fcn2} and the comments after \eqref{eqn:noninc Lyap fcn2}, we have $f_{\TT_i}(\tY_i,t)=\langle\tY_i(t),\tY_i'(t)\rangle$. Hence, \eqref{eqn:appC.factor.middle.cover.lyap.const} implies that $\tY_i(\cdot)$ is constant on $I$. This finishes the proof of Lemma \ref{sublem:nomadic=rank-1-2}.

\subsection{Proof of (C1)}
By Definition \ref{dfn:j.angle} and the assumption that $(\pr^A_i)_*(v)=(\pr^A_i)_*(w)$ for any $i\in A$, we have $\beta_i(v)=\beta_i(w)$ for any $i\in \cE(A)$ such that $\beta_i(v)$ is well-defined. Hence, by Definition \ref{dfn:exceptional value} and the assumption that $v\in T^{\text{NE}}\TT_A$, we have $w\in T^{\text{NE}}\TT_A$. For any $v\in  T^{\text{NE}}\TT_A$, we let $X_v$ be the smooth vector field on $\TT_A$ such that $(\pr^A_i)_*(v)=(\pr^A_i)_*(X_v(p))$ for any $i\in A$ and for any $p\in\TT_A$. Then, $\{X_v(p):p\in\TT_A\setminus \Omega_{\Gamma_A;A}\}$ is a subset of $T^{\text{NE}}(\TT_A\setminus \Omega_{\Gamma_A;A})$. Observe the following:
\begin{itemize}
\item $w\in \{X_v(p):p\in\TT_A\setminus \Omega_{\Gamma_A;A}\}\subset\widetilde{\cN}_A$. (This is due to the assumption that $w\in\widetilde{\cN}_A$, Definition \ref{dfn:normal.vectors} and the fact that $w\in T^{\text{NE}}\TT_A$.)
\item $\{X_v(p):p\in\TT_A\setminus \Omega_{\Gamma_A;A}\}$ is diffeomorphic to $\TT_A\setminus \Omega_{\Gamma_A;A}$, which is connected. (This is due to \eqref{jth singularity} and \eqref{total singularity subgp}, which shows that $\Omega_{\Gamma_A;A}$ is the union of finitely many codimension two submanifolds of $\TT_A$.)
\end{itemize}
\hyperlink{C1}{(C1)} then follows from the above two bullet points.

\subsection{Proof of (C2)}
Let $w\in T_q\TT_A$ such that $(\pr^A_{i,i+1})_*(w)\neq 0$ and that $(\pr^A_{j})_*(w)=(\pr^A_{j})_*(v)$ for any $j\in A\setminus\{i,i+1\}$. By the assumption on $q$, $w$ satisfies the first condition in Definition \ref{dfn:normal.vectors}. It remains to verify that $w$ satisfies the second condition in Definition \ref{dfn:normal.vectors}. We first prove that
\begin{align}\label{eqn:appC.C2-1}
(\pr^A_{j,j+1})_*(w)=(\pr^A_{j,j+1})_*(v)\text{ and }\beta_j(w)=\beta_j(v),\text{ for any }j\in\cE(A)\setminus\{i-1,i,i+1\}.
\end{align}
Indeed, for any $j\in\cE(A)\setminus\{i-1,i,i+1\}$, we have $\{j,j+1\}\cap\{i,i+1\}=\emptyset$. By the assumption on $w$, we have $(\pr^A_{j,j+1})_*(w)=(\pr^A_{j,j+1})_*(v)$. Apply Definition \ref{dfn:j.angle} to the above, we have $\beta_j(w)=\beta_j(v)$. This finishes the proof of \eqref{eqn:appC.C2-1}. 

Since $v\in\widetilde{\cN}_A$, for any $j\in \cE(A)\setminus\{i-1,i,i+1\}$, we have $\pr^A_{j,j+1}(v)\neq 0$ due to Definition \ref{dfn:normal.vectors}. In particular, by \eqref{eqn:appC.C2-1}, $(\pr^A_{j,j+1})_*(w)=(\pr^A_{j,j+1})_*(v)\neq 0$. Moreover, by Definition \ref{dfn:normal.vectors} again, one of the following cases holds:
\begin{itemize}
\item $\beta_j(v)$ is not a $j$-exceptional angle. By \eqref{eqn:appC.C2-1},  $\beta_j(w)=\beta_j(v)$ is also not a $j$-exceptional angle.
\item $j-1\in\cE(A)$, $q\in \cT_{j-1;A}(\pi/10)$ and $(\pr^A_{j+1})_*(v)\neq 0$. By the assumption on $w$ and $j$, we have $(\pr^A_{j+1})_*(w)=(\pr^A_{j+1})_*(v)\neq 0$.
\item $q\in \cT_{j;A}(\pi/10)$.
\item $j+1\in\cE(A)$, $q\in \cT_{j+1;A}(\pi/10)$ and $(\pr^A_{j})_*(v)\neq 0$. By the assumption on $w$ and $j$, we have $(\pr^A_{j})_*(w)=(\pr^A_{j})_*(v)\neq 0$.
\end{itemize}
Observe the following facts:
\begin{itemize}
\item If $i-1\in A$ (which is equivalent to $i-1\in\cE(A)$ due to the fact that $i\in\cE(A)$ and \eqref{eqn:cutpoint}), then $(\pr^A_{i-1})_*(w)\neq 0$ due to the assumptions on $w$ and the assumption that $(\pr^A_{i-1})_*(v)\neq 0$. In particular, $(\pr^A_{i-1,i})_*(w)\neq 0$. We emphasize that $q\in\cT_{i;A}(\pi/10)$ and that $\{i-1,i\}\setminus\{i,i+1\}=\{i-1\}$.
\item $q\in\cT_{i;A}(\pi/10)$ and that $(\pr^A_{i,i+1})_*(w)\neq 0$.
\item If $i+2\in A$ (which is equivalent to $i+1\in\cE(A)$ due to the fact that $i\in\cE(A)$ and \eqref{eqn:cutpoint}), then $(\pr^A_{i+2})_*(w)\neq 0$ due to the assumptions on $w$ and the assumption that $(\pr^A_{i+2})_*(v)\neq 0$. In particular, $(\pr^A_{i+1,i+2})_*(w)\neq 0$. We emphasize that $q\in\cT_{i;A}(\pi/10)$ and that $\{i+1,i+2\}\setminus\{i,i+1\}=\{i+2\}$.
\end{itemize}
Then the six bullet points in the above prove that $w$ satisfies the second condition in Definition \ref{dfn:normal.vectors}. Hence, $w\in\widetilde{\cN}_A$. This finishes the proof of \hyperlink{C2}{(C2)}.

\subsection{Verifying the last sentence in the proof of Lemma \ref{lem:basic.properties.normal.vector-3}}
For simplicity, we write $A_-:=\{j\in A:j\leq i\}$ and $A_+:=\{j\in A:j> i\}$. In the proof of Lemma \ref{lem:basic.properties.normal.vector-3}, we claimed that 
\begin{align}\label{eqn:appC.basic.properties.normal.vector-3-a}
\widetilde{\cN}_{\Gamma;A}(i)\subset \widetilde{\cN}_{A_-}\times \widetilde{\cN}_{A_+}
\end{align} immediately after mentioning 
\begin{align}\label{eqn:appC.basic.properties.normal.vector-3-b}
\cT_{j;A'}(\pi/10)\times \TT_{A\setminus A'}=\cT_{j;A}(\pi/10)\text{ for any }A'\subset A.
\end{align}
In this subsection, we prove \eqref{eqn:appC.basic.properties.normal.vector-3-a}, assuming \eqref{eqn:appC.basic.properties.normal.vector-3-b}.

We first notice that $\cE(A)\setminus \{i\}=\cE(A_-)\sqcup\cE(A_+)$ due to \eqref{eqn:cutpoint}. For any $v\in\widetilde{\cN}_{\Gamma;A}(i)$ with base point $p$ and for any $j\in\cE(A)\setminus\{i\}$, by Definition \ref{dfn:normal.vectors} and \eqref{eqn:j.higher.rk.normal.vector}, the following holds:
\begin{itemize}
\item The base point $p$ of $v$ is in $\TT_A\setminus\Omega_{\Gamma_A;A}$. By \eqref{jth singularity} and \eqref{total singularity subgp}, this is equivalent to $\pr^A_{l,l+1}(p)\not\in\Omega(l)$ for any $l\in\cE(A)$. Since $\cE(A)\setminus \{i\}=\cE(A_-)\sqcup\cE(A_+)$, by \eqref{eqn:tori.proj}, we have $\pr^{A_-}_{l,l+1}(\pr^A_{A_-}(p))\not\in\Omega(l)$ for any $l\in\cE(A_-)$ and that $\pr^{A_+}_{l,l+1}(\pr^A_{A_+}(p))\not\in\Omega(l)$ for any $l\in\cE(A_+)$. Then, by \eqref{jth singularity} and \eqref{total singularity subgp} again, we have that the base point $\pr^A_{A_-}(p)$ of $(\pr^A_{A_-})_*(v)$ is in $\TT_{A_-}\setminus\Omega_{\Gamma_{A_-};A_-}$, and that the base point $\pr^A_{A_+}(p)$ of $(\pr^A_{A_+})_*(v)$ is in $\TT_{A_+}\setminus\Omega_{\Gamma_{A_+};A_+}$.
\item $(\pr^A_{j,j+1})_*(v)\neq 0$. When $j<i$, this is equivalent to $(\pr^{A_-}_{j,j+1})_*((\pr^A_{A_-})_*(v))\neq 0$. When $j>i$, this is equivalent to $(\pr^{A_+}_{j,j+1})_*((\pr^A_{A_+})_*(v))\neq 0$. (See \eqref{eqn:tori.proj}.)
\end{itemize} 
When $j\in\cE(A)\setminus\{i-1,i,i+1\}$, by the second condition in Definition \ref{dfn:normal.vectors} and \eqref{eqn:j.higher.rk.normal.vector}, one of the following holds:
\begin{itemize}
\item $\beta_{j}(v)$ is not $j$-exceptional. When $j<i$, by Definition \ref{dfn:j.angle}, this is equivalent to $\beta_j((\pr^A_{A_-})_*(v))=\beta_{j}(v)$ being not $j$-exceptional. When $j>i$, by Definition \ref{dfn:j.angle}, this is equivalent to $\beta_j((\pr^A_{A_+})_*(v))=\beta_{j}(v)$ being not $j$-exceptional. 
\item $p\in\cT_{j-1;A}(\pi/10)$ and $(\pr^A_{j+1})_*(v)\neq 0$, under the additional assumption that $j>\min\cE(A)$. Recall that $\cE(A)\setminus \{i\}=\cE(A_-)\sqcup\cE(A_+)$. When $j<i-1$, we have $|A_-|\geq 2$ and $\cE(A_-)\supset \{j\}\neq\emptyset$. Moreover, $j>\min\cE(A_-)=\min\cE(A)$. By \eqref{eqn:appC.basic.properties.normal.vector-3-b} and the above, we have that $\pr^A_{A_-}(p)\in\cT_{j-1;A_-}(\pi/10)$ and that $(\pr^{A_-}_{j+1})_*((\pr^A_{A_-})_*(v))=(\pr^A_{j+1})_*(v)\neq 0$. When $j>i+1=\min\cE(A_+)$, by \eqref{eqn:appC.basic.properties.normal.vector-3-b} and the above, we have that $\pr^A_{A_+}(p)\in\cT_{j-1;A_+}(\pi/10)$ and that $(\pr^{A_+}_{j+1})_*((\pr^A_{A_+})_*(v))=(\pr^A_{j+1})_*(v)\neq 0$.
\item $p\in\cT_{j;A}(\pi/10)$. Recall that $\cE(A)\setminus \{i\}=\cE(A_-)\sqcup\cE(A_+)$. When $j<i-1$, by \eqref{eqn:appC.basic.properties.normal.vector-3-b} and the above, we have $\pr^A_{A_-}(p)\in\cT_{j;A_-}(\pi/10)$. When $j>i+1$, by \eqref{eqn:appC.basic.properties.normal.vector-3-b} and the above, we have $\pr^A_{A_+}(p)\in\cT_{j;A_+}(\pi/10)$.
\item $p\in\cT_{j+1;A}(\pi/10)$ and $(\pr^A_{j})_*(v)\neq 0$, under the additional assumption that $j<\max\cE(A)$. Recall that $\cE(A)\setminus \{i\}=\cE(A_-)\sqcup\cE(A_+)$. When $j<i-1=\max\cE(A_-)$, by \eqref{eqn:appC.basic.properties.normal.vector-3-b} and the above, we have $\pr^A_{A_-}(p)\in\cT_{j+1;A_-}(\pi/10)$ and that $(\pr^{A_-}_{j})_*((\pr^A_{A_-})_*(v))=(\pr^A_{j})_*(v)\neq 0$. When $j>i+1$, we have $\cE(A_+)\supset \{j\}\neq\emptyset$. Moreover, $j<\max\cE(A_+)=\max\cE(A)$. By \eqref{eqn:appC.basic.properties.normal.vector-3-b} and the above, we have that $\pr^A_{A_+}(p)\in\cT_{j+1;A_+}(\pi/10)$ and that $(\pr^{A_+}_{j})_*((\pr^A_{A_+})_*(v))=(\pr^A_{j})_*(v)\neq 0$.
\end{itemize}
By \eqref{eqn:vectors.avoiding.j.tube}, we have $\Pi_\Gamma(p)\not\in\Pi_{\Gamma}(\cT_{i;A}(\pi/10))$. Hence, by Lemma \ref{lem:tube.index.properties-1-4}, we have 
\begin{align}\label{eqn:appC.basic.properties.normal.vector-3-c}
p\not\in\cT_{i;A}(\pi/10).
\end{align}
When $j=i-1$, by the definition of $A_\pm$ and the fact that $\cE(A)\setminus \{i\}=\cE(A_-)\sqcup\cE(A_+)$, we have $|A_-|\geq 2$ and $\cE(A_-)\neq\emptyset$. By the second condition in Definition \ref{dfn:normal.vectors}, \eqref{eqn:j.higher.rk.normal.vector} and \eqref{eqn:appC.basic.properties.normal.vector-3-c}, one of the following holds:
\begin{itemize}
\item $\beta_{j}(v)$ is not $j$-exceptional. By Definition \ref{dfn:j.angle}, this is equivalent to $\beta_j((\pr^A_{A_-})_*(v))=\beta_{j}(v)$ being not $j$-exceptional.
\item $p\in\cT_{j-1;A}(\pi/10)$ and $(\pr^A_{j+1})_*(v)\neq 0$, under the additional assumption that $j>\min\cE(A)=\min\cE(A_-)$. By \eqref{eqn:appC.basic.properties.normal.vector-3-b} and the above, we have that $\pr^A_{A_-}(p)\in\cT_{j-1;A_-}(\pi/10)$ and that $(\pr^{A_-}_{j+1})_*((\pr^A_{A_-})_*(v))=(\pr^A_{j+1})_*(v)\neq 0$.
\item $p\in\cT_{j;A}(\pi/10)$. By \eqref{eqn:appC.basic.properties.normal.vector-3-b} and the above, we have $\pr^A_{A_-}(p)\in\cT_{j;A_-}(\pi/10)$.
\end{itemize}
When $j=i+1$, by the definition of $A_\pm$ and the fact that $\cE(A)\setminus \{i\}=\cE(A_-)\sqcup\cE(A_+)$, we have $|A_+|\geq 2$ and $\cE(A_+)\neq\emptyset$. By the second condition in Definition \ref{dfn:normal.vectors}, \eqref{eqn:j.higher.rk.normal.vector} and \eqref{eqn:appC.basic.properties.normal.vector-3-c}, one of the following holds:
\begin{itemize}
\item $\beta_{j}(v)$ is not $j$-exceptional. By Definition \ref{dfn:j.angle}, this is equivalent to $\beta_j((\pr^A_{A_+})_*(v))=\beta_{j}(v)$ being not $j$-exceptional.
\item $p\in\cT_{j;A}(\pi/10)$. By \eqref{eqn:appC.basic.properties.normal.vector-3-b} and the above, we have $\pr^A_{A_+}(p)\in\cT_{j;A_+}(\pi/10)$.
\item $p\in\cT_{j+1;A}(\pi/10)$ and $(\pr^A_{j+1})_*(v)\neq 0$, under the additional assumption that $j<\max\cE(A)=\max\cE(A_+)$. By \eqref{eqn:appC.basic.properties.normal.vector-3-b} and the above, we have that $\pr^A_{A_+}(p)\in\cT_{j+1;A_+}(\pi/10)$ and that $(\pr^{A_+}_{j})_*((\pr^A_{A_+})_*(v))=(\pr^A_{j})_*(v)\neq 0$.
\end{itemize}
Notice that 
\begin{itemize}
\item If $|A_-|=1$, then $A_-=\{i\}$, $\cE(A_-)=\emptyset$ and $i=\min\cE(A)$. Since $p\not\in \cT_{i;A}(\pi/10)$, by Definition \ref{dfn:normal.vectors} and \eqref{eqn:j.higher.rk.normal.vector}, we have $(\pr^A_i)_*(v)=(\pr^{A_-}_i)_*((\pr^A_{A_-})_*(v))\neq 0$. (Indeed, we first notice that $i-1\not\in\cE(A)$ and hence $\cT_{i-1;A}$ is not well-defined. Since $v\in\widetilde{\cN}_A$ due to \eqref{eqn:j.higher.rk.normal.vector} and $p\not\in \cT_{i;A}(\pi/10)$, by Definition \ref{dfn:normal.vectors}, either (1) $\beta_i(v)$ is not $i$-exceptional, which implies that $(\pr^A_i)_*(v)\neq 0$ due to the remark after Definition \ref{dfn:exceptional value}, or (2) $p\in\cT_{i+1;A}(\pi/10)$ and $(\pr^A_i)_*(v)\neq 0$.)
\item  If $|A_+|=1$, then $A_+=\{i+1\}$, $\cE(A_+)=\emptyset$ and $i=\max\cE(A)$. Since $p\not\in \cT_{i;A}(\pi/10)$, by Definition \ref{dfn:normal.vectors} and \eqref{eqn:j.higher.rk.normal.vector}, we have $(\pr^A_{i+1})_*(v)=(\pr^{A_+}_{i+1})_*((\pr^A_{A_+})_*(v))\neq 0$. (Indeed, we first notice that $i+1\not\in\cE(A)$ and hence $\cT_{i+1;A}$ is not well-defined. Since $v\in\widetilde{\cN}_A$ due to \eqref{eqn:j.higher.rk.normal.vector} and $p\not\in \cT_{i;A}(\pi/10)$, by Definition \ref{dfn:normal.vectors}, either (1) $\beta_i(v)$ is not $i$-exceptional, which implies that $(\pr^A_{i+1})_*(v)\neq 0$ due to the remark after Definition \ref{dfn:exceptional value}, or (2) $p\in\cT_{i-1;A}(\pi/10)$ and $(\pr^A_{i+1})_*(v)\neq 0$.)
\end{itemize} 
Hence, by Definition \ref{dfn:normal.vectors} and all bullet points in the above, we have that $(\pr^A_{A_-})_*(v)\in\widetilde{\cN}_{A_-}$ and that $(\pr^A_{A_+})_*(v)\in\widetilde{\cN}_{A_+}$. This proves \eqref{eqn:appC.basic.properties.normal.vector-3-a} due to the arbitrariness of $v\in\widetilde{\cN}_A(i)$.

\subsection{Proof of (M1)}

If $v\in\widetilde{\cQ}_{\Gamma;A}(i)$, by \eqref{eqn:middle.lift.vectors.avoid.j.tube}, $(\Pi^{\Gamma(i)}_{\Gamma})_*(v)\in\cQ_{\Gamma;A}(i)$. By \eqref{eqn:vectors.avoiding.j.tube}, there exists a geodesic $c:\RR\to \Gamma\backslash\TT_A$ in $(\Gamma\backslash\TT_A,g_{\Gamma;A})$ such that $c'(0)= (\Pi^{\Gamma(i)}_{\Gamma})_*(v)$ and that
$c(\RR)\cap\Pi_\Gamma(\cT_{i;A}(\pi/10))=\emptyset$. 
By Lemma \ref{lem:tube.index.properties-1-4}, we have 
\begin{align}\label{eqn:appC.M1-0a}
c(\RR)\subset&\Pi_\Gamma(\TT_A\setminus \cT_{i;A}(\pi/10))=(\Gamma\backslash\TT_A)\setminus \Pi_\Gamma({\cT_{i;A}(\pi/10)}))\nonumber\\\subset&\Pi_\Gamma(\TT_A\setminus \overline{\cT_{i;A}(\pi/11)})=(\Gamma\backslash\TT_A)\setminus \Pi_\Gamma(\overline{\cT_{i;A}(\pi/11)})).
\end{align}
Hence, by Lemma \ref{lem:tube.index.properties-1-4} again, we have 
\begin{align}\label{eqn:appC.M1-0b}
(\Pi^{\Gamma(i)}_\Gamma)^{-1}(c(\RR))\subset &(\Pi^{\Gamma(i)}_\Gamma)^{-1}(\Pi_\Gamma(\TT_A\setminus \cT_{i;A}(\pi/10)))\nonumber\\
=&\Pi_{\Gamma(i)}(\Pi_{\Gamma(i)}^{-1}((\Pi^{\Gamma(i)}_\Gamma)^{-1}(\Pi_\Gamma(\TT_A\setminus \cT_{i;A}(\pi/10)))))\nonumber\\
=&\Pi_{\Gamma(i)}(\Pi_{\Gamma}^{-1}(\Pi_\Gamma(\TT_A\setminus \cT_{i;A}(\pi/10))))\nonumber\\
=&\Pi_{\Gamma(i)}(\TT_A\setminus \cT_{i;A}(\pi/10))=(\Gamma(i)\backslash\TT_A)\setminus(\Pi_{\Gamma(i)}( \cT_{i;A}(\pi/10))).
\end{align}
Similarly, 
\begin{align}\label{eqn:appC.M1-0c}
(\Pi^{\Gamma(i)}_\Gamma)^{-1}(\Pi_\Gamma(\TT_A\setminus \overline{\cT_{i;A}(\pi/11)}))=\Pi_{\Gamma(i)}(\TT_A\setminus \overline{\cT_{i;A}(\pi/11)}).
\end{align}
Recall from \eqref{eqn:j.subgroup.Gamma} that $\Gamma(i):=\Gamma_A(\{i\})\cap\Gamma$. If we write $\Gamma=\Gamma_A(E)$ for some $E\subset \cE(A)$, then by Lemma \ref{lem:properties.good.subgp-(4)}, $\Gamma(i)=\Gamma_A(E\cup\{i\})$. Hence, by \eqref{jth singularity}, \eqref{total singularity subgp} and \eqref{eqn:j-tubes}, for any $R\in(0,\pi/8)$, we have  
\begin{align}\label{eqn:appC.M1-0d}
\overline{\Omega_{\Gamma;A}\setminus\Omega_{\Gamma(i);A}}\subset \Omega_A(i)\subset \cT_{i;A}(R).
\end{align}
Therefore, by \eqref{eqn:appC.M1-0a}-\eqref{eqn:appC.M1-0d} and Lemma \ref{lem:branching.locus-2}, the following holds: 
\begin{itemize}
\item The map $\Pi^{\Gamma(i)}_\Gamma$ restricted to $\Pi_{\Gamma(i)}(\TT_A\setminus \overline{\cT_{i;A}(\pi/11)})$ gives a smooth, everywhere nondegenerate map from the manifold $\Pi_{\Gamma(i)}(\TT_A\setminus \overline{\cT_{i;A}(\pi/11)})$ to the manifold $\Pi_\Gamma(\TT_A\setminus \overline{\cT_{i;A}(\pi/11)})$. 
\item $\Pi_{\Gamma}(\TT_A\setminus \overline{\cT_{i;A}(\pi/11)})$ is an open subset of $\Gamma\backslash\TT_A$. Similarly, $\Pi_{\Gamma(i)}(\TT_A\setminus \overline{\cT_{i;A}(\pi/11)})$ is  an open subset of $\Gamma(i)\backslash\TT_A$.
\item $\Pi_\Gamma(\TT_A\setminus \cT_{i;A}(\pi/10))$ is a compact subset of $\Gamma\backslash\TT_A$, with $\Pi_{\Gamma}(\TT_A\setminus \overline{\cT_{i;A}(\pi/11)})$ being an open neighborhood of it. Similarly, $\Pi_{\Gamma(i)}(\TT_A\setminus \cT_{i;A}(\pi/10))$ is a compact subset of $\Gamma(i)\backslash\TT_A$, with $\Pi_{\Gamma(i)}(\TT_A\setminus \overline{\cT_{i;A}(\pi/11)})$ being an open neighborhood of it.
\end{itemize}
For any $R>0$, we let $\tc_R:[-R,R]\to\Gamma(i)\backslash\TT_A$ be a smooth curve such that $\Pi^{\Gamma(i)}_\Gamma\circ\tc_R=c|_{[-R,R]}$ and that $\tc_R'(0)=v$. By \eqref{eqn:appC.M1-0a} and the three bullet points in the above, $\tc_R$ is well-defined for some $R>0$. Moreover, if $\tc_{R_1},\tc_{R_2}$ are well-defined for some $R_1>R_2>0$, then $\{t\in[-R_2,R_2]:\tc_{R_1}(t)=\tc_{R_2}(t)\}$ is a nonempty, open and closed subset of $[-R_2,R_2]$. In particular, 
\begin{align}\label{eqn:appC.M1-0e}
\tc_{R_1}|_{[-R_2,R_2]}=\tc_{R_2}.
\end{align} 
Let $R_0:=\sup\{R>0:\tc_R\text{ is well-defined.}\}$. We then claim that $R_0=\infty$. Assume the contrary that $R_0<\infty$, then by \eqref{eqn:appC.M1-0e}, there exists a smooth curve $\tc:(-R_0,R_0)\to\Gamma(i)\backslash\TT_A$ such that $\Pi^{\Gamma(i)}_\Gamma\circ \tc=c|_{(-R_0,R_0)}$ and that $\tc'(0)=v$. Notice that$ |(\Pi^{\Gamma(i)}_\Gamma)^{-1}(q)|\leq |\Gamma|<\infty$ for any $q\in\Gamma\backslash\TT_A$. By the above three bullet points and the above fact, there exist an open neighborhood $\cV_+$ of $c(R_0)$ and an open neighborhood $\cV_-$ of $c(-R_0)$, such that the restriction of $\Pi^{\Gamma(i)}_{\Gamma}$ to any connected component of $(\Pi^{\Gamma(i)}_{\Gamma})^{-1}(\cV_+)$ ($(\Pi^{\Gamma(i)}_{\Gamma})^{-1}(\cV_-)$ resp.) is a diffeomorphism onto $\cV_+$ ($\cV_-$ resp.). Choose $\epsilon>0$ such that $c^{-1}(\cV_{\pm})\supset (\pm R-\epsilon,\pm R+\epsilon)$. Then, by the above property of $\cV_\pm$, one can extend $\tc$ smoothly to $(-R_0-\epsilon,R_0+\epsilon)$ such that $\Pi^{\Gamma(i)}_\Gamma\circ\tc=c|_{(-R_0-\epsilon,R_0+\epsilon)}$. This contradicts the definition of $R_0$. Therefore, we conclude that $R_0=\infty$.

By \eqref{eqn:appC.M1-0e} and the fact that $R_0=\infty$, there exists a smooth curve $\tc:\RR\to \Gamma(i)\backslash\TT_A$ such that $\Pi^{\Gamma(i)}_\Gamma\circ\tc=c$ and that $\tc'(0)=v$.
Apply Lemma \ref{lem:properties.of.g.Gamma-4} to $\tc$, we have that $\tc$ is a geodesic in $(\Gamma(i)\backslash\TT_A,g_{\Gamma;A;\{i\}})$ such that $\Pi^{\Gamma(i)}_\Gamma\circ\tc=c$ and that $\tc'(0)=v$. Moreover, by \eqref{eqn:appC.M1-0b}, $\tc(\RR)\subset (\Gamma(i)\backslash\TT_A)\setminus(\Pi_{\Gamma(i)}( \cT_{i;A}(\pi/10)))$. This proves that for any $v\in\widetilde{\cQ}_{\Gamma;A}(i)$, $v$ is  tangent to a bi-infinite geodesic in $(\Gamma(i)\backslash\TT_A, g_{\Gamma;A;\{i\}})$ which never intersects $\Pi_{\Gamma(i)}(\cT_{i;A}(\pi/10))$. 

Now we prove the reverse direction in \hyperlink{M1}{(M1)}. Assume that $v=\tc'(0)$ for a bi-infinite geodesic $\tc:\RR\to\Gamma(i)\backslash\TT_A$ in $(\Gamma(i)\backslash\TT_A, g_{\Gamma;A;\{i\}})$ which never intersects $\Pi_{\Gamma(i)}(\cT_{i;A}(\pi/10))$. Then by Lemma \ref{lem:tube.index.properties-1-4}, we have 
\begin{align}\label{eqn:appC.M1-0f}
\tc(\RR)\subset \Pi_{\Gamma(i)}(\TT_A\setminus \cT_{i;A}(\pi/10)).
\end{align}
By Lemma \ref{lem:properties.of.g.Gamma-3}, $g_{\Gamma;A;\{i\}}=(\Pi^{\Gamma(i)}_\Gamma)^*g_{\Gamma;A}$ when restricted to $\Pi_{\Gamma(i)}(\TT_A\setminus \cT_{i;A}(\pi/10))$. Therefore, by \eqref{eqn:appC.M1-0f}, $\Pi^{\Gamma(i)}_\Gamma\circ\tc$ is a geodesic in $(\Gamma\backslash\TT_A,g_{\Gamma;A})$ with 
\begin{align}\label{eqn:appC.M1-0g}
\Pi^{\Gamma(i)}_\Gamma\circ\tc(\RR)\subset\Pi_{\Gamma}(\TT_A\setminus \cT_{i;A}(\pi/10)).
\end{align}
By Lemma \ref{lem:tube.index.properties-1-4} and \eqref{eqn:appC.M1-0g}, $\Pi^{\Gamma(i)}_\Gamma\circ\tc$ never intersects $\Pi_{\Gamma}( \cT_{i;A}(\pi/10))$. Since $(\Pi^{\Gamma(i)}_\Gamma)_*(v)=(\Pi^{\Gamma(i)}_\Gamma)_*(\tc'(0))$ is tangent to $\Pi^{\Gamma(i)}_\Gamma\circ\tc$. Hence, by \eqref{eqn:vectors.avoiding.j.tube}, \eqref{eqn:middle.lift.vectors.avoid.j.tube} and the above, we have $v\in\widetilde{\cQ}_{\Gamma;A}(i)$. This proves \hyperlink{M1}{(M1)}.

\subsection{Proof of Lemma \ref{ellip coor}}
Before we prove Lemma \ref{ellip coor}, we first introduce two technical lemmas.
\begin{lem}\label{elementary lemma}
For any integer $m\geq 2$ and $m$ pairwise distinct real numbers $t_1,\dots ,t_m$, we have 
$$\sum_{i=1}^m\prod_{\substack{1\leq j\leq m\\j\neq i}}\frac{1}{t_i-t_j}=0.$$
\end{lem}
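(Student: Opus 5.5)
This is the classical identity behind Lagrange interpolation, so I would prove it by interpreting the sum as a coefficient in an interpolation formula. Let $P(x)=\prod_{j=1}^m(x-t_j)$. Since the $t_i$ are pairwise distinct, $P'(t_i)=\prod_{j\neq i}(t_i-t_j)\neq 0$, and the sum in question is $\sum_{i=1}^m 1/P'(t_i)$.

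\emph{Step 1 (partial fractions).} First I would establish the decomposition
\begin{align*}
\frac{1}{P(x)}=\sum_{i=1}^m\frac{1}{P'(t_i)}\cdot\frac{1}{x-t_i}.
\end{align*}
To verify it, multiply both sides by $P(x)$. The claim becomes that the polynomial
\begin{align*}
Q(x):=\sum_{i=1}^m\frac{1}{P'(t_i)}\prod_{j\neq i}(x-t_j)
\end{align*}
is identically $1$. $Q$ has degree at most $m-1$. At each $t_k$, only the $i=k$ term survives, and it equals $P'(t_k)/P'(t_k)=1$. So $Q-1$ has degree at most $m-1$ and $m$ distinct roots, hence $Q\equiv 1$.

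\emph{Step 2 (compare the top coefficient).} The coefficient of $x^{m-1}$ in $Q$ is exactly $\sum_i 1/P'(t_i)$. Since $Q\equiv 1$ and $m\geq 2$, this coefficient is $0$, which is the claimed identity.

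An alternative route is to let $x\to\infty$ in the partial fraction identity. The left side is $O(x^{-m})$ with $m\geq 2$, while the right side is $\big(\sum_i 1/P'(t_i)\big)x^{-1}+O(x^{-2})$, so the coefficient of $x^{-1}$ must vanish.

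There is no real obstacle here. The only point needing care is that $m\geq 2$ is used precisely to ensure the degree $m-1$ is at least $1$, so that the $x^{m-1}$ coefficient of the constant polynomial $1$ is zero. For $m=1$ the sum would be $1$, not $0$.
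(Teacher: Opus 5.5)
Your proof is correct, but it takes a different route from the paper. The paper argues by induction on $m$. It subtracts the sums for the sets $\{t_1,\dots,t_{m-1}\}$ and $\{t_1,\dots,t_{m-2},t_m\}$, both zero by the inductive hypothesis. For $i\le m-2$ it matches terms via $\frac{1}{t_i-t_{m-1}}-\frac{1}{t_i-t_m}=\frac{t_{m-1}-t_m}{(t_i-t_{m-1})(t_i-t_m)}$, and it handles the two leftover terms directly. This identifies the difference as $(t_{m-1}-t_m)$ times the full $m$-term sum, which must therefore vanish since $t_{m-1}\neq t_m$. That argument is a purely mechanical manipulation of rational expressions and uses no facts about polynomials.

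Your Lagrange-interpolation argument is non-inductive, explains where the identity comes from, and gives more for free. The same comparison of $x^{m-1}$-coefficients shows that $\sum_i R(t_i)/P'(t_i)$ equals the $x^{m-1}$-coefficient of any polynomial $R$ of degree at most $m-1$. Take nodes $1,\dots,n+1$ and $R(x)=\prod_{l\neq j}(x-\xi_l)$. This gives $f_j\equiv 1$ in the proof of Lemma~\ref{ellip coor} (where the present lemma is used) in one line. It bypasses the paper's argument there via multilinearity in each $\xi_l$ and vanishing of mixed partial derivatives at $(1,\dots,n+1)$.
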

\begin{proof}[Proof of Lemma \ref{elementary lemma}]
We prove by induction on $m$. Clearly Lemma \ref{elementary lemma} holds when $m=2$. Suppose Lemma \ref{elementary lemma} is true for the case of $m-1$, then
\begin{align*}
0-0=&\left(\sum_{i=1}^{m-1}\displaystyle\prod_{\substack{1\leq j\leq m-1\\j\neq i}}\frac{1}{t_i-t_j}\right)-\left(\sum_{\substack{1\leq i\leq m\\i\neq m-1}}\displaystyle\prod_{\substack{1\leq j\leq m\\j\neq m-1,j\neq i}}\frac{1}{t_i-t_j}\right)\\
=&\sum_{i=1}^{m-2}\left(\displaystyle\prod_{\substack{1\leq j\leq m-1\\j\neq i}}\left(\frac{1}{t_i-t_j}\right)-\prod_{\substack{1\leq j\leq m\\j\neq m-1,j\neq i}}\left(\frac{1}{t_i-t_j}\right)\right)+\prod_{j=1}^{m-2}\left(\frac{1}{t_{m-1}-t_j}\right)-\prod_{j=1}^{m-2}\left(\frac{1}{t_{m}-t_j}\right)\\
=&(t_{m-1}-t_m)\sum_{i=1}^m\prod_{\substack{1\leq j\leq m\\j\neq i}}\frac{1}{t_i-t_j}.
\end{align*}
Case $m$ of Lemma \ref{elementary lemma} then follows immediately from the above and the fact that $t_{m-1}\neq t_m$.
\end{proof}

\begin{lem}\label{ez linear alg-1}
Let $\cA=(a_{ij})_{1\leq i,j\leq n}$ be an $n\times n$ matrix with real entries. Suppose that $\cA$ satisfies the following properties:
\begin{enumerate}
\item[(1).] $a_{ij}>0$ whenever $i\leq j$ and $a_{ij}<0$ whenever $i>j$;
\item[(2).] For any $i,j,k\in\{1,\dots ,n\}$ with $i<j$, if $a_{ki}$ and $a_{kj}$ has the same sign, then $a_{ki}>a_{kj}$.
\end{enumerate}
Then $\det(\cA)>0$.
\end{lem}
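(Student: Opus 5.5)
I will argue by induction on $n$, using column operations that keep the matrix inside the class described by (1) and (2). The case $n=1$ is immediate, since $a_{11}>0$ by (1). For $n\ge 2$ the plan is to clear the first row with column operations, then expand the determinant along that row.

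The first row is all positive, by (1). For each $j\ge 2$ I will replace column $j$ by
\[
C_j' = C_j - \frac{a_{1j}}{a_{11}}\,C_1 .
\]
This does not change the determinant, and it makes the new first row $(a_{11},0,\dots,0)$. Expanding along that row gives $\det\cA = a_{11}\det \cB$, where $\cB=(b_{kj})_{2\le k,j\le n}$ and
\[
b_{kj}=a_{kj}-\lambda_j a_{k1},\qquad \lambda_j:=a_{1j}/a_{11}.
\]
Condition (2) applied to row $1$ gives $a_{11}>a_{1j}>0$, so $\lambda_j\in(0,1)$. Since $a_{11}>0$, it then suffices to show that $\cB$ satisfies (1) and (2) in its own indexing; the induction hypothesis then gives $\det\cB>0$.

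First I check the sign pattern (1) for $\cB$. Fix $k\ge 2$. Here $a_{k1}<0$, so $-\lambda_j a_{k1}>0$.
\begin{itemize}
\item If $k\le j$, then $a_{kj}>0$, and hence $b_{kj}>0$.
\item If $k>j$, then $a_{kj}<0$ and $a_{k1}<0$ have the same sign. Condition (2) applied to columns $1<j$ gives $a_{k1}>a_{kj}$, that is, $|a_{kj}|>|a_{k1}|$. Together with $\lambda_j<1$ this yields $b_{kj}=a_{kj}-\lambda_j a_{k1}<a_{kj}-a_{k1}<0$.
\end{itemize}
So $\cB$ has the required sign pattern.

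Next I check the monotonicity (2) for $\cB$. Take $2\le i<j$ and a row $k$ such that $b_{ki}$ and $b_{kj}$ have the same sign. I need $b_{ki}>b_{kj}$, which is
\[
(a_{ki}-a_{kj}) + (\lambda_j-\lambda_i)\,a_{k1} > 0 .
\]
Condition (2) on row $1$ gives $\lambda_i>\lambda_j$. Since $a_{k1}<0$, the second term $(\lambda_j-\lambda_i)a_{k1}$ is positive. The first term is positive by (2) for $\cA$, provided $a_{ki}$ and $a_{kj}$ have the same sign. The signs of $b$ agree with those of $a$ by the sign analysis above, so $b_{ki},b_{kj}$ having the same sign forces $a_{ki},a_{kj}$ to have the same sign. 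Hence both terms are positive and $b_{ki}>b_{kj}$.

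This closes the induction, and I do not expect a real obstacle. The one point needing care is the strict inequality $\lambda_j<1$. It is what keeps the below-diagonal entries of $\cB$ negative, and it comes from (2) applied with $k=1$.
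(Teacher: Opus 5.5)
Your proposal is correct and follows essentially the same route as the paper. The paper likewise clears the first row by column operations, writes $\det\cA=a_{11}\det\cA'$ with $a'_{ij}=a_{(i+1)(j+1)}-a_{1(j+1)}a_{(i+1)1}/a_{11}$, and checks (1) and (2) for $\cA'$ using the same two facts you isolate: $a_{1j}/a_{11}\in(0,1)$, and the decrease of $a_{1j}$ in $j$ along the first row.
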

\begin{proof}[Proof of Lemma \ref{ez linear alg-1}]
We prove Lemma \ref{ez linear alg-1} using induction. The lemma obviously holds when $n=1$. Suppose Lemma \ref{ez linear alg-1} holds for the case $n-1$, then in the case of $n$, we define 
$$\cL=\matv{1&-\frac{a_{12}}{a_{11}}&-\frac{a_{13}}{a_{11}}&\cdots&-\frac{a_{1n}}{a_{11}}}{0&1&0&\cdots&0}{0&0&1&\cdots&0}{\vdots&\vdots&\vdots&\ddots&\vdots}{0&0&0&\cdots&1}\mathrm{~and~hence~}\cA\cL=\matii{a_{11}&0}{\cC&\cA'},$$
where $\cA'=(a'_{ij})_{1\leq i,j\leq n-1}$ satisfies $a'_{ij}=a_{(i+1)(j+1)}-\left(a_{1(j+1)}a_{(i+1)1}/a_{11}\right)$ for any $i,j\in\{1,\dots ,n-1\}$. If $A'$ satisfies the two properties in Lemma \ref{ez linear alg-1}, then by the case $n-1$, we have $\det(A)=a_{11}\det(A')>0$.

Hence, it suffices to show that $\cA'=(a'_{ij})_{1\leq i,j\leq n-1}$ satisfies the two properties in Lemma \ref{ez linear alg-1}.
\begin{enumerate}
\item When $i\leq j$, by the first property of $\cA$, $a_{(i+1)(j+1)}, a_{11}, a_{1(j+1)}>0 $ and $a_{(i+1)1}<0$. Hence 
$$a'_{ij}=a_{(i+1)(j+1)}-\left(a_{1(j+1)}a_{(i+1)1}/a_{11}\right)>a_{(i+1)(j+1)}>0.$$
When $i>j$, by the first property of $\cA$, $ a_{11}, a_{1(j+1)}>0 $ and $a_{(i+1)(j+1)},a_{(i+1)1}<0$, Moreover, by the second property of $\cA$, we have $a_{1(j+1)}/a_{11}\in(0,1)$ and that $a_{(i+1)(j+1)}<a_{(i+1)1}<0$. Therefore
$$a'_{ij}=a_{(i+1)(j+1)}-\left(a_{1(j+1)}a_{(i+1)1}/a_{11}\right)<a_{(i+1)(j+1)}-a_{(i+1)1}<0.$$
This verifies the first property for $\cA'$.
\item For any $i,j,k\in\{1,\dots, n-1\}$ with $i<j$, by the first property for $\cA'$, $a_{ki}'$ and $a_{kj}'$ have the same sign if and only if $k\leq i$ or $k>j$. By the first property for $\cA$, we have $a_{(k+1)1}<0$ and $a_{11},a_{1(i+1)},a_{1(j+1)}>0$. By the second property for $\cA$, we have $a_{1(i+1)}-a_{1(j+1)}>0$ and $a_{(k+1)(i+1)}-a_{(k+1)(j+1)}>0$. Therefore,
\begin{align*}
a_{ki}'-a_{kj}'=&\left(a_{(k+1)(i+1)}-\frac{a_{(k+1)1}a_{1(i+1)}}{a_{11}}\right)-\left(a_{(k+1)(j+1)}-\frac{a_{(k+1)1}a_{1(j+1)}}{a_{11}}\right)\\
=&(a_{(k+1)(i+1)}-a_{(k+1)(j+1)})+\left(-\frac{a_{(k+1)1}}{a_{11}}\right)\cdot(a_{1(i+1)}-a_{1(j+1)})>0.
\end{align*}
This verifies the second property for $\cA'$.
\end{enumerate}
\end{proof}

Now, we prove Lemma \ref{ellip coor}.
\begin{proof}[Proof of Lemma \ref{ellip coor}]
We first prove that \eqref{ellip coor formula} gives a solution to $\cA x=(1,1,\dots ,1)^T$. Let $f_j(\xi_1,\dots ,\xi_{n+1})$ be the $j$-th component of $\cA y$ for any $j\in\{1,\dots ,n+1\}$. We have 
\begin{align}\label{jth ellip coor}
f_j(\xi_1,\dots ,\xi_{n+1})=\sum_{k=1}^{n+1}\frac{y_k}{k-\xi_j}
=\sum_{k=1}^{n+1}\frac{1}{k-\xi_j}\cdot\frac{\displaystyle\prod_{l=1}^{n+1}(k-\xi_l)}{\displaystyle\prod_{\substack{1\leq l\leq n+1\\ l\neq k}}(k-l)}
=\sum_{k=1}^{n+1}\frac{\displaystyle\prod_{\substack{1\leq l\leq n+1\\ l\neq j}}(k-\xi_l)}{\displaystyle\prod_{\substack{1\leq l\leq n+1\\ l\neq k}}(k-l)}.
\end{align}
In particular, $f_j$ is a polynomial on $\xi_1,\dots ,\xi_{n+1}$ which is independent on $\xi_j$. Notice that for any $l\in\{1,\dots ,n+1\}$ and any $\xi_1,\dots ,\xi_{l-1},\xi_{l+1},\dots ,\xi_{n+1}$ fixed, $f_j(\xi_1,\dots ,\xi_{l-1},\cdot,\xi_{l+1},\dots ,\xi_{n+1})$ is a linear function. Since
$$f_j(1,\dots ,{n+1})=\sum_{k=1}^{n+1}\frac{\displaystyle\prod_{\substack{1\leq l\leq n+1\\ l\neq j}}(k-l)}{\displaystyle\prod_{\substack{1\leq l\leq n+1\\ l\neq k}}(k-l)}=\frac{\displaystyle\prod_{\substack{1\leq l\leq n+1\\ l\neq j}}(j-l)}{\displaystyle\prod_{\substack{1\leq l\leq n+1\\ l\neq j}}(j-l)}=1,$$
it suffices to show that $\del^\alpha f_j/\del \xi_{\alpha}(1,\dots ,{n+1})=0$ for any $j\in\{1,\dots ,n+1\}$ and for any multi-index $\alpha$ in $\{1,\dots ,n+1\}\setminus\{j\}$ with no repetitions. By \eqref{jth ellip coor} and Lemma \ref{elementary lemma}, we have 
\begin{align*}
(-1)^{|\alpha|}\frac{\del^\alpha f_j}{\del\xi_{\alpha}}(1,\dots ,{n+1})=&\sum_{ k \in\alpha\sqcup\{j\}}\frac{\displaystyle\prod_{\substack{1\leq l\leq n+1\\ l\not\in\alpha\sqcup\{j\}}}(k-l)}{\displaystyle\prod_{\substack{1\leq l\leq n+1\\l\neq k}}(k-l)} 
=\sum_{ k \in\alpha\sqcup\{j\}}\frac{1}{\displaystyle\prod_{\substack{l\in\alpha\sqcup\{j\}\\l\neq k}}(k-l)}=0
\end{align*}

Hence we can conclude that $f_j(\xi_1,\dots ,\xi_{n+1})\equiv f_j(1,\dots ,{n+1})=1$ and therefore $y$ defined in \eqref{ellip coor formula} solves $\cA x=(1,1,\dots ,1)^T$.

It remains for us to prove that $y$ is the unique solution to $\cA x=(1,1,\dots ,1)^T$ when $\xi_1<1<\xi_2<2<\cdots <\xi_{n+1}<n+1$ and it suffices to show that $\det(\cA)\neq 0$ in this case. This follows directly from Lemma \ref{ez linear alg-1}.
\end{proof}

\bibliographystyle{alpha}
\bibliography{BGR-bib}

\end{document}